\titlespacing*{\section}{0pt}{\baselineskip}{0pt}
\titlespacing*{\subsection}{0pt}{0.66\baselineskip}{0pt}
\setlist{leftmargin=0.8cm,topsep=0pt,itemsep=-2pt}
\setlist[enumerate]{label=\rm{(\roman*)}}
\numberwithin{equation}{section}
\g@addto@macro\normalsize{%
  \setlength\abovedisplayskip{0.4\baselineskip plus 0.4\baselineskip}
  \setlength\belowdisplayskip{0.4\baselineskip plus 0.4\baselineskip}
  \setlength\abovedisplayshortskip{-0.3\baselineskip}
  \setlength\belowdisplayshortskip{0.4\baselineskip plus 0.4\baselineskip}
}
\def\blfootnote{\gdef\@thefnmark{}\@footnotetext} \makeatother
\newcommand{\dateline}[1]{\enlargethispage{16pt}\blfootnote{\phantom{\Large M}\hspace{-1em}\hspace{-20pt}\emph{Date} #1}}
\renewenvironment{thebibliography}[1]
{ \begin{oldthebibliography}{#1}
  \setlength{\parskip}{0pt}
  \setlength{\itemsep}{2pt plus 0.3ex}
  \bgroup\footnotesize }
{ \egroup \end{oldthebibliography} }
\renewenvironment{proof}[1][\proofname]{\par
  \pushQED{\qed}%
  \normalfont
  \topsep2pt \partopsep1pt 
  \trivlist
  \item[\hskip\labelsep
        \itshape
    #1\@addpunct{.}]\ignorespaces
}{%
  \popQED\endtrivlist\@endpefalse
  \addvspace{6pt plus 6pt}
}
\newtheoremstyle{shdefinition}{8pt}{4pt}{}{}{\bfseries\boldmath}{.}{0.3em}{} 
\newtheoremstyle{shplain}{8pt}{4pt}{\itshape}{}{\bfseries\boldmath}{.}{0.3em}{} 
\theoremstyle{shdefinition}
\newtheorem{definition}{Definition}[section]
\newtheorem{remark}[definition]{Remark}
\newtheorem{example}[definition]{Example}
\newtheorem*{example*}{Example}
\newtheorem*{organisation*}{Organisation}
\newtheorem*{notation*}{Notation}
\newtheorem*{acknowledgements*}{Acknowledgements}
\newtheorem*{furtheracknowledgements*}{Further acknowledgements (rights retention statement)}
\theoremstyle{shplain}
\newtheorem{theorem}[definition]{Theorem}
\newtheorem{shtheorem}{Theorem}
\newtheorem*{theorem*}{Theorem}
\newtheorem{corollary}[definition]{Corollary}
\newtheorem{proposition}[definition]{Proposition}
\newtheorem{lemma}[definition]{Lemma}
\renewcommand{\a}{\alpha}
\renewcommand{\b}{\beta}
\newcommand{\g}{\gamma}
\renewcommand{\d}{\delta}
\newcommand{\e}{\varepsilon}
\newcommand{\p}{\varphi}
\newcommand{\s}{\sigma}
\renewcommand{\l}{\lambda}
\renewcommand{\t}{\tau}
\renewcommand{\wp}{\widetilde{\varphi}}
\newcommand{\ws}{\widetilde{\sigma}}
\newcommand{\wrho}{\widetilde{\rho}}
\newcommand{\C}{\mathcal{C}}
\renewcommand{\L}{\mathcal{L}}
\renewcommand{\S}{\mathcal{S}}
\newcommand{\<}{\langle}
\renewcommand{\>}{\rangle}
\renewcommand{\leq}{\leqslant}
\renewcommand{\geq}{\geqslant}
\newcommand{\leqn}{\trianglelefteqslant}
\newcommand{\Aut}{\mathrm{Aut}}
\newcommand{\Out}{\mathrm{Out}}
\newcommand{\Inndiag}{\mathrm{Inndiag}}
\newcommand{\tr}{\mathsf{T}}
\newcommand{\F}{\mathbb{F}}
\newcommand{\FF}{\overline{\F}}
\renewcommand{\:}{\colon}
\renewcommand{\mod}[1]{\mathrm{ \ } (\mathrm{mod\ } #1)}
\renewcommand{\div}{\mid}
\newcommand{\ndiv}{\nmid}
\newcommand{\Sym}{\mathrm{Sym}}
\newcommand{\Alt}{\mathrm{Alt}}
\newcommand{\SL}{\mathrm{SL}}
\newcommand{\GL}{\mathrm{GL}}
\newcommand{\PSL}{\mathrm{PSL}}
\newcommand{\PGL}{\mathrm{PGL}}
\newcommand{\Sp}{\mathrm{Sp}}
\newcommand{\PSp}{\mathrm{PSp}}
\newcommand{\SU}{\mathrm{SU}}
\newcommand{\GU}{\mathrm{GU}}
\newcommand{\PSU}{\mathrm{PSU}}
\newcommand{\SO}{\mathrm{SO}}
\renewcommand{\O}{\mathrm{O}}
\newcommand{\POm}{\mathrm{P}\Omega}
\newcommand{\PO}{\mathrm{PO}}
\newcommand{\PGO}{\mathrm{PGO}}
\newcommand{\PGaO}{\mathrm{P}\Gamma\mathrm{O}}
\begin{document}
 
\begin{center} 
{\LARGE \textbf{Totally deranged elements of almost simple groups \\[2pt] and invariable generating sets}} \\[11pt]
{\Large Scott Harper}                                                                                      \\[22pt]
\end{center}

\begin{center}
\begin{minipage}{0.8\textwidth}
\small By a classical theorem of Jordan, every faithful transitive action of a nontrivial finite group has a derangement (an element with no fixed points). The existence of derangements with additional properties has attracted much attention, especially for faithful primitive actions of almost simple groups. In this paper, we show that an almost simple group can have an element that is a derangement in \emph{every} faithful primitive action, and we call these elements \emph{totally deranged}. In fact, we classify the totally deranged elements of all almost simple groups, showing that an almost simple group $G$ contains a totally deranged element only if the socle of $G$ is $\mathrm{Sp}_4(2^f)$ or $\mathrm{P}\Omega^+_n(q)$ with $n=2^l \geqslant 8$. Using this, we classify the invariable generating sets of a finite simple group $G$ of the form $\{ x, x^a \}$ where $x \in G$ and $a \in \mathrm{Aut}(G)$, answering a question of Garzoni. As a final application, we classify the elements of almost simple groups that are contained in a unique maximal subgroup $H$ in the case where $H$ is not core-free, which complements the recent work of Guralnick and Tracey addressing the case where $H$ is core-free.\par
\end{minipage}
\end{center}

\dateline{26 March 2024 \ \emph{MSC 2020} Primary: 20E32; Secondary: 20B05, 20E28, 20F05}

\section{Introduction} \label{s:intro} 

Let $G$ be a finite group with a transitive action on a set $\Omega$ of size $|\Omega|>1$. Then there exists an element of $G$ that fixes no point of $\Omega$ in this action; such an element is known as a \emph{derangement}. This follows quickly from the orbit counting lemma and was first observed by Jordan \cite{ref:Jordan72}. As highlighted by Serre \cite{ref:Serre03}, this basic fact has consequences for number theory and topology. Recently, an extensive field of research has emerged on counting derangements and finding derangements with specified properties. For instance, Fulman and Guralnick \cite{ref:FulmanGuralnick18} proved the Boston--Shalev conjecture that the proportion of derangements in any nontrivial transitive action of a finite simple group is bounded away from zero by an absolute constant. Many problems in this area reduce to faithful primitive actions of almost simple groups, and we refer the reader to the introductory chapter of \cite{ref:BurnessGiudici16} for an overview of recent work. (Recall that a group $G$ is \emph{almost simple} if $T \leq G \leq \Aut(T)$ for a nonabelian simple group $T$.) 

Since every faithful primitive action of an almost simple group admits a derangement, it is natural to reverse the quantifiers and ask whether it is possible for an almost simple group to contain an element that is a derangement in \emph{every} faithful primitive action? Despite how restrictive this condition is, the following example shows that it is possible.

\begin{example*}
Let $G = \O^+_{2m}(q)$ where $2m \geq 8$ and $q$ are powers of two. Then any element of $G$ of order $q^m-1$ is a derangement in every faithful primitive action of $G$.
\end{example*}

Motivated by this example, we make the following definition. An element $x$ of a group $G$ is \emph{totally deranged} if $x$ is a derangement in every faithful primitive action of $G$.

Our first main theorem classifies the almost simple groups with totally deranged elements.

\begin{shtheorem} \label{thm:derangement_basic}
Let $G$ be an almost simple group with socle $T$. Then $G$ contains a totally deranged element if and only if $T = \Sp_4(2^f)$ and $G$ contains a graph-field automorphism, or $T = \POm^+_n(q)$ with $2m=2^l \geq 8$ and $G$ contains a graph automorphism or a product of graph and field automorphisms.
\end{shtheorem}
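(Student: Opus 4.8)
The plan is to work throughout with the reformulation that an element $x \in G$ is totally deranged if and only if it lies in no core-free maximal subgroup of $G$. Indeed, the faithful primitive actions of $G$ are exactly the actions on the coset spaces $G/M$ with $M$ a core-free maximal subgroup, and $x$ is a derangement on $G/M$ precisely when no conjugate of $x$ lies in $M$; as this is a conjugacy-invariant condition, $x$ is totally deranged if and only if every maximal subgroup of $G$ containing $x$ contains the socle $T$ (equivalently, is not core-free). The problem thus becomes the classification of the pairs $(G,x)$ for which every maximal overgroup of $\langle x \rangle$ contains $T$, and I would analyse this one socle type at a time and, within each, one $T$-coset of $x$ at a time. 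The guiding observation is that for $x \in T$ the element is totally deranged in $G$ exactly when none of the maximal subgroups of $T$ containing $x$ extends to a core-free maximal subgroup of $G$, so an outer automorphism present in $G$ can render an ordinary element of $T$ totally deranged by fusing or destabilising the maximal subgroups of $T$ that would otherwise contain it. This is what makes the graph and field automorphisms decisive, and it already shows (cf.\ the Example) that a totally deranged element need not be outer.

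For the ``only if'' direction I would prove that, for every almost simple group whose socle lies outside the two stated families, each element is contained in some core-free maximal subgroup. For alternating and sporadic socles this follows from the known subgroup structure. For groups of Lie type I would apply Aschbacher's theorem and argue that a generic element visibly stabilises a geometric configuration---a proper nonzero totally singular, nondegenerate or eigen-subspace, an imprimitive or tensor decomposition, or a subfield or field-extension structure---thereby exhibiting a core-free overgroup in one of the geometric classes $\C_1,\dots,\C_8$; one then checks that this overgroup survives as a maximal subgroup of the whole of $G$ and is not absorbed or fused away by the outer automorphisms. The residual elements are the regular semisimple ones lying in a maximal torus of largest order, for which no such configuration is forced, and these I would control with Zsigmondy prime arguments, bounding the geometric overgroups of $\langle x \rangle$ to a short explicit list and treating each directly.

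For the ``if'' direction I would exhibit the totally deranged elements explicitly and verify the claim through Aschbacher's theorem. In the orthogonal case $T = \POm^+_n(q)$ with $n = 2m = 2^l \geq 8$ the candidate is an element of order $q^m-1$ (the Example when $q$ is even, and the analogous torus element otherwise), which acts irreducibly on each of a complementary pair $U, W$ of maximal totally singular subspaces. Its parabolic overgroups inside $T$ are the stabilisers of $U$ and of $W$, and these lie in the two families of maximal totally singular subspaces that the graph automorphism interchanges, so they cease to be maximal in $G$; the remaining potential overgroups---the imprimitive stabiliser $\GL_m(q).2$ of $\{U,W\}$ and the field-extension subgroups---are then eliminated using that $m$ is even (whence the element interchanging $U$ and $W$ is inner, so that $\GL_m(q).2$ is not maximal in $G$) and that $m$ is a power of $2$. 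For $T = \Sp_4(2^f)$ with a graph-field automorphism I would argue in the same spirit, the graph-field automorphism now interchanging the long- and short-root parabolic classes and so removing the reducible overgroups of a suitable element of the graph-field coset.

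The main obstacle is pinning down the exact boundary: that the orthogonal exceptions occur precisely when $n$ is a power of $2$ and precisely when the relevant graph or graph-field automorphism lies in $G$. The necessity of the $2$-power condition is where I expect most of the work. If an odd prime $r$ divided $m$, then the field-extension subgroup $\O^+_{n/r}(q^r).r$ would contain a cyclic torus of order $(q^r)^{m/r}-1 = q^m-1$, and one must show that this yields a genuine core-free maximal overgroup of a conjugate of $x$, so that $x$ is not totally deranged; conversely, when $m$ is a power of $2$, one must show that the surviving degree-two field-extension subgroups---such as $\O^\pm_m(q^2).2$ and the unitary subgroup $\GU_m(q).2$---do not contain a conjugate of $x$, which rests on distinguishing the $\GL_m$-Singer torus of $x$ from the tori of equal order inside them. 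Carrying this out requires the precise maximality and novelty criteria for the $\C_2$ and $\C_3$ subgroups under outer automorphisms, an analysis of maximal-torus fusion via the Weyl group of type $D_m$, and the exclusion of the almost simple ($\S$) overgroups on order grounds; it is this bookkeeping, rather than any single conceptual step, that I expect to be the crux.
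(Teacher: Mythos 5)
Your reformulation (an element is totally deranged if and only if it lies in no core-free maximal subgroup) and your treatment of the orthogonal example --- a Singer-type element of order $q^m-1$, killed off by the fusion of the two classes of stabilisers of maximal totally singular subspaces under the graph automorphism, with the $2$-power condition on $m$ coming from field-extension subgroups --- agree with the paper (compare Lemma~\ref{lem:ex_d} and Proposition~\ref{prop:examples}). But there is a genuine gap at the heart of the ``only if'' direction: you have no method for elements lying outside $\Inndiag(T)$. You say you will work one $T$-coset at a time, yet every tool you then invoke --- invariant subspaces, eigenvalues, tensor decompositions, regular semisimple elements, maximal tori of largest order, Zsigmondy primes --- applies to elements acting projectively linearly, that is, to elements of $\Inndiag(T)$. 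The ``only if'' direction requires showing that in every non-listed group \emph{every} element, including those in cosets of field, graph and graph-field automorphisms, lies in a core-free maximal subgroup: for instance, for $G = \< T, \p \>$ with $T = \POm^+_{2m}(q)$ and $2m=2^l$ one must handle all cosets $\Inndiag(T)\p^j \cap G$, and for $G = \< \PSL_n(q), \g \>$ the cosets $\PSL_n(q)\g\p^j$. This is where the paper spends most of its effort, via Shintani descent (Theorems~\ref{thm:shintani_subgroups} and~\ref{thm:shintani_cosets}, and the new Corollary~\ref{cor:shintani_subfields} for subfield subgroups), matching a coset element with an element of a group over a smaller field and transferring maximal overgroups in both directions. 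This outer analysis cannot be dismissed as routine bookkeeping: Theorem~\ref{thm:derangement} shows that totally deranged elements genuinely exist in field-automorphism cosets, and the conditions characterising them (parts (c) and (d) of (iii)) are phrased in terms of the subfield $\F_{q_0}$ with $q=q_0^e$ --- arithmetic that no analysis conducted purely over $\F_q$ would detect.

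There is also a concrete error in your ``if'' direction for $T = \Sp_4(2^f)$: you propose taking the totally deranged element to be ``a suitable element of the graph-field coset''. No such element exists. By Theorem~\ref{thm:derangement}(iii), a totally deranged element must lie in $\Inndiag(T)\a$ for $\a$ a \emph{field} automorphism of odd order, and Case~1 of the proof of Proposition~\ref{prop:bfg} shows by Shintani descent that every element of a graph-field coset lies in the normaliser of a Borel subgroup, of a maximal torus of order $(q-1)^2$, $(q+1)^2$ or $q^2+1$, or of a Suzuki subgroup ${}^2B_2(q)$, each of which is (or is contained in) a core-free maximal subgroup of $G$. The correct example mirrors your orthogonal one: an element of $T$ itself of order $q^2-1$, which escapes all core-free maximal subgroups of $G$ because the graph-field automorphism fuses the two parabolic classes, fuses $\O^+_4(q)$ with $\Sp_2(q) \wr \Sym_2$ and $\O^-_4(q)$ with $\Sp_2(q^2).2$, and fuses the two classes of maximal tori of order $q^2-1$. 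Finally, for $T = \POm^+_8(q)$ your verification treats only duality; the statement's ``graph automorphism'' includes triality, and groups containing triality but no duality also contain totally deranged elements, whose rather different list of potential core-free overgroups ($P_{1,3,4}$ parabolic subgroups, subgroups of type $\O^\pm_2(q) \times \GL^\pm_3(q)$, tori of order $(q^2+1)^2$, and subgroups of type $2^4.2^6.\PSL_3(2)$) must be ruled out separately, as in Case~3 of the proof of Proposition~\ref{prop:examples}.
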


In fact, with some more notation, we can give an expanded version of Theorem~\ref{thm:derangement_basic} that classifies all totally deranged elements of almost simple groups. For a finite simple group $T$ of Lie type, we write $\p$ for the standard field automorphism and $\Inndiag(T)$ for the subgroup of $\Aut(T)$ generated by inner and diagonal automorphisms. (We formally define $\Inndiag(T)$ in Section~\ref{ss:p_groups}, but let us note that if $T$ is $\Sp_4(2^f)$ or $\POm^+_{2m}(2^f)$, then $\Inndiag(T)  = T$, and if $T = \POm^+_{2m}(p^f)$ for odd $p$, then $\Inndiag(T)$ is a particular index two subgroup of $\PGO^+_{2m}(p^f)$.)

\begin{shtheorem} \label{thm:derangement}
Let $G$ be an almost simple group with socle $T$. Then $x \in G$ is totally deranged if and only if all of the following conditions hold, where $p$ is prime and $q=p^f$:
\begin{enumerate}
\item $T$ is either $\POm^+_{2m}(q)$ with $2m=2^l \geq 8$, or $\Sp_{2m}(q)$ with $2m=4$ and $p=2$
\item $G \not\leq \< \Inndiag(T), \p \>$
\item $x \in \Inndiag(T)\a$ for $\a \in \< \p \>$ of odd order $e$, and, writing $q=q_0^e$, we have $x^e = su = us$ for:
\begin{enumerate}[{\rm (a)}]
\item $u = 1$ or $|u| = p > 2$
\item $|s| \div (q_0^{m}-1)$ and $|x^e| \ndiv 2(q_0^{m/2} + 1)$
\item $|s| \ndiv (q_0^{m/k}-1)$ for all prime divisors $k$ of $2f$ that do not divide $e$
\item $k \div |s|$ and $k \ndiv (q_0^{m}-1)/|s|$ for all prime divisors $k$ of $e$.
\end{enumerate}
\end{enumerate}
\end{shtheorem}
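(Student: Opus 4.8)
The plan is to use the standard correspondence between faithful primitive actions of $G$ and its core-free maximal subgroups: the coset action on $G/M$ is faithful and primitive exactly when $M \not\geq T$, and $x$ is a derangement in it precisely when $x^G \cap M = \varnothing$. Hence $x$ is totally deranged if and only if it lies in no core-free maximal subgroup, equivalently every maximal subgroup of $G$ containing $x$ contains $T$. Since Theorem~\ref{thm:derangement_basic} already tells us that a totally deranged element can exist only when $T = \POm^+_{2m}(q)$ with $2m = 2^l \geq 8$ or $T = \Sp_4(q)$ with $q = 2^f$, and only when $G$ contains a graph or graph-field automorphism, I may assume throughout that $G$ is in one of these configurations; this is exactly what conditions (i) and (ii) record, so the real content is to identify, among such $G$, precisely the elements described by (iii). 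I would analyse the core-free maximal subgroups through Aschbacher's theorem and the explicit lists of Kleidman--Liebeck and Bray--Holt--Roney-Dougal, organised into the geometric classes $\C_1, \dots, \C_8$ and the class $\S$.

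The key structural observation is that the graph automorphism interchanges the two $\Omega^+$-orbits of maximal totally singular $m$-spaces, and since $m = 2^{l-1}$ is even a complementary pair $\{U,W\}$ of such spaces lies in a single orbit; consequently, once $G$ contains a graph or graph-field automorphism, both the parabolic stabiliser of a maximal totally singular space and the reductive stabiliser $\GL_m(q).2$ of a complementary pair fail to be maximal in $G$, lying among its maximal subgroups only inside $T$. These are exactly the reducible subgroups into which the Singer-type elements of order dividing $q^m-1$ fall, which is what makes totally deranged elements possible here and accounts for the distinguished role of $|s| \div (q_0^m-1)$ in (iii)(b). To handle the field component I would apply Shintani descent (via Lang--Steinberg) to the coset $\Inndiag(T)\a$, reducing the study of $x$ with $\a$ of odd order $e$ to that of its norm $x^e = su = us$ inside a fixed-point subgroup defined over $\F_{q_0}$ where $q = q_0^e$; this explains why every clause of (iii) is phrased over $q_0$ and why the semisimple and unipotent parts $s, u$ of $x^e$ are the natural invariants.

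For necessity I would show that the failure of any single clause of (iii) places a conjugate of $x$ inside a surviving core-free maximal subgroup. If the image of $x$ in $\Out(T)$ is not of the form $\Inndiag(T)\a$ with $\a$ of odd order --- in particular if it involves the graph automorphism or an even-order field contribution --- then, after descent, the descended element is caught by a subfield ($\C_5$) or graph-field stabiliser. Writing $x^e = su$, a unipotent part $u$ with $|u| > p$ (or any nontrivial $u$ when $p = 2$) leaves invariant a proper nonzero totally singular subspace, putting $x$ into a parabolic $P_k$ with $k < m$; and a semisimple part $s$ violating the divisibility pattern of (b)--(d) yields an invariant extension-field structure ($\C_3$), an invariant decomposition ($\C_2$), or a subfield subgroup ($\C_5$), the conditions $|s| \div (q_0^m-1)$, $|s| \ndiv (q_0^{m/k}-1)$ and the prime conditions attached to the divisors of $e$ being precisely the numerical shadows of avoiding these families. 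The excluded value $|x^e| \div 2(q_0^{m/2}+1)$ corresponds to a further reducible or extension-field subgroup of minus type. Given the eigenvalue description of $s$ over $\FF$, these constructions should be routine.

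For sufficiency I would assume (i)--(iii) and eliminate every core-free maximal subgroup in turn. The classes $\C_4, \C_6, \C_7, \C_8$ are excluded immediately because the dimension is a $2$-power and $|x^e|$ is constrained, while $\C_1, \C_2, \C_3$ and $\C_5$ are removed by the converses of the computations above, using a primitive prime divisor of $q_0^m - 1$ to force any $x$-invariant subspace to be zero or a maximal totally singular space --- and the latter no longer yields a maximal subgroup by condition (ii). I expect the class $\S$ to be the main obstacle: here I would combine the hypothesis $|x^e| \ndiv 2(q_0^{m/2}+1)$ with the classification of irreducible linear groups containing an element of order divisible by a large primitive prime divisor, in the spirit of Guralnick--Penttila--Praeger--Saxl, to rule out every almost simple irreducible overgroup apart from the classical group itself. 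Matching the permitted unipotent contribution $u$ against this primitive-prime-divisor analysis, and verifying that the delicate bookkeeping of (b)--(d) excludes the $\C_2$, $\C_3$ and $\C_5$ subgroups over $\F_{q_0}$ exactly rather than approximately, is where I anticipate the real difficulty.
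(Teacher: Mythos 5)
Your proposal has a fatal circularity at its foundation. You begin by invoking Theorem~\ref{thm:derangement_basic} to ``assume throughout'' that $T$ is $\POm^+_{2m}(q)$ with $2m=2^l\geq 8$ or $\Sp_4(2^f)$ and that $G$ contains a graph or graph-field automorphism, i.e.\ to grant yourself conditions (i) and (ii) for free. But in the paper Theorem~\ref{thm:derangement_basic} is a \emph{corollary} of Theorem~\ref{thm:derangement} --- the paper states explicitly that it ``immediately follows'' from it --- and it has no independent proof. Establishing exactly this reduction is where the overwhelming majority of the work lies: one must show that almost simple groups with socle $\PSL_n(q)$, $\PSU_n(q)$, $\PSp_{2m}(q)$ ($2m\geq 6$), odd-dimensional and minus-type orthogonal groups, $\Sp_4(q)$ with $q$ odd, every exceptional type (including $G_2(3^f)$ and $F_4(2^f)$ with graph-field automorphisms, $E_6(q)$ with graph automorphisms, and the twisted and very twisted families), and $\POm^+_{2m}(q)$ with $2m$ not a $2$-power, admit no totally deranged elements whatsoever, and moreover that within the two surviving families every element lying in a coset not of the prescribed form is caught by a core-free maximal subgroup. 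This occupies Propositions~\ref{prop:computational}, \ref{prop:standard}, \ref{prop:graph}, \ref{prop:bfg}, \ref{prop:a}, \ref{prop:e}, \ref{prop:d} and~\ref{prop:d4} of the paper and requires, beyond Shintani descent, detailed input such as the Liebeck--Saxl--Seitz tables of maximal rank subgroups, L\"ubeck's data on semisimple centralisers, and bespoke arguments for mixed elements (e.g.\ Lemma~\ref{lem:bfg_mixed}). None of this can be waved away by citing a statement that is itself downstream of the theorem you are trying to prove.

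On the portions you do address, your outline is broadly consonant with the paper's strategy: Shintani descent to replace $x$ by $x^e=su$ over $\F_{q_0}$, the observation that complementary totally singular $m$-spaces have the same type (so their stabilisers fuse under the graph automorphism and cease to provide core-free maximal overgroups), primitive prime divisors of $q_0^m-1$ to kill the classes $\C_2$, $\C_3$, $\C_4$, $\C_6$, $\C_7$, $\C_8$, and the new subfield criterion (Corollary~\ref{cor:shintani_subfields}) for $\C_5$. Two caveats even here. First, the subgroups that survive in $G$ --- notably $\O^\e_{2m/k}(q)\wr\Sym_k$ and the subfield subgroups --- cannot be excluded by naive subspace arguments; the paper needs the coset version of Shintani descent (Theorem~\ref{thm:shintani_cosets}) together with the exact divisibility bookkeeping of (iii)(b)--(d), and your sketch does not engage with why, for instance, an element with $u\neq 1$ and $p=2$ lands in an $\O^-_m(q)\wr\Sym_2$ subgroup unless the conditions of (iii) hold (this is precisely the delicate part of Proposition~\ref{prop:d}). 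Second, for the class $\S$ you propose citing a primitive-prime-divisor classification, whereas the paper argues directly from degree bounds in \cite[Chapter~5]{ref:KleidmanLiebeck}; that substitution is plausible but would still need the case $S$ defined in characteristic $p$ (Case~4d of Lemma~\ref{lem:ex_d}), which such classifications do not dispose of and which the paper handles by a separate Shintani argument. As it stands, the proposal proves at most a fragment of the theorem, conditional on the very statement it sets out to establish.
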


Observe that Theorem~\ref{thm:derangement} is simpler if $x \in T$, since in this case $e=1$, so part~(d) is vacuous, $x=su$ and $q_0=q$. Theorem~\ref{thm:derangement_basic} and the example above immediately follow from Theorem~\ref{thm:derangement}.

Theorem~\ref{thm:derangement} has an application to generation. A subset $\{ x_i \mid i \in I \}$ of a group $G$ is an \emph{invariable generating set} of $G$ if $\{ x_i^{g_i} \mid i \in I \}$ is a generating set of $G$ for all choices of $g_i \in G$. This notion was first introduced by Dixon \cite{ref:Dixon92} motivated by computational Galois theory. There is a close connection between derangements and invariable generation given by the easy observation that a group $G$ admits an invariable generating set if and only if every transitive action of $G$ on a set of size at least two has a derangement. In particular, every finite group has an invariable generating set, and much attention has been dedicated to finding invariable generating sets of minimal size. For instance, every finite simple group is known to have an invariable generating set of size two \cite{ref:GuralnickMalle12JLMS,ref:KantorLubotzkyShalev11}, which strengthens the famous result that every finite simple group has a generating set of size two (see \cite{ref:AschbacherGuralnick84,ref:Steinberg62}).

In \cite[Question~2.14]{ref:Garzoni20}, Garzoni asks whether there exists a nonabelian finite simple group $T$ that is invariably generated by $\{ x, x^a \}$ for some $x \in T$ and $a \in \Aut(T)$. This is clearly impossible if $\Out(T) = 1$. A short elementary argument shows that if $\{ x, x^a \}$ is an invariable generating set for a nonabelian finite simple group $T$, then $x$ must be a totally deranged element of the almost simple group $\< T, a \>$  (see Lemma~\ref{lem:invariable}). This leads to our next theorem, which gives an affirmative answer to Garzoni's question.

\begin{shtheorem} \label{thm:invariable}
Let $T$ be a nonabelian finite simple group, let $a \in \Aut(T)$ and let $x \in T$. Then the following are equivalent:
\begin{enumerate}
\item $T$ is invariably generated by $\{x,x^a\}$
\item $x$ is a totally deranged element of $\< T, a \>$
\item the following hold, where $p$ is prime and $q=p^f$:
\begin{enumerate}[{\rm (a)}]
\item $T$ is either $\POm^+_{2m}(q)$ with $2m=2^l \geq 8$, or $\Sp_{2m}(q)$ with $2m=4$ and $p=2$
\item $a \not\in \< \Inndiag(T), \p \>$
\item $x=su=us$ where $|s| \div (q^m-1)$, $|x| \ndiv 2(q^{m/2}+1)$, $|s| \ndiv (q^{m/k}-1)$ for all prime divisors $k$ of $2f$ and either $u=1$ or $|u|=p>2$.
\end{enumerate}
\end{enumerate}
\end{shtheorem}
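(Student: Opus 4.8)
The plan is to establish the cycle of implications (i) $\Rightarrow$ (ii) $\Rightarrow$ (iii) $\Rightarrow$ (i), writing $G=\<T,a\>$ throughout. The implication (i) $\Rightarrow$ (ii) is precisely Lemma~\ref{lem:invariable}, so no new work is required there. For (ii) $\Rightarrow$ (iii) I would simply specialise Theorem~\ref{thm:derangement} to the element $x\in T$: since $x\in T\leq\Inndiag(T)$ we may take $\a=1$, so the integer $e$ of Theorem~\ref{thm:derangement}(iii) equals $1$, whence $q_0=q$, the element $x^e=x=su$, and clause~(d) is vacuous. Clauses~(a)--(c) of Theorem~\ref{thm:derangement}(iii) then read off verbatim as the three conditions in~(iii)(c) here; Theorem~\ref{thm:derangement}(i) is~(iii)(a); and Theorem~\ref{thm:derangement}(ii), namely $G\not\leq\<\Inndiag(T),\p\>$, is equivalent to~(iii)(b) because $G=\<T,a\>$ with $T\leq\Inndiag(T)$. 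Thus (ii) $\Leftrightarrow$ (iii) is a matter of unwinding definitions, and it remains to close the loop with (iii) $\Rightarrow$ (i).

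The substance lies in this last implication. I would use the standard reformulation that $\{x,x^a\}$ fails to invariably generate $T$ if and only if some maximal subgroup $M<T$ contains both a conjugate of $x$ and a conjugate of $x^a$; so suppose for contradiction that such an $M$ exists. The key structural input, obtained in the course of proving Theorem~\ref{thm:derangement}, is a list of the maximal subgroups of $T$ that meet the conjugacy class of $x$. The conditions $|s|\div(q^m-1)$, $|s|\ndiv(q^{m/k}-1)$ for every prime $k\div 2f$, and $|x|\ndiv 2(q^{m/2}+1)$ are exactly what force $x$ to avoid every subgroup in the Aschbacher classes $\C_3$, $\C_5$, $\C_6$ and $\S$, together with the $\O^-$-type members of $\C_2$. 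Consequently any maximal subgroup of $T$ containing a conjugate of $x$ stabilises a maximal totally singular $m$-space (when $T=\POm^+_{2m}(q)$), respectively is a maximal parabolic (when $T=\Sp_4(q)$). Recall that the maximal totally singular $m$-spaces fall into two $T$-classes, the two \emph{families}, which are interchanged by a graph automorphism but preserved by inner, diagonal and field automorphisms.

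Now I would exploit condition~(iii)(b). Since $a\notin\<\Inndiag(T),\p\>$, the automorphism $a$ induces a graph automorphism (possibly composed with a field automorphism), and hence interchanges the two families. The regularity built into~(iii)(c) shows that the maximal totally singular $m$-spaces stabilised by $x$ form complementary pairs; as $m=2^{l-1}$ is even, complementary pairs lie in a single family, say the $+$ family, and the same therefore holds for every $T$-conjugate of $x$. Applying $a$, every conjugate of $x^a$ stabilises only $-$-family spaces. Thus any maximal subgroup containing a conjugate of $x$ is a $+$-family subgroup while any maximal subgroup containing a conjugate of $x^a$ is a $-$-family subgroup; as these are distinct $T$-classes, no single $M$ can contain conjugates of both. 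This contradicts the previous paragraph and completes the cycle. For $T=\Sp_4(q)$ the same argument runs with the exceptional graph automorphism swapping the two classes of maximal parabolics and with the element of order dividing $q^2-1$ playing the role of the Singer-type torus.

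The main obstacle is the structural input invoked in the second paragraph: pinning down precisely which maximal subgroups of $T$ meet the class of $x$, and verifying that each numerical condition in~(iii)(c) rules out exactly the intended Aschbacher classes. This is genuine work, but it is carried out as part of the proof of Theorem~\ref{thm:derangement}, and the payoff here is the clean combinatorial dichotomy that a conjugate of $x$ and a conjugate of $x^a$ can never share a maximal overgroup, precisely because the graph automorphism separates their invariant totally singular $m$-spaces into opposite families.
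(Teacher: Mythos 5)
Your handling of (i)$\implies$(ii) and (ii)$\iff$(iii) matches the paper exactly (Lemma~\ref{lem:invariable} plus specialisation of Theorem~\ref{thm:derangement}), and your central mechanism for (iii)$\implies$(i) --- that $a$ interchanges the two $T$-classes of totally singular $m$-spaces while all such spaces stabilised by $T$-conjugates of $x$ lie in a single class (because $U$ and $U^*$ have even-dimensional intersection) --- is indeed the paper's key observation. The genuine gap is your structural claim that conditions (iii)(c) exclude the classes $\C_3$, $\C_6$ and the tensor-type subgroups, so that every maximal overgroup of a conjugate of $x$ stabilises a totally singular $m$-space (resp.\ is a maximal parabolic when $T=\Sp_4(q)$). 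This is false. When $T=\Sp_4(q)$, an element with eigenvalues $\lambda,\lambda^q,\lambda^{-1},\lambda^{-q}$ and $|\lambda|=|x|$ lies in a field-extension subgroup of type $\Sp_2(q^2)$ and in a subgroup of type $\O^+_4(q)$ (the tensor-decomposable subgroup): these are precisely the images under $a$ of the subgroups $\O^-_4(q)$ and $\Sp_2(q)\wr\Sym_2$ that $x$ avoids. When $T=\POm^+_{2m}(q)$ and $u=1$, the element $x$ lies in the $\C_2$-subgroup of type $\GL_m(q)$ stabilising $U\oplus U^*$, in $\C_3$-subgroups of type $\GU_m(q)$ (since $m$ is even, $\GU_m(q)$ contains a cyclic maximal torus of order $q^m-1$, coming from a Singer cycle of the $\GL_{m/2}(q^2)$ inside it) and of type $\O^+_m(q^2)$, and for $q=p$ with $r=2^{l-1}+1$ one cannot exclude the $\C_6$-subgroups of type $2^{1+2l}.\O^+_{2l}(2)$ either. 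So the clean dichotomy on which your contradiction rests does not hold, and the argument collapses for these overgroups, which do not stabilise any totally singular $m$-space.

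Relatedly, the structural input you defer to Theorem~\ref{thm:derangement} is not actually available from it: Lemma~\ref{lem:ex_d} (the ingredient of that proof) lists only the subgroups that $x$ \emph{avoids}, and the types above are exactly the ones absent from that list. This is why the paper's proof of the present theorem is long. For each such overgroup $H$ it proves $\<x\>^{\Aut(T)}\cap H = \<x\>^{\<\Inndiag(T),\p\>}\cap H$ --- via uniqueness of classes of cyclic subgroups of order $|x|$ in $\GU_m(q)$ and in the tensor-product subgroups, an eigenvalue and subspace-type analysis inside $\O^+_m(q^2)$, and an argument forcing $r=2^{l-1}+1$ and a factor of order $3$ in the $\C_6$ case --- and then combines this with the fact that $\<x\>$ and $\<x^a\>$ are not conjugate under $\<\Inndiag(T),\p\>$, which is established precisely by your families argument applied to the $m$-space stabilisers; Lemma~\ref{lem:invariable_check} then finishes. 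Your proposal contains the germ of the proof (the families argument does dispose of the parabolic and $\GL_m(q)$-type overgroups), but without the treatment of the remaining overgroup types the implication (iii)$\implies$(i) is not proved.
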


Theorem~\ref{thm:derangement} gives (ii)$\implies$(iii). Interestingly, while (i)$\implies$(ii) is elementary (see Lemma~\ref{lem:invariable}), we prove the converse (ii)$\implies$(i) by directly showing that (i) satisfied by the groups in (iii).

In \cite{ref:GuralnickTracey}, Guralnick and Tracey classify the elements of almost simple groups that are contained in a unique maximal subgroup $H$ under the assumption that $H$ is core-free (they use this to classify the finite groups $G$ and elements $x$ satisfying $\< x^G \> = G$ such that $x$ is contained in a unique maximal subgroup of $G$). However, examples also occur when $H$ is not core-free and our next theorem complements \cite{ref:GuralnickTracey} by solving the problem in this case.

\begin{shtheorem} \label{thm:unique}
Let $G$ be almost simple with socle $T$ and let $x \in G$. Then the following are equivalent:
\begin{enumerate}
\item $x$ is contained in a unique maximal subgroup $H$ of $G$ and $H$ is not core-free
\item $x$ is a totally deranged element of $G$ and $Tx$ is contained in a unique maximal subgroup of $G/T$
\item $(G,x)$ appears in Theorem~\ref{thm:derangement} and, in addition, the following hold:
\begin{enumerate}[{\rm (a)}]
\item if $T = \Sp_4(q)$, then $G = \< T, \rho^i \>$ for a graph-field automorphism $\rho$ that satisfies $\rho^2=\p$ and where $i$ divides $f/e$ and $f/ei$ is a power of $2$
\item if $T = \POm^+_{2m}(q)$, then $G = \< T, x, y\p^i \>$ for a duality (respectively, triality) graph automorphism $y$ and where $i$ divides $f/e$ and $f/ei$ is a power of $2$ (respectively, $3$).
\end{enumerate}
\end{enumerate}
\end{shtheorem}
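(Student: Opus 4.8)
The plan is to split the triple equivalence into the elementary part (i)$\Leftrightarrow$(ii) and the substantive part (ii)$\Leftrightarrow$(iii), the latter resting on Theorem~\ref{thm:derangement} and a direct study of the coset $Tx$ inside $\bar G:=G/T\leq\Out(T)$. For (i)$\Leftrightarrow$(ii) I would first record that, since $T$ is the unique minimal normal subgroup of $G$, the core of any maximal subgroup either is trivial or contains $T$; hence a maximal subgroup $H$ is core-free exactly when $T\not\leq H$. As $G$ is non-cyclic, $x$ lies in some maximal subgroup, and unwinding the definitions, $x$ is totally deranged if and only if it lies in no core-free maximal subgroup, i.e.\ every maximal subgroup containing $x$ contains $T$. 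The inclusion-preserving bijection $H\mapsto H/T$ between maximal subgroups of $G$ containing $T$ and maximal subgroups of $\bar G$ then carries the maximal subgroups of $G$ containing both $x$ and $T$ onto the maximal subgroups of $\bar G$ containing $Tx$, and (i)$\Leftrightarrow$(ii) follows formally.

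For (ii)$\Leftrightarrow$(iii), Theorem~\ref{thm:derangement} identifies ``$x$ totally deranged'' with ``$(G,x)$ appears in Theorem~\ref{thm:derangement}'', so it remains to prove that, under these hypotheses, $Tx$ lies in a unique maximal subgroup of $\bar G$ if and only if (a)/(b) holds. The key tool is the lattice fact that a finite group has a unique maximal subgroup if and only if it is a non-trivial cyclic group of prime-power order. Since $Tx\in M$ forces $\langle Tx\rangle\leq M$, the maximal subgroups of $\bar G$ containing $Tx$ are precisely those containing $\langle Tx\rangle$; after checking that $\langle Tx\rangle\trianglelefteq\bar G$, these correspond to the maximal subgroups of $\bar G/\langle Tx\rangle$, so the condition becomes: $\bar G/\langle Tx\rangle$ is a non-trivial cyclic $p$-group for some prime $p$.

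It then remains to read this off from $\Out(T)$. By Theorem~\ref{thm:derangement}, $x\in\Inndiag(T)\alpha$ with $\alpha\in\langle\p\rangle$ of odd order $e$, so $Tx$ lies in the field-diagonal part $\Phi$ (the image of $\langle\Inndiag(T),\p\rangle$), while $G\not\leq\langle\Inndiag(T),\p\rangle$ forces $\bar G$ to project, via $\psi\colon\bar G\to\Gamma$, onto a non-trivial subgroup of the graph quotient $\Gamma=\Out(T)/\Phi$, which is $C_2$ for $\Sp_4(q)$ and for $\POm^+_{2m}(q)$ with $2m\geq 16$, but $S_3$ for $\POm^+_8(q)$. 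For $T=\Sp_4(q)$ one has $\Out(T)=\langle\rho\rangle\cong C_{2f}$ with $\rho^2=\p$, so $\bar G$ is cyclic, $\bar G/\langle Tx\rangle$ is automatically cyclic, and writing $\bar G=\langle\rho^i\rangle$ the index $[\bar G:\langle Tx\rangle]=2f/(ei)$ is even (the graph forces $i$ odd while $e$ is odd); thus ``prime-power'' becomes ``power of $2$'', which with $i\mid f/e$ is exactly (a). For $T=\POm^+_{2m}(q)$, the surjection $\bar G/\langle Tx\rangle\twoheadrightarrow\psi(\bar G)\leq\Gamma$ shows that a cyclic $p$-group quotient forces $\psi(\bar G)$ to be cyclic of prime order $p\in\{2,3\}$: duality ($p=2$) in all cases, and additionally triality ($p=3$) precisely when $\Gamma=S_3$, i.e.\ $2m=8$. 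Choosing a generator $y$ of $\psi(\bar G)$ and a matching field power, one gets $\bar G=\langle Tx,\overline{y\p^i}\rangle$ with $\bar G/\langle Tx\rangle$ cyclic of order $\mathrm{lcm}(|y|,f/ei)$, which is a $p$-group exactly when $i\mid f/e$ and $f/ei$ is a power of $p=|y|$, giving (b).

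The main obstacle is the orthogonal case. For odd $q$ the diagonal automorphisms contribute a Klein four-group on which the field and graph automorphisms act non-trivially (triality permuting its involutions when $2m=8$), so I would need the precise structure and this action to confirm $\langle Tx\rangle\trianglelefteq\bar G$, to compute $\bar G/\langle Tx\rangle$, and to rule out extra maximal subgroups arising from the diagonal part; the triality/``power of $3$'' alternative is especially delicate, since a cyclic $3$-group quotient is incompatible with a surviving order-$2$ diagonal contribution, and one must check these configurations against the uniqueness condition. By comparison the $\Sp_4(q)$ case is immediate once $\Out(T)$ is known to be cyclic, and throughout I would take the derangement data $(e,s,u)$ from Theorem~\ref{thm:derangement} and concentrate the new work on the subgroup lattice of $\bar G$.
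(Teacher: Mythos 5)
Your reduction (i)$\iff$(ii) is the same formal argument as the paper's, and your criterion ``$\ddot{x}=Tx$ has a unique maximal overgroup in $\ddot{G}=G/T$ if and only if $\ddot{G}/\<\ddot{x}\>$ is a nontrivial cyclic group of prime-power order'' is exactly the paper's argument in the cases where it applies: $T=\Sp_4(q)$, where $\Out(T)$ is cyclic, and more generally whenever $\<\ddot{x}\>$ is normal in $\ddot{G}$, which covers all orthogonal cases with $p=2$.

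The gap is the step you propose to ``check'': the normality $\<\ddot{x}\>\leqn\ddot{G}$ is \emph{false} in some of the configurations the theorem must treat, and these are not pathologies to be excluded but honest examples in part (iii). Concretely, let $q$ be odd and $T=\POm^+_{2m}(q)$, so the diagonal part of $\Out(T)$ is $\<\ddot{\d},\ddot{\d}^{\ddot{\g}}\>\cong C_2^2$, the field automorphism acts trivially on it, and a duality graph automorphism $\ddot{\g}$ swaps the involutions $\ddot{\d}$ and $\ddot{\d}^{\ddot{\g}}$ (the paper records $\<\ddot{\d},\ddot{\g}\>\cong D_8$). Take $\ddot{x}=\ddot{\d}\ddot{\p}^{j}$ and $\ddot{G}=\<\ddot{\d}\ddot{\p}^{j},\,\ddot{\d}\ddot{\g}\>$, one of the Goursat subgroups arising in Case~2 of the paper's proof. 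Every power of $\ddot{x}$ has diagonal part $1$ or $\ddot{\d}$, whereas $\ddot{x}^{\ddot{\d}\ddot{\g}}=\ddot{\d}^{\ddot{\g}}\ddot{\p}^{j}$, so $\<\ddot{x}\>$ is not normal in $\ddot{G}$; yet this $\ddot{G}$ equals $\<\ddot{x},\ddot{\g}\ddot{\p}^{j}\>$ and \emph{does} satisfy the uniqueness conclusion. So in precisely the case you flag as the main obstacle, the quotient $\ddot{G}/\<\ddot{x}\>$ does not exist as a group, the bijection with its maximal subgroups is unavailable, and no structural information about the Klein four-group will ``confirm'' the normality. The paper restructures the argument there rather than repairing it: for (ii)$\implies$(iii) it enumerates the possible $\ddot{G}$ via Goursat's lemma and refutes uniqueness by exhibiting a proper \emph{nonabelian} overgroup of $\ddot{x}$ alongside the abelian maximal overgroup $\ddot{G}\cap\ddot{D}$, where $D=\<\Inndiag(T),\p\>$; for (iii)$\implies$(i) (Proposition~\ref{prop:unique}) it replaces $\<\ddot{x}\>$ by the normal subgroup $\ddot{K}=\<\ddot{x},\ddot{x}^{\ddot{y}\ddot{\p}^{i}}\>$, using abelianness of $\ddot{G}\cap\ddot{D}$ to show that any maximal overgroup of $\ddot{x}$ other than $\ddot{G}\cap\ddot{D}$ must contain $\ddot{K}$, and only then runs a quotient argument. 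Your proposal needs this restructuring; as written, both directions of (ii)$\iff$(iii) break down for odd $q$ in the orthogonal case, and a fortiori in the triality case.
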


The equivalence (i)$\iff$(ii) is elementary, so to complete the proof we have to determine when $Tx$ is contained in a unique maximal subgroup of $G/T$ for the groups in Theorem~\ref{thm:derangement}. Notice that if $x \in T$, then we simply require that $G/T$ be cyclic of prime power order. In Proposition~\ref{prop:unique} we will see that when $x$ is contained in a unique maximal subgroup $H$ of $G$ and $H$ is not core-free, it is always the case that $H = G \cap \< \Inndiag(T), \p \>$.

We now discuss our methods. Let $G$ be an almost simple group with socle $T$. If $G$ acts transitively with point stabiliser $H \leq G$, then $x \in G$ is a derangement if and only if $x^G \cap H$ is empty. Therefore, $x \in G$ is totally deranged if and only if $x$ is contained in no core-free maximal subgroup of $G$. This gives a framework for proving Theorem~\ref{thm:derangement}. Indeed, to show that $G$ contains no totally deranged elements, we show that every element of $G$ is contained in a maximal subgroup not containing $T$. For alternating $T$ the proof is short, and for sporadic $T$ we use computational methods exploiting the Character Table Library \cite{ref:CTblLib} in \textsf{GAP} \cite{ref:GAP}.

Most of our effort is devoted to the case where $T$ is a finite simple group of Lie type. The most interesting cases are when $T \in \{ B_2(2^f), G_2(3^f), F_4(2^f) \}$ and $G$ contains a graph-field automorphism or $T \in \{\PSL_n(q), \POm^+_{2m}(q), E_6(q) \}$ and $G$ contains a graph or graph-field automorphism, since graph(-field) automorphisms affect the maximality of subgroups. We use a variety of geometric and Lie theoretic methods to study the elements of $\Inndiag(T)$, but the elements in $\Aut(T) \setminus \Inndiag(T)$ are more opaque and we apply \emph{Shintani descent}. This technique was introduced by Shintani \cite{ref:Shintani76} in 1976 and has its origins in character theory. However, in recent years it has seen powerful applications in group theory, for example in the series of papers leading to the classification of the finite $\frac{3}{2}$-generated groups \cite{ref:BurnessGuralnickHarper21}.

In this paper, we establish new general results on Shintani descent, which we expect will have further applications (indeed they have already been used in \cite{ref:GuralnickTracey}, see below). For a connected linear algebraic group $X$ over $\FF_{p}$ and two commuting Steinberg endomorphisms $\s_1$ and $\s_2$, the \emph{Shintani map} is a bijection between the $X_{\s_1}$-classes in $X_{\s_1}\s_2$ and the $X_{\s_2}$-classes in $X_{\s_2}\s_1$ (here $X_{\s_i}$ is the set of fixed points of $X$ under $\s_i$). For instance, if $X = \PGL_n(\FF_p)$ and $\p$ is the standard Frobenius map, then for divisors $i$ of $f$ we have a bijection between the conjugacy classes in $\PGL_n(p^f)\p^i$ and in $\PGL_n(p^i)$. This bijection preserves important structure, including information about maximal overgroups (see \cite[Theorem~4]{ref:Harper21}). 

Corollary~\ref{cor:shintani_subfields} is a new result describing how Shintani descent relates to maximal subfield subgroups. In the previous example, it implies that if $k$ is a prime dividing $f$ but not $f/i$, then an element of $\PGL_n(p^f)\p^i$ is contained in a degree $k$ subfield subgroup of $\< \PGL_n(p^f), \p^i \>$ if and only if its image under the Shintani map is contained in a degree $k$ subfield subgroup of $\PGL_n(p^i)$. Corollary~\ref{cor:shintani_subfields} has recently been used by Guralnick and Tracey \cite{ref:GuralnickTracey} in their classification of elements of finite groups contained in a unique maximal subgroup.

\begin{organisation*}
Section~\ref{s:prelims} records preliminary results on groups of Lie type and Shintani descent, with our new results on Shintani descent and subfields being presented in Section~\ref{ss:p_shintani_subfields}. Theorem~\ref{thm:derangement} is proved in Section~\ref{s:proof}, then Theorems~\ref{thm:invariable} and~\ref{thm:unique} are proved in Section~\ref{s:applications}.
\end{organisation*}

\begin{notation*}
We follow \cite{ref:GorensteinLyonsSolomon98,ref:KleidmanLiebeck}. In particular, $\Omega_n(\FF_p)$ is the connected component $\O_n(\FF_p)^\circ$. For a Lie type $\Sigma$, by $\Sigma(\FF_p)$ we mean the adjoint algebraic group and by $\Sigma(q)$ we mean the (typically) simple group, so $A_m(\FF_p) = \PGL_{m+1}(\FF_p)$ and ${}^2A_m(q) = \PSU_{m+1}(q)$. We also write $\SL^+_n(q) = \SL_n(q)$ and $\SL^-_n(q) = \SU_n(q)$ and similarly $E_6^+(q) = E_6(q)$ and $E_6^-(q) = {}^2E_6(q)$. For $H,K \leq G$, we write $N_H(K) = \{ h \in H \mid K^h = K\}$ even when $K$ is not a subgroup of $H$. We write $(a,b)$ for the greatest common divisor of natural numbers $a$ and $b$. 
\end{notation*}

\begin{acknowledgements*}
The author wrote this paper when he was first a Leverhulme Early Career Fellow (ECF-2022-154) and then an EPSRC Postdoctoral Fellow (EP/X011879/1), and he thanks the Leverhulme Trust and the Engineering and Physical Sciences Research Council. He thanks Jay Taylor, Adam Thomas and the anonymous referee for their helpful comments.
\end{acknowledgements*}

\section{Preliminaries} \label{s:prelims}

\subsection{Groups of Lie type} \label{ss:p_groups}

We take this section to introduce our notation for the almost simple groups of Lie type. Our notation is consistent with \cite{ref:Harper21}, to which we will refer later. 

Let $X$ be a linear algebraic group over $\FF_p$, with $p$ prime, which we call an \emph{algebraic group}. For a Steinberg endomorphism $\s$ of $X$, write $X_{\s} = \{ x \in X \mid x^\s = x \}$. Let $\L$ be the set of finite groups $T$ such that $T = O^{p'}(X_\s)$ for a simple algebraic group $X$ of adjoint type and a Steinberg endomorphism $\s$. (Recall that $O^{p'}(G)$ is the subgroup generated by the $p$-elements of $G$.) Usually, if $T \in \L$, then $T$ is simple and we call it a \emph{finite simple group of Lie type}. (For us, the Tits group ${}^2F_4(2)'$ is not a finite simple group of Lie type.) We call $A_m(q)$, \dots, $G_2(q)$ \emph{untwisted}, ${}^2A_m(q)$, ${}^2D_m(q)$, ${}^2E_6(q)$, ${}^3D_4(q)$ \emph{twisted} and ${}^2B_2(2^f)$, ${}^2F_4(2^f)$, ${}^2G_2(3^f)$ \emph{very twisted}.

We say that an element $x \in X$ is \emph{semisimple} if $|x|$ is prime to $p$ and \emph{unipotent} if $|x|$ is a power of $p$. In particular, the identity is both semisimple and unipotent. Every element of $X$ is expressible uniquely as $x = su = us$ where $s$ is semisimple and $u$ unipotent (this is the \emph{Jordan decomposition} of $x$). We say that $x$ is \emph{mixed} if it is neither semisimple nor unipotent.

Let $X$ be a simple algebraic group of adjoint type. Let us fix some notation:
\begin{enumerate}[ ]
\setlength{\itemsep}{-3pt}
\item $\p$ is the Frobenius endomorphism of $X$ fixing $\F_p$
\item $\g$ is the standard involutory graph automorphism of $X  \in \{ A_m, D_m, E_6 \}$
\item $\t$ is the standard triality graph automorphism of $X = D_4$
\item $\rho$ is the graph-field endomorphism of $X$ fixing $\F_p$ if $(X,p) \in \{ (B_2, 2), (F_4,2), (G_2,3) \}$.
\end{enumerate}
\enlargethispage{3pt}

Write $\Sigma(X)$ for the group generated by the maps $\p$, $\g$, $\t$, $\rho$ when they are defined. For $T = O^{p'}(X_\s)\in \L$, note that $\Aut(T) \cong \Inndiag(T){:}\Sigma(T)$, where $\Inndiag(T) = X_\s$ and $\Sigma(T) = \{ g|_{X_\s} \mid \text{$g \in \Sigma(X)$ and $g^\s = g$} \}$ (see \cite[Theorem~2.5.4]{ref:GorensteinLyonsSolomon98}). 

It will be useful to record the following immediate consequence of \cite[Theorem~2.5.12]{ref:GorensteinLyonsSolomon98}.
\begin{lemma} \label{lem:groups_automorphism}
Let $T$ be a finite simple group of Lie type. Then $\Aut(T) = \< \Inndiag(T), \p \>$ or
\begin{enumerate}
\item $\Aut(T) = \< T, \rho \>$ and $T \in \{ B_2(2^f), F_4(2^f), G_2(3^f) \}$
\item $\Aut(T) = \< \Inndiag(T), \p, \g \>$ and $T \in \{ A_m(q) \, \text{\rm ($m \geq 2$)}, \ D_m(q) \, \text{\rm ($m \geq 5$)}, \ E_6(q) \}$
\item $\Aut(T) = \< \Inndiag(T), \p, \g, \t \>$ and $T = D_4(q)$.
\end{enumerate}
\end{lemma}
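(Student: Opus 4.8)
The plan is to read off the result from the decomposition $\Aut(T) \cong \Inndiag(T){:}\Sigma(T)$ recorded above, where $\Inndiag(T) = X_\s$ and $\Sigma(T)$ is the set of restrictions to $X_\s$ of those elements of $\Sigma(X) = \<\p,\g,\t,\rho\>$ that commute with $\s$. Since $\<\Inndiag(T),\p\> = \Inndiag(T){:}\<\p|_{X_\s}\>$, the assertion $\Aut(T) = \<\Inndiag(T),\p\>$ is equivalent to $\Sigma(T) = \<\p|_{X_\s}\>$, i.e.\ to the statement that no nontrivial graph, triality or graph-field automorphism survives restriction to $X_\s$. Thus the whole lemma reduces to identifying exactly those $T$ for which such an automorphism does survive, which is precisely the data encoded in \cite[Theorem~2.5.12]{ref:GorensteinLyonsSolomon98}; the work is simply to repackage it into the stated trichotomy.

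First I would observe that $\g$ is defined only for $X \in \{A_m, D_m, E_6\}$, that $\t$ is defined only for $X = D_4$, and that $\rho$ is defined only for $(X,p) \in \{(B_2,2),(F_4,2),(G_2,3)\}$; for every other type $\Sigma(X) = \<\p\>$, so $\Sigma(T) = \<\p|_{X_\s}\>$ automatically and we land in the first alternative. It then remains to treat these distinguished types, where the decisive question is whether $T$ is untwisted. When $T$ is untwisted, $\s$ is a power of $\p$, so each of $\g$, $\t$ and $\rho$ (when defined) commutes with $\s$ and descends to a nontrivial outer automorphism. This produces the three listed families: $\Aut(T) = \<\Inndiag(T),\p,\g\>$ for untwisted $A_m$ ($m \geq 2$), $D_m$ ($m \geq 5$) and $E_6$; $\Aut(T) = \<\Inndiag(T),\p,\g,\t\>$ for untwisted $D_4$, where the diagram automorphisms generate $\Sym_3$ rather than a group of order two; and $\Aut(T) = \<T,\rho\>$ for the untwisted $B_2(2^f)$, $F_4(2^f)$ and $G_2(3^f)$, using $\rho^2 = \p$ and the fact that these groups have no diagonal automorphisms, so $\Inndiag(T) = T$. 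The low-rank coincidences ($A_1$ carries no diagram symmetry, and $D_2$, $D_3$ are not genuine $D$-types) explain the ranges $m \geq 2$ and $m \geq 5$.

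The point requiring the most care is the complementary claim that the twisted and very twisted groups give no extra automorphism. For the twisted types ${}^2A_m$, ${}^2D_m$, ${}^2E_6$ and ${}^3D_4$ the relevant diagram symmetry has already been absorbed into the defining endomorphism $\s$, so the corresponding graph automorphism is no longer outer and $\Sigma(T) = \<\p|_{X_\s}\>$. For the very twisted groups ${}^2B_2(2^f)$, ${}^2F_4(2^f)$ and ${}^2G_2(3^f)$ the endomorphism $\s$ is an odd power of $\rho$, and here I would record the subtlety that $\Out(T)$ is cyclic of \emph{odd} order generated by $\rho|_{X_\s}$; since the order is odd, $\p|_{X_\s} = (\rho|_{X_\s})^2$ is again a generator, so $\Aut(T) = \<\Inndiag(T),\p\>$ holds despite $\rho$ being the natural generator. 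Assembling these cases against \cite[Theorem~2.5.12]{ref:GorensteinLyonsSolomon98} yields exactly the trichotomy, completing the proof.
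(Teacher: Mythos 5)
Your proposal is correct and takes essentially the same route as the paper: the paper states this lemma without proof, as an immediate consequence of \cite[Theorem~2.5.12]{ref:GorensteinLyonsSolomon98}, and your argument is precisely the case-by-case unpacking of that citation via the decomposition $\Aut(T) \cong \Inndiag(T){:}\Sigma(T)$ already recorded in the paper. Your treatment of the very twisted groups (where $\<\wp\> = \<\wrho\>$ because $\Out(T)$ is cyclic of odd order) is exactly the observation the paper itself makes in Remark~\ref{rem:groups_automorphism}(i).
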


\begin{remark} \label{rem:groups_automorphism}
Let us comment on Lemma~\ref{lem:groups_automorphism}.
\begin{enumerate}
\item In Lemma~\ref{lem:groups_automorphism}, we are abusing notation by, for example, conflating the map $\p\:X \to X$ with its restriction $\p|_T\:T \to T$. Usually this causes no problems, but when we need more precision, for an endomorphism $\a$ of $X$ that restricts to an automorphism of $T = O^{p'}(X_{\s})$, we write $\widetilde{\a} = \a|_{\Inndiag(T)}$. For instance, if $X=B_2$, $p=2$ and $f$ is odd, then $\<\p\>$ is a proper subgroup of $\<\rho\>$, but for $T = {}^2B_2(2^f) = X_{\rho^f}$, we have $\<\wp\> = \<\wrho\>$.
\item Lemma~\ref{lem:groups_automorphism} demonstrates that $\< \Inndiag(T), \p \>$ is a normal subgroup of $\Aut(T)$. Indeed, if $T \neq \POm^+_8(q)$, then $\< \Inndiag(T), \p \>$ has index at most two in $\Aut(T)$. If $T =  \POm^+_8(q)$, then $\Aut(T)/\< \Inndiag(T), \p \> \cong S_3$, and, for $T \leq G \leq \Aut(T)$, we say that the almost simple group \emph{$G$ contains triality} if $|G/(G \cap \<\Inndiag(T),\p\>)|$ is divisible by $3$.
\end{enumerate}
\end{remark}

Let us now discuss the the subgroup structure of the finite groups of Lie type. Let $T$ be a finite simple group of Lie type and let $T \leq G \leq \Aut(T)$. The core-free maximal subgroups of $G$ are of central importance to this paper and they are described by the following theorem, which combines \cite[Theorem~2]{ref:LiebeckSeitz90} and \cite[Theorem~2]{ref:LiebeckSeitz98} of Liebeck and Seitz .

\begin{theorem} \label{thm:maximal}
Let $G$ be an almost simple group of Lie type with socle $T$. Write $T= O^{p'}(X_\s)$ for a simple algebraic group $X$ of adjoint type and a Steinberg endomorphism $\s$ of $X$. Let $H$ be a maximal subgroup of $G$ not containing $T$. Then $H$ is one of
\begin{enumerate}[{\rm (I)}]
\item $N_G(Y_\s \cap T)$ for a maximal closed $\s$-stable positive-dimensional subgroup $Y$ of $X$
\item $N_G(X_\a \cap T)$ for a Steinberg endomorphism $\a$ of $X$ such that $\a^k=\s$ for a prime $k$
\item a local subgroup not in (I)
\item an almost simple group not in (I) or (II)
\item the Borovik subgroup: $H \cap T = (\Alt_5 \times \Alt_6).2^2$ with $T = E_8(q)$ and $p \geq 7$.
\end{enumerate}
\end{theorem}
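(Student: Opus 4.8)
The plan is to read the statement off the Liebeck--Seitz reduction theorems \cite[Theorem~2]{ref:LiebeckSeitz90} and \cite[Theorem~2]{ref:LiebeckSeitz98}, which between them classify a maximal subgroup $H$ of $G$ with $T \not\leq H$ into a small number of structurally defined families; the only genuine work is to match those families with the five classes (I)--(V). The cited theorems present such an $H$ either as the $G$-normaliser of a subgroup of $T$ cut out from the ambient algebraic group $X$, or as an almost simple group, or as one of a short explicit list of exceptional configurations, so I would simply verify that each possibility lands in exactly one of (I)--(V).

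Next I would run through the output class by class. The parabolic, reductive maximal-rank and remaining closed positive-dimensional cases give $H = N_G(Y_\s \cap T)$ for a maximal $\s$-stable positive-dimensional subgroup $Y \leq X$, which is (I). The subfield subgroups, together with the twisted subgroups (for instance the unitary, symplectic or orthogonal subgroups of a linear group that become maximal once $G$ contains a graph or graph-field automorphism), are exactly the groups $H = N_G(X_\a \cap T)$ for a Steinberg endomorphism $\a$ of $X$ with $\a^k = \s$ and $k$ prime; this is (II). The finitely many exotic local subgroups not already of type (I) form (III); the class-$\mathcal{S}$ almost simple subgroups, namely those lying in no positive-dimensional or subfield subgroup, form (IV); and the unique subgroup with $H \cap T = (\Alt_5 \times \Alt_6).2^2$ inside $E_8(q)$ for $p \geq 7$ is (V).

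The main obstacle --- indeed essentially the only substantive point --- is the uniform packaging of the subfield and twisted subgroups into the single condition that $\a^k = \s$ with $k$ prime. I would check that restricting to prime $k$ loses nothing, since a subfield subgroup of composite index is properly contained in one of prime index and hence is not maximal, and that each relevant twisted subgroup really does arise as $X_\a$ for a Steinberg endomorphism $\a$ satisfying $\a^k = \s$, the graph or graph-field nature of $\a$ accounting for the twisting. A secondary and routine point is to confirm that passing from the algebraic-group level (the subgroups $Y_\s$ and $X_\a$) to their intersections with $T$ and then to normalisers in $G$ neither destroys maximality nor conflates distinct classes; this follows from the structure $\Aut(T) \cong \Inndiag(T){:}\Sigma(T)$ recorded above.
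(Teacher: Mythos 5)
Your overall strategy is exactly the paper's: Theorem~\ref{thm:maximal} is stated there without proof, as a direct combination of \cite[Theorem~2]{ref:LiebeckSeitz90} (for exceptional $T$) and \cite[Theorem~2]{ref:LiebeckSeitz98} (for classical $T$), and your family-by-family matching, the prime-index reduction in case (II), and the passage to normalisers in $G$ are the routine verifications implicit in that citation.

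There is, however, a genuine error in your sketch, and it sits precisely at the point you identify as the only substantive one. Subgroups of type $\Sp_n(q)$ and $\SO_n(q)$ in $T = \PSL_n(q)$, defined over the full field $\F_q$, are \emph{not} of the form $N_G(X_\a \cap T)$ for a Steinberg endomorphism $\a$ of $X$ with $\a^k=\s$: a Steinberg endomorphism has finite fixed-point group, and for $X$ of type $A_{n-1}$ the finite groups $X_\a$ are linear or unitary, never symplectic or orthogonal. These subgroups arise instead as $Y_\s \cap T$ for the maximal closed positive-dimensional $\s$-stable subgroups $Y = C_X(\g)^\circ$ of type $\Sp_n$ or $\SO_n$, so they lie in class (I), irrespective of the fact that Aschbacher's scheme files them under $\C_8$. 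The only ``twisted'' members of class (II) are twisted groups over proper subfields, e.g.\ $\PSU_n(q^{1/2}) \leq \PSL_n(q)$ arising from a graph-field endomorphism $\a$ with $\a^2=\s$; this is consistent with Remark~\ref{rem:maximal}(iv), where (II) is said to overlap with the subfield class $\C_5$, and with Example~\ref{ex:shintani_subfields}(ii). (Relatedly, maximality of the $\Sp_n(q)$ and $\SO_n(q)$ subgroups does not in fact hinge on $G$ containing a graph automorphism; it is certain class (I) novelties, such as the $\GL_k(q) \times \GL_{n-k}(q)$ type subgroups appearing in the proof of Proposition~\ref{prop:standard}, whose maximality requires $G \not\leq \< \Inndiag(T), \p \>$.) The check you propose --- realising each ``twisted'' subgroup as some $X_\a$ --- would simply fail for $\Sp_n(q)$ and $\SO_n(q)$, so the verification plan as written would stall until the boundary between (I) and (II) is redrawn correctly.
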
 

\begin{remark} \label{rem:maximal}
Let us comment on Theorem~\ref{thm:maximal}.
\begin{enumerate}
\item An important class of examples arising in (I) are parabolic subgroups. For a simple algebraic group or a finite group of Lie type, we write $P_{i_1, \dots, i_k}$ for the parabolic subgroup obtained by deleting nodes $i_1$, \dots, $i_k$ from the Dynkin diagram.
\item The subgroups in (II) are easily determined via the possible Steinberg endomorphisms. 
\item In addition to those in (i) and (ii), for exceptional groups, the only other maximal subgroups we need detailed information about are the subgroups in (I) arising from reductive maximal rank subgroups $Y \leq X$, and our reference for these is \cite{ref:LiebeckSaxlSeitz92}.
\item For classical groups, one usually categorises maximal subgroups via Aschbacher's subgroup structure theorem \cite{ref:Aschbacher84}, where each maximal subgroup is either contained in one of eight geometric classes $\C_1$, \dots, $\C_8$ (with (II) and (III) broadly overlapping with $\C_5$ and $\C_6$, respectively) or is an absolutely irreducible almost simple group in the class $\S$. We follow Kleidman and Liebeck's notation in \cite{ref:KleidmanLiebeck} for geometric subgroups (including their definition of \emph{type}), and we refer to \cite[p.3]{ref:KleidmanLiebeck} for a definition of the class $\S$.
\end{enumerate}
\end{remark}

\subsection{Shintani descent} \label{ss:p_shintani}

We now introduce Shintani descent, following \cite{ref:Harper21}. Let $X$ be a connected algebraic group and let $\s_1$ and $\s_2$ be commuting Steinberg endomorphisms. For $\{ i,j \} = \{ 1,2 \}$, write $\ws_i = \s_i|_{X_{\s_j}}$ and assume that $\<\ws_i\> \cap X_{\s_j} = 1$. The \emph{Shintani map} of $(X,\s_1,\s_2)$, written $F\:X_{\s_1}\ws_2 \to X_{\s_2}\ws_1$, is 
\begin{gather*}
F\: \{ g^{X_{\s_1}} \mid g \in X_{\s_1}\ws_2 \} \to \{ h^{X_{\s_2}} \mid h \in X_{\s_2}\ws_1 \} \\
(x\ws_2)^{X_{\s_1}} \mapsto (y\ws_1)^{X_{\s_2}} \iff (x\s_2,\s_1)^X = (\s_2,y\s_1)^X. 
\end{gather*}
By \cite[Theorem~2.1]{ref:Harper21}, $F$ is a well-defined bijection and $C_{X_{\s_1}}(g) \cong C_{X_{\s_2}}(h)$ if $F(g^{X_{\s_1}}) = h^{X_{\s_2}}$. We frequently abuse notation and write $F$ on elements rather than conjugacy classes.

\begin{remark} \label{rem:shintani}
Let us collect together some basic properties of the Shintani map.
\begin{enumerate}
\item If $F$ is the Shintani map of $(X,\s_1,\s_2)$ and $F(g) = h$, then there exists $a \in X$ such that $g = aa^{-\s_2^{-1}}\ws_2$ and $h = a^{-1}a^{\s_1^{-1}}\ws_1$ (see \cite[Lemma~2.2]{ref:Harper21}).
\item Part~(i) implies that if $F$ is the Shintani map of $(X,\s^e,\s)$ for $e > 0$, then for all $g \in X_{\s^e}\ws$, the image $F(g)$ is $X$-conjugate to $g^{-e}$ (see \cite[Lemma~2.20]{ref:Harper21}). In particular, $|g| = e|F(g)|$.
\item If $F\:X_{\s_1}\ws_2 \mapsto X_{\s_2}\ws_1$ is the Shintani map of $(X,\s_1,\s_2)$, then define $F'\:X_{\s_1}\ws_2 \mapsto X_{\s_2}\ws_2^{-1}$ as $F'(g) = F(g)^{-1}$. In particular, if $F$ is the Shintani map of $(X,\s^e,\s)$, then $F'$ is what is called the Shintani map of $(X,\s,e)$ elsewhere (see \cite[Definition~3.18]{ref:BurnessGuralnickHarper21} for example).
\end{enumerate}
\end{remark}

Let us explain how the Shintani map relates to closed subgroups of $X$. This is studied comprehensively in \cite{ref:Harper21}, but apart from in a few instances, all we need is a simpler result. We have chosen to give the proof since it is short and enlightening.

\begin{theorem} \label{thm:shintani_subgroups}
Let $Y$ be a closed connected $\< \s_1, \s_2 \>$-stable subgroup of $X$. Let $g \in X_{\s_1}\ws_2$ and $h \in X_{\s_2}\ws_1$ with $F(g^{X_{\s_1}}) = h^{X_{\s_2}}$. Then $g$ is contained in an $\<X_{\s_1},\ws_2\>$-conjugate of $\<Y_{\s_1},\ws_2\>$ if and only if $h$ is contained in an $\<X_{\s_2},\ws_1\>$-conjugate of $\<Y_{\s_2},\ws_1\>$.
\end{theorem}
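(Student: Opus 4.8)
The plan is to use the explicit description of the Shintani correspondence from Remark~\ref{rem:shintani}(i) to produce the required conjugating element directly. By symmetry of the roles of $\s_1$ and $\s_2$ in the statement, it suffices to prove one implication, say that if $g$ lies in an $\<X_{\s_1},\ws_2\>$-conjugate of $\<Y_{\s_1},\ws_2\>$, then $h$ lies in an $\<X_{\s_2},\ws_1\>$-conjugate of $\<Y_{\s_2},\ws_1\>$; the reverse then follows by applying the same argument to the inverse Shintani map, noting that $F$ is a bijection by \cite[Theorem~2.1]{ref:Harper21}. Since the property of lying in a conjugate of $\<Y_{\s_i},\ws_j\>$ is invariant under $X_{\s_i}$-conjugacy of $g$ (respectively $h$), I may work with any chosen representatives of the classes $g^{X_{\s_1}}$ and $h^{X_{\s_2}}$.

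First I would fix the element $a \in X$ supplied by Remark~\ref{rem:shintani}(i), so that (after replacing $g$ and $h$ by suitable class representatives) we have $g = a a^{-\s_2^{-1}}\ws_2$ and $h = a^{-1} a^{\s_1^{-1}}\ws_1$. The key structural observation is that these formulae exhibit $g$ and $h$ as the images of a single "twisting" relation: the idea is that $a$ simultaneously conjugates the $\s_1$-setup to the $\s_2$-setup. Concretely, if $g$ lies in some $\<X_{\s_1},\ws_2\>$-conjugate of $\<Y_{\s_1},\ws_2\>$, then after absorbing the conjugating element I may assume $g \in \<Y_{\s_1},\ws_2\>$, i.e.\ $g = y\ws_2$ with $y \in Y_{\s_1}$. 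The task is then to show that the corresponding $h$ lies in an $\<X_{\s_2},\ws_1\>$-conjugate of $\<Y_{\s_2},\ws_1\>$. Since $Y$ is connected and $\<\s_1,\s_2\>$-stable, the restriction of the Shintani construction to $Y$ is again a genuine Shintani correspondence for the triple $(Y,\s_1,\s_2)$; the plan is to run Remark~\ref{rem:shintani}(i) \emph{inside} $Y$ to obtain an element $b \in Y$ with $g = bb^{-\s_2^{-1}}\ws_2$ and a corresponding $Y$-element $h_0 = b^{-1}b^{\s_1^{-1}}\ws_1 \in \<Y_{\s_2},\ws_1\>$.

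The heart of the argument is then to compare the two expressions $g = aa^{-\s_2^{-1}}\ws_2 = bb^{-\s_2^{-1}}\ws_2$ coming from the ambient group $X$ and from the subgroup $Y$: this gives $a^{-1}b = (a^{-1}b)^{\s_2^{-1}}$, so that $c := a^{-1}b$ lies in $X_{\s_2}$. Tracking this same element $c$ through the $h$-formulae shows that $h = c^{-1} h_0 c^{\cdots}$ — more precisely, a short computation with the defining relations demonstrates that $h$ is $X_{\s_2}$-conjugate to $h_0 \in \<Y_{\s_2},\ws_1\>$, where the conjugating element is built from $c$ and its image under $\ws_1$. Because $h_0 \in \<Y_{\s_2},\ws_1\>$ and $c \in X_{\s_2}$, this places $h$ in an $\<X_{\s_2},\ws_1\>$-conjugate of $\<Y_{\s_2},\ws_1\>$, as required.

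I expect the main obstacle to be the bookkeeping in the final comparison step: one must verify that the element $c = a^{-1}b$ genuinely lies in $X_{\s_2}$ (not merely that it conjugates one coset element to another) and that the induced conjugation respects the coset structure, i.e.\ sends the marker $\ws_1$ appropriately, so that the conjugate lies in $\<X_{\s_2},\ws_1\>$ rather than some larger coset. This hinges on the hypothesis $\<\ws_i\>\cap X_{\s_j}=1$, which guarantees that the semidirect-product decompositions are unambiguous and that equalities of coset elements can be cancelled to yield equalities in $X$ together with matching powers of $\ws_1, \ws_2$. Care is also needed because $Y$ being connected is precisely what licenses applying Remark~\ref{rem:shintani}(i) within $Y$; without connectedness the Shintani map for $(Y,\s_1,\s_2)$ need not have the same explicit form, so I would flag that this hypothesis is used essentially and not just for convenience.
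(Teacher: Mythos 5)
Your proposal is correct and rests on the same core idea as the paper's proof: since $Y$ is closed, connected and $\<\s_1,\s_2\>$-stable, the Shintani correspondence can be run inside $Y$ itself, and membership transfers because the internal correspondence is compatible with the one for $X$. The execution differs in a meaningful way, however. The paper works entirely at the level of conjugacy classes: it reduces membership in a conjugate of $\<Y_{\s_1},\ws_2\>$ to nonemptiness of $g^{X_{\s_1}} \cap Y_{\s_1}\ws_2$, and then observes that if $E$ is the Shintani map of $(Y,\s_1,\s_2)$, the defining condition for $E(y^{Y_{\s_1}}) = z^{Y_{\s_2}}$ (conjugacy of pairs of endomorphisms witnessed in $Y$) trivially implies the defining condition for $F(y^{X_{\s_1}}) = z^{X_{\s_2}}$; no element computation occurs and both directions come out symmetrically at once. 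You instead unwind everything through Remark~\ref{rem:shintani}(i): you take a Lang--Steinberg witness $a \in X$ for the pair $(g,h)$ and a witness $b \in Y$ for $g$ inside $Y$, and show that the discrepancy $c = a^{-1}b$ lies in $X_{\s_2}$ and conjugates $h_0 = b^{-1}b^{\s_1^{-1}}\ws_1 \in Y_{\s_2}\ws_1$ to $h$. This computation does close (with the paper's conventions one gets $h = ch_0c^{-1}$); in effect you are re-proving by hand the well-definedness of $F$ on classes, which the paper simply quotes from \cite[Theorem~2.1]{ref:Harper21}. What the class-level formulation buys is the avoidance of two pieces of bookkeeping that your version must supply explicitly. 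First, your ``absorbing'' step replaces $g$ by an $\<X_{\s_1},\ws_2\>$-conjugate, and you need to know this does not change the class $g^{X_{\s_1}}$ (and hence the hypothesis $F(g^{X_{\s_1}}) = h^{X_{\s_2}}$); this holds, but only because every $\<X_{\s_1},\ws_2\>$-conjugate of $\<Y_{\s_1},\ws_2\>$ is already an $X_{\s_1}$-conjugate of it, as $\ws_2$ normalises both $X_{\s_1}$ and $\<Y_{\s_1},\ws_2\>$ -- a point worth one explicit sentence. Second, the witness $a$ must be chosen \emph{after} conjugating $g$ into $Y_{\s_1}\ws_2$ (your write-up fixes $a$ first and conjugates afterwards, which invalidates the identity $g = aa^{-\s_2^{-1}}\ws_2$); re-choosing $a$ then recovers the given $h$ only up to a further $X_{\s_2}$-conjugacy, which is harmless for the conclusion but should be said. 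Neither point is a genuine gap, and your identification of connectedness of $Y$ as the essential hypothesis (licensing Lang--Steinberg inside $Y$) matches exactly how the paper uses it.
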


\begin{proof}
First note that $g \in X_{\s_1}\ws_2$ is contained in an $\<X_{\s_1},\ws_2\>$-conjugate of the subgroup $\<Y_{\s_1},\ws_2\>$ if and only if $g$ is contained in an $\<X_{\s_1},\ws_2\>$-conjugate of the coset $Y_{\s_1}\ws_2$, which, in turn, is true if and only if $g^{\<X_{\s_1},\ws_2\>} \cap Y_{\s_1}\ws_2 = g^{X_{\s_1}} \cap Y_{\s_1}\ws_2$ is nonempty. Similarly, $h \in X_{\s_2}\ws_1$ is contained in an $\<X_{\s_2},\ws_1\>$-conjugate of $\<Y_{\s_2},\ws_1\>$ if and only if $h^{X_{\s_2}} \cap Y_{\s_2}\ws_1$ is nonempty. Let $E$ be the Shintani map of $(Y,\s_1,\s_2)$. Observe that if $y \in Y_{\s_1}\ws_2$ and $z \in Y_{\s_2}\ws_1$ satisfy $E(y^{Y_{\s_1}}) = z^{Y_{\s_2}}$, then $F(y^{X_{\s_1}}) = z^{X_{\s_2}}$. In particular, if $g^{X_{\s_1}} \cap Y_{\s_1}\ws_2$ is nonempty, and contains $y^{Y_{\s_1}}$, say, then $h^{X_{\s_2}} \cap Y_{\s_2}\ws_1$ is nonempty since it contains $E(y^{Y_{\s_1}})$; the converse similarly holds. Therefore, $g^{X_{\s_1}} \cap Y_{\s_1}\ws_2$ is nonempty if and only $h^{X_{\s_2}} \cap Y_{\s_2}\ws_1$ is nonempty. The result follows.
\end{proof}

\begin{example} \label{ex:shintani_subgroups}
Let $X = \GL_n$ and let $Y$ be a maximal $P_k$ parabolic subgroup of $X$ for an integer $0 < k < n$ (that is, $Y$ is the stabiliser of a $k$-space of the natural module for $X$). Write $q=p^f$ and $q_0 = p^i$ where $i \div f$. Let $x \in \GL_n(q) = X_{\p^f}$ and $x_0 \in \GL_n(q_0) = X_{\p^i}$ satisfy $F((x\p^i)^{\GL_n(q)}) = {x_0}^{\GL_n(q_0)}$ where $F\: \GL_n(q)\p^i \to \GL_n(q_0)$ is the Shintani map of $(X,\p^f,\p^i)$. Then Theorem~\ref{thm:shintani_subgroups} implies that $x\p^i$ is contained in a $P_k$ parabolic subgroup of $\<\GL_n(q), \p \>$ if and only if $x_0$ is contained in a $P_k$ parabolic subgroup of $\GL_n(q_0)$. Applying this to all $0 < k < n$, we deduce that $x\p^i$ stabilises no subspace of $\F_q^n$ in the semilinear action of $\< \GL_n(q), \p \>$ on $\F_q^n$ if and only if $x_0$ acts irreducibly on $\F_{q_0}^n$.
\end{example}

The following more technical result \cite[Theorem~2.8(ii)]{ref:Harper21} will be useful in some proofs. Here, for an element $x \in X$ and a Steinberg endomorphism $\s$ of $X$, we write 
\begin{equation}
x_\s = xx^{-\s^{-1}}. \label{eq:lang}
\end{equation}
By the Lang--Steinberg theorem, every $s \in X$ may written as $s = x_\s$ for some $x \in X$.

\begin{theorem} \label{thm:shintani_cosets}
Let $Y$ be a closed $\< \s_1, \s_2 \>$-stable subgroup of $X$ and let $s,t \in N_X(Y^\circ)$. Assume that $N_{X_{s\s_1}}(Y^\circ_{s\s_1}) = N_X(Y^\circ)_{s\s_1}$, $N_{X_{t\s_2}}(Y^\circ_{t\s_2}) = N_X(Y^\circ)_{t\s_2}$ and $[s\s_1,t\s_2] = 1$. Let $g \in X_{\s_1}\ws_2$ and $h \in X_{\s_2}\ws_1$ with $F(g^{X_{\s_1}}) = h^{X_{\s_2}}$. Then the number of $\<X_{\s_1},\ws_2\>$-conjugates of $(Y^\circ_{s\ws_1} t\ws_2)^{s_{\s_1}}$ that contain $g$ equals the number of $\<X_{\s_2},\ws_1\>$-conjugates of $(Y^\circ_{t\ws_2} s\ws_1)^{t_{\s_2}}$ that contain $h$.
\end{theorem}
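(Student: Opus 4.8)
The plan is to count each collection of conjugates directly by an orbit-counting argument and then match the resulting numerical expressions across the Shintani map $F$, factor by factor. Throughout write $G_1 = \<X_{\s_1},\ws_2\>$, $Z_1 = (Y^\circ_{s\ws_1}\,t\ws_2)^{s_{\s_1}}$, and symmetrically $G_2 = \<X_{\s_2},\ws_1\>$, $Z_2 = (Y^\circ_{t\ws_2}\,s\ws_1)^{t_{\s_2}}$.

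First I would reduce the ambient group from $G_1$ to $X_{\s_1}$. One checks that $Z_1 \subseteq X_{\s_1}\ws_2$, so that it can meaningfully contain $g$: conjugation by the Lang element $s_{\s_1}$ of \eqref{eq:lang} untwists $s\s_1$ to $\s_1$, carrying $Y^\circ_{s\s_1}$ into $X_{\s_1}$, and since $t\s_2$ commutes with $s\s_1$ its image $(t\s_2)^{s_{\s_1}}$ commutes with $\s_1$ and so lies in $X_{\s_1}\ws_2$. The hypotheses $[s\s_1,t\s_2]=1$ and $t \in N_X(Y^\circ)$ ensure that $t\s_2$ normalises $Y^\circ_{s\s_1}$, so a coset representative of $Z_1$ lies in $N_{G_1}(Z_1)$ and projects onto a generator of $G_1/X_{\s_1} = \<\ws_2\>$. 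It follows that every $G_1$-conjugate of $Z_1$ is already an $X_{\s_1}$-conjugate, so the number of $G_1$-conjugates of $Z_1$ containing $g$ equals the number of $X_{\s_1}$-conjugates containing $g$.

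With this reduction in hand, a standard orbit-counting argument gives
\[
\#\{\,X_{\s_1}\text{-conjugates of }Z_1 \ni g\,\} = \frac{|g^{X_{\s_1}} \cap Z_1|\cdot|C_{X_{\s_1}}(g)|}{|N_{X_{\s_1}}(Z_1)|},
\]
and likewise for $h$, $Z_2$ and $X_{\s_2}$. It therefore suffices to match the three factors. The centraliser factor is immediate from the Shintani map, which gives $C_{X_{\s_1}}(g) \cong C_{X_{\s_2}}(h)$ whenever $F(g^{X_{\s_1}})=h^{X_{\s_2}}$. For the class-intersection factor I would exploit the Shintani map $E$ of the connected $\<\s_1,\s_2\>$-stable group $Y^\circ$ together with its compatibility with $F$ (if $E(y^{Y_{\s_1}})=z^{Y_{\s_2}}$ then $F(y^{X_{\s_1}})=z^{X_{\s_2}}$), exactly as in the proof of Theorem~\ref{thm:shintani_subgroups}: conjugating $Z_1$ back by $s_{\s_1}^{-1}$ identifies $g^{X_{\s_1}}\cap Z_1$ with the elements of the twisted coset $Y^\circ_{s\s_1}\,t\s_2$ conjugate to the untwisted representative, which the centraliser-preserving bijection $E$ matches with the corresponding set on the $\s_2$-side, giving $|g^{X_{\s_1}}\cap Z_1| = |h^{X_{\s_2}}\cap Z_2|$. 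For the normaliser factor, the hypotheses $N_{X_{s\s_1}}(Y^\circ_{s\s_1}) = N_X(Y^\circ)_{s\s_1}$ and $N_{X_{t\s_2}}(Y^\circ_{t\s_2}) = N_X(Y^\circ)_{t\s_2}$ identify the finite coset-stabilisers $N_{X_{\s_i}}(Z_i)$ with fixed points of the algebraic normaliser $N_X(Y^\circ)$ under the relevant Steinberg endomorphism, whose orders then agree by a uniform Lang–Steinberg fixed-point count on $N_X(Y^\circ)^\circ$ and the Shintani correspondence for this connected group. Combining the three matched factors yields the desired equality.

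The main obstacle is upgrading the purely existential statement of Theorem~\ref{thm:shintani_subgroups} to an exact equality of multiplicities, and this is precisely where the two normaliser hypotheses are indispensable: without them the finite coset-stabilisers need not be the fixed points of a connected algebraic group, so the clean Lang–Steinberg computation of $|N_{X_{\s_i}}(Z_i)|$, and hence the cancellation forcing the two counts to coincide, could fail. The delicate bookkeeping on which the argument hinges is tracking the twists $s$ and $t$ — which realise $Y^\circ$ as possibly inequivalent rational forms on the two sides — through the Lang map $x \mapsto x_\s$, so that the intersection and normaliser factors are computed against genuinely corresponding data under $F$.
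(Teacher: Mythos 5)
First, a point of reference: this paper never proves Theorem~\ref{thm:shintani_cosets} --- it is imported wholesale from \cite[Theorem~2.8(ii)]{ref:Harper21} --- so your attempt can only be measured against the argument given there. That argument does follow your skeleton: untwist by a Lang element, observe that every $\<X_{\s_1},\ws_2\>$-conjugate of $Z_1 = (Y^\circ_{s\ws_1}t\ws_2)^{s_{\s_1}}$ is already an $X_{\s_1}$-conjugate, apply the orbit-counting formula, and transfer the resulting terms through the Shintani map of $(Y^\circ,s\s_1,t\s_2)$ and the two normaliser hypotheses. Your untwisting and reduction paragraph, and the counting formula itself, are correct.

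The gap is in the matching of factors: two of your three claimed equalities are false. The Shintani map $E$ of $(Y^\circ,s\s_1,t\s_2)$ is a bijection of \emph{conjugacy classes} that preserves \emph{centraliser orders}, not a bijection of elements: corresponding classes have sizes $|Y^\circ_{s\s_1}|/|C|$ and $|Y^\circ_{t\s_2}|/|C|$, and $|Y^\circ_{s\s_1}| \neq |Y^\circ_{t\s_2}|$ in general (in every application in this paper $\s_1 = \s_2^e$, so these are groups over $\F_{q^e}$ and over $\F_q$). What $E$ actually yields is the equality of ratios
\[
\frac{|g^{X_{\s_1}} \cap Z_1|}{|Y^\circ_{s\s_1}|} \;=\; \frac{|h^{X_{\s_2}} \cap Z_2|}{|Y^\circ_{t\s_2}|},
\]
not your equality $|g^{X_{\s_1}} \cap Z_1| = |h^{X_{\s_2}} \cap Z_2|$. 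The normaliser step fails for the same reason: writing $A = N_X(Y^\circ)/Y^\circ$ and letting $A_{s\s_1}$, $A_{t\s_2}$ denote the fixed points of the induced automorphisms, the coset stabiliser $N_{X_{\s_1}}(Z_1)$ is (after untwisting, and using the hypothesis $N_{X_{s\s_1}}(Y^\circ_{s\s_1}) = N_X(Y^\circ)_{s\s_1}$) the preimage of $A_{s\s_1}\cap A_{t\s_2}$ in $N_X(Y^\circ)_{s\s_1}$, so by Lang--Steinberg applied to the connected group $Y^\circ$ its order is $|Y^\circ_{s\s_1}|\,|A_{s\s_1}\cap A_{t\s_2}|$, whereas $|N_{X_{\s_2}}(Z_2)| = |Y^\circ_{t\s_2}|\,|A_{s\s_1}\cap A_{t\s_2}|$; again only the ratios agree, and note in passing that stabilising the coset is strictly stronger than normalising the group --- the common factor $|A_{s\s_1}\cap A_{t\s_2}|$, not an equality of normaliser orders, is the point. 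The theorem survives because your two spurious factors are both $|Y^\circ_{s\s_1}|/|Y^\circ_{t\s_2}|$, one sitting in the numerator and one in the denominator of the counting formula, so they cancel; but as written your proof asserts two false intermediate statements, and exhibiting precisely this cancellation is the real content of the proof. A quick falsifier for your version: take $Y = X$ and $s = t = 1$, where both counts in the theorem equal $1$, yet $|g^{X_{\s_1}}| \neq |h^{X_{\s_2}}|$ and $|X_{\s_1}| \neq |X_{\s_2}|$ whenever $\s_1 = \s_2^e$ with $e > 1$.
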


\subsection{Shintani descent and subfields} \label{ss:p_shintani_subfields}

Referring to the cases in Theorem~\ref{thm:maximal}, to see how Shintani descent relates to subgroups in case~(I) (the ``algebraic subgroups''), the key tool is Theorem~\ref{thm:shintani_subgroups}, which we introduced in the previous section. However, we will need new techniques to handle the subgroups in case~(II) (the ``subfield subgroups''). This is what we introduce in this section. (Other methods are available to study the subgroups in cases~(III)--(V).)

\begin{lemma} \label{lem:shintani_power}
Let $X$ be a connected algebraic group and let $\s_1$ and $\s_2$ be commuting Steinberg endomorphisms of $X$. Let $d \geq 1$. Let $F_1$ be the Shintani map of $(X,\s_1,\s_2)$ and let $F_2$ be the Shintani map of $(X,\s_1^d,\s_2)$. Let $x \in X_{\s_1}$. Then $F_2(x\ws_2) = F_1(x\ws_2)^d$.
\end{lemma}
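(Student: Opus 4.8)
The plan is to prove the identity by unwinding the defining simultaneous-conjugacy condition of the Shintani map and observing that a witness realizing $F_1$ transfers verbatim to $F_2$. First I would record that the two maps share the same ``second'' endomorphism $\s_2$ and differ only in that $F_1$ uses $\s_1$ where $F_2$ uses $\s_1^d$. Since $X_{\s_1} \subseteq X_{\s_1^d}$, the element $g = x\ws_2$ lies in the domain $X_{\s_1^d}\ws_2$ of $F_2$ as well as in the domain $X_{\s_1}\ws_2$ of $F_1$; moreover $\ws_1^d$ is precisely the restriction of $\s_1^d$ to $X_{\s_2}$, so the codomain $X_{\s_2}\ws_1^d$ of $F_2$ is exactly where $F_1(g)^d$ lives. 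Thus both sides of the claimed equation make sense and lie in the same coset, and it remains to identify them.

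Now write $F_1(g) = y\ws_1$ with $y \in X_{\s_2}$. By the definition of $F_1$ as the Shintani map of $(X,\s_1,\s_2)$, the statement $F_1(g) = y\ws_1$ means $(x\s_2, \s_1)^X = (\s_2, y\s_1)^X$, so there exists $a \in X$ with $(x\s_2)^a = \s_2$ and $\s_1^a = y\s_1$, where $z^a = a^{-1}za$. The key step is that raising the second coordinate to the $d$th power commutes with this conjugation: since $z \mapsto z^a$ is an automorphism of the ambient group $X\<\s_1,\s_2\>$, we have $(\s_1^d)^a = (\s_1^a)^d = (y\s_1)^d$. Hence the \emph{same} element $a$ witnesses $(x\s_2, \s_1^d)^X = (\s_2, (y\s_1)^d)^X$, which is exactly the defining condition for $F_2$, the Shintani map of $(X, \s_1^d, \s_2)$, to send $g$ to $(y\s_1)^d$. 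Writing $(y\s_1)^d = Y\s_1^d$, the coefficient $Y = y\,\s_1(y)\cdots\s_1^{d-1}(y)$ is a product of elements of $X_{\s_2}$ (as $\s_1$ and $\s_2$ commute and $y \in X_{\s_2}$), so $(y\s_1)^d \in X_{\s_2}\s_1^d$ and therefore $F_2(g) = (y\ws_1)^d = F_1(g)^d$, as required.

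The argument is essentially bookkeeping once the simultaneous-conjugacy formulation is in hand, so I do not anticipate a genuine obstacle; the only points needing care are checking that $g$ lies in the domain of $F_2$ (immediate from $X_{\s_1} \subseteq X_{\s_1^d}$), that the restriction conventions align (so that $\ws_1^d$ really is the restriction of $\s_1^d$), and that $(y\s_1)^d$ has its $X$-component in $X_{\s_2}$ so that it represents a genuine element of the codomain of $F_2$. An alternative route would use the Lang--Steinberg description in Remark~\ref{rem:shintani}(i): taking $a$ with $g = aa^{-\s_2^{-1}}\ws_2$ and $F_1(g) = a^{-1}a^{\s_1^{-1}}\ws_1$, one checks that the same $a$ yields $F_2(g) = a^{-1}a^{(\s_1^d)^{-1}}\ws_1^d$. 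This reduces to the same computation, but the conjugacy formulation is cleaner since it invokes the existential witness only once.
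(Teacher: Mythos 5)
Your proof is correct, but it is packaged differently from the paper's. The paper's own proof is a one-line computation via Remark~\ref{rem:shintani}(i): writing $x = a_{\s_2}$, it computes $F_1(x\ws_2)^d = (a^{-1}a^{\s_1^{-1}}\ws_1)^d = a^{-1}a^{\s_1^{-d}}\ws_1^d = F_2(x\ws_2)$, the middle equality being a telescoping product; this is exactly the ``alternative route'' you sketch in your final paragraph. Your primary argument instead works directly from the defining simultaneous-conjugacy condition, transporting the single witness $a$ from the condition characterising $F_1$ to the condition characterising $F_2$ via $(\s_1^d)^a = (\s_1^a)^d$. The two arguments exploit the same underlying mechanism (the same element $a$ does all the work), but yours is marginally more self-contained in that it uses only the definition of the Shintani map and the fact that conjugation commutes with taking powers, whereas the paper's requires the explicit Lang--Steinberg witness formula of Remark~\ref{rem:shintani}(i); conversely, given that remark, the paper's computation is shorter. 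One cosmetic point: with the paper's conventions ($x^{\s}$ is the image of $x$ under $\s$, so that $\s_1 y \s_1^{-1} = y^{\s_1^{-1}}$ in the ambient group), the $X$-part of $(y\s_1)^d$ is $y\, y^{\s_1^{-1}} \cdots y^{\s_1^{-(d-1)}}$ rather than $y\,\s_1(y)\cdots \s_1^{d-1}(y)$; this does not affect your argument, since either way each factor lies in $X_{\s_2}$ because $\s_1$ and $\s_2$ commute, which is all you need to conclude $(y\s_1)^d \in X_{\s_2}\s_1^d$.
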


\begin{proof}
Let $x = a_{\s_2}$. By Remark~\ref{rem:shintani}(i), $F_1(x\ws_2)^d = (a^{-1}a^{\s_1^{-1}}\ws_1)^d = a^{-1}a^{\s_1^{-d}}\ws_1^d = F_2(x\ws_2)$. 
\end{proof} 

\begin{theorem} \label{thm:shintani_subfields}
Let $X$ be a connected algebraic group and let $\s$ be a Steinberg endomorphism of $X$. Let $F$ be the Shintani map of $(X,\s^m,\s^l)$ where $l \div m$. Let $x \in X_{\s^m}$. Let $k$ be a prime divisor of $m$. Then $x\ws^l$ is contained in an $X_{\s^m}$-conjugate of $\<X_{\s^{m/k}},\ws\>$ if and only if $F(x\ws^l) = z^k$ for $z \in X_{\s^l}\ws^{m/k}$.
\end{theorem}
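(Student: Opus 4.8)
The engine of the proof is Lemma~\ref{lem:shintani_power}, which turns passage to a subfield into a $k$th power. Set $\tau = \s^{m/k}$, so that $\tau^k = \s^m$ and $X_{\s^{m/k}} = X_\tau$, and let $E$ be the Shintani map of $(X,\s^{m/k},\s^l)$. Its domain is the coset $X_{\s^{m/k}}\ws^l$ and, crucially, its codomain is $X_{\s^l}\ws^{m/k}$, which is precisely the coset in which the putative $k$th root $z$ is required to lie. Since $\s^m = (\s^{m/k})^k$, applying Lemma~\ref{lem:shintani_power} with $\s_1 = \s^{m/k}$, $\s_2 = \s^l$ and $d = k$ yields the identity $F(g_0) = E(g_0)^k$ for every $g_0 \in X_{\s^{m/k}}\ws^l$. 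This single identity is what links the two sides of the statement, and the plan is to combine it with the bijectivity of $E$ and $F$.

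First I would reduce the subfield condition to a coset condition. Because $\ws = \s|_{X_{\s^m}}$ has order $m$ and meets $X_{\s^m}$ trivially (part of the standing Shintani setup), the subgroup $\< X_{\s^{m/k}}, \ws \>$ meets the coset $X_{\s^m}\ws^l$ in exactly $X_{\s^{m/k}}\ws^l$. Since conjugation by $X_{\s^m}$ preserves the coset $X_{\s^m}\ws^l$ containing $g = x\ws^l$, it follows that $g$ lies in some $X_{\s^m}$-conjugate of $\< X_{\s^{m/k}}, \ws \>$ if and only if the class $g^{X_{\s^m}}$ meets $X_{\s^{m/k}}\ws^l$, that is, if and only if $g$ is $X_{\s^m}$-conjugate to some $g_0 \in X_{\s^{m/k}}\ws^l$.

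Now I would chain the two bijections. For the forward implication, if $g$ is $X_{\s^m}$-conjugate to some $g_0 \in X_{\s^{m/k}}\ws^l$, then $F(g) = F(g_0) = E(g_0)^k$ by the identity above, and $z := E(g_0) \in X_{\s^l}\ws^{m/k}$ is the desired $k$th root. For the converse, suppose $F(g) = z^k$ with $z \in X_{\s^l}\ws^{m/k}$. As $E$ is a bijection onto the $X_{\s^l}$-classes in its codomain $X_{\s^l}\ws^{m/k}$, there is a $g_0 \in X_{\s^{m/k}}\ws^l$ with $E(g_0) = z$; then $F(g_0) = E(g_0)^k = z^k = F(g)$, and injectivity of $F$ forces $g$ to be $X_{\s^m}$-conjugate to $g_0 \in X_{\s^{m/k}}\ws^l$. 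By the reduction of the previous paragraph, $g$ lies in an $X_{\s^m}$-conjugate of $\< X_{\s^{m/k}}, \ws \>$, completing the proof.

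The conceptual content is entirely in choosing the intermediate map $E$ of $(X,\s^{m/k},\s^l)$ and recognising, via Lemma~\ref{lem:shintani_power}, that $F$ is the $k$th power of $E$ on the coset $X_{\s^{m/k}}\ws^l$; everything else is formal manipulation of the two bijections. I therefore expect the main obstacle to be the bookkeeping in the reduction step: identifying $\< X_{\s^{m/k}}, \ws \> \cap X_{\s^m}\ws^l$ with $X_{\s^{m/k}}\ws^l$, which rests on $\ws$ having order $m$, and verifying that $E$ is a genuine Shintani bijection, i.e. that the intersection hypothesis $\< \ws^l \> \cap X_{\s^{m/k}} = 1$ holds. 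The latter is inherited from the corresponding hypothesis for $F$ because $X_{\s^{m/k}} \leq X_{\s^m}$.
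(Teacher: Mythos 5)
Your proof is correct and is essentially the paper's own argument: both introduce the intermediate Shintani map $E$ of $(X,\s^{m/k},\s^l)$, apply Lemma~\ref{lem:shintani_power} (with $\s_1=\s^{m/k}$, $\s_2=\s^l$, $d=k$) to obtain $F(g_0)=E(g_0)^k$ for $g_0 \in X_{\s^{m/k}}\ws^l$, and then conclude via surjectivity of $E$ and injectivity of $F$ on classes. The only difference is that you make explicit two points the paper leaves implicit, namely the coset-intersection reduction identifying $\<X_{\s^{m/k}},\ws\> \cap X_{\s^m}\ws^l$ with $X_{\s^{m/k}}\ws^l$, and the verification that $E$ satisfies the standing Shintani hypotheses.
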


\begin{proof}
Let $E\:X_{\s^{m/k}}\ws^l \to X_{\s^l}\ws^{m/k}$ be the Shintani map of $(X,\s^{m/k},\s^l)$. 

For the forward direction, assume that $x\ws^l$ is contained in an $X_{\s^m}$-conjugate of $\<X_{\s^{m/k}},\ws\>$. Then $x\ws^l$ is $X_{\s^m}$-conjugate to $g\ws^l$ for some $g \in X_{\s^{m/k}}$. By Lemma~\ref{lem:shintani_power}, $F(x\ws^l) = F(g\ws^l) = E(g\ws^l)^k$ and $E(g\ws^l) \in X_{\s^l}\ws^{m/k}$ as required. 

For the reverse direction, assume that there exists $z \in X_{\s^l}\ws^{m/k}$ such that $z^k=F(x\ws^l)$. Let $g \in X_{\s^{m/k}}$ such that $E(g\ws^l) = z$. By Lemma~\ref{lem:shintani_power}, $F(g\ws^l) = E(g\ws^l)^k = z^k = F(x\ws^l)$, so $x\ws^l$ is $X_{\s^m}$-conjugate to $g\ws^l$, so $x\ws^l$ is contained in an $X_{\s^m}$-conjugate of $\< X_{\s^{m/k}}, \ws \>$. 
\end{proof}

In practice, we will use a corollary of Theorem~\ref{thm:shintani_subfields} that is slightly easier to apply. To obtain this corollary we first need the following lemma.

\begin{lemma} \label{lem:shintani_subfields}
Let $X$ be a connected algebraic group and let $\s$ be a Steinberg endomorphism of $X$. Let $e,k \geq 1$ be coprime. Let $y \in X_{\s^k}$. Then $y$ is $X$-conjugate to an element of $X_{\s}$ if and only if $y$ is $X$-conjugate to $z^k$ for an element $z \in X_{\s^k}\ws^e$.
\end{lemma}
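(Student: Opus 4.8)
The plan is to reduce both implications to the Lang--Steinberg principle that an $X$-conjugacy class is $\s$-stable if and only if it meets $X_\s$. Throughout I write $z = w\ws^e$ with $w \in X_{\s^k}$ and note that, computed in $\< X_{\s^k}, \ws \>$ (in which $\ws^k = 1$, since $\s^k$ fixes $X_{\s^k}$ pointwise), we have $z^k = w\,w^{\s^e}\cdots w^{\s^{(k-1)e}} \in X_{\s^k}$.

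First I would prove the implication ``$y \sim_X z^k$ implies $y$ is $X$-conjugate into $X_\s$''. The class of $z^k$ is $\s^k$-stable because $z^k \in X_{\s^k}$, and it is $\s^e$-stable because a direct cyclic-shift computation gives $(z^k)^{\s^e} = w^{-1}z^kw$. As $(e,k)=1$, the class is therefore $\s$-stable, so by Lang--Steinberg it contains an element of $X_\s$; hence so does the class of $y$.

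For the converse I would exploit the Shintani map $F$ of $(X,\s^k,\s^e)$, a bijection from the classes in $X_{\s^k}\ws^e$ to those in $X_{\s^e}\ws^k$. Writing $z = a\s^e a^{-1}$ and $F(z) = a^{-1}\s^k a$ inside $\< X, \s \>$ as in Remark~\ref{rem:shintani}(i), a short computation of the $X$-parts of $z^k = a\s^{ek}a^{-1}$ and $F(z)^e = a^{-1}\s^{ek}a$ shows that $z^k$ is $X$-conjugate to $F(z)^{-e}$. Since $F$ is a bijection, as $z$ runs over $X_{\s^k}\ws^e$ the element $F(z)$ runs over all of $X_{\s^e}\ws^k$; thus the $X$-classes of the elements $z^k$ are exactly the $X$-classes of the elements $c^{-e}$ for $c \in X_{\s^e}\ws^k$. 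Given $y \sim_X y_0 \in X_\s$, it therefore suffices to produce $c \in X_{\s^e}\ws^k$ with $c^{-e} \sim_X y_0$; equivalently, the $e$-th power map on the coset $X_{\s^e}\ws^k$ should hit every class of $X_\s$.

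The main obstacle is precisely this surjectivity, the point being that $\ws^k$ generates the same cyclic group $\< \ws \>$ as $\ws$ (as $(e,k)=1$) but is a different generator. I would settle it by comparison with the coset $X_{\s^e}\ws$: by the Shintani map of $(X,\s^e,\s)$ the power map $c \mapsto c^{-e}$ on $X_{\s^e}\ws$ is a bijection onto the classes of $X_\s$. A twisted Burnside count shows that the number of $X_{\s^e}$-classes in a coset $X_{\s^e}\ws^j$ with $(j,e)=1$ equals the number of $\< \ws \>$-stable classes of $X_{\s^e}$, independently of $j$; combined with the injectivity inherent in the Shintani correspondence, equality of these cardinalities forces the power map on $X_{\s^e}\ws^k$ to be a bijection onto the classes of $X_\s$ as well, yielding the required $c$. (Alternatively, one can construct $c$ directly by solving the norm equation $c^{-e} = y_0$ via Lang--Steinberg, where the subtlety is exactly that the solution must be arranged to lie in $X_{\s^e}$ rather than merely in $X_{\s^{ek}}$; this descent is the same crux in a different guise.)
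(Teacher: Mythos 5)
Your first implication (from $y \sim_X z^k$ to $y$ being $X$-conjugate into $X_\s$) is correct, and it is a genuinely different argument from the paper's: you deduce $\s$-stability of the $X$-class of $z^k$ from the cyclic-shift identity $(z^k)^{\s^e} = w^{-1}z^k w$ together with $\s^k$-stability and $(e,k)=1$, and then quote Lang--Steinberg, whereas the paper powers $z$ into the coset $X_{\s^k}\ws$ (via an exponent coprime to $|y|$ and congruent to $e^{-1}$ modulo $k$) and applies Remark~\ref{rem:shintani}(ii). Your intermediate relation $z^k \sim_X F(z)^{-e}$ for the Shintani map of $(X,\s^k,\s^e)$ is also correct.

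The converse, however, has a genuine gap, and it sits exactly where you located the ``main obstacle''. The twisted Burnside count is fine: the number $N$ of $X_{\s^e}$-classes in $X_{\s^e}\ws$ equals the number in $X_{\s^e}\ws^k$, and by Shintani descent $N$ also equals the number of $X_\s$-classes in $X_\s$. But the comparison between the two cosets can only be made at the level of $X$-classes (for the coset $X_{\s^e}\ws^k$ there is no Shintani map landing in $X_\s$-classes), and distinct $X_\s$-classes fuse in $X$: for instance, with $X = \PGL_2$ and $q$ odd, the two $\PGL_2(q)$-classes of involutions are $X$-conjugate. So the target set, the $X$-classes meeting $X_\s$, is in general strictly smaller than $N$; the map $[c] \mapsto [c^{-e}]_X$ on classes of $X_{\s^e}\ws^k$ is then forced to be non-injective, and equality of the two coset class-counts says nothing about its image. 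The ``injectivity inherent in the Shintani correspondence'' lives at the level of $X_{\s^e}$- and $X_\s$-classes and is destroyed precisely by this fusion, so the count cannot close the argument; your parenthetical alternative concedes the same crux without resolving it. The missing idea is the coprime power trick, which works with elements rather than counts of classes and so never sees fusion. On your side of the correspondence it reads: given $y_0 \in X_\s$, let $e'$ be the order of $\ws$ on $X_{\s^e}$ and choose $a$ with $a \equiv k \mod{e'}$ and $(a,|y_0|)=1$ (possible since $(k,e)=1$), and $b$ with $ab \equiv 1 \mod{|y_0|}$; by the Shintani map of $(X,\s^e,\s)$ there is $g \in X_{\s^e}\ws$ with $g^{-e} \sim_X y_0^{b}$; then $c = g^{a} \in X_{\s^e}\ws^{a} = X_{\s^e}\ws^{k}$ satisfies $c^{-e} = (g^{-e})^{a} \sim_X y_0^{ab} = y_0$, and your relation $z^k \sim_X F(z)^{-e}$ finishes the proof. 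This is, in substance, exactly how the paper argues (there the powering is done in $X_{\s^k}\ws$ rather than $X_{\s^e}\ws$); with this one extra step your proposal becomes a complete proof.
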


\begin{proof}
Let $F\:X_{\s^k}\ws \to X_{\s}$ be the Shintani map of $(X,\s^k,\s)$.

For the forward direction, assume that $y$ is $X$-conjugate to $w \in X_{\s}$. Fix $a \geq 1$ such that $(a,|y|) = 1$ and $a \equiv e \mod{k}$ and fix $b \geq 1$ such that $ab \equiv 1 \mod{|y|}$. Then $y = y^{ab}$ is $X$-conjugate to $w^{ab} = (w^b)^a$, which, by Remark~\ref{rem:shintani}(ii), is $X$-conjugate to $(F^{-1}(w^b)^k)^a = (F^{-1}(w^b)^a)^k$, and $F^{-1}(w^b)^a \in X_{\s^k}\ws^a = X_{\s^k}\ws^e$.

For the reverse direction, assume that $y$ is $X$-conjugate to $z^k$ for some $z \in X_{\s^k}\ws^e$. Fix $c \geq 1$ such that $(c,|y|) = 1$ and $ce \equiv 1 \mod{k}$ and fix $d \geq 1$ such that $cd \equiv 1 \mod{|y|}$. Then $y = y^{cd}$ is $X$-conjugate to $z^{kcd} = (z^{ck})^d$. Now $z^c \in X_{\s^k}\ws^{ce} = X_{\s^k}\ws$, and by Remark~\ref{rem:shintani}(ii), $(z^{ck})^d$ is $X$-conjugate to $F(z^c)^d \in X_{\s}$. 
\end{proof}

We can now state the main result we use to relate Shintani descent and subfield subgroups.

\begin{corollary} \label{cor:shintani_subfields}
Let $X$ be a connected algebraic group and let $\s$ be a Steinberg endomorphism of $X$. Let $F$ be the Shintani map of $(X,\s^m,\s^l)$ where $l \div m$. Let $x \in X_{\s^m}$. Let $k$ be a prime divisor of $m$. 
\begin{enumerate}
\item If $k$ divides $m/l$, then $x\ws^l$ is contained in an $X_{\s^m}$-conjugate of $\<X_{\s^{m/k}},\ws\>$ if and only if $F(x\ws^l) = z^k$ for some $z \in X_{\s^l}$.
\item Assume $F(x\ws^l)^X \cap X_{\s^l} = F(x\ws^l)^{X_{\s^l}}$. If $k$ does not divide $m/l$, then $x\ws^l$ is contained in an $X_{\s^m}$-conjugate of $\<X_{\s^{m/k}},\ws\>$ if and only if $F(x\ws^l)$ is contained in an $X_{\s^l}$-conjugate of $X_{\s^{l/k}}$.
\end{enumerate}
\end{corollary}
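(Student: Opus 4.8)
The plan is to derive both parts from Theorem~\ref{thm:shintani_subfields}, whose criterion reads: $x\ws^l$ lies in an $X_{\s^m}$-conjugate of $\<X_{\s^{m/k}},\ws\>$ if and only if $F(x\ws^l)=z^k$ for some $z\in X_{\s^l}\ws^{m/k}$. Throughout, $\ws$ denotes the restriction of $\s$ to the fixed-point group on which it acts, so as an automorphism of $X_{\s^l}$ it has order dividing $l$; in particular $\ws^j$ acts trivially on $X_{\s^l}$ whenever $l\div j$, since $\s^j=(\s^l)^{j/l}$ fixes $X_{\s^l}$ pointwise. Because $l\div m$, the restriction $\ws^m$ acts trivially on $X_{\s^l}$, so the codomain $X_{\s^l}\ws^m$ of $F$ is simply $X_{\s^l}$, and hence $y:=F(x\ws^l)$ is a genuine element of $X_{\s^l}$. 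This is the object both parts of the corollary concern.

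For part~(i), suppose $k\div m/l$. Then $l\div m/k$, so $\ws^{m/k}$ also acts trivially on $X_{\s^l}$ and the coset $X_{\s^l}\ws^{m/k}$ is just $X_{\s^l}$. Thus the condition ``$z\in X_{\s^l}\ws^{m/k}$'' in Theorem~\ref{thm:shintani_subfields} is literally ``$z\in X_{\s^l}$'', and part~(i) follows at once.

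For part~(ii), suppose $k\ndiv m/l$. Since $k$ is prime this gives $(k,m/l)=1$, and as $k\div m=l\cdot(m/l)$ we deduce $k\div l$; hence $l/k$ is a positive integer and $X_{\s^{l/k}}\leq X_{\s^l}$. The idea is to recognise the coset $X_{\s^l}\ws^{m/k}$ of Theorem~\ref{thm:shintani_subfields} as exactly the coset arising in Lemma~\ref{lem:shintani_subfields} for the Steinberg endomorphism $\s^{l/k}$. Concretely, I would apply Lemma~\ref{lem:shintani_subfields} with $\s$ replaced by $\s^{l/k}$, with $e=m/l$ (coprime to $k$), and with $y\in X_{(\s^{l/k})^k}=X_{\s^l}$. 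Since $(l/k)e=m/k$, the relevant coset $X_{\s^l}(\widetilde{\s^{l/k}})^e$ is precisely $X_{\s^l}\ws^{m/k}$, so the Lemma yields that $y$ is $X$-conjugate to an element of $X_{\s^{l/k}}$ if and only if $y$ is $X$-conjugate to $z^k$ for some $z\in X_{\s^l}\ws^{m/k}$.

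It remains to sharpen these $X$-conjugacy statements into the assertions of the corollary, and this bridging, powered by the hypothesis $y^X\cap X_{\s^l}=y^{X_{\s^l}}$, is the step I expect to be the main obstacle. Write $A$ for ``$y=z^k$ for some $z\in X_{\s^l}\ws^{m/k}$'' and $B$ for ``$y$ is $X_{\s^l}$-conjugate into $X_{\s^{l/k}}$'', and let $\widehat A$, $\widehat B$ be the corresponding $X$-conjugacy statements given by the Lemma. The implications $A\implies\widehat A$ and $B\implies\widehat B$ are trivial. For $\widehat B\implies B$: any $X$-conjugate of $y$ lying in $X_{\s^{l/k}}\subseteq X_{\s^l}$ lies in $y^X\cap X_{\s^l}=y^{X_{\s^l}}$, hence is $X_{\s^l}$-conjugate to $y$. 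For $\widehat A\implies A$: if $y$ is $X$-conjugate to $z^k$ with $z\in X_{\s^l}\ws^{m/k}$, then $z^k\in X_{\s^l}\ws^m=X_{\s^l}$, so $z^k\in y^X\cap X_{\s^l}=y^{X_{\s^l}}$, say $z^k=y^g$ with $g\in X_{\s^l}$; then $z':=z^{g^{-1}}$ again lies in $X_{\s^l}\ws^{m/k}$ (conjugation by $g\in X_{\s^l}$ fixes this coset, as $X_{\s^l}$ is normal in $\<X_{\s^l},\ws\>$) and satisfies $z'^{\,k}=(z^k)^{g^{-1}}=y$. Chaining $A\iff\widehat A\iff\widehat B\iff B$ with the equivalence ``$x\ws^l\in(\text{conjugate subfield subgroup})\iff A$'' of Theorem~\ref{thm:shintani_subfields} then gives part~(ii).
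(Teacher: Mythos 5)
Your proposal is correct and takes essentially the same route as the paper: part~(i) from Theorem~\ref{thm:shintani_subfields} via the observation that $\ws^{m/k}=1$ when $k \div m/l$, and part~(ii) by combining Theorem~\ref{thm:shintani_subfields} with Lemma~\ref{lem:shintani_subfields} (applied to $\s^{l/k}$ with $e=m/l$), using the hypothesis $F(x\ws^l)^X \cap X_{\s^l} = F(x\ws^l)^{X_{\s^l}}$ to pass between $X$-conjugacy and $X_{\s^l}$-conjugacy. The only difference is that you spell out the bridging details (the implications $\widehat A\implies A$ and $\widehat B\implies B$) that the paper's two-line proof leaves implicit, and these are verified correctly.
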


\begin{proof}
If $k$ divides $m/l$, then $\ws^{m/k}=1$, so the result holds by Theorem~\ref{thm:shintani_subfields}. Now assume that $k$ does not divide $m/l$. By assumption, $F(x\ws^l)^X \cap X_{\s^l} = F(x\ws^l)^{X_{\s^l}}$, so the result holds by combining Theorem~\ref{thm:shintani_subfields} and Lemma~\ref{lem:shintani_subfields}.
\end{proof}

The following example illustrates the utility of Corollary~\ref{cor:shintani_subfields}.

\begin{example} \label{ex:shintani_subfields}
Let $X = \GL_n$ and write $q=p^f$ and $q_0=p^i$ where $i \div f$. Let $x \in \GL_n(q) = X_{\p^f}$ and $x_0 \in \GL_n(q_0) = X_{\p^i}$ satisfy $F((x\p^i)^{\GL_n(q)}) = {x_0}^{\GL_n(q_0)}$ where $F\: \GL_n(q)\p^i \to \GL_n(q_0)$ is the Shintani map of $(X,\p^f,\p^i)$. Since conjugacy in both $X = \GL_n$ and $X_{\p^i} = \GL_n(q_0)$ is determined by the rational canonical form, the condition $x_0^X \cap X_{\p^i} = {x_0}^{X_{\p^i}}$ is satisfied.

Assume that $f$ is even and $e=f/i$ is odd (so $i$ is even), and write $m=2f/i$ and $l=2$. We consider two examples.
\begin{enumerate}
\item Let $\s=\p^{i/2}$. Then $F$ is the Shintani map of $(X,\s^m,\s^l)$, and by Corollary~\ref{cor:shintani_subfields}(ii), $x\p^i$ is contained in a subgroup of $\< \GL_n(q), \p \>$ of type  $\< \GL_n(q^{1/2}), \p \>$ if and only if $x_0$ is contained in a subgroup of $\GL_n(q_0)$ of type $\GL_n(q_0^{1/2})$.
\item Let $\s = \g\p^{i/2}$ where $\g$ is the standard graph automorphism of $X$. Then, again, $F$ is the Shintani map of $(X,\s^m,\s^l)$, but, in this case, by Corollary~\ref{cor:shintani_subfields}(ii), $x\p^i$ is contained in a subgroup of $\< \GL_n(q), \p \>$ of type $\< \GU_n(q^{1/2}), \p \>$ if and only if $x_0$ is contained in a subgroup of $\GL_n(q_0)$ of type $\GU_n(q_0^{1/2})$.
\end{enumerate}
\end{example}

\subsection{Reductive maximal rank subgroups and the related finite subgroups} \label{ss:p_subgroups}

This final preliminary section is dedicated to the reductive maximal rank subgroups that arise in part~(I) of Theorem~\ref{thm:maximal}, which will play a particular role in our proofs. 

Let $X$ be a simple algebraic group and let $\s$ be a Steinberg endomorphism of $X$. For $x \in X$, we will regularly use the notation $x_\s =  xx^{-\s^{-1}}$ introduced in \eqref{eq:lang}.

Let $S$ be a $\s$-stable maximally split maximal torus of $X$ and let $Y$ be a closed connected reductive $\s$-stable subgroup of $X$ containing $S$. Let $x \in X$ such that $Y^x$ is $\s$-stable. Since $Y^x$ contains a $\s$-stable maximal torus, without loss of generality, we may assume that $S^x$ is also $\s$-stable. The possibilities for $(Y^x)_\s$ up to $X_\s$-conjugacy are in bijection with the conjugacy classes in the coset $(N_X(Y)/Y)\s$, via $(Y^x)_\s \mapsto Y x_\s\s$ (see \cite[Theorem~21.11]{ref:MalleTesterman11} for example).

The following easy observation will be useful.

\begin{lemma} \label{lem:subgroups_power}
Let $X$ be a connected algebraic group and let $\s$ be a Steinberg endomorphism of $X$. Let $Y \leq X$ be a closed $\s$-stable subgroup, let $x \in X$ and let $e \geq 0$. Then $(Y^x)_{\s^e} = Y_{(x_\s\s)^e}^x$.
\end{lemma}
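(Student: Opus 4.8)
The plan is to peel off the exponent $e$ by means of a purely formal identity between Steinberg endomorphisms, and then to settle the underlying case $e=1$. The two ingredients are as follows. First, an identity in the semidirect product $X \rtimes \< \s \>$, namely
\[
(x_\s\s)^e = x_{\s^e}\s^e, \qquad \text{where } x_{\s^e} = xx^{-\s^{-e}}
\]
in the notation of \eqref{eq:lang}. Second, the case $e=1$ for an arbitrary Steinberg endomorphism, that is, $(Y^x)_\s = (Y_{x_\s\s})^x$. Granting both, I would apply the second ingredient with $\s^e$ in place of $\s$ to obtain $(Y^x)_{\s^e} = (Y_{x_{\s^e}\s^e})^x$, and then rewrite $x_{\s^e}\s^e = (x_\s\s)^e$ using the first ingredient; this is exactly the claim. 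Note that no hypothesis beyond the $\s$-stability of $Y$ (which gives $\s^e$-stability) is used, and the argument is insensitive to whether $Y^x$ is $\s^e$-stable.

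For the first ingredient I would induct on $e$, the base case being immediate. Multiplying out $(x_\s\s)(x_{\s^k}\s^k)$ in $X \rtimes \< \s \>$, where $\s b \s^{-1} = b^{\s^{-1}}$ is the convention that makes $x_\s = xx^{-\s^{-1}}$ the natural Lang map, gives $x_\s\, x_{\s^k}^{\s^{-1}}\,\s^{k+1}$, and the group part telescopes:
\[
x_\s\, x_{\s^k}^{\s^{-1}} = (xx^{-\s^{-1}})(x^{\s^{-1}}x^{-\s^{-(k+1)}}) = xx^{-\s^{-(k+1)}} = x_{\s^{k+1}},
\]
so $(x_\s\s)^{k+1} = x_{\s^{k+1}}\s^{k+1}$, completing the induction. (It is precisely this clean telescoping that forces the $\s^{-1}$ convention rather than the naive one.)

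For the second ingredient I would compute the $\s$-fixed points of $Y^x = x^{-1}Yx$ directly. An element $x^{-1}yx$ with $y\in Y$ is fixed by $\s$ if and only if $(x^\s)^{-1}y^\s x^\s = x^{-1}yx$, equivalently $y^\s = (x^\s x^{-1})\,y\,(x^\s x^{-1})^{-1}$. Since $x^\s x^{-1} = x_\s^\s$ by \eqref{eq:lang}, this is exactly the condition $y^\s = x_\s^\s y x_\s^{-\s}$ defining the fixed-point subgroup $Y_{x_\s\s}$, whence $(Y^x)_\s = \{x^{-1}yx : y \in Y_{x_\s\s}\} = (Y_{x_\s\s})^x$. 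Conceptually this is the statement that conjugation by $x$ is an automorphism of $X \rtimes \< \s \>$ carrying the coset $(x_\s\s)^e$ to $\s^e$, hence carrying $Y_{(x_\s\s)^e}$ onto $(Y^x)_{\s^e}$; adopting this viewpoint proves both ingredients at once, since it sends $x\s^e x^{-1} = (xx^{-\s^{-e}})\s^e$ to $\s^e$.

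The only genuine obstacle is bookkeeping with conventions: one must fix compatibly the Lang map $x_\s = xx^{-\s^{-1}}$ and the Steinberg endomorphism attached to a coset $s\s$, so that $Y_{s\s} = \{y : y^\s = s^\s y s^{-\s}\}$, and then verify that the telescoping identity and the $e=1$ computation are aligned with these choices. Once the conventions are pinned down, both computations are short and entirely formal.
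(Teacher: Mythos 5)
Your proposal is correct and is essentially the paper's own argument: the paper proves the lemma in one line by writing $(Y^x)_{\s^e} = Y_\psi^x$ for $\psi = (\s^e)^{x^{-1}} = (\s^{x^{-1}})^e = (x_\s\s)^e$, which is exactly the conjugation-by-$x$ viewpoint you state at the end of your proposal. Your two ingredients --- the $e=1$ fixed-point computation and the telescoping identity $(x_\s\s)^e = x_{\s^e}\s^e$ --- are a correct explicit unpacking of that single conjugation identity, with the conventions pinned down compatibly.
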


\begin{proof}
Here $(Y^x)_{\s^e} = Y_{\psi}^x$ for $\psi = (\s^e)^{x^{-1}} = (\s^{x^{-1}})^e =  (x_\s\s)^e$.
\end{proof}

We can now present a technical result that we use in the proof of Proposition~\ref{prop:standard}.

\begin{lemma} \label{lem:subgroups_closed}
Let $X$ be a simple algebraic group, let $\s$ be a Steinberg endomorphism of $X$, let $M$ be a closed connected reductive $\s$-stable maximal rank subgroup of $X$ and let $e$ be a positive integer. Assume that one of the following holds:
\begin{enumerate}
\item $X \in \{ G_2, F_4, E_6, E_7, E_8 \}$ and $\s = \p^i$ 
\item $X \in \{ B_2, G_2, F_4 \}$,  $\s = \rho^i$ (odd $i$) and $2 \ndiv e$
\item $X = E_6$, $\s = \g\p^i$ and $2 \ndiv e$
\item $X = D_4$, $\s = \t\p^i$   and $3 \ndiv e$.
\end{enumerate}
If $N_{X_\s}(M_\s)$ is a maximal subgroup of $X_\s$, then $N_{X_{\s^e}}(M_{\s^e})$ is a maximal subgroup of $X_{\s^e}$.
\end{lemma}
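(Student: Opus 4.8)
The plan is to fix a standard $\s$-stable connected reductive maximal rank subgroup $Y$ of $X$ and an element $x \in X$ with $M = Y^x$, and to exploit the bijection recalled just before Lemma~\ref{lem:subgroups_power} (see \cite[Theorem~21.11]{ref:MalleTesterman11}): the $X_\s$-class of $M_\s = (Y^x)_\s$ is encoded by the conjugacy class of $Yx_\s\s$ in the coset $(N_X(Y)/Y)\s$, which I will call the \emph{form} of $M$. By Lemma~\ref{lem:subgroups_power} we have $M_{\s^e} = Y^x_{(x_\s\s)^e}$, so the $X_{\s^e}$-class of $M_{\s^e}$ is encoded by the class of $(Yx_\s\s)^e$ in $(N_X(Y)/Y)\s^e$; thus passing from $\s$ to $\s^e$ replaces the form of $M$ by its $e$-th power. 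The first observation is that the coprimality hypotheses force $X_{\s^e}$ to have the same twisted type as $X_\s$: since $\rho^2=\p$, $\g^2=1$ and $\t^3=1$, the conditions $2 \ndiv e$ in (ii) and (iii) and $3 \ndiv e$ in (iv) guarantee that $\s^e$ is again a graph-field, graph or triality endomorphism of the same kind, while in (i) the endomorphism $\p^{ie}$ is visibly of the same kind as $\p^i$. This type-preservation is essential, since the maximal rank subgroups of, say, $E_6(q)$ and ${}^2E_6(q)$ differ.

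Next I would argue by contradiction. Suppose $N_{X_{\s^e}}(M_{\s^e})$ is properly contained in a maximal subgroup $K$ of $X_{\s^e}$, and use Theorem~\ref{thm:maximal} to determine $K$. A parabolic overgroup is impossible: since $M$ is connected reductive, the Borel--Tits theorem would force $M$ into a proper Levi subgroup of $X$, an algebraic containment independent of the field, whence $M_\s$ would already lie in a proper parabolic of $X_\s$, contradicting maximality of $N_{X_\s}(M_\s)$. A subfield overgroup $N_{X_{\s^e}}(X_{\s^{e/k}})$ (type~(II)) is excluded by a standard order argument, as $M_{\s^e}$ contains elements of a maximal torus whose order has primitive prime divisors not dividing $|X_{\s^{e/k}}|$. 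The finitely many local, exotic almost simple and Borovik subgroups in types~(III)--(V) arise only in bounded configurations and cannot contain the positive-dimensional maximal rank subgroup $M_{\s^e}$. Hence $K = N_{X_{\s^e}}(L_{\s^e})$ for a proper $\s^e$-stable connected reductive maximal rank subgroup $L \geq M$.

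It then remains to descend this containment to the $\s$-level, and this is the crux. Both $M$ and its candidate overgroup $L$ are governed by the same kind of form correspondence, now applied to a standard overgroup $Z \supseteq Y$ of $M$'s standard model, and in both cases passage from $\s$ to $\s^e$ is the $e$-th power map on the relevant coset of the component group $N_X(Z)/Z$. Using that $e$ is coprime to the order of the graph or triality symmetry in cases (ii)--(iv) — which makes this $e$-th power map bijective on the $\mathbb{Z}/2$ and $\mathbb{Z}/3$ pieces that distinguish the forms — and that the ambient type is unchanged, I would lift the class of $L$ to a class in $(N_X(Z)/Z)\s$ lying above the form of $M$. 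This produces a proper $\s$-stable reductive maximal rank subgroup $L' \geq M$, so that $N_{X_\s}(M_\s) \leq N_{X_\s}(L'_\s) < X_\s$ properly, contradicting maximality at the $\s$-level.

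The main obstacle is precisely this last descent: controlling the fibres of the $e$-th power map on the cosets $(N_X(Z)/Z)\s$ and verifying, through the Liebeck--Saxl--Seitz classification \cite{ref:LiebeckSaxlSeitz92} of maximal reductive maximal rank subgroups of the exceptional groups, that no maximal $M_\s$ acquires a reductive maximal rank overgroup after extension of the field. In practice this reduces to checking, type by type for $X \in \{ B_2, G_2, F_4, E_6, E_7, E_8, D_4 \}$, that every form reachable as an $e$-th power of a maximal form is again maximal; the coprimality hypotheses are exactly what prevent a maximal subsystem subgroup from flipping together with a change of ambient twist (for instance from an $\SU_3$-type to an $\SL_3$-type configuration across the boundary between a twisted and an untwisted ambient group) into a non-maximal one.
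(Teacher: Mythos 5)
Your core mechanism is the right one and coincides with the paper's: the $X_\s$-classes of twisted forms of $M$ are parametrised by classes in the coset $(N_X(Y)/Y)\s$, Lemma~\ref{lem:subgroups_power} says that passing from $\s$ to $\s^e$ raises the form to its $e$-th power, and your closing remark --- that everything reduces to checking type by type against \cite{ref:LiebeckSaxlSeitz92} that every $e$-th power of a ``maximal form'' is again maximal --- is, verbatim, the entire content of the paper's proof (the paper defines $\mathcal{A}_\s$ to be the set of forms whose normalisers are maximal and verifies $a \in \mathcal{A}_\s\alpha \Rightarrow a^e \in \mathcal{A}_{\s^e}\alpha^e$ directly from Tables~5.1 and~5.2 of \cite{ref:LiebeckSaxlSeitz92}, with explicit Weyl-group computations in $W_{E_8}$, $W_{G_2}$ and $W_{F_4}$ for the torus cases). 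The problem is the contradiction scaffolding you wrap around this, several steps of which genuinely fail. The parabolic exclusion is a non sequitur: even granting the unjustified lift of the finite containment $N_{X_{\s^e}}(M_{\s^e}) \leq K$ to an algebraic containment $M \leq P$, the conclusion ``$M_\s$ lies in a proper parabolic of $X_\s$, contradicting maximality of $N_{X_\s}(M_\s)$'' is false. A maximally split maximal torus $S$ satisfies $S_\s \leq B_\s$ for a Borel subgroup $B$, yet $N_{X_\s}(S_\s) = S_\s.W_X$ is maximal in many of the very cases the lemma covers (this is one of the cases in the paper's $E_8$ analysis); containment of $M_\s$ in a subgroup does not give containment of its normaliser. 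Borel--Tits is also the wrong tool here: it concerns unipotent subgroups, not reductive ones.

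The other exclusions are equally fragile. Since $M_{\s^e}$ is a \emph{finite} group --- not positive-dimensional --- boundedness of the type~(III)--(V) subgroups is no obstruction for small $q$; and the ppd argument against subfield subgroups can fail outright: a primitive prime divisor of $q^{2e}-1$ divides $q^e+1 = \Phi_4(q^{e/2})$, which divides $|E_8(q^{e/2})|$, so maximal tori of $X_{\s^e}$ of order $(q^e+1)^8$ are not excluded from subfield subgroups by order considerations. Carrying out these exclusions in general is precisely the work Liebeck--Saxl--Seitz already did; the paper simply cites their tables, in which all of it is encoded. Finally, the step you yourself flag as the crux --- descending the reductive overgroup $L \geq M$ from level $\s^e$ to a proper $\s$-stable overgroup $L' \geq M$ --- is only asserted: you need the form of $L$ in $(N_X(Z)/Z)\s^e$ to lie in the image of the $e$-th power map \emph{and} you need the twisting elements of $M$ and $L$ to be chosen compatibly so that the lifted $L'$ still contains $M$; neither is automatic, and in case~(i) there is no coprimality hypothesis at all, so your argument that coprimality makes the power map bijective on the graph-symmetry pieces has nothing to act on there. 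The fix is to discard the contradiction framework and run the argument forwards, exactly as in your last paragraph: prove $a \in \mathcal{A}_\s\alpha \Rightarrow a^e \in \mathcal{A}_{\s^e}\alpha^e$ by direct inspection of the Liebeck--Saxl--Seitz tables.
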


Before we prove Lemma~\ref{lem:subgroups_closed}, let us make a modification to the above setup that will make our calculations easier (both in the following proof and later in the paper). 

Fix the Weyl groups $W_X = N_X(S)/S$ and $W_Y = N_Y(S)/S$, and for $n \in N_X(S)$, write $\bar{n} = Sn \in W_X$. Write $A =  N_{W_X}(W_Y)/W_Y$ and note that $N_X(Y)/Y \cong A$. The endomorphism $\s$ induces an automorphism $\alpha = \s|_A$ on $A$, and by \cite[Corollary~3]{ref:Carter78}, the possibilities for $(Y^x)_\s$ up to $X_\s$-conjugacy are also in bijection with the conjugacy classes in the coset $A\a$, via $(Y^x)_\s \mapsto W_Y\bar{x}_\s\a$. Let us write
\[
\mathcal{A}_\s = \{ W_Y\bar{x}_\s \in A \mid \text{$N_{X_\s}((Y^x)_\s)$ is maximal in $X_\s$} \}.
\]

We are now in a position to prove Lemma~\ref{lem:subgroups_closed}.

\begin{proof}[Proof of Lemma~\ref{lem:subgroups_closed}]
Let $S$ be a $\s$-stable maximally split maximal torus of $X$, and let $Y$ be a closed connected reductive $\s$-stable subgroup of $X$ containing $S$. Let $x \in X$ such that $M = Y^x$ and $S^x$ are $\s$-stable. We will prove that if $N_{X_\s}((Y^x)_\s)$ is maximal in $X_\s$, then $N_{X_{\s^e}}((Y^x)_{\s^e})$ is maximal in $X_{\s^e}$. From Lemma~\ref{lem:subgroups_power}, recall that $N_{X_\s}((Y^x)_\s) = N_{X_\s}(Y_{x_\s\s}^x)$ and $N_{X_{\s^e}}((Y^x)_{\s^e}) = N_{X_{\s^e}}(Y_{(x_\s\s)^e}^x)$. We will prove that if $a \in \mathcal{A}_\s \alpha$, then $a^e \in \mathcal{A}_{\s^e}\alpha^e$, which establishes the claim.

First consider part~(i). Here $\s$ centralises $W_X$, so $\alpha = 1$ and we need to verify that if $a \in A_\s$ then $a^e \in A_{\s^e}$. This is easily deduced by inspecting \cite[Tables~5.1 \&~5.2]{ref:LiebeckSaxlSeitz92}. For example, let us consider in detail the case where $X = E_8$. Then either $Y$ yields no maximal subgroups of $X_\s$ (so $A_\s = \emptyset$) or all the possible types of subgroup arising from $Y$ are maximal in $X_\s$ and $X_{\s^e}$ (so $A_\s = A_{\s^e} = A$) or, inspecting the proof of \cite[Lemma~2.5]{ref:LiebeckSaxlSeitz92}, $Y \in \{ D_4^2, A_2^4, A_1^8, S \}$. When $Y$ is $D_4^2$, $A_2^4$ or $A_1^8$, we see that $A_\s = A_{\s^e}$ is a proper subgroup of $A$ (we have $C_6 < \Sym_3 \times C_2$, $C_8 < \GL_2(3)$, $1 < \mathrm{AGL_3(2)}$, respectively). 

Finally assume that $Y = S$. Here $A = W_{E_8}$ and, writing $I = \{ 1, 2, 3, 4, 5, 6, 10, 12, 15, 30 \}$, for each $i \in I$ we may fix $a_i \in A$ satisfying $|a_i| = i$ and $\{ a_i \mid i \in I \}^A = (\< a_{30} \> \cup \< a_{12} \>)^A$ such that we have
\[
A_{\p^j} = \left\{ 
\begin{array}{ll}
\{ a_i \mid i \in I \setminus \{ 1,6 \} \}^A & \text{if $(p,j) = (2,1)$} \\
\{ a_i \mid i \in I \setminus \{ 1 \} \}^A   & \text{if $(p,j) \in \{(2,2), (3,1)\}$} \\
\{ a_i \mid i \in I \}^A                     & \text{otherwise}.
\end{array} \right.
\]
In all cases, if $a \in A_\s$, then $a^e \in A_{\s^e}$. 

Now consider parts~(ii)--(iv). These are all similar, so we just give the details for part~(ii). Here $X \in \{B_2,G_2,F_4\}$ and $\rho$ (so $\s$ and $\s^e$) induces a nontrivial involution $\alpha$ on $W_X$. We will verify that if $a \in A_\s \alpha$ then $a^e \in A_{\s^e} \alpha$ for all odd $e$. First assume $X = B_2$. By \cite[Table~5.1]{ref:LiebeckSaxlSeitz92}, $Y = S$ and, by \cite[Table~5.2]{ref:LiebeckSaxlSeitz92}, $A_\s = A_{\s^e} = A$, so the result holds.

Next assume $X = G_2$. By \cite[Table~5.1]{ref:LiebeckSaxlSeitz92}, $Y = S$, so $A = W_{G_2}$. Let us consider the dihedral group $D_{24} = \< s, t \mid s^{12}, s^2, s^t = s^{-1} \>$. Then $A = \< s^2, t \> = D_{12}$ and $\alpha$ induces $st$, so $A\alpha = D_{24} \setminus D_{12}$. For $s = u\alpha$, it is easy to check that $\{ |S_{u^j\rho^i}| \mid \text{$j$ odd} \} = \{ q_0+1, q_0 \pm \sqrt{3q_0} + 1 \}$. Therefore, by \cite[Table~5.2]{ref:LiebeckSaxlSeitz92}, $A_\s\alpha = A_{\s^e}\alpha = \{ (u\alpha)^j \mid \text{$j$ odd} \}$, so if $a \in A_\s \alpha$ then $a^e \in A_{\s^e} \alpha$.

Finally assume that $X = F_4$. For now also assume that $Y \neq S$. Here, by \cite[Table~5.1]{ref:LiebeckSaxlSeitz92}, $Y \in \{A_2\tilde{A}_2, B_2^2\}$ and $A_\s = A_{\s^e} = A$, so the result holds. It remains to assume that $Y = S$, where $A = W_{F_4}$. Fix $u \in A$ such that $|u\alpha| = 4$ and $\{ |S_{u^j\rho^i}| \mid \text{$j$ odd} \} = \{ (q_0+1)^2 \}$ and $v \in A$ such that $|v\alpha| = 24$ and $\{ |S_{v^j\rho^i}| \mid \text{$j$ odd} \} = \{ q_0^2 \pm \sqrt{2}q_0^{3/2} + q_0 \pm \sqrt{2q_0} + 1, (q_0 \pm \sqrt{2q_0} + 1)^2 \}$. Then, by \cite[Table~5.2]{ref:LiebeckSaxlSeitz92}, $A_\s\alpha = A_{\s^e}\alpha = \{ (u\alpha)^j, (v\alpha)^j \mid \text{$j$ odd} \}$, so $a^e \in A_{\s^e} \alpha$ if $a \in A_\s \alpha$. The proof is complete.
\end{proof}

\section{Totally deranged elements} \label{s:proof}

We now turn to the proof of Theorem~\ref{thm:derangement}. Sections~\ref{ss:proof_alternating_sporadic}--\ref{ss:proof_ade} prove that the only possible examples appear in Theorem~\ref{thm:derangement}, and Section~\ref{ss:proof_examples_proof} proves that the examples in Theorem~\ref{thm:derangement} really are totally deranged. We formally complete the proof in Section~\ref{ss:proof}.

\subsection{Alternating, sporadic and small groups} \label{ss:proof_alternating_sporadic}

We begin by reducing the proof of Theorem~\ref{thm:derangement} to the almost simple groups of Lie type. We also take this opportunity to handle some small groups of Lie type.

\begin{proposition} \label{prop:computational}
Let $G$ be an almost simple group whose socle is alternating, sporadic, the Tits group or one of the following groups of Lie type
\begin{gather*}
\PSL_2(q) \: q \leq 9, \ \ \ 
\PSL_3(q) \: q \leq 4, \ \ \ 
\PSL_4(q) \: q \leq 3, \ \ \
\PSU_3(q) \: q \leq 4, \ \ \
\PSU_5(2), \ \ \ G_2(3).
\end{gather*}
Then $G$ has no totally deranged elements.
\end{proposition}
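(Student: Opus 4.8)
The plan is to reduce Proposition~\ref{prop:computational} to a finite computation by recalling the framework from the introduction: an element $x \in G$ is totally deranged if and only if $x$ lies in no core-free maximal subgroup of $G$. Thus for each $G$ in the list, my goal is to show that every element of $G$ is contained in some core-free maximal subgroup. Equivalently, writing $H_1, \dots, H_r$ for representatives of the conjugacy classes of core-free maximal subgroups of $G$, I must verify that $\bigcup_i \bigcup_{g \in G} H_i^g = G$, i.e.\ that every conjugacy class of $G$ meets some $H_i$.

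For the \emph{alternating} socles the argument can be made uniform and short. For $T = \Alt_n$ with $G \in \{ \Alt_n, \Sym_n \}$, the natural action on $n$ points is faithful and primitive, with point stabiliser a core-free maximal subgroup (an intransitive maximal subgroup of type $\Sym_{n-1} \cap G$ for $n \geq 5$, $n \neq 6$; the case $n=6$ needs its outer automorphism handled separately but those small $n$ fall under explicit computation anyway). An element of $G$ fixes a point, hence lies in a point stabiliser, unless it is fixed-point-free in the natural action; so the only candidate totally deranged elements are fixed-point-free permutations. For these I would use a second primitive action --- the action on partitions or on cosets of a transitive maximal subgroup --- or simply observe that a fixed-point-free permutation still lies in an \emph{imprimitive} maximal subgroup (of type $\Sym_a \wr \Sym_b$ with $ab = n$) whenever its cycle type is compatible with a block system, which covers everything except possibly $n$-cycles and a few cycle types, all resolvable directly. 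I expect this to reduce to a clean divisibility/cycle-type bookkeeping argument showing no element survives in all primitive actions.

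For the \emph{sporadic} socles, the Tits group, and the explicitly listed small groups of Lie type, the approach is purely computational, exploiting the \textsf{GAP} Character Table Library \cite{ref:CTblLib,ref:GAP} exactly as flagged in the introduction. The key observation is that whether a conjugacy class $C$ of $G$ meets a subgroup $H$ can be read off from the class fusion map $H \to G$, which is stored (or computable) in the library together with the tables of maximal subgroups. So for each such $G$ I would: enumerate the core-free maximal subgroups $H_i$ via the known maximal subgroup structure (available in the library, cross-checked against the ATLAS); compute each fusion map $H_i \to G$; form the union of the images; and confirm it equals the full set of conjugacy classes of $G$. Since $G$ may be almost simple rather than simple, I must run this over each almost simple $G$ with the given socle, taking care with the outer automorphisms (the relevant tables of marks / fusion data for extensions are also available).

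The main obstacle is bookkeeping rather than conceptual difficulty. For the alternating case the delicate point is isolating exactly which fixed-point-free classes fail to lie in an imprimitive subgroup and then exhibiting, for each, an explicit primitive overgroup containing it --- this requires a careful case split on cycle types and on whether $n$ is prime. For the computational cases the genuine risk is completeness of the maximal subgroup data: I must ensure every conjugacy class of core-free maximal subgroups is accounted for (including novelties that arise only for particular extensions $G$) so that the claimed union is truly over all of them, and I must correctly treat the socle extensions where fusion from $H_i$ into $G$ differs from fusion into $T$. These small groups of Lie type are in the list precisely because the general Lie-type arguments of the later sections exclude them or break down for small $q$, so verifying directly that they contribute no totally deranged elements closes that gap.
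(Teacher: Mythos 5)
Your overall strategy is the paper's: reduce to showing every element lies in a core-free maximal subgroup, argue the alternating case by hand, and verify the sporadic, Tits and small Lie-type cases by machine (the paper uses the \textsf{GAP} Character Table Library for the sporadic groups and \textsc{Magma} for the rest, but your fusion-map computation is the same check). However, you miss the observation with which the paper's proof begins and which trivialises most of the proposition: if $G$ is \emph{simple}, then every maximal subgroup of $G$ is core-free, so every element is automatically contained in a core-free maximal subgroup and no element can be totally deranged. This disposes of $\Alt_n$, all simple sporadic groups, the Tits group itself and every simple group in the list with no cycle-type analysis and no computation at all; the hand and machine work is only needed for the \emph{non-simple} almost simple extensions (e.g.\ $\Sym_n$, $\Aut$-extensions of sporadics, ${}^2F_4(2)$, and the nonsimple extensions of the listed Lie-type groups).

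There is also a concrete gap in your treatment of $\Sym_n$. For fixed-point-free elements you lean on imprimitive maximal subgroups $\Sym_a \wr \Sym_b$ with $ab=n$, but these do not exist when $n$ is prime, and a fixed-point-free element need not be an $n$-cycle: for instance, an element of cycle type $(2,5)$ in $\Sym_7$ is fixed-point-free, lies in no imprimitive subgroup (there are none), and lies in no primitive proper subgroup either, since no primitive maximal subgroup of $\Sym_7$ has order divisible by $10$. So your stated toolkit (point stabilisers, imprimitive subgroups, other primitive actions) cannot cover this element; what saves it is the intransitive maximal subgroup $\Sym_2 \times \Sym_5$, which never appears in your proposal beyond the point stabiliser. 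The paper's cleaner dichotomy avoids this entirely: any element that is not an $n$-cycle stabilises a proper nonempty subset (the support of a single cycle), hence lies in a maximal subgroup of type $\Sym_k \times \Sym_{n-k}$ (or $\Sym_k \wr \Sym_2$ if $n = 2k$), while an $n$-cycle lies in $\Sym_a \wr \Sym_b$ if $n = ab$ is composite and in $\mathrm{AGL}_1(n)$ if $n$ is prime, with maximality of all these overgroups supplied by \cite{ref:LiebeckPraegerSaxl87}. With that substitution, and the simple-group observation above, your proposal becomes the paper's proof.
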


\begin{proof}
Let $x \in G$. First assume that $G$ is simple. Then every maximal subgroup is core-free, so $x$ is certainly contained in a core-free maximal subgroup and is thus not totally deranged. 

Next assume that $G = \Sym_n$ with $n \geq 5$. If $x$ stabilises a subset of size $k$, for some $0 < k < n$, then $x$ is contained in a $G$-conjugate of $\Sym_k \times \Sym_{n-k}$ if $n \neq 2k$ or $\Sym_k \wr \Sym_2$ if $n=2k$. If $x$ is an $n$-cycle, then $x$ is contained in a $G$-conjugate of $\Sym_a \wr \Sym_b$ if $n=ab$ with $a,b > 1$ or $\mathrm{AGL}_1(n)$ if $n$ is prime. Each of these overgroups is a maximal subgroup of $G$ (see the main theorem of \cite{ref:LiebeckPraegerSaxl87}), so, in all cases, $x$ is contained in a core-free maximal subgroup and we deduce that $x$ is not totally deranged.

It remains to consider the nonsimple sporadic groups, the remaining extensions of $\Alt_6$, the almost simple group ${}^2F_4(2)$ and the nonsimple extensions of the groups displayed in the statement. In these cases, it is straightforward to verify computationally that every element of $G$ is contained in a core-free maximal subgroup and is thus not totally deranged. We handle the sporadic groups using the Character Table Library \cite{ref:CTblLib} in $\textsf{GAP}$ \cite{ref:GAP}, but the remaining groups can be computed with directly in \textsc{Magma} \cite{ref:Magma}. The code is in \cite{ref:GithubTotallyDeranged}.
\end{proof}

\subsection{Eliminating most possibilities}  \label{ss:proof_standard}

Turning to the groups of Lie type, we begin with Propositions~\ref{prop:standard} and~\ref{prop:graph}, which eliminate all but a few families of groups. The cases not covered here require more involved arguments (see Sections~\ref{ss:proof_bfg} and~\ref{ss:proof_ade}). However, we first record a small generalisation of \cite[Lemma~6.4]{ref:BurnessHarper20}.

\begin{lemma} \label{lem:parabolic}
Let $T$ be a finite simple group of Lie type, let $T \leq G \leq \Inndiag(T)$ and let $x \in G$. If $x$ is not contained in a parabolic subgroup of $G$, then $x$ is a regular semisimple element.
\end{lemma}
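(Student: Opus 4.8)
The plan is to work inside the ambient algebraic group and reduce immediately to the case $G = \Inndiag(T) = X_\s$. Writing $T = O^{p'}(X_\s)$ for a simple adjoint algebraic group $X$ and a Steinberg endomorphism $\s$, every parabolic subgroup of $G$ has the form $G \cap P_\s$ for a proper $\s$-stable parabolic subgroup $P$ of $X$, and conversely each such $P$ yields a parabolic $G \cap P_\s$ of $G$. Since $x \in G \leq X_\s$, the element $x$ lies in a parabolic of $G$ if and only if $x \in P_\s$ for some proper $\s$-stable parabolic $P$ of $X$, and this condition does not depend on the choice of $G$ between $T$ and $X_\s$. This is precisely the ``small generalisation'' referred to: the statement reduces verbatim to the case $G = \Inndiag(T)$, so it suffices to prove that if $x \in X_\s$ is not regular semisimple, then $x$ lies in a proper $\s$-stable parabolic subgroup of $X$. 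One may then either cite \cite[Lemma~6.4]{ref:BurnessHarper20} directly or reprove it as below.

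First I would record the characterisation driving the argument: writing the Jordan decomposition $x = su$ in $X_\s$, the element $x$ fails to be regular semisimple exactly when $C := C_X(s)^\circ$ is not a maximal torus. Indeed, if $u \neq 1$ then $u$ is a nontrivial unipotent element of the connected centraliser $C$, so $C$ is not a torus; and if $x=s$ is semisimple then $x$ is regular precisely when $C_X(s)^\circ$ is a maximal torus. This splits the proof into two cases. If $u \neq 1$, I would apply the Borel--Tits theorem to the nontrivial $\s$-stable connected unipotent subgroup $U = \overline{\<u\>}$: there is a proper $\s$-stable parabolic $P$ with $U \leq R_u(P)$ and $N_X(U) \leq P$. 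Since $s$ centralises $u$ it normalises $U$, so $s \in N_X(U) \leq P$ and hence $x = su \in P$, as required.

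The remaining and genuinely harder case is $x=s$ semisimple but non-regular, so $C = C_X(s)^\circ$ is a $\s$-stable connected reductive subgroup of maximal rank that is not a torus and has $s$ in its centre. Here I would fix a $\s$-stable maximal torus $S \leq C$ with $s \in S$; as $s \in S$ lies in every parabolic containing $S$, it suffices to locate a proper $\s$-stable parabolic of $X$ containing such an $S$. When $Z := Z(C)^\circ$ is nontrivial, $L = C_X(Z)$ is a proper $\s$-stable Levi subgroup containing $s$ (proper since $X$ is adjoint, so $Z$ is non-central), and I would realise $L$ as a Levi factor of a $\s$-stable parabolic $P(\lambda)$ by producing a non-central cocharacter $\lambda$ of $Z$ fixed by the finite part $\rho$ of the action of $\s$ on $X_*(Z)$.

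The hard part will be exactly this last step, and it is where care is essential: a $\s$-stable Levi subgroup need not be the Levi factor of any $\s$-stable parabolic — this is the Deligne--Lusztig phenomenon already visible for anisotropic maximal tori — so one cannot simply quote ``Levi lies in a parabolic''. The resolution is to exploit the freedom in choosing $S$ inside $C$ together with the fact that $C$ is strictly larger than a torus, which forces a $\rho$-fixed chamber in $X_*(Z)_{\mathbb{R}}$ and hence a genuinely $\s$-stable parabolic. I would handle the degenerate subcase where $Z(C)^\circ$ is trivial (so $C$ is semisimple of maximal rank, a pseudo-Levi) separately, by instead selecting among the $\s$-stable maximal tori of $C$ one of non-maximal twist type and showing it lies in a proper $\s$-stable parabolic of $X$; confirming that such a torus always exists when $C$ is nontrivial is the delicate point, which I would verify using the $\s$-action on the relevant Weyl group cosets as in the maximal-rank analysis of Section~\ref{ss:p_subgroups}.
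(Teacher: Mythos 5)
Your reduction to $G = \Inndiag(T)$ and your treatment of elements with nontrivial unipotent part are sound (modulo the small slip that $\overline{\<u\>}$ is finite, not connected; Borel--Tits needs no connectedness), and in that case you essentially reproduce the paper's argument. The genuine gap is in the non-regular semisimple case, and it sits exactly where you flagged ``the hard part'': your proposed resolution is false. It is not true that when $C = C_X(s)^\circ$ is larger than a torus, the finite part of the $\s$-action on $X_*(Z(C)^\circ)_{\mathbb{R}}$ must fix a chamber. Take $X = \PGL_3$ with $\s$ the unitary Steinberg endomorphism $g \mapsto (g^{(q)})^{-\tr}$, so that $X_\s = \Inndiag(\PSU_3(q))$, and let $s$ be the image of $\mathrm{diag}(a,a,b)$ with $a^{q+1} = b^{q+1} = 1$ and $a \neq b$. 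Then $C$ is the image of $\GL_2 \times \GL_1$, the torus $Z(C)^\circ$ is one-dimensional, and $\s$ acts on $X_*(Z(C)^\circ) \cong \mathbb{Z}$ as multiplication by $-q$: the finite part acts by $-1$ and interchanges the two chambers. Correspondingly, $L = C_X(Z(C)^\circ) = C$ is not a Levi factor of any proper $\s$-stable parabolic of $X$; indeed, since $\s$ interchanges the two classes of maximal parabolics of $X$, the only proper $\s$-stable parabolics are Borel subgroups, and these cannot even contain the reductive non-toral group $C$. So your construction produces nothing here, although the conclusion of the lemma does hold for this $s$: its two-dimensional eigenspace contains an isotropic $1$-space $\ell$, and $s$ lies in the $\s$-stable Borel subgroup stabilising the flag $\ell \subset \ell^\perp$. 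Your fallback for the pseudo-Levi case $Z(C)^\circ = 1$ is likewise left unverified, so the semisimple case is not proved.

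The fix is much simpler than what you attempt, and it is what the paper does: stay in the finite group and never invoke Levi theory. In your set-up, since $C$ is a $\s$-stable connected reductive group that is not a torus, $p$ divides $|C_\s|$, so $C_\s$ contains a nontrivial unipotent element $u$; being a $p$-element of $X_\s$, it lies in $T \leq G$, and $s$ centralises it. Now the Borel--Tits theorem for finite groups of Lie type (\cite[Theorem~26.5]{ref:MalleTesterman11}) says $N_G(\<u\>)$, and hence $C_G(u) \ni s$, is contained in a proper parabolic subgroup of $G$. In fact the paper's proof never splits into cases at all: it quotes the equivalence that an element of $G$ fails to be regular semisimple if and only if it commutes with a nontrivial unipotent element (\cite[Corollary~E.III.1.4]{ref:Borel70}), and then this single Borel--Tits step proves the contrapositive uniformly for semisimple, unipotent and mixed $x$ -- a three-line argument in place of your case analysis.
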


\begin{proof}
We prove the contrapositive. Assume $x$ is not regular semisimple. Not being regular semisimple is equivalent to commuting with a unipotent element (see \cite[Corollary~E.III.1.4]{ref:Borel70}, for example). Therefore, fix a unipotent element $u \in G$ such that $x \in C_G(u)$. Now $C_G(u)$ is contained in $N_G(\<u\>)$, which, by the Borel--Tits theorem for finite groups of Lie type (see \cite[Theorem~26.5]{ref:MalleTesterman11}, for example), is contained in a parabolic subgroup of $G$.
\end{proof}

%\begin{lemma} \label{lem:maximal}
%Let $G$ be a finite group, let $N \leqn G$ and let $H \leq G$. Assume that $H \cap N$ is a maximal subgroup of $N$ and $|H:H \cap N| = |G:N|$. Then $H$ is a maximal subgroup of $G$.
%\end{lemma}
%
%\begin{proof}
%Let $H \leq M \leq G$. Then $G = MN$ as $|MN:N| \geq |HN:N| = |H:H \cap N| = |G:N|$. Also $H \cap N \leq M \cap N \leq N$, so $M \cap N = H \cap N$ or $M \cap N = N$. In the former case, $|H:H \cap N| = |G:N| = |MN:N| = |M:M \cap N| = |M:H \cap N|$, so $M = H$. In the latter case, $N \leq M$, so $G = MN = M$. Therefore, $H$ is maximal in $G$.
%\end{proof}

\begin{proposition} \label{prop:standard}
Let $T$ be a finite simple group of Lie type, let $T \leq G \leq \Aut(T)$ and let $x \in G$. Assume that one of the following holds:
\begin{enumerate}
\item $T$ is untwisted or very twisted, and $G \leq \< \Inndiag(T), \p \>$
\item $T \in \{ A^\e_m(p^f) \, \text{($m \geq 2$)}, \ D^\e_m(p^f) \, \text{($m \geq 4$)}, \ E^\e_6(p^f) \}$ and $x \in \Inndiag(T)\g\p^j$ for $\e = (-)^{f/j}$, where $G$ does not contain triality if $T = D_4(p^f)$
\item $T = D_4(p^f)$ and $x \in \Inndiag(T)\t\p^j$ with $3 \div \frac{f}{j}$, or $T = {}^3D_4(p^f)$ and $x \in T\t\p^j$ with $3 \ndiv \frac{f}{j}$.
\end{enumerate}
Then $x$ is not totally deranged.
\end{proposition}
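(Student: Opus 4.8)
The element $x$ is totally deranged if and only if it lies in no core-free maximal subgroup of $G$, so the plan is to produce, for every $x$ as in the statement, a core-free maximal subgroup of $G$ containing it. By Theorem~\ref{thm:maximal} it is enough to place $x$ inside a subgroup of type~(I): a parabolic subgroup or the normaliser of a reductive maximal rank subgroup, since all of these are core-free.

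The main engine is Shintani descent. In each of the three cases $x$ lies in a coset $\Inndiag(T)\ws_0$, where $\s_0 = \p^j$ in case~(i) (with $\s_0 = \rho^j$ in the very twisted subcase), $\s_0 = \g\p^j$ in case~(ii), and $\s_0 = \t\p^j$ in case~(iii). Writing $\s$ for the Steinberg endomorphism with $\Inndiag(T) = X_\s$, the role of the arithmetic hypotheses is precisely to guarantee $\s_0^e = \s$ for $e = f/j$: for example in case~(iii) the condition $3 \div \frac{f}{j}$ forces $\t^e = 1$, so $(\t\p^j)^e = \p^f = \s$, whereas $3 \ndiv \frac{f}{j}$ keeps $\s_0^e$ inside the triality-twisted coset. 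After checking the mild condition $\<\ws_0\> \cap X_\s = 1$ needed for the Shintani map $F\: X_\s\ws_0 \to X_{\s_0}$ of $(X,\s_0^e,\s_0)$ to be defined, I would pass to $x_0 = F(x) \in X_{\s_0} = \Inndiag(T_0)$, where $T_0 = O^{p'}(X_{\s_0})$ is a smaller group of Lie type carrying no outer graph or field part.

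I would then apply Lemma~\ref{lem:parabolic} to $x_0 \in \Inndiag(T_0)$, which yields the clean dichotomy that $x_0$ either lies in a parabolic subgroup or is regular semisimple. In the parabolic case, since parabolic subgroups are closed and connected, Theorem~\ref{thm:shintani_subgroups} transports the containment back up, placing $x$ in a parabolic subgroup of $\<X_\s,\ws_0\>$ and hence in a core-free parabolic maximal subgroup of $G$. The regular semisimple case is the crux: here $x_0$ lies in $N_{X_{\s_0}}(M_{\s_0})$ for a reductive maximal rank subgroup $M$ containing the maximal torus $C_X(x_0)^\circ$, chosen so that this normaliser is maximal in $X_{\s_0}$. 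Applying Theorem~\ref{thm:shintani_subgroups} with $Y = M$ lifts the containment, and Lemma~\ref{lem:subgroups_closed} lifts the \emph{maximality}, so that $x$ lands in a reductive maximal rank maximal subgroup of $G$, which is core-free.

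The principal obstacle is exactly this regular semisimple step, and it is where the case split is essential. Lemma~\ref{lem:subgroups_closed} is tailored to the hypotheses of the proposition: part~(i) covers the untwisted exceptional groups of case~(i), part~(ii) the very twisted groups (where $f$, and hence $e$, is odd), part~(iii) the $E_6$ instance of case~(ii) with $\e = -$ (so $e$ odd), and part~(iv) the ${}^3D_4$ instance of case~(iii). The remaining instances — the classical types $A^\e_m$ and $D^\e_m$ throughout cases~(i) and~(ii), together with the exceptional instances (such as untwisted $E_6$ with $\e = +$, where the descent lands in a twisted group with $e$ even) whose parity falls outside the lemma — are handled by the explicit geometry of maximal subgroups: the Aschbacher classes $\C_1$, $\C_2$, $\C_3$ for classical groups, and the reductive maximal rank tables for exceptional groups, with Corollary~\ref{cor:shintani_subfields} controlling any subfield subgroup that intervenes in the lift. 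The final piece is bookkeeping: verifying that after descent $x_0$ genuinely lies in a \emph{maximal} reductive maximal rank subgroup of $X_{\s_0}$, so that Lemma~\ref{lem:subgroups_closed} has a maximal subgroup to transport, rather than merely in some torus normaliser.
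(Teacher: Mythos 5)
Your proposal is correct and takes essentially the same route as the paper's proof: Shintani descent to an inner-diagonal element $x_0$ over the subfield, the parabolic versus regular-semisimple dichotomy of Lemma~\ref{lem:parabolic}, lifting containment via Theorem~\ref{thm:shintani_subgroups}, and lifting maximality via Lemma~\ref{lem:subgroups_closed} in the exceptional cases and via explicit geometric choices checked against the Kleidman--Liebeck tables in the classical cases. The only deviations are minor: Corollary~\ref{cor:shintani_subfields} is never actually needed in this argument, and the classical-group bookkeeping you defer is where the paper spends most of its effort (specific choices of field-extension and nondegenerate-subspace overgroups, novelty maximality using $G \not\leq \langle \Inndiag(T), \p \rangle$, and small exceptions such as replacing $\GL_1(4) \wr \Sym_n$ by a subgroup of type $\PSU_n(2)$ inside $\PSL_n(4)$).
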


\begin{proof}
We may assume that $G$ is not in Proposition~\ref{prop:computational}. We will first identify a core-free subgroup $H \leq G$ that contains $x$, and then we will prove that $H$ is a maximal subgroup of $G$. If $x \in \Inndiag(T)$, then we have some control over the maximal overgroups of $x$, but for the general case, we will apply Shintani descent (see Section~\ref{ss:p_shintani}), which gives an element $x_0 \in \Inndiag(T_0)$ for a related group $T_0$, which we use to inform us about our original element $x$ (in particular, $T_0 = T$ and $\<x_0\> = \<x\>$ if $x \in \Inndiag(T)$, see Remark~\ref{rem:shintani}(ii)). 

Let us establish some notation. We will fix a simple algebraic group $X$ and two Steinberg endomorphisms $\s_1$ and $\s_2$ such that $X_{\s_1} = \Inndiag(T)$ and $x \in \Inndiag(T)\ws_2$. The specific choices for $(X,\s_1,\s_2)$ are given in Table~\ref{tab:standard}. (We will define $T_0$ below.)

\begin{table}[b]
\[
\begin{array}{ccccccc}
\hline 
\text{row} & X                 & \s_1       & \s_2        & T        & T_0        & \text{conditions} \\
\hline
1          & \text{any}        & \p^f       & \p^j        & X(q)     & X(q_0)     & \text{none}       \\
2          & B_2,\, F_4,\, G_2 & \rho^f     & \rho^j      & {}^2X(q) & {}^2X(q_0) & \text{$f$ odd}    \\
3          & A_m,\, D_m,\, E_6 & \g\p^f     & \g\p^j      & {}^2X(q) & {}^2X(q_0) & \text{$e$ odd}    \\
4          & A_m,\, D_m,\, E_6 & \p^f       & \g\p^j      & X(q)     & {}^2X(q_0) & \text{$e$ even}   \\
5          & D_4               & \t\p^f     & \t^e\p^j    & {}^3X(q) & {}^3X(q_0) & 3 \ndiv e         \\
6          & D_4               & \p^f       & \t\p^j      & X(q)     & {}^3X(q_0) & 3 \div e          \\
\hline
\end{array}
\]
\caption{Notation for the proof of Proposition~\ref{prop:standard} (here $q=p^f$, $q_0 = p^j$, $j$ divides $f$, $e=f/j$).}\label{tab:standard}
\end{table}

We claim that Table~\ref{tab:standard} exhausts all possibilities in the statement. To see this, first note that Rows~3 to~6 cover parts~(ii) and~(iii). Now consider part~(i). If $T$ is untwisted, then by replacing $x$ by another generator of $\< x \>$ if necessary, we may assume that $x \in \Inndiag(T)\wp^j$ where $j$ divides $f$, and $\wp^j = \ws_2$ in Row~1. Similarly, if $T$ is very twisted, then since $\<\wp^j\> = \<\wrho^j\>$, we may assume that $x \in \Inndiag(T)\wrho^j$ where $j$ divides $f$, and $ \wrho^j = \ws_2$ in Row~2.

Let $F\: X_{\s_1}\ws_2 \to X_{\s_2}\ws_1$ be the Shintani map of $(X,\s_1,\s_2)$. Since $\s_1 = \s_2^{f/j}$, we have $\ws_1 = 1$. Write $T_0 = X_{\s_2}$ and $F\: \Inndiag(T)\ws_2 \to \Inndiag(T_0)$. Observe that, except in Rows~4 and~6, $T_0$ has the same type as $T$ except it is defined over the subfield $\F_{q_0}$ of $\F_q$. Noting that $x \in \Inndiag(T)\ws_2$, define $x_0 = F(x) \in \Inndiag(T_0)$. We find a core-free maximal subgroup $H$ of $G$ containing $x$ by first considering subgroups $H_0$ of $G_0 = \Inndiag(T_0)$ containing $x_0$.

\emph{\textbf{Case~1.} $x_0$ is contained in a parabolic subgroup of $G_0$.}\nopagebreak

Let $H_0$ be a maximal parabolic subgroup of $G_0$ containing $x_0$. Then $H_0 = Y_{\s_2}$ for a maximal $\p$-stable parabolic subgroup of $Y \leq X$. Theorem~\ref{thm:shintani_subgroups} implies that $x$ is contained in an $\< X_{\s_1}, \ws_2 \>$-conjugate of $\< Y_{\s_1}, \ws_2 \>$. Let $H = N_G(Y_{\s_1})$, so $x \in H$. Since $Y$ is $\p$-stable, $\wp$ normalises $Y_{\s_1}$. In Rows~1 and~2, $|H:H \cap \Inndiag(T)| = |H:H \cap Y_{\s_1}| = f/i = |G:{G \cap \Inndiag(T)}|$, where $\< \Inndiag(T), G \> = \< \Inndiag(T), \wp^i \>$. Hence, $H$ is maximal in $G$ as $H \cap \Inndiag(T)$ is maximal in $\Inndiag(T)$. In Rows~3--6, $X \in \{A_m, D_m, E_6\}$ and $T_0$ is twisted, so $Y$ is actually $\<\p,\g\>$-stable, so again $|H: H \cap Y_{\s_1}| = |G: G \cap \Inndiag(T)|$ and $H = N_G(Y_{\s_1})$ is maximal.

\vspace{0.3\baselineskip}

\emph{\textbf{Case~2.} $x_0$ is not contained in a parabolic subgroup of $G_0$.}\nopagebreak

By Lemma~\ref{lem:parabolic}, $x_0$ is a (regular) semisimple element, so it is contained in a maximal torus of $G_0$ and hence in $H_0 = Y_{\s_2}$ for a connected reductive maximal rank subgroup $Y \leq X$. We may assume that $Y$ is maximal among such subgroups, so $N_{G_0}(H_0)$ is a maximal subgroup of $G_0$. If $X$ is classical, then to simplify our maximality arguments later in the proof we will exhibit a specific choice of $H_0$. (Later we will see why it is useful to record the type of $Y$).

\emph{\textbf{Case~2a.} $T_0 = \PSL_n(q_0)$.}\nopagebreak

Since $x_0$ is not in a parabolic subgroup, $x_0$ is irreducible, so is contained in a field extension subgroup $H_0$ of type $\GL_{n/k}(q_0^k)$ for the least prime $k$ dividing $n$ (so $Y$ has type $\GL_{n/k}^k$).

\emph{\textbf{Case~2b.} $T_0 = \PSU_n(q_0)$.}\nopagebreak

First assume that $x_0$ is irreducible. Then $n$ is odd and $x_0 \in H_0$ of type $\GU_{n/k}(q_0^k)$ for the least prime $k$ dividing $n$ (so $Y$ has type $\GL_{n/k}^k$). Now assume that $x_0$ is reducible. Then $x_0$ stabilises a $k$-space $U$ for some $0 < k \leq n/2$. Since $x_0$ is in no parabolic subgroup, $U$ must be nondegenerate. Since $x_0$ is semisimple, we may assume that $x_0$ acts irreducibly on $U$, which implies that $U$ has odd dimension $k$, so $x_0 \in H_0$ of type $\GU_k(q_0) \times \GU_{n-k}(q_0)$ (so $Y$ has type $\GL_k \times \GL_{n-k}$, perhaps with $k=n-k$).

\emph{\textbf{Case~2c.} $T_0 = \PSp_n(q_0)$.}\nopagebreak

If $q_0$ is even, then $x_0 \in H_0$ of type $\Omega^+_n(q_0)$ or $\Omega^-_n(q_0)$ (so $Y$ has type $\Omega_n$). Now assume that $q_0$ is odd. Here we proceed as in the previous case. First assume that $x_0$ is irreducible. Then $x_0 \in H_0$ of type $\Sp_{n/k}(q_0^k)$ for the least prime $k$ dividing $n/2$ (so $Y$ has type $\Sp_{n/k}^k$). Now assume that $x_0$ is reducible. Then $x_0$ stabilises a $k$-space $U$ for some $0 < k \leq n/2$. Since $x_0$ is in no parabolic subgroup, $U$ must be nondegenerate, so $x_0 \in H_0$ of type $\Sp_k(q_0) \times \Sp_{n-k}(q_0)$ (so $Y$ has type $\Sp_k \times \Sp_{n-k}$, perhaps with $k=n-k$).

\emph{\textbf{Case~2d.} $T_0 = \POm^\e_n(q_0)$ for some $\e \in \{+,\circ,-\}$.}\nopagebreak

First assume that $x_0$ is irreducible. In this case, $T_0 = \POm^-_{2m}(q_0)$ and $x_0 \in H_0$ of type $\SO^-_m(q_0^2)$ if $m$ is even (so $Y$ has type $\Omega_m^2$) or $\GU_m(q_0)$ if $m$ is odd (so $Y$ has type $\GL_m^2$). Now assume that $x_0$ is reducible. Since $x_0$ is in no parabolic subgroup, $x_0$ stabilises a proper nonzero nondegenerate subspace $U$. Since $x_0$ is semisimple, we may assume that $x_0$ acts irreducibly on $U$, which implies that $U$ is a minus-type space of even dimension $k$, so $x_0 \in H_0$ of type $\SO^-_k(q_0) \times \SO^{-\e}_{n-k}(q_0)$ (so $Y$ has type $\Omega_k \times \Omega_{n-k}$, perhaps with $k=n-k$).

\emph{\textbf{Case~2e.} $T_0 = {}^3D_4(q_0)$.}\nopagebreak

Let $S_0$ be a maximal torus of $T_0$ that contains $x_0$. If $|S_0| \in \{ (q_0^2 \pm q_0 + 1)^2, q_0^4-q_0^2+1 \}$, then $x_0 \in H_0 = S_0$ (so $Y$ is a maximal torus). Otherwise, consulting \cite[Table~1.1]{ref:DeriziotisMichler87} for example, since $x_0$ is not contained in a parabolic subgroup of $T_0$, we have $|S_0| = (q_0+1)(q_0^3+1)$, which means that $x_0 \in H_0$ of type $\SL_2(q) \times \SL_2(q^3)$ (so $Y$ has type $\SL_2^4$).

\vspace{0.3\baselineskip}

In summary, $x_0$ is contained in $H_0 = Y_{\s_2}$ for a closed connected $\s_2$-stable subgroup $Y \leq X$, so Theorem~\ref{thm:shintani_subgroups} implies that $x$ is contained in an $\<X_{\s_1}, \ws_2\>$-conjugate of $\<Y_{\s_1},\ws_2\>$. Let $H = N_G(Y_{\s_1})$. We now show that $H$ is maximal, except for one case that we can easily handle. In all cases, if $n \leq 12$, then to verify the claims on maximality we refer the reader to the relevant table in \cite[Chapter~8]{ref:BrayHoltRoneyDougal} rather than to the stated reference in \cite[Chapter~3]{ref:KleidmanLiebeck}.

\emph{\textbf{Case~2A.} $T = \PSU_n(q)$.}\nopagebreak

Here $T_0 = \PSU_n(q_0)$ and $Y$ has type $\GL_k \times \GL_{n-k}$ or $\GL_{n/k}^k$ ($n$ odd and $k$ prime), so $H$ is maximal \cite[Table~3.5B]{ref:KleidmanLiebeck}. 

\emph{\textbf{Case~2B.} $T = \PSp_n(q)$.}\nopagebreak

Here $T_0 = \PSp_n(q_0)$ and $Y$ has type $\Omega_n$ if $q$ is even and type $\Sp_k \times \Sp_{n-k}$ or $\Sp_{n/k}^k$ ($k$ prime) if $q$ is odd, so $H$ is maximal \cite[Table~3.5C]{ref:KleidmanLiebeck}. 

\emph{\textbf{Case~2C.} $T = \Omega_n(q)$ for odd $n$.}\nopagebreak 

Here $T_0 = \Omega_n(q_0)$ and $Y$ has type $\Omega_k \times \Omega_{n-k}$, so $H$ is maximal \cite[Table~3.5D]{ref:KleidmanLiebeck} (referring to the exceptional case in that table, if $q=3$, then $\s_1 = \s_2 = \p$, so $Y_{\s_1} = Y_{\s_2} = H_0$, which has type $\SO_k^-(3) \times \SO_{n-k}(3)$, see Case~2d).

\emph{\textbf{Case~2D.} $T = \POm^-_n(q)$.}\nopagebreak

Here $T_0 = \POm^-_n(q_0)$ and $Y$ has type $\GL_{n/2}^2$ ($n/2$ odd) or $\Omega_k \times \Omega_{n-k}$ ($k$ even), so $H$ is maximal \cite[Table~3.5F]{ref:KleidmanLiebeck} (if $q \leq 3$, then $H$ does not have type $\SO^-_{n-2}(q) \times \SO^+_2(q)$ as above). 

\emph{\textbf{Case~2E.} $T = \PSL_n(q)$.}\nopagebreak 

Here there are two possibilities, corresponding to Rows~1 and~4 of Table~\ref{tab:standard}, namely $T_0 = \PSL_n(q_0)$ (with $\s_2 = \p^j$) and $T_0 = \PSU_n(q_0)$ (with $\s_2 = \g\p^j$ and $f/j > 0$ even). Inspecting Cases~2a and~2b above, we see that we can divide into two subcases.

First assume that $Y$ has type $\GL_{n/k}^k$  where $k$ is the least prime dividing $n$. Now $H$ is maximal \cite[Table~3.5A]{ref:KleidmanLiebeck} (if $q \leq 3$, then $H$ does not have type ${\GL_{n/k}(q) \wr \Sym_k}$), except that if $q=4$ and $n$ is prime, then when $x \not\in \Inndiag(T)$ the subgroup $H$ could have type $\GL_1(4) \wr \Sym_n$ and not be maximal. Here, if $H$ is not maximal, then it is contained in a subgroup $\widetilde{H}$ of type $\PSU_n(2)$ (see \cite[Table~3.5H]{ref:KleidmanLiebeck} and \cite[Proposition~2.3.6]{ref:BrayHoltRoneyDougal}), which is maximal (see \cite[Table~3.5A]{ref:KleidmanLiebeck}), so we replace $H$ with $\widetilde{H}$ in this case.

We may now assume that $Y$ has type $\GL_k \times \GL_{n-k}$ where $k < n$ and $T_0 = \PSU_n(q_0)$. As $T_0 = \PSU_n(q_0)$, we know that $T\ws_2 = T\g\wp^j \in G/T$, so $G \not\leq \< \Inndiag(T), \wp\>$. Therefore, inspecting \cite[Tables~3.5A \&~3.5H]{ref:KleidmanLiebeck}, we see that $H$ is maximal.

\emph{\textbf{Case~2F.} $T = \POm^+_n(q)$.}\nopagebreak

Here there are three possibilities, corresponding to Rows~1, 4 and~6 of Table~\ref{tab:standard}, namely $T_0 = \POm^+_n(q_0)$ (with $\s_2 = \p^j$), $T_0 = \POm^-_n(q_0)$ (with $\s_2 = \g\p^j$ and $f/j$ even) and $T_0 = {}^3D_4(q_0)$ (with $\s_2 = \t\p^j$ and $e = f/j$ divisible by $3$). Assume for now that $T_0 \neq {}^3D_4(q_0)$. Inspecting Case~2d above, we see that we can divide into two subcases.

First assume $Y$ has type $\Omega_k \times \Omega_{n-k}$ where $k < n$ is even, so $H$ is maximal \cite[Table~3.5E]{ref:KleidmanLiebeck}. 

We may now assume that $Y$ has type $\Omega_{n/2}^2$ where $n/2$ is even or $\GL_{n/2}^2$ where $n/2$ is odd and, in both cases, $T_0 = \POm^-_n(q_0)$. As $T_0 = \POm^-_n(q_0)$, we know that $T\ws_2 = T\g\wp^j \in G/T$, so $G \not\leq \< \Inndiag(T), \wp\>$. Moreover, we know that $f/j$ is even, so, by Lemma~\ref{lem:subgroups_power}, $H$ has type $\SO^+_{n/2}(q) \wr \Sym_2$ or $\GL_{n/2}(q).2$. By consulting \cite[Tables~3.5E \&~3.5H]{ref:KleidmanLiebeck}, we see that $H$ is maximal. 

It remains to assume that $T_0 = {}^3D_4(q_0)$. From Case~2e above, we know that $Y$ has type $\SL_2^4$ or is a maximal torus. In the former case, $H$ has type $\SL_2(q)^4$. For the latter case, let $S$ be a maximally split $\s_2$-stable maximal torus of $X$ and write $Y = S^y$. Seeking to apply the theory from Section~\ref{ss:p_subgroups}, let $A = N_{W_X}(W_Y)/W_Y$ and let $\a$ be the automorphism of $A$ induced by $\s_2$. Since $|S_{y_{\s_2}\s_2}| = |Y_{\s_2}| \in \{ (q_0^2 \pm q_0 + 1)^2, q_0^4-q_0^2+1 \}$, we deduce that $W_Y\bar{y}_{\s_2}\a \in A\alpha$ has order $3$, $6$ or $12$ (see \cite[Section~7.5]{ref:Gager73} for example). Now $3$ divides $e = f/j$, so $(W_Y\bar{y}_{\s_2}\a)^e \in A$ has order $1$, $2$ or $4$. By Lemma~\ref{lem:subgroups_power}, $|Y_{\s_1}| = |Y_{\s_2^e}| = |S_{(y_{\s_2}\s_2)^e}| \in \{ (q \pm 1)^4, (q^2+1)^2 \}$. In all cases, by \cite[Table~8.50]{ref:BrayHoltRoneyDougal}, $H$ is maximal since $G \not\leq \< \Inndiag(T), \g, \wp \>$.

\emph{\textbf{Case~2G.} $T$ is exceptional.}\nopagebreak 

For now exclude the case in Row~4 of Table~\ref{tab:standard}. Recall that we write $H_0 = Y_{\s_2}$  and we know that $N_{G_0}(H_0)$ is a maximal subgroup of $G_0 = \Inndiag(T_0) = X_{\s_2}$. Applying Lemma~\ref{lem:subgroups_closed}, we deduce that $N_{\Inndiag(T)}(Y_{\s_1})$ is a maximal subgroup of $\Inndiag(T) = X_{\s_1}$. Now consulting \cite[Tables~5.1 \&~5.2]{ref:LiebeckSaxlSeitz92}, we deduce that $H = N_G(Y_{\s_1})$ is a maximal subgroup of $G$. 

It remains to consider $T = E_6(q)$ and $T_0 = {}^2E_6(q_0)$. Arguing as in the proof of Lemma~\ref{lem:subgroups_closed}, using \cite[Tables~5.1 \&~5.2]{ref:LiebeckSaxlSeitz92}, it is easy to check that since $N_{G_0}(H_0)$ is a maximal subgroup of $G_0$, we also have that $H = N_G(Y_{\s_1})$ is a maximal subgroup of $G$. Here, the only subtlety is that when $Y$ has type $D_5$, while $H_0 = Y_{\s_2} = \POm^-_{10}(q_0) \times (q_0+1)$ is maximal in $\Inndiag(E_6(q_0)) = X_{\s_2}$, the subgroup $Y_{\s_1} = \POm^+_{10}(q) \times (q-1)$ is not maximal in $\Inndiag(E_6(q)) = X_{\s_1}$, but $H = N_G(Y_{\s_1})$ is maximal in $G$ since $T\ws_2 = T\g\wp^j \in G/T$ and hence $G \not\leq \< \Inndiag(T), \wp \>$.

In all cases, we have identified a core-free maximal subgroup $H$ of $G$ that contains $x$, so $x$ is not a totally deranged element of $G$.
\end{proof}

\begin{proposition} \label{prop:graph}
Let $T$ be a finite simple group of Lie type, let $T \leq G \leq \Aut(T)$ and let $x \in G$. Assume that one of the following holds:
\begin{enumerate}
\item $T \in \{ A_m(p^f) \, \text{($m \geq 2$)}, \ D_m(p^f) \, \text{($m \geq 4$)}, \ E_6(p^f) \}$ and $x \in \Inndiag(T)\g\p^j$ for $f/j$ odd, where $G$ does not contain triality if $T = D_4(p^f)$
\item $T \in \{ {}^2A_m(p^f) \, \text{($m \geq 2$)}, \ {}^2D_m(p^f) \, \text{($m \geq 4$)}, \ {}^2E_6(p^f) \}$ and $x \in \Inndiag(T)\p^j$ for $j \div f$
\item $T = D_4(p^f)$ and $x \in \Inndiag(T)\t\p^j$ with $3 \ndiv \frac{f}{j}$
\item $T = {}^3D_4(p^f)$ and $x \in T\p^j$ with $j \div f$.
\end{enumerate}
Then $x$ is not totally deranged.
\end{proposition}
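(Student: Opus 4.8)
The plan is to mirror the Shintani-descent strategy of Proposition~\ref{prop:standard}, but to exploit the fact that the parity hypotheses here ($f/j$ odd in part~(i), $3 \ndiv f/j$ in part~(iii), and the field cosets of the twisted groups in parts~(ii) and~(iv)) are precisely the complementary ones, so the descent can no longer collapse the outer coset. Concretely, for part~(i) I would take $X \in \{A_m, D_m, E_6\}$, $\s_1 = \p^f$ and $\s_2 = \g\p^j$, and form the Shintani map $F\colon X_{\s_1}\ws_2 \to X_{\s_2}\ws_1$. Since $f/j$ is odd we have $\s_2^{f/j} = \g\p^f \neq \p^f = \s_1$, so $\ws_1 = \widetilde{\p^f}$ restricts to the nontrivial duality automorphism $\widetilde{\g}$ of the twisted group $T_0 = {}^2X(q_0)$ rather than to the identity. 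Thus $x_0 = F(x)$ lands in the graph coset $\Inndiag(T_0)\ws_1$ of a unitary (or ${}^2D_m$, ${}^2E_6$) group, and, dually, the field-coset cases of parts~(ii) and~(iv) descend to graph- and triality-coset elements of untwisted groups. In this way parts~(i)--(iv) pair up under $F$ as graph$\leftrightarrow$field (and triality$\leftrightarrow$field for $D_4$, ${}^3D_4$) coset problems, with the field size strictly decreasing whenever $e = f/j > 1$.

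This reduction invites an induction on $f$: the inductive step descends $x$ to $x_0$ over the proper subfield $\F_{q_0}$, finds a core-free maximal overgroup of $x_0$ of ``algebraic'' or ``subfield'' type, and lifts it back to a core-free maximal overgroup of $x$ using Theorem~\ref{thm:shintani_subgroups} (for parabolic, reductive-maximal-rank, and form subgroups of type $\Sp$ and $\O$ arising as $Y_{\s}$) or Corollary~\ref{cor:shintani_subfields} (for subfield and unitary form subgroups). For the geometric analysis of $x_0$ I would run the dichotomy of Proposition~\ref{prop:standard}: either $x_0$ normalises a nontrivial unipotent subgroup and so, by a Borel--Tits argument (cf.\ Lemma~\ref{lem:parabolic}), lies in a $\g$- or $\t$-stable parabolic subgroup, or it is regular semisimple and lies in a reductive maximal rank subgroup. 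For the classical cases the relevant overgroups are the subspace stabilisers ($\C_1$), field-extension subgroups ($\C_3$) and form subgroups ($\C_5$, $\C_8$), and maximality of the lift $N_G(Y_{\s_1})$ is read off from \cite[Chapter~3]{ref:KleidmanLiebeck} (or \cite[Chapter~8]{ref:BrayHoltRoneyDougal} in small rank), while for exceptional $T$ one invokes Lemma~\ref{lem:subgroups_closed} with \cite[Tables~5.1 \&~5.2]{ref:LiebeckSaxlSeitz92}. Crucially, in parts~(i) and~(iii) the element $x$ lies in a graph or triality coset, so $G \not\leq \< \Inndiag(T), \p \>$, and this is exactly what makes the otherwise-nonmaximal $\GL$-type and subspace-type subgroups maximal after fusion by the graph automorphism, just as in Cases~2E--2G of Proposition~\ref{prop:standard}; in parts~(ii) and~(iv) the same fusion takes place after descent, in the corresponding untwisted group $T_0$.

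The base case, and the step I expect to be the main obstacle, is $e = 1$, where no field reduction occurs and $x_0$ sits in a \emph{pure} graph or triality coset --- for instance $x \in \PGL_{m+1}(q)\widetilde{\g}$ with $\widetilde{\g}$ the inverse-transpose automorphism. Here I must argue directly that every element of the coset either preserves a nondegenerate classical form up to duality, landing it in a symplectic, orthogonal, or unitary subgroup (class $\C_8$ or $\C_5$), or stabilises a subspace together with its dual and so lies in a $\C_1$ or $\C_2$ subgroup; each such overgroup is $\g$-stable and maximal in $G$. The analogous statements for the duality coset of $\POm^+_{2m}(q)$, the graph coset of $E_6(q)$, and the triality cosets of $D_4(q)$ and ${}^3D_4(q)$ require the corresponding reductive maximal rank subgroups (such as the $G_2$-type fixed subgroup of triality on $D_4$, and the subgroups appearing in Case~2e of Proposition~\ref{prop:standard} for ${}^3D_4$), together with a careful reading of \cite[Tables~5.1 \&~5.2]{ref:LiebeckSaxlSeitz92} and \cite[Chapter~8]{ref:BrayHoltRoneyDougal}. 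Any small fields falling outside the inductive framework are absorbed into Proposition~\ref{prop:computational}. The delicate bookkeeping is to ensure, in each base case, that the form- or subspace-stabiliser produced is genuinely $\s_1$-stable and maximal after the graph or triality fusion, rather than merely core-free.
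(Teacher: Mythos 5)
Your reduction framework is essentially the paper's own: the paper also uses Shintani descent to convert every case of Proposition~\ref{prop:graph} into the study of a single element $x_0$ lying in the graph coset $X(q_0)\a$ of an \emph{untwisted} group over a subfield (it does this in one pass, via Table~\ref{tab:graph} and the composite maps built from Remark~\ref{rem:shintani}(iii), rather than by your induction on $f$, and it disposes of all the $D_m$ duality cases at the outset by quoting \cite[Theorem~5.8(iii)]{ref:Harper21}). So the viability of your plan rests entirely on the step you defer: the base case of a pure graph or triality coset. That is where the gap is.

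The dichotomy you propose for the base case --- every element of $\PGL_n(q_0)\widetilde{\g}$ normalises a form subgroup ($\C_5$/$\C_8$) or a stabiliser of a subspace and its dual ($\C_1$/$\C_2$) --- is not correct as stated. For $x_0 = A\widetilde{\g}$ the natural invariant is the bilinear form with Gram matrix $A$, which is in general neither symmetric nor alternating; its isometry group is exactly $C_{\GL_n(q_0)}(x_0)$, which can be as small as a subgroup of a Singer torus, and $x_0$ need not normalise the orthogonal or symplectic group of the symmetrised form. In particular, when $x_0^2$ is irreducible (possible for $n$ even), $x_0$ stabilises no subspace pair and lies in no $\C_8$ subgroup; its genuine overgroups are of field-extension/torus-normaliser type, which your base-case dichotomy omits. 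Moreover, for $E_6$ and the triality cases there is no natural module to fall back on, and the fixed-point subgroups you lean on are not maximal rank ($G_2$ has rank $2$ in $D_4$, contrary to your parenthetical), so Lemma~\ref{lem:subgroups_closed} and the Liebeck--Saxl--Seitz tables cannot be applied to them. The paper's base-case argument is a genuinely different idea and is the real content of the proof: with $d = |\a|$, split on whether $x_0^d$ is unipotent. If it is, a Sylow argument (when $p = d$) or an $s$-stable Borel argument (when $p \neq d$, where $s$ is the order-$d$ part of $x_0$) places $x_0$ in the normaliser of an $\a$-stable parabolic subgroup. If it is not, one takes a power $y_0$ of $x_0$ that is semisimple of prime order in $\Inndiag(T_0)$; then $x_0 \in H_0 = C_{G_0}(y_0)$, and \cite[Theorem~4.2.2(j)]{ref:GorensteinLyonsSolomon98} plus a Frattini argument gives $\< T_0, H_0 \> = G_0$, so \emph{every} maximal overgroup $M_0$ of $H_0$ in $G_0$ is automatically core-free and of maximal rank --- no analysis of forms is ever needed. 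Finally, your lifting machinery is under-equipped for this: such an $M_0$ equals $Y_{\p^j}$ for a subgroup $Y$ that is only $\p^j$-stable, not stable under both Steinberg endomorphisms, so Theorem~\ref{thm:shintani_subgroups} does not apply to it; this is precisely why the paper invokes Theorem~\ref{thm:shintani_cosets} at this point, a tool your proposal never uses.
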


\begin{proof}
In (i) and (ii), if $T = D_m^\pm(p^f)$, then \cite[Theorem~5.8(iii)]{ref:Harper21} implies that $x$ is contained in $N_G(H)$ where $H$ is the stabiliser in $T$ of a $1$- or $2$-space of the natural module for $T$, so, consulting \cite[Tables~3.5E \&~3.5F]{ref:KleidmanLiebeck}, we deduce that $x$ is contained in a core-free maximal subgroup of $G$. For the rest of the proof we will assume that $T \neq D_m^\pm(p^f)$ in (i) and (ii).

To unify notation, write $\a = \g$ if $T$ is $A_m^\pm(p^f)$ or $E_6^\pm(p^f)$ and write $\a = \t$ otherwise; in all cases write $d=|\alpha|$. Fix a simple algebraic group $X$ and two Steinberg endomorphisms $\s_1$ and $\s_2$ such that $X_{\s_1} = \Inndiag(T)$ and $x \in \Inndiag(T)\ws_2$. The specific choices for $(X,\s_1,\s_2)$ are given in Table~\ref{tab:graph}. Write $e=f/j$ and observe that $\s_1 = \a\s_2^e$, so $\ws_1 = \a$.

We now define $F\: \Inndiag(T)\ws_2 \to X(q_0)\a$. For Row~1, $F$ is the Shintani map of $(X,\s_1,\s_2)$. For Row~2, we use the notation of Remark~\ref{rem:shintani}(iii). Let $E_0\: \Inndiag(T)\ws_2 \to {}^dX(q_0)\a^{-e}$ be the Shintani map of $(X,\p^f,\a\p^j)$ and choose $F_0\:\Inndiag(T)\ws_2 \to {}^dX(q_0)\a$ satisfying $g \mapsto E_0(g)^\pm$, so $F_0 \in \{ E_0, E_0'\}$. For $i > 0$, let $F_i$ be the Shintani map of $(X,\a^i\p^j,\a^{i+1}\p^j)$. Let $F$ be $F_0F_1'$ if $d=2$ and $F_0F_1'F_2'$ if $d=3$. In all cases, $x \in \Inndiag(T)\ws_2$, so define $x_0 = F(x) \in X(q_0)\a$. We will write $\Inndiag(T_0) = X(q_0)$ and $G_0 = \< X(q_0), \a \>$.

\begin{table}[t]
\[
\begin{array}{ccccccc}
\hline 
\text{row} & X                   & \s_1   & \s_2   & X_{\s_1} & X_{\s_2}   & \text{conditions} \\
\hline
1          & A_m, \, D_4, \, E_6 & \a\p^f & \p^j   & {}^dX(q) & X(q_0)     &                   \\
2          & A_m, \, D_4, \, E_6 & \p^f   & \a\p^j & X(q)     & {}^dX(q_0) & d \ndiv e         \\
\hline
\end{array}
\]
\caption{Notation for the proof of Proposition~\ref{prop:graph} (here $q=p^f$, $q_0 = p^j$, $j$ divides $f$, $e=f/j$).}\label{tab:graph}
\end{table}

\emph{\textbf{Case~1.} $x_0^d$ is unipotent.}\nopagebreak

First assume that $p=d$, so $x_0$ is a $p$-element of $G_0$. Let $P_0$ be an $\a$-stable parabolic subgroup of $\Inndiag(T_0)$. Then $N_{G_0}(P_0) = \<P_0,\a\>$ contains a Sylow $p$-subgroup of $G_0$, so $x_0$ is contained in a suitable $G_0$-conjugate of $N_{G_0}(P_0)$. Therefore, Theorem~\ref{thm:shintani_subgroups} implies that $x$ is contained in the normaliser of a maximal parabolic subgroup of $G$.

Now assume that $p \neq d$. In this case, $x_0 = su = us$ where $s$ has order $d$ and $u$ is a unipotent element of $\Inndiag(T_0)$. We will fix a Borel subgroup $B$ of $X$ so that $s$ induces a graph automorphism on the corresponding root system; note that $C_B(s)^\circ$ is a Borel subgroup of $C_X(s)^\circ$. Now, since $u \in C_X(s)^\circ$, we know that there exists $g \in C_G(s)$ such that $u \in (C_B(s)^\circ)^g = C_{B^g}(s)^\circ \leq B^g$ and $(B^g)^s = (B^s)^g = B^g$. Therefore, $u$ is contained in an $s$-stable Borel subgroup of $X$ and hence an $s$-stable Borel subgroup $B_0$ of $\Inndiag(T_0)$. We now deduce that $x_0 = su$ is contained in $N_{G_0}(P_0)$ where $P_0$ is a maximal $s$-stable parabolic subgroup of $\Inndiag(T_0)$ containing $B_0$. As before, Theorem~\ref{thm:shintani_subgroups} implies that $x$ is contained in the normaliser of a maximal parabolic subgroup of $G$.

\emph{\textbf{Case~2.} $x_0^d$ is not unipotent.}\nopagebreak

Here we fix a power $y_0$ of $x_0$ that is a semisimple element of prime order in $\Inndiag(T_0)$. Let $H_0 = C_{G_0}(y_0)$, so $x_0 \in H_0$. By \cite[Theorem~4.2.2(j)]{ref:GorensteinLyonsSolomon98}, $y_0^{T_0} = y_0^{\Inndiag(T_0)}$, which means that $\< T_0, H_0 \> \geq \< T_0, C_{\Inndiag(T_0)}(y) \> = \Inndiag(T_0)$. Additionally, $x_0 \in \Inndiag(T_0)\a$ centralises $y_0$, so, in fact, $\< T_0, H_0 \> = G_0$. Let $M_0$ be a maximal subgroup of $G_0$ that contains $H_0$. Since $\< T_0, M_0 \> \geq \< T_0, H_0 \>$, we know that $\< T_0, M_0 \> = G_0$, so $T_0 \not\leq M_0$. Moreover, the fact that $H_0 \leq M_0$ implies that $M_0 = Y_{\p^j}$ for a closed $\p^j$-stable maximal rank subgroup $Y$ of $X$. Now applying Theorem~\ref{thm:shintani_cosets} gives that $x$ is contained in a core-free maximal subgroup of $G$.
\end{proof}

\begin{remark}\label{rem:graph}
There is an alternative proof of Case~1 of Proposition~\ref{prop:graph} when $T_0 = \PSL_n(q_0)$. Write $V = \F_{q_0}^n$ for the natural module for $T_0$. Since $x_0^2$ is unipotent, there is a $1$-space $U$ of $V$ stabilised by $x_0^2$. Let $H_0$ be the stabiliser in $T_0$ of $U$. Then $x_0$ normalises $H_0 \cap H_0^{x_0}$. Now $H_0^{x_0}$ is the stabiliser in $T$ of an $(n-1)$-space $W$ of $V$. If $U \subseteq W$, then $H_0\cap H_0^{x_0}$ is a subgroup of type $P_{1,n-1}$; otherwise, $V = U \oplus W$, so $H_0 \cap H_0^{x_0}$ has type $\GL_1(q_0) \times \GL_{n-1}(q_0)$. In either case $x_0$ is contained in $M_0 = N_{G_0}(H_0 \cap H_0^{x_0})$ and we proceed by Shintani descent in the usual way.
\end{remark}

\subsection{\boldmath The groups $B_2(q)$, $G_2(q)$ and $F_4(q)$} \label{ss:proof_bfg}

We now turn to the cases not covered by Propositions~\ref{prop:standard} and~\ref{prop:graph}, beginning with the almost simple groups with socle $T \in \{ B_2(2^f), G_2(3^f), F_4(2^f) \}$.

\begin{proposition} \label{prop:bfg}
Let $T \in \{B_2(2^f),\, G_2(3^f),\, F_4(2^f) \}$ and let $G = \< T, \rho^i \>$ for an odd divisor $i$ of $f$. Let $x \in G$. Assume that $(G,x)$ is not in Theorem~\ref{thm:derangement}. Then $x$ is not totally deranged.
\end{proposition}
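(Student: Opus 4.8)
The plan is to run the same Shintani-descent machinery used in Propositions~\ref{prop:standard} and~\ref{prop:graph}, now taking $X \in \{ B_2, G_2, F_4 \}$ with the graph-field endomorphism $\rho$ as the base Steinberg endomorphism. Since $\Inndiag(T) = T$ and $\rho^2 = \p$, we have $T = X_{\p^f} = X_{\rho^{2f}}$ and $G = \< T, \rho^i \>$ with $G/T$ cyclic generated by $\wrho^i$. After replacing $x$ by a generator of $\< x \>$, I would reduce to the case $x \in T\wrho^{ic}$ with $ic \div 2f$ and form the Shintani map $F$ of $(X,\rho^{2f},\rho^{ic})$, so that $x_0 := F(x)$ lies in $T_0 := X_{\rho^{ic}}$ and, by Remark~\ref{rem:shintani}(ii), $x_0$ is $X$-conjugate to $x^{-e}$ with $e=2f/(ic)$ and $|x| = e|x_0|$. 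The crucial dichotomy is the parity of $c$: if $c$ is odd then $ic$ is odd, $\rho^{ic}$ is genuinely graph-field and $T_0$ is the very twisted (Suzuki or Ree) group ${}^2X(p^{ic})$; if $c$ is even then $\rho^{ic} = \p^{ic/2}$ and $T_0 = X(p^{ic/2})$ is an untwisted subfield group of $T$. Note that a totally deranged element must lie in an even-$c$ coset, since condition~(iii) of Theorem~\ref{thm:derangement} places it in $T\a$ with $\a \in \< \p \> = \< \rho^2 \>$.

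For the odd-$c$ case I would exploit the completely understood maximal subgroup structure of the Suzuki and Ree groups ${}^2B_2(p^{ic})$, ${}^2G_2(p^{ic})$, ${}^2F_4(p^{ic})$. Every element $x_0 \in T_0$ lies in one of their maximal subgroups (a Borel/Frobenius subgroup, a torus normaliser, a subfield subgroup, or one of the few extra reductive or almost simple subgroups of ${}^2F_4$), and each of these is either of the form $Y_{\rho^{ic}}$ for a $\rho$-stable closed subgroup $Y \leq X$ or is a subfield subgroup. Applying Theorem~\ref{thm:shintani_subgroups} and Theorem~\ref{thm:shintani_cosets} to the algebraic subgroups and Corollary~\ref{cor:shintani_subfields} to the subfield subgroups pulls this back to a core-free maximal overgroup of $x$ in $G$, so no element in an odd-$c$ coset is totally deranged, consistently with the observation above.

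The even-$c$ case is where the work and the examples lie. Here $x_0$ is an honest element of $\Sp_4(q_0)$, $G_2(q_0)$ or $F_4(q_0)$ with $q_0 = p^{ic/2}$, and its Jordan decomposition $x_0^{-1} = su$ matches the data of Theorem~\ref{thm:derangement}(iii) (with this $q_0$ and this $e$). I would split on whether $x_0$ lies in a parabolic subgroup or is regular semisimple (Lemma~\ref{lem:parabolic}) and, in the latter case, locate $x_0$ in a reductive maximal rank subgroup $Y_{\p^{ic/2}}$. The key is to decide when the exceptional graph(-field) automorphism $\rho^i$ \emph{swaps} such a subgroup (so its normaliser is not even a proper subgroup of $G$) and when $x_0$ is forced into a genuinely $\rho^i$-stable core-free maximal subgroup; Lemma~\ref{lem:subgroups_closed} and Theorem~\ref{thm:shintani_cosets} transfer the maximality statements, while Corollary~\ref{cor:shintani_subfields} detects membership in subfield subgroups. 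For $G_2(q_0)$ and $F_4(q_0)$ I expect every $x_0$ always to lie in some $\rho^i$-stable core-free maximal subgroup, so these socles yield \emph{no} totally deranged elements at all; for $\Sp_4(q_0)$ the only elements avoiding every $\rho^i$-stable core-free maximal subgroup are the regular semisimple (times unipotent) elements in the torus of order $q_0^2-1$, and I would show that the failure of any of the arithmetic conditions~(b)--(d) of Theorem~\ref{thm:derangement} is exactly what pushes $x_0$ into one further $\rho^i$-stable overgroup — a torus normaliser of the other type, a subfield subgroup, or (when $f/e$ is odd) a Suzuki subgroup — thereby exhibiting the required core-free maximal overgroup of $x$.

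The hardest part will be this even-$c$ analysis for $\Sp_4(q_0)$: one must track precisely how the exceptional graph automorphism of $B_2 = C_2$ in characteristic $2$ permutes the maximal subgroups containing a semisimple element of order dividing $q_0^2-1$ (the two parabolics, the subgroups of type $\O^\pm_4(q_0)$ and $\Sp_2(q_0^2).2$, the subfield subgroups, and the Suzuki subgroups when applicable) and to match the resulting unavoidable elements exactly with conditions~(a)--(d) of Theorem~\ref{thm:derangement}. Verifying that $G_2$ and $F_4$ never isolate an element from all maximal-rank and subfield overgroups is a parallel but more routine check, carried out using the tables of \cite{ref:LiebeckSaxlSeitz92} together with Lemma~\ref{lem:subgroups_closed}. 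Since we only need a core-free maximal overgroup for each $(G,x)$ outside Theorem~\ref{thm:derangement} (the converse being deferred to Section~\ref{ss:proof_examples_proof}), it suffices throughout to exhibit one such overgroup rather than to enumerate all of them.
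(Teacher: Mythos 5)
Your skeleton coincides with the paper's: the same Shintani map of $(X,\p^f,\rho^l)$ (your $l=ic$), the same dichotomy on the parity of $l$ giving a very twisted group $T_0$ in the odd case and an untwisted one in the even case, and the same recognition that for $B_2$ the arithmetic conditions (b)--(d) of Theorem~\ref{thm:derangement} are exactly what Corollary~\ref{cor:shintani_subfields} converts into subfield membership or $k$-th power conditions. The genuine gaps are in the two steps you treat as automatic. In the odd case you assert that each maximal subgroup of the Suzuki/Ree group $T_0$ ``pulls back to a core-free maximal overgroup of $x$ in $G$''. Shintani descent (Theorems~\ref{thm:shintani_subgroups} and~\ref{thm:shintani_cosets}) transfers containment, not maximality, and the assertion is false: for the maximal tori of ${}^2F_4(q_0)$ of orders $(q_0\pm1)^2$, $q_0^2\pm1$, $(q_0-1)(q_0\pm\sqrt{2q_0}+1)$, $(q_0\pm\sqrt{2q_0}+1)^2$ and $q_0^2-q_0+1$, the normaliser in $G$ of the corresponding torus of $F_4(q)$ need not be maximal, so pulling back the maximal subgroup $N_{T_0}(H_0)$ proves nothing. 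The paper instead embeds $x_0$ in a carefully chosen, generally \emph{non-maximal}, overgroup of $T_0$ (of type $\Sp_4(q_0)$, ${}^2B_2(q_0)^2$ or $\SU_3(q_0)$) whose pullback (of type $\Sp_4(q)^2$, $\Sp_4(q^2)$ or $\SL_3^\pm(q)\circ\SL_3^\pm(q)$) \emph{is} maximal; and even where the torus pullback does work there are exceptions ($T=\Sp_4(4)$ and $T=F_4(2)$) requiring the ad hoc overgroups $\<\Sp_4(2),\wrho\>$ and $\<{}^2F_4(2),\wrho\>$. Your sketch has no mechanism for these repairs.

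The second gap is in the even case. Your dichotomy via Lemma~\ref{lem:parabolic} --- $x_0$ in a parabolic subgroup, or else regular semisimple --- leaves the mixed elements stranded. A mixed element $x_0=su$ does lie in a parabolic subgroup of $T_0$, but a parabolic pullback gives a maximal subgroup of $G$ only when the parabolic can be chosen $\rho$-stable ($\rho$ swaps $P_1\leftrightarrow P_2$ in $B_2$, and $P_1\leftrightarrow P_4$, $P_2\leftrightarrow P_3$ in $F_4$), and when the semisimple part $s$ acts irreducibly on a nondegenerate $2$-space (order dividing $q_0+1$) the element $x_0$ lies in no Borel and hence, for $B_2$, in no $\rho$-stable parabolic at all. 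This is why the paper has a separate mixed case, using subgroups of type $\O^\pm_2(q_0)^2$ for $B_2$ and $\SL_2(q_0)\circ\SL_2(q_0)$ for $G_2$, and, for $F_4$, the dedicated Lemma~\ref{lem:bfg_mixed}, whose proof needs L\"ubeck's semisimple-centraliser data and explicit root-subgroup computations; this is considerably more than the ``routine check'' your final paragraph anticipates. Two smaller points: $G_2(3)$ must be excluded at the outset via Proposition~\ref{prop:computational}; and your parenthetical ``so its normaliser is not even a proper subgroup of $G$'' is the wrong diagnosis --- when $\rho^i$ moves a subgroup, its normaliser in $G$ is still a proper subgroup; the problem is that it fails to be maximal and its maximal overgroups need not be core-free.
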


Before proving Proposition~\ref{prop:bfg}, let us record two preliminary results. We include the second, not just to reduce the length of the proof of Proposition~\ref{prop:bfg}, but also because we will adopt the arguments in the proof a few more times in Section~\ref{ss:proof_ade} and we wish to explain the general strategies in detail once and for all.

\begin{lemma} \label{lem:bfg_b2b2}
Let $T_0 = F_4(q_0)$ with $q_0=2^j$ and let $M_0 \leq T_0$ have type $\Sp_4(q_0)^2$. Then $M_0$ has two subgroups of order $(q_0^2-1)^2$ that are not $T_0$-conjugate.
\end{lemma}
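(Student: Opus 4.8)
The plan is to exhibit the two subgroups as maximal tori of $F_4(q_0)$ and to separate their $T_0$-classes using the Weyl group $W(F_4)$. First I would realise $M_0$ concretely. Writing the $F_4$ root system in $\mathbb{R}^4$ with long roots $\pm e_i \pm e_j$ and short roots $\pm e_i$ and $\frac12(\pm e_1 \pm e_2 \pm e_3 \pm e_4)$, the two orthogonal planes $\langle e_1, e_2 \rangle$ and $\langle e_3, e_4 \rangle$ each carry a $B_2$ subsystem (the short roots $\pm e_i$ together with the long roots $\pm e_i \pm e_j$), and together these span the subsystem $Y$ of type $B_2 B_2$. Thus $M_0 = Y_{\p^j} \cong \Sp_4(q_0) \times \Sp_4(q_0)$ (recall that $B_2 = C_2$ and $q_0$ is even). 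Since $\p^j$ acts trivially on $W(F_4)$, the maximal tori of $M_0$ are the products $T_1 \times T_2$ of maximal tori of the two $\Sp_4(q_0)$ factors, each is a maximal torus of $F_4(q_0)$, and its $T_0$-class is recorded by the $W(F_4)$-class of the corresponding element of $W(B_2) \times W(B_2) \leq W(F_4)$.

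Next I would use that $W(B_2) \cong D_8$ has exactly two classes of reflections, namely those in long and in short roots, and that each yields a maximal torus of order $q_0^2 - 1$ in the ambient $\Sp_4(q_0)$ (such a reflection acts with eigenvalues $\{1,-1\}$ on the rank-$2$ cocharacter lattice). From this I would build two maximal tori of $M_0$, each of order $(q_0^2 - 1)^2$: let $A$ arise from a long-root reflection in each factor (Weyl element $s_{e_1-e_2} s_{e_3-e_4}$), and let $B$ arise from a long-root reflection in the first factor and a short-root reflection in the second (Weyl element $s_{e_1-e_2} s_{e_3}$). Both Weyl elements are involutions with eigenvalues $1,1,-1,-1$ on the reflection representation, so both tori have order $(q_0-1)^2(q_0+1)^2 = (q_0^2-1)^2$, as required, and both lie in $M_0 \leq T_0$.

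It remains to show that $A$ and $B$ are not $T_0$-conjugate, which is the crux of the argument. Here I would invoke the standard fact that conjugacy of maximal tori of $F_4(q_0)$ is governed by $W(F_4)$-conjugacy of the associated Weyl elements, together with the observation that $W(F_4)$-conjugate involutions have isometric (length-preserving) root subsystems inside their $(-1)$-eigenspaces. I would then read off these subsystems directly. For $A$ the $(-1)$-eigenspace is $\langle e_1 - e_2, e_3 - e_4 \rangle$, which contains the long roots $\pm(e_1-e_2), \pm(e_3-e_4)$ and also the four short roots $\frac12(\pm(e_1-e_2) \pm (e_3-e_4))$, so its root subsystem has type $B_2$. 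For $B$ the $(-1)$-eigenspace is $\langle e_1 - e_2, e_3 \rangle$, which contains only $\pm(e_1-e_2)$ and $\pm e_3$ (no short half-sum root survives, since those have all four coordinates nonzero), so its root subsystem has type $A_1 \tilde{A}_1$. As $B_2 \not\cong A_1 \tilde{A}_1$, the two Weyl involutions are not conjugate in $W(F_4)$, and hence $A$ and $B$ are not $T_0$-conjugate.

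The main obstacle is precisely this final non-conjugacy check. A naive comparison would note that all three reflection-pair tori (long--long, short--short and long--short) share the order $(q_0^2-1)^2$ and might wrongly conclude they are all conjugate. The essential subtlety is that one must account for the short half-sum roots of $F_4$ when determining the root subsystem of the $(-1)$-eigenspace: this is exactly what separates the $B_2$-type torus $A$ from the $A_1 \tilde{A}_1$-type torus $B$, whereas the long--long and short--short tori both give type $B_2$ and are in fact fused by $W(F_4)$. Choosing a genuinely mixed long/short reflection pair across the two $B_2$ factors is therefore indispensable to the construction.
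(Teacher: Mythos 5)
Your proof is correct, and it reaches the paper's conclusion by a genuinely different argument; in fact the two proofs use essentially the same pair of tori. The paper takes $K_0 = K_1 \times K_2$ with $|K_1| = (q_0-1)^2$ and $|K_2| = (q_0+1)^2$, whose associated Weyl involution $(1,-1) \in W(B_2) \times W(B_2)$ acts as $-1$ on the $B_2$-plane $\langle e_3, e_4 \rangle$, so $K_0$ lies in the same $W(F_4)$-class (hence the same $T_0$-class) as your long--long torus $A$; and the paper's $L_0$, the product of the two non-conjugate tori of order $q_0^2-1$, is precisely your mixed torus $B$. The difference is how non-conjugacy is proved. The paper stays inside the finite groups and compares normalisers: $N_{M_0}(K_0)/K_0 \cong D_8^2$ has order $64$, while $N_{M_0}(L_0)/L_0 \cong 2^4$, and a separate overgroup analysis (the only maximal rank maximal subgroup that can contain a group of order $(q_0^2-1)^2.2^4$ is $\Sp_8(q_0)$) shows $N_{T_0}(L_0) = N_{M_0}(L_0)$, so the normaliser orders distinguish the two subgroups. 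You instead work in the Weyl group and separate the two involutions by the root subsystems of their $(-1)$-eigenspaces: $B_2$ (eight roots, including the four half-sum short roots you correctly account for) versus $A_1\tilde{A}_1$ (four roots); since Weyl elements act as isometries of the root system, this is a valid conjugacy invariant, and your identification of the subsystems is accurate. Your route is shorter and purely combinatorial, avoiding the paper's delicate overgroup analysis; its one caveat, which applies equally to the paper's proof, is the tacit identification of $T_0$-conjugacy of the finite tori with conjugacy of the corresponding algebraic tori (equivalently, of their Weyl classes). That identification is valid when each finite torus lies in a unique maximal torus of the algebraic group, which holds for $q_0 \geq 4$ but degenerates at $q_0 = 2$ (where, likewise, the paper's claim $N_{M_0}(K_0)/K_0 = D_8^2$ fails because $K_1$ is trivial), so the two proofs stand at the same level of rigour. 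Finally, your closing observation that the long--long and short--short choices are fused in $W(F_4)$, so that a genuinely mixed pair is indispensable, is exactly the point that makes the paper's choice of $L_0$ work.
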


\begin{proof}
Let $K_0 = K_1 \times K_2$ where $K_1,K_2$ are maximal tori of $\Sp_4(q_0)$ of orders $(q_0-1)^2$ and $(q_0+1)^2$, and let $L_0 = L_1 \times L_2$ where $L_1,L_2$ are non-conjugate maximal tori of $\Sp_4(q_0)$ of order $q_0^2-1$. It suffices to show that $K_0$ and $L_0$ are not $T_0$-conjugate. Note that $N_{T_0}(K_0)/K_0 \geq N_{M_0}(K_0)/K_0 = D_8^2$ (where $D_8$ is the dihedral group of order $8$) and $N_{M_0}(L_0)/L_0 = 2^4$. We claim that $N_{M_0}(L_0) = N_{T_0}(L_0)$ (from which it follows that $N_{T_0}(L_0)/L_0 = 2^4$ and $L_0$ is not $T_0$-conjugate to $K_0$). Clearly, $N_{T_0}(L_0)$ is contained in a maximal subgroup $N_0$ of $T_0$ of maximal rank. Inspecting the possibilities for $N_0$ (parabolic subgroups and reductive subgroups of maximal rank in \cite{ref:LiebeckSaxlSeitz92}), we see that the only such subgroup to contain a subgroup of order $(q_0^2-1)^2.2^4$ is $N_0 = \Sp_8(q_0)$, but $N_{M_0}(L_0) = N_{N_0}(L_0)$, so $N_{M_0}(L_0) = N_{T_0}(L_0)$, as claimed. 
\end{proof}

\begin{lemma} \label{lem:bfg_mixed}
Let $T_0 = F_4(q_0)$ with $q_0 = 2^j$ and let $x_0 \in T_0$ be mixed. Then $x_0$ is contained in one of the following types of maximal rank maximal subgroups of $T_0$
\[
P_{2,3}, \quad \SL_3(q_0) \circ \SL_3(q_0), \quad \SU_3(q_0) \circ \SU_3(q_0), \quad \Sp_4(q_0) \wr \Sym_2, \quad \Sp_4(q_0^2).2.
\]
\end{lemma}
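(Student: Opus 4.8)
The plan is to run the argument through the Jordan decomposition of $x_0$. Write $X = F_4$ for the (adjoint, equivalently simply connected) simple algebraic group over $\FF$ and let $\s = \p^j$, so that $X_\s = T_0 = F_4(q_0)$. Write $x_0 = su = us$ with $s$ semisimple and $u$ unipotent. Since $x_0$ is mixed, both $s \neq 1$ and $u \neq 1$, and since $q_0$ is even the semisimple part $s$ has odd order. As $X$ is simply connected, $C := C_X(s)$ is connected; it is a $\s$-stable reductive subgroup of maximal rank, and since it contains the nontrivial unipotent element $u$ it has positive semisimple rank. The key structural point is that $s$ is central in $C$, so $\langle s,u\rangle \leq C_\s = C_{T_0}(s)$, and it suffices to place $\langle s, u\rangle$ inside a $\s$-stable subgroup of one of the five listed types.

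First I would pin down the $X$-class of $C$ using the Borel--de Siebenthal algorithm. Because $|s|$ is odd, the isolated subsystem subgroups of type $B_4$ and $C_3A_1$ (which arise only from an involution) do not occur, and the only proper isolated pseudo-Levi that can appear is the subsystem of type $A_2\tilde A_2$ (coming from an element of order $3$); every other possibility for $C$ is either contained in this $A_2\tilde A_2$ or is a Levi subgroup. In characteristic two the special isogeny identifies the long and short root systems, so the swap of the two factors of an $A_2\tilde A_2$, and of the two factors of a $B_2B_2 = C_2C_2$ subsystem, is realised inside $X$. I would then divide into cases on the type of $C$. When $x_0$ lies in a rational proper parabolic (the ``reducible'' cases), I would apply the finite Borel--Tits theorem (see \cite{ref:MalleTesterman11}) to $u$ to obtain a $\s$-stable parabolic $P$ with $u \in R_u(P)$ and $s \in C_{X_\s}(u) \leq P_\s$, and then reduce through the Levi to land $x_0$ in a subgroup of type $P_{2,3}$. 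When instead the image of $x_0$ is irreducible on the relevant Levi factor, I would embed $\langle s,u\rangle$ into the $\s$-stable subsystem subgroup of type $A_2\tilde A_2$ (giving a subgroup of type $\SL_3(q_0)\circ\SL_3(q_0)$ or $\SU_3(q_0)\circ\SU_3(q_0)$) or, when $C$ carries a factor of type $C_2 = B_2$, into a subsystem of type $B_2B_2$ (giving $\Sp_4(q_0)\wr\Sym_2$ or $\Sp_4(q_0^2).2$).

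The precise finite form in each reductive case is governed by the action of $\s$ on $N_X(Y)/Y$, which I would read off using the parametrization of Section~\ref{ss:p_subgroups} (the bijection between $X_\s$-classes of rational forms of $Y$ and conjugacy classes in $(N_X(Y)/Y)\s$) together with Lemma~\ref{lem:subgroups_power}: an untwisted action gives $\SL_3(q_0) \circ \SL_3(q_0)$ and $\Sp_4(q_0) \wr \Sym_2$, whereas an action twisting by the graph swap gives $\SU_3(q_0) \circ \SU_3(q_0)$ and $\Sp_4(q_0^2).2$. Here Lemma~\ref{lem:bfg_b2b2} is exactly what guarantees that the two forms of the $C_2C_2$ subgroup are genuinely non-conjugate, so that the case split is well defined. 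I expect the main obstacle to be the irreducible cases: one must verify that whenever $x_0$ lies in no $\s$-stable parabolic of type $P_{2,3}$, the semisimple part $s$ forces $\langle s,u\rangle$ into one of the subsystem subgroups $A_2\tilde A_2$ or $B_2B_2$ with $u$ still contained in it. This is where the oddness of $|s|$ and the characteristic-two identification of long and short roots do the real work, and where a short but careful inspection of the possible pseudo-Levi types via \cite{ref:LiebeckSaxlSeitz92} will be needed.
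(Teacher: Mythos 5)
Your skeleton is the same as the paper's: Jordan decomposition $x_0 = su$, reduction to the connected maximal rank centraliser $C_X(s)$, and a case division according to its type. However, the proposal stops exactly where the paper's work starts, and the one structural claim you make confidently along the way is false.

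First, every case-by-case embedding is deferred rather than proved, and these are not formalities. (i) For $C_X(s)$ of type $B_3T_1$ (or its dual $C_3T_1$), your plan is to apply Borel--Tits to $u$ and then ``reduce through the Levi to land $x_0$ in a subgroup of type $P_{2,3}$''. But Borel--Tits only produces \emph{some} proper parabolic containing $x_0$, and the Levi-reduction process can terminate at a parabolic of type $P_1$ or $P_4$; nothing in your argument forces the $\rho$-stable type $P_{2,3}$. The paper needs an explicit root-subgroup computation here (its Case~4): it places $s$ in $\langle S, U_{\pm\alpha_4}\rangle_\s$, inside the Levi of a $P_{2,3}$ parabolic, and conjugates $u$ within $C_{T_0}(s)$ into the unipotent radical of a Borel lying in that parabolic. (ii) For $C_X(s)$ of type $B_2T_2$, the containment of $C_{T_0}(s) = \Sp_4(q_0)\times Z_0$ in a \emph{maximal} subgroup of type $\Sp_4(q_0)^2$ is also not formal; the paper proves it by a centraliser-order argument (a subgroup $Z_0'\leq M_0$ of order $|Z_0|$ has $C_{T_0}(Z_0') = C_{M_0}(Z_0')$, and such centralisers are unique up to $\langle T_0,\rho\rangle$-conjugacy). (iii) The smaller Levi-type centralisers are pushed into the two previous cases using the explicit central-element/centraliser-inclusion data in L\"ubeck's tables. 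Even your preliminary claim that in characteristic two all centraliser types are Levis or $A_2\tilde A_2$ requires the data of Shinoda or L\"ubeck (for instance, one must check that a putative type $B_2A_1T_1$ forces extra short roots to vanish on an odd-order element); citing \cite{ref:LiebeckSaxlSeitz92}, which classifies maximal rank \emph{maximal} subgroups, does not give this.

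Second, the claim that in characteristic two ``the swap of the two factors of an $A_2\tilde A_2$ \dots is realised inside $X$'' is wrong, and the error is visible in the statement of the lemma itself. Conjugation by elements of $X$ preserves root lengths, so no inner element exchanges the long $A_2$ with the short $\tilde A_2$; that swap is realised only by the exceptional graph morphism $\rho$, which is not inner. This is precisely why the list contains $\SL_3(q_0)\circ\SL_3(q_0)$ and $\SU_3(q_0)\circ\SU_3(q_0)$ but \emph{no} form with the two $A_2$ factors fused over $\F_{q_0^2}$, whereas for the $B_2^2$ subgroup the factor swap does lie in $N_X(Y)$, so both $\Sp_4(q_0)\wr\Sym_2$ and the twisted form $\Sp_4(q_0^2).2$ occur. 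Taking your claim seriously would either create a nonexistent case ($\SL_3^\pm(q_0^2)$-type subgroups) or wrongly collapse the $\SU_3$ case into a ``swap''. Relatedly, Lemma~\ref{lem:bfg_b2b2} does not play the role you assign it: it asserts that a single subgroup of type $\Sp_4(q_0)^2$ contains two non-$T_0$-conjugate tori of order $(q_0^2-1)^2$; it is used in the semisimple analysis in the proof of Proposition~\ref{prop:bfg}, not in the paper's proof of this lemma, and it has nothing to do with distinguishing the rational forms $\Sp_4(q_0)^2$ and $\Sp_4(q_0^2)$.
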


\begin{proof}
Let $X$ be the algebraic group $F_4$ and let $\s$ be the Steinberg endomorphism $\p^j$. Let $S$ be a $\s$-stable maximally split maximal torus of $X$. Write $\Phi$ for the root system of $X$ with respect to $S$ and let $\a_1, \a_2, \a_3, \a_4$ be the simple roots of $\Phi$. For $i \in \{1,2,3,4\}$, let
\[
S_i = \{ h \in S \mid \text{$\a_j(h) = 1$ for all $j \in \{ 1,2,3,4 \} \setminus \{ i \}$} \},
\] 
which is a $1$-dimensional subtorus. Note that $S = S_1 \times S_2 \times S_3 \times S_4$ and $S_\s = (S_1)_\s \times (S_2)_\s \times (S_3)_\s \times (S_4)_\s$, with $|(S_i)_\s| = q_0-1$.

Write $x_0 = su = us$ where $s \neq 1$ is semisimple, $u \neq 1$ is unipotent. Note that $C_{T_0}(x_0) \neq T_0$ since $s \neq 1$ and $C_{T_0}(x_0)$ is not a torus since $u \neq 1$. The conjugacy classes of semisimple elements of $T_0$, together with their centralisers, are given in \cite[Tables~II \&~III]{ref:Shinoda74}, but we prefer to use the convenient presentation of the conjugacy classes of centralisers of semisimple elements of $T_0$ given in \cite{ref:Luebeck}. Inspecting the possibilities for $C_{T_0}(x_0)$, we see that $C_{T_0}(x_0) = Y_\s$ for a maximal rank closed subgroup $Y < X$. Moreover, by \cite[Proposition~14.7]{ref:MalleTesterman11} for example, $x_0 = su \in Y^\circ_\s$. Write $H_0 = Y^\circ_\s$ and write $\mathcal{M}_0$ for the set of maximal subgroups of $T_0$ of one of the five types given in the statement. Except in one specific case (where we proceed differently), we will identify $M_0 \in \mathcal{M}_0$ containing $H_0$ and thus also containing $x_0$.

\emph{\textbf{Case~1.} $Y^\circ$ has type $A_2 A_2$.}\nopagebreak

Here $H_0$ has type $\SL_3(q_0) \circ \SL_3(q_0)$ or $\SU_3(q_0) \circ \SU_3(q_0)$, so is certainly contained in a subgroup in $\mathcal{M}_0$.

\emph{\textbf{Case~2.} $Y^\circ$ has type $S_1 B_2 S_4$.}\nopagebreak

Here $H_0$ is the centraliser of a subgroup $Z_0$ of order $(q_0-1)^2$, $(q_0+1)^2$, $q_0^2-1$ or $q_0^2+1$ (for each order $H_0$ is determined uniquely up to $\<T_0,\rho\>$-conjugacy, even though there are two possibilities up to $T_0$-conjugacy when $|Z_0|=q_0^2-1$), and $H_0 = \Sp_4(q_0) \times Z_0$ in each case. Let $M_0$ have type $\Sp_4(q_0)^2$. Then $M_0$ has a subgroup $Z_0'$ of order $|Z_0|$ with centraliser of order $|H_0|$, and since no subgroups of $T_0$ of order $|Z_0|$ have centraliser order properly divisible by $|H_0|$, we deduce that $|C_{T_0}(Z_0')| = |C_{M_0}(Z_0')| = |H_0|$. Therefore, $H_0 = C_{T_0}(Z_0)$ and $C_{T_0}(Z_0')$ are $\<T_0,\rho\>$-conjugate, so $H_0$ is contained in a subgroup $M_0 \in \mathcal{M}_0$ of type $\Sp_4(q_0)^2$.

\emph{\textbf{Case~3.} $Y^\circ$ has type $A_2 S_3 A_1$, $A_2 S_3 S_4$, $A_1 S_2 S_3 A_1$ or $A_1 S_2 S_3 S_4$.}\nopagebreak

In \cite{ref:Luebeck}, under \texttt{elements of other class types in center}, for each semisimple centraliser $C_{T_0}(y)$, we have a list of elements $z_1, \dots, z_k$ such that $z_i \in Z(C_{T_0}(y))$ and, conversely, if $z \in Z(C_{T_0}(y))$, then there exists some $1 \leq i \leq k$ such that $C_{T_0}(z)$ is $T_0$-conjugate to $C_{T_0}(z_i)$. As $z \in Z(C_{T_0}(y))$ implies that $C_{T_0}(y) \leq C_{T_0}(z)$, it is easy to use this information to deduce that one semisimple centraliser is contained in another. By considering each possibility for $C_{T_0}(s)$ with these choices of $Y$ (these are, without loss of generality, \texttt{i = 7, 13, 14, 18} in \cite{ref:Luebeck}), we deduce that $C_{T_0}(s) \leq Z_{\s}$ where $Z$ has type $A_2 A_2$ or $S_1 B_2 S_4$ (\texttt{i = 4, 15}). Applying the conclusions from the previous two cases, we deduce that $H_0 \leq M_0$ for $M_0 \in \mathcal{M}_0$.

\emph{\textbf{Case~4.} $Y^\circ$ has type $B_3 S_4$.}\nopagebreak 

We handle this case differently. Let $B$ be the Borel subgroup of $X$ containing $S$ defined as $B = \< S, U_\alpha \mid \alpha \in \Phi^+ \>$, and let $U$ be the unipotent radical of $B$. Let $P$ be the $P_{2,3}$ parabolic subgroup $P = \< S, U_\alpha \mid \alpha \in \Phi^+ \cup \Phi_{1,4} \>$, and let $L$ be the Levi complement of $P$, so 
\[
L = \< S, U_\alpha \mid \alpha \in \Phi_{1,4}\> = A_1 \times S_2 \times S_3 \times A_1.
\] 
Observe that $U \leq B \leq P$. Noting that $P$ is $\rho$-stable, write $M_0 = P_\s$. Observe that 
\[
H_0 = C_{T_0}(s) = B_3(q_0) \times \< s \> = \< S_1, S_2, S_3, U_\alpha \mid \alpha \in \Phi_{1,2,3} \>_\s \times \< s \>.
\] 
Note that 
\[
s \in \< S, U_\alpha \mid \alpha \in \Phi_4 \>_\s = (S_1 \times S_2 \times S_3)_\s \times A_1(q_0) \leq L_\s \leq P_\s.
\] 
Moreover, by conjugating in $H_0$ if necessary, we may assume that 
\[
u \in \< U_\alpha \mid \alpha \in \Phi^+_{1,2,3}\>_\s \leq U_\s \leq P_\s.
\] 
Therefore, $x_0 = su \in P_\s = M_0$. 

In all cases, $x_0 \in M_0$ for some $M_0 \in \mathcal{M}_0$, as claimed.
\end{proof}

With these results in place, we can now continue with the main proof.

\begin{proof}[Proof of Proposition~\ref{prop:bfg}]
By Proposition~\ref{prop:computational}, assume that $T \neq G_2(3)$. By replacing $x$ by another generator of $\<x\>$ if necessary, assume that $x \in T\wrho^l$ where $i$ divides $l$ and $l$ divides $2f$. Fix $X \in \{ B_2,\, G_2,\, F_4 \}$ and $p \in \{2,3\}$ such that, writing $q=p^f$, we have $T = X(q)$. Fix $\s_1 = \p^f$ and $\s_2 = \rho^l$, so $X_{\s_1} = T$ and $x \in T\ws_2$. Let $F\:X_{\s_1}\ws_2 \to X_{\s_2}\ws_1$ be the Shintani map of $(X,\s_1,\s_2)$. Write $T_0 = X_{\s_2}$ and let $x_0 = F(x) \in T_0$. Since $\s_1 = \s_2^{2f/l}$, Remark~\ref{rem:shintani}(ii) gives us $|x| = e|x_0|$ where $e=2f/l$. 

Let $S$ be a $\rho$-stable maximally split maximal torus of $X$. Let $W_X$ be the Weyl group $N_X(S)/S$, and for $g \in N_X(S)$ write $\bar{g} = Sg \in W_X$. 

\emph{\textbf{Case~1.} $l$ is odd.}\nopagebreak

Here $T_0 = {}^2X(q_0)$ for $q_0 = p^l$ and $F\:T\wrho^l \to T_0$. 

\emph{\textbf{Case~1a.} $x_0$ is semisimple.}\nopagebreak

In this case, $x_0$ is contained in a maximal torus $H_0$ of $T_0$. Let $t \in X$ such that $S^t$ is $\s_2$-stable and $H_0 = (S^t)_{\s_2}$. Theorem~\ref{thm:shintani_subgroups} implies that $x$ is contained in a $G$-conjugate of $\< H, \ws_2 \> \leq N_G(H)$ where $H = (S^t)_{\s_1}$. We will show that $N_G(H)$ is a maximal subgroup of $G$, or, in some cases with $X = F_4$, find an another core-free maximal overgroup of $x$. Write $s = t_{\s_2}$, so that $H_0 = (S^t)_{\s_2} = S_{s\s_2}^t$ and $H = (S^t)_{\s_1} = S_{(s\s_2)^{2f/l}}$. Recall $\s_2 = \rho^l$ induces an involution $\alpha$ on $W_X$. 

First assume that $X \in \{B_2, G_2\}$. Here, $W_X = \< a, b \mid a^{2p}, b^2, a^b = a^{-1} \> \cong D_{4p}$ (recall that $p=2$ if $X=B_2$ and $p=3$ if $X=G_2$). Since $\<a\>^\alpha = \<a\>$, we see that 
\[
(\bar{s}\s_2)^{2f/l} = (\bar{s}\rho^l)^{2f/l} = (\bar{s}\bar{s}^\alpha)^{f/l} \p^f \in \<a\>\s_1.
\] 
Therefore, $|H| \neq q^2-1$ (see \cite[(4.4) \& (5.2)]{ref:Gager73} for example), so, consulting \cite[Tables~8.14 \& 8.42]{ref:BrayHoltRoneyDougal}, we see that $N_G(H)$ is maximal unless $T = \Sp_4(4)$ and $|H| = (4-1)^2$, in which case $N_G(H) \leq \widetilde{H} = \< \Sp_4(2), \wrho \>$, which is a core-free maximal subgroup of $G$ containing $x$ (this is easy to verify directly, but the details are also given in the proof of \cite[Proposition~7.2.7]{ref:BrayHoltRoneyDougal}).

Now assume that $X = F_4$.We consider each class of maximal torus in ${}^2F_4(q_0)$ in turn (for the list of these 11 classes, see \cite{ref:Shinoda75}). First consider $|H_0| = q_0^2 \pm \sqrt{2}q_0^{3/2} + q_0 \pm \sqrt{2q_0} + 1$. By \cite[Table~7.3]{ref:Gager73}, $\bar{s}\bar{s}^\alpha$ is a Weyl group element $w \in W_{F_4}$ of order $12$. Therefore, since 
\[
(\bar{s}\s_2)^{2f/l} = w^{f/l}\p^f \in \<w\>\s_1,
\] 
we deduce that $|H|$ is 
\[
|S_{(s\s_2)^{2f/l}}| \in \{ (q \pm 1)^2,\, (q^2+1)^2, \, (q^2 \pm q +1)^2,\, q^4-q^2+1 \}.
\] 
If $q > 2$, then it follows from \cite[Table~5.2]{ref:LiebeckSaxlSeitz92} that $N_G(H)$ is maximal (noting that if $q=4$, then $f=2$ and $l=1$, which means that $|w^{f/l}|=6$, so, in particular, $|H| \neq (4-1)^2$). Now assume that $T = F_4(2)$, so $f=l=1$ and $|w^{f/l}|=12$. In this case, $|H| = 2^4-2^2+1 = 13$ and $N_G(H) \leq \widetilde{H} = \< {}^2F_4(2), \wrho \>$, which is a core-free maximal subgroup of $G$ containing $x$ (this information is provided in \cite[Tables~1(b) \&~2]{ref:NortonWilson89}).

For the remaining possibilities for $H_0$, we will identify an alternative core-free maximal overgroup of $x$. If $|H_0| \in \{ (q_0 \pm 1)^2,\, q_0^2 \pm 1 \}$, then $x_0$ is contained in a subgroup $M_0$ of $T_0$ of type $\Sp_4(q_0)$ and if $|H_0| \in \{ (q_0-1)(q_0 \pm \sqrt{2q_0} +1),\, (q_0 \pm \sqrt{2q_0} +1)^2 \}$ then $x_0$ is contained in a subgroup $M_0$ of type ${}^2B_2(q_0)^2$; in both cases, Theorem~\ref{thm:shintani_subgroups} implies that $x$ is contained in $M = N_G(K)$ where $K$ has type $\Sp_4(q)^2$ or $\Sp_4(q^2)$, and in either case, $M$ is maximal in $G$ (see \cite[Table~5.1]{ref:LiebeckSaxlSeitz92}). The final possibility is that $|H_0| = q_0^2-q_0+1$, and here $x_0$ is contained in a subgroup $M_0$ of type $\SU_3(q_0)$, so Theorem~\ref{thm:shintani_subgroups} implies that $x$ is contained in $M = N_G(K)$ where $K$ has type $\SL^\pm_3(q) \circ \SL^\pm_3(q)$, which is maximal in $G$ (see \cite[Table~5.1]{ref:LiebeckSaxlSeitz92}).

\emph{\textbf{Case~1b.} $x_0$ is not semisimple.}\nopagebreak

By Lemma~\ref{lem:parabolic}, $x_0$ is contained in a maximal parabolic subgroup $H_0$ of $T_0$. Now $H_0 = Y_{\s_2}$ for a maximal $\rho$-stable parabolic subgroup $Y \leq X$, so Theorem~\ref{thm:shintani_subgroups} implies that $x$ is contained in an $\< X_{\s_1}, \ws_2 \>$-conjugate of $\< Y_{\s_1}, \ws_2 \>$. Let $M = N_G(Y_{\s_1})$, so $x \in M$. Since $Y$ is $\rho$-stable, $\wrho^i \in M$, so $|N_G(Y_{\s_1}):Y_{\s_1}| = 2f/i = |G:T|$ and we deduce that $M = N_G(Y_{\s_1})$ is maximal in $G$.

\vspace{0.3\baselineskip}

\emph{\textbf{Case~2.} $l$ is even.}\nopagebreak

Here $\rho^l = \p^j$, where $j=l/2$, so $T_0 = X(q_0)$ for $q_0 = p^j$ and $F\:T\p^j \to T_0$. 

\emph{\textbf{Case~2a.} $x_0$ is semisimple.}\nopagebreak

First assume that $X \in \{B_2, G_2\}$ and that $x_0$ is contained in a maximal torus $H_0$ of $T_0$ of order different from $q_0^2-1$. Fix $t \in X$ such that $S^t$ is $\s_2$-stable and $H_0 = (S^t)_{\s_2}$ and write $s = t_{\s_2}$, so $H_0= S^t_{s\s_2}$. Theorem~\ref{thm:shintani_subgroups} implies that $x$ is contained in a $G$-conjugate of $N_G(H)$ where $H = (S^t)_{\s_1} = S^t_{(s\s_2)^{f/j}}$. Since $\s_2 = \p^j$ acts trivially on $W_X$, $H = S^t_{s^{f/j}\s_1}$. Writing 
\[
W_X = \< a, b \mid a^{2p}, b^2, a^b = a^{-1} \> \cong D_{4p},
\]
as $|H_0| \neq q_0^2-1$, $\bar{s} \in \< a \>$ (see \cite[(4.4) \& (5.2)]{ref:Gager73}), so $\bar{s}^{f/j} \in \< a \>$ and $|H| \neq q^2-1$. As in Case~1a, $N_G(H)$ is maximal or $T = \Sp_4(4)$, $|H| = 9$ and $N_G(H) \leq \widetilde{H} = \< \Sp_4(2), \wrho \>$, which is maximal.

Next assume that $X \in \{ B_2, G_2 \}$ and that every maximal torus of $T_0$ that contains $x_0$ has order $q_0^2-1$. This implies that the order $r$ of $x_0$ satisfies $r \div (q_0^2-1)$ and $r \ndiv (q_0 \pm 1)$. If $X = G_2$, then $x_0$ is contained in a maximal subgroup of $T_0$ of type $\SL_2(q_0) \circ \SL_2(q_0)$, so by Theorem~\ref{thm:shintani_subgroups}, $x$ is contained in a maximal subgroup of $G$ of type $\SL_2(q) \circ \SL_2(q)$ (see \cite[Table~8.42]{ref:BrayHoltRoneyDougal}). Now assume that $X = B_2$. If $f/i$ is even, then arguing as above, $\bar{s} = 1$, so $|H| = (q-1)^2$ and $N_G(H)$ is maximal or $T = \Sp_4(4)$, $|H| = (4-1)^2$ and $N_G(H) \leq \widetilde{H} = \< \Sp_4(2), \wrho \>$ is maximal. Therefore, we will assume that $f/i$ is odd. Since $x$ is not in Theorem~\ref{thm:derangement}, either there exists a prime divisor $k$ of $j$ not dividing $f/j$ such that $r \div (q_0^{2/k}-1)$, or there there exists a prime divisor $k$ of $f/j$ such that $k \ndiv r$ or $k \div (q_0^2-1)/r$. In the former case, $x_0$ is contained in a $T_0$-conjugate of $B_2(q_0^{1/k})$, which by Corollary~\ref{cor:shintani_subfields}, means that $x$ is contained in a $T$-conjugate of $N_G(B_2(q^{1/k}))$. In the latter case there exists $z \in T_0$ such that $z^r = x_0$, which by Corollary~\ref{cor:shintani_subfields}, means that $x$ is contained in a $T$-conjugate of $N_G(B_2(q^{1/k}))$. In both cases, $x$ is contained in a core-free maximal subgroup of $G$ (see \cite[Table~8.14]{ref:BrayHoltRoneyDougal}).

Now assume that $X = F_4$ and fix a maximal torus $H_0$ of $T_0$ containing $x_0$. The $T_0$-classes of maximal tori of $T_0$ given in \cite[p.145]{ref:Shinoda74}. We will consider each possibility for $H_0$ in turn and identify a subgroup $M_0$ of $T_0$ that contains $H_0$. It will be useful to note that there are two $\<T_0,\rho\>$-classes of maximal tori of order $(q_0^2-1)^2$, but otherwise $|H_0|$ determines $H_0$ up to $\<T_0,\rho\>$-conjugacy. 

First assume that $|H_0| = (q_0^2-1)^2$. Here there are two possibilities for $H_0$ up to $T_0$-conjugacy (and $\<T,\rho\>$-conjugacy), but by Lemma~\ref{lem:bfg_b2b2}, both types are contained in a subgroup $M_0$ of type $\Sp_4(q_0)^2$. Now assume that $|H_0| \neq (q_0^2-1)^2$, so $H_0 \leq M_0$ provided that $M_0$ has a maximal torus of order $|H_0|$. If 
\[
|H_0| \in \{ (q_0^2 + \e q_0+1)^2,\, (q_0^3-\e)(q_0 \pm 1) \},
\] 
then let $M_0$ have type $\SL^\e_3(q_0) \circ \SL^\e_3(q_0)$, if 
\[
|H_0| \in \{ (q_0 \pm 1)^4,\, (q_0 \pm 1)^3(q_0 \mp 1),\, (q_0 \pm 1)^2(q_0^2+1),\, (q_0^2+1)^2,\, q_0^4-1 \},
\] 
then let $M_0$ have type $\Sp_4(q_0)^2$, if $|H_0| = q_0^4+1$, then let $M_0$ have type $\Sp_4(q_0^2)$ and if $|H_0| = q_0^4-q_0^2+1$, then let $M_0 = H_0$.

Therefore, in all cases, $M_0$ is $T_0$-conjugate to $Y_{\s_2}$ where $Y$ is a a closed connected $\rho$-stable subgroup $Y$ of $X$ of type $A_2\tilde{A}_2$ or $B_2^2$ or a maximal torus. Therefore, by Theorem~\ref{thm:shintani_subgroups}, $x$ is contained in a $G$-conjugate of $M = N_G(Y_{\s_1})$, which we claim is a core-free maximal subgroup of $G$ (except for some very small values of $q$). If $|H_0| \neq q_0^4-q_0^2+1$, then $Y$ has type $A_2\tilde{A}_2$ or $B_2^2$, so $Y_{\s_1}$ has one of the following types $\SL_3(q) \circ \SL_3(q)$, $\SU_3(q) \circ \SU_3(q)$, $\Sp_2(q)^2$ or $\Sp_2(q^2)$, all of which yield core-free maximal subgroups $M = N_G(Y_{\s_1})$ (see \cite[Table~1]{ref:LiebeckSaxlSeitz92}). Now assume that $|H_0| = q_0^4-q_0^2+1$. Here, we fix $t \in X$ such that $Y = S^t$, so $H_0 = (S^t)_{\s_2} = S_{s\s_2}^t$, where we write $s = t_{\s_2}$. By \cite[Table~5.2]{ref:Gager73}, we may assume that $\bar{s} \in W_{F_4}$ has order 12. Now $Y_{\s_1} = (S^t)_{\s_2} = S^t_{s^{f/j}\s_2}$, which means that 
\[
|Y_{\s_1}| = |S_{s\s_1}| \in \{ (q \pm 1)^2,\, (q^2+1)^2, \, (q^2 \pm q +1)^2,\, q^4-q^2+1 \},
\] 
so, as in Case~1(a), either $M = N_G(Y_{\s_1})$ is maximal, or $T = F_4(2)$, $|H| = 2^4-2^2+1 = 13$ and $N_G(H) \leq \widetilde{M} = \< {}^2F_4(2), \wrho \>$ is maximal.

\emph{\textbf{Case~2b.} $x_0$ is unipotent.}\nopagebreak

In this case, $x_0$ is contained in a Borel subgroup $H_0$ of $T_0$. If $X \in \{B_2,G_2\}$, then let $M_0 = H_0$ and if $X = F_4$, then let $M_0$ be a parabolic subgroup of $T_0$ of type $P_{1,4}$ or $P_{2,3}$ that contains $H_0$. In either case, $x_0 \in M_0$ and $M_0$ is $T_0$-conjugate to $Y_{\s_2}$ for a maximal $\rho$-stable parabolic subgroup of $Y \leq X$. Theorem~\ref{thm:shintani_subgroups} implies that $x$ is contained in an $\< X_{\s_1}, \ws_2 \>$-conjugate of $\< Y_{\s_1}, \ws_2 \>$. Let $M = N_G(Y_{\s_1})$, so $x \in M$. Since $Y$ is $\rho$-stable, $\wrho^i \in M$, so $|N_G(Y_{\s_1}):Y_{\s_1}| = 2f/i = |G:T|$ and we deduce that $M = N_G(Y_{\s_1})$ is maximal in $G$.

\emph{\textbf{Case~2c.} $x_0$ is mixed.}\nopagebreak

Let us make some general comments about the case $X \in \{B_2,G_2\}$. The maximal subgroups of $T_0$ are given in \cite[Table~8.14]{ref:BrayHoltRoneyDougal} if $T_0 = B_2(q_0)$ and \cite[Table~8.42]{ref:BrayHoltRoneyDougal} if $T_0 = G_2(q_0)$. The conjugacy classes of $T_0$ are given in \cite[Table~IV-1]{ref:Enomoto72} if $T_0 = B_2(q_0)$ and \cite[Table~2]{ref:Enomoto70} if $T_0 = G_2(q_0)$. Consulting these lists, we see that if $u \in T_0$ is any nontrivial unipotent element that commutes with a nontrivial semisimple element, then $|C_{T_0}(u)|_{p'}$ divides $q_0^2-1$ and for any two tori $S^+$ and $S^-$ of orders $q_0-1$ and $q_0+1$ contained in $C_{T_0}(u)$, any mixed element of $T_0$ is $\<T_0,\rho\>$-conjugate to an element $us$ where $s \in S^+ \cup S^-$.

First assume that $X = G_2$. Let $H_0$ be a $\rho$-stable subgroup of $T_0$ of type $\SL_2(q_0) \circ \SL_2(q_0)$. Then $H_0$ contains a nontrivial unipotent element that commutes with subgroups of order $q_0-1$ and $q_0+1$, so $x_0$ is contained in a $T_0$-conjugate of $H_0$. Writing $H_0 = Y_{\s_2}$ for a $\rho$-stable subgroup of $Y$ of type $A_1\tilde{A}_1$, by Theorem~\ref{thm:shintani_subgroups}, $x$ is contained in a $G$-conjugate of $N_G(Y_{\s_1})$, which is a maximal core-free subgroup of $G$ (of type $\SL_2(q) \circ \SL_2(q)$).

Next assume that $X = B_2$. Let $H_0^\e$ be a $\rho$-stable subgroup of $T_0$ of type $\O^\e_2(q_0)^2$. Then $H_0^\e$ contains a nontrivial unipotent element that commutes with a subgroup of order $q_0-\e$, so $x_0$ is contained in a $T_0$-conjugate of $H_0^\e$ for some $\e \in \{+,-\}$. Writing $H_0 = Y_{\s_2}$ for a $\rho$-stable subgroup of $Y$ of type $\Omega_2^2$, by Theorem~\ref{thm:shintani_subgroups}, $x$ is contained in a $G$-conjugate of $N_G(Y_{\s_1})$, which is a maximal core-free subgroup of $G$ (of type $\O^\pm_2(q) \wr \Sym_2$ or $\O^-_2(q^2).2$).

Finally assume that $X = F_4$. Here we make use of Lemma~\ref{lem:bfg_mixed}. In particular, $x_0$ is contained in a $T_0$-conjugate of $Y_{\s_2}$ where $Y$ is a a closed connected $\rho$-stable subgroup $Y$ of $X$ (either $P_{1,4}$ or $P_{2,3}$ parabolic or reductive maximal rank of type $A_2\tilde{A}_2$ or $B_2^2$). Therefore, by Theorem~\ref{thm:shintani_subgroups}, $x$ is contained in a $G$-conjugate of $M = N_G(Y_{\s_1})$, which is a core-free maximal subgroup of $G$ (either $P_{1,4}$ or $P_{2,3}$ parabolic or maximal rank of type  $\SL_3(q) \circ \SL_3(q)$, $\SU_3(q) \circ \SU_3(q)$, $\Sp_4(q) \wr \Sym_2$ or $\Sp_4(q^2).2$), see \cite{ref:LiebeckSaxlSeitz92} for maximality. This completes the proof.
\end{proof}

\subsection{\boldmath The groups $A_m(q)$, $D_m(q)$ and $E_6(q)$} \label{ss:proof_ade}

We now turn to the remaining cases not covered by Propositions~\ref{prop:standard} and~\ref{prop:graph}, all of which have socle $T \in \{ \PSL_n(q) \ \text{($n \geq 2$)}, \ \POm^+_{2m}(q) \ \text{($m \geq 4$)}, \ E_6(q) \}$.

\begin{proposition} \label{prop:a}
Let $T = \PSL_n(q)$ and $T \leq G \leq \Aut(T)$ with $G \not\leq \< \Inndiag(T), \p \>$. Let $x \in G$. Assume that $x \in \< \Inndiag(T), \p \>$. Then $x$ is not totally deranged.
\end{proposition}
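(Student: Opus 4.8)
The plan is to mimic the Shintani-descent strategy of Proposition~\ref{prop:standard}(i), paying careful attention to maximality: since $G \not\leq \<\Inndiag(T),\p\>$, the group $G$ contains a graph or graph-field automorphism, so a subspace stabiliser $P_k$ with $k \neq n/2$ is no longer $\g$-stable and $N_G(P_k)$ is no longer maximal in $G$. Thus the crucial point is to place the Shintani image of $x$ into a $\g$-stable subgroup. First I would replace $x$ by a suitable generator of $\<x\>$ so that $x \in \Inndiag(T)\wp^j$ for some divisor $j$ of $f$; note $n \geq 3$ since $T$ admits a graph automorphism, and we may assume $G$ is not among the groups in Proposition~\ref{prop:computational}. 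Taking $X = \GL_n$, $\s_1 = \p^f$ and $\s_2 = \p^j$, let $F\:\Inndiag(T)\wp^j \to \Inndiag(T_0)$ be the Shintani map of $(X,\s_1,\s_2)$, where $T_0 = \PSL_n(q_0)$ with $q_0 = p^j$, and set $x_0 = F(x) \in \Inndiag(T_0)$, acting on $V = \F_{q_0}^n$.

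If $x_0$ is irreducible (equivalently, by Lemma~\ref{lem:parabolic}, contained in no parabolic subgroup of $\Inndiag(T_0)$), then exactly as in Case~2a of Proposition~\ref{prop:standard} it lies in a field extension subgroup $Y_{\s_2}$ of type $\GL_{n/k}^k$, where $k$ is the least prime dividing $n$. Since $Y$ is $\g$-stable, Theorem~\ref{thm:shintani_subgroups} places $x$ in $N_G(Y_{\s_1})$, which is a core-free maximal subgroup of $G$ by \cite[Table~3.5A]{ref:KleidmanLiebeck} (or \cite[Chapter~8]{ref:BrayHoltRoneyDougal} for small $n$), possibly after replacing it by an alternative overgroup in the finitely many small-field exceptions, as in Case~2E of Proposition~\ref{prop:standard}.

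So suppose $x_0$ is reducible, and regard $V$ as a module over $\F_{q_0}[x_0]$. Decomposing $V$ into indecomposable summands yields the dichotomy that drives this case. Either $V$ has at least two indecomposable summands, in which case $x_0$ stabilises a direct-sum decomposition $V = U \oplus W$ with $U,W$ both nonzero and proper (stabiliser of type $\GL_a \times \GL_{n-a}$); or $V$ is a single indecomposable $\F_{q_0}[t]/(\pi^e)$ with $e \geq 2$ (as $x_0$ is reducible), whose submodule lattice is a chain, so that $x_0$ stabilises an incident pair $U \subset W$ with $\dim U + \dim W = n$ (stabiliser of type $P_{d,n-d}$ with $d = \deg\pi$). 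In either case the relevant $Y \leq X$ can be chosen $\<\s_1,\s_2\>$-stable and $\g$-stable with $x_0 \in Y_{\s_2}$, so Theorem~\ref{thm:shintani_subgroups} places $x$ in $N_G(Y_{\s_1})$.

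It then remains to check that $N_G(Y_{\s_1})$ is a core-free maximal subgroup of $G$, which I expect to be the main obstacle. The subtlety is that the flag stabilisers $P_{d,n-d}$ and the decomposition stabilisers $\GL_a \times \GL_{n-a}$ are \emph{not} maximal in $G \cap \<\Inndiag(T),\p\>$ (each lies inside a parabolic), but become maximal novelties in $G$ precisely because $G$ contains a graph automorphism interchanging $P_k$ and $P_{n-k}$; the corresponding dimension multisets $\{d,n-d\}$ and $\{a,n-a\}$ are symmetric under $k \mapsto n-k$, which is exactly the condition for these to appear as maximal subgroups of the graph-extended group. I would verify maximality (and dispose of the small-field exceptions, as with the $q=4$ case in Case~2E of Proposition~\ref{prop:standard}) by consulting \cite[Table~3.5A]{ref:KleidmanLiebeck} and \cite[Chapter~8]{ref:BrayHoltRoneyDougal}. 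Having exhibited a core-free maximal subgroup of $G$ containing $x$ in every case, we conclude that $x$ is not totally deranged.
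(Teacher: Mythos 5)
Your proof is correct and follows the same Shintani-descent skeleton as the paper's: reduce to $x \in \Inndiag(T)\wp^j$, pass to $x_0 = F(x)$, analyse the rational canonical form of $x_0$, place $x_0$ in a subgroup whose type is preserved by the graph automorphism, pull back with Theorem~\ref{thm:shintani_subgroups}, and quote \cite[Table~3.5A]{ref:KleidmanLiebeck} for maximality using $G \not\leq \< \Inndiag(T), \p \>$. The genuine difference is the case division and the overgroup chosen for indecomposable reducible elements. The paper splits into: $x_0$ decomposable (type $\GL_k(q_0) \times \GL_{n-k}(q_0)$) versus $x_0$ indecomposable, where the minimal and characteristic polynomials both equal $\psi^{n/\deg\psi}$; for $\deg\psi = 1$ it uses a $P_{1,n-1}$ subgroup, and for $\deg\psi = d > 1$ --- a case which absorbs irreducible $x_0$, where $d=n$ --- it uses a field-extension subgroup of type $\GL_{n/k}(q_0^k)$ with $k$ the least prime divisor of $d$. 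You instead isolate the irreducible case (field-extension subgroups, as in Case~2a of Proposition~\ref{prop:standard}) and send every reducible indecomposable $x_0$, for any $d = \deg\pi$, into a flag stabiliser of type $P_{d,n-d}$, exploiting the chain submodule lattice of $\F_{q_0}[t]/(\pi^e)$; this generalises the paper's $\deg\psi=1$ step to all $d$. What your variant buys is that flag stabilisers and decomposition stabilisers are $\C_1$ novelties, maximal in graph-extended groups with no small-field exceptions, so all delicate table-checking is confined to the irreducible case; the paper's route pushes more elements through $\C_2$/$\C_3$ subgroups, treating all indecomposable elements uniformly, and relies on the fact (implicit in its citation) that once a graph automorphism is present these types are also maximal without exception --- for the same reason, your hedge about replacing the overgroup in small-field exceptions is probably unnecessary, though harmless.

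Three small repairs. First, take $X$ to be the adjoint group $A_{n-1}$, as the paper does, so that $X_{\s_1} = \Inndiag(T)$: with $X = \GL_n$ your Shintani map does not literally have domain $\Inndiag(T)\wp^j$, and you should pass to preimages in $\GL_n(q_0)$ only when discussing the module $V$. Second, drop the citation of Lemma~\ref{lem:parabolic}: that lemma says elements lying in no parabolic are regular semisimple, not that they are irreducible; the equivalence you want is simply the definition of parabolics as flag stabilisers. Third, in the degenerate cases $d = n/2$ (where $e=2$ and your ``flag'' is a single $(n/2)$-space, so the overgroup is $P_{n/2}$) and $a = n-a$ (where the decomposition stabiliser lies in the maximal subgroup of type $\GL_{n/2}(q) \wr \Sym_2$), maximality still holds by the same table, so the argument is unaffected, but the subgroup types should be stated correctly.
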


\begin{proof}
Write $q=p^f$. By replacing $x$ by another generator of $\<x\>$ if necessary, assume that $x \in \PGL_n(q)\wp^j$ for some divisor $j$ of $f$. Fix the simple algebraic group $X = A_{n-1}$ and the Steinberg endomorphisms $\s_1 = \p^f$ and $\s_2 = \p^j$. Let $F\: \Inndiag(T)\wp^j \to \Inndiag(T_0)$ be the Shintani map of $(X,\s_1,\s_2)$ where $T_0 = \PSL_n(q_0)$ for $q_0 = p^j$, and let $x_0 = F(x)$.

If $x_0$ is contained in a subgroup of $\Inndiag(T_0)$ of type $\GL_k(q_0) \times \GL_{n-k}(q_0)$, for some $0 < k < n$, then let $M_0$ be such a subgroup. Now assume that $x_0$ is not contained in such a subgroup. Then, by considering the rational canonical form of $x_0$, we deduce that the minimal polynomial and characteristic polynomial are both equal to $\psi^{n/\deg(\psi)}$ for an irreducible polynomial $\psi \in \F_{q_0}[X]$. If $\deg(\psi) = 1$, then $x_0$ is conjugate to an element with a single Jordan block (of size $n$) and is thus contained in a Borel subgroup of $\Inndiag(T_0)$ and consequently a subgroup $M_0$ of type $P_{1,n-1}$. If $\deg(\psi) = d > 1$, then $x_0$ is conjugate to an element arising from an element with a single Jordan block of size $n/d$ embedded via a degree $d$ field extension of $\F_{q_0}$ and is thus contained in a subgroup of type $\GL_{n/d}(q_0^d)$ and consequently a subgroup $M_0$ of type $\GL_{n/k}(q_0^k)$ where $k$ is the least prime divisor of $d$. 

In all cases, $x_0 \in M_0$ and $M_0$ is $\Inndiag(T_0)$-conjugate to $Y_{\s_2}$ for a closed connected subgroup $Y$ of $X$ (of type $\GL_k \times \GL_{n-k}$, $P_{1,n-1}$ or $\GL_{n/k}^k$). By Theorem~\ref{thm:shintani_subgroups}, $x$ is contained in a $G$-conjugate of $M = N_G(Y_{\s_1})$, which is a core-free maximal subgroup of $G$ (of type $\GL_k(q) \times \GL_{n-k}(q)$, $P_{1,n-1}$, $\GL_{n/k}(q) \wr \Sym_k$ or $\GL_{n/k}(q^k).k$), see \cite[Table~3.5A]{ref:KleidmanLiebeck} using the fact that $G \not\leq \< \Inndiag(T), \wp \>$.
\end{proof}

\begin{proposition} \label{prop:e}
Let $T = E_6(q)$ and $T \leq G \leq \Aut(T)$ with $G \not\leq \< \Inndiag(T), \p \>$. Let $x \in G$. Assume that $x \in \< \Inndiag(T), \p \>$. Then $x$ is not totally deranged.
\end{proposition}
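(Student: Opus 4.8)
The plan is to mirror the proof of Proposition~\ref{prop:a}, replacing the rational canonical form analysis of $\PGL_n(q_0)$ by the reductive subgroup structure of $E_6$. First I would replace $x$ by another generator of $\<x\>$ so that $x \in \Inndiag(T)\wp^j$ for a divisor $j$ of $f$, and then fix the simple algebraic group $X = E_6$ together with the Steinberg endomorphisms $\s_1 = \p^f$ and $\s_2 = \p^j$, writing $q_0 = p^j$. Letting $F\:\Inndiag(T)\wp^j \to \Inndiag(T_0)$ be the Shintani map of $(X,\s_1,\s_2)$ with $T_0 = E_6(q_0)$, I obtain $x_0 = F(x) \in \Inndiag(T_0)$. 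The goal is then to exhibit a closed connected $\<\p,\g\>$-stable subgroup $Y \leq X$ with $x_0 \in Y_{\s_2}$ such that $N_G(Y_{\s_1})$ is a core-free maximal subgroup of $G$; Theorem~\ref{thm:shintani_subgroups} places $x$ in an $\<X_{\s_1},\wp^j\>$-conjugate of $\<Y_{\s_1},\wp^j\> \leq N_G(Y_{\s_1})$, which finishes the proof. The standing hypothesis $G \not\leq \<\Inndiag(T),\p\>$ means that $G$ contains a graph automorphism, so it is precisely the $\g$-stable subgroups $Y$ (and their $G$-normalisers) that yield maximal subgroups.

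Writing $x_0 = su = us$ for its Jordan decomposition, I would split on whether $s=1$. If $s=1$, so $x_0$ is unipotent, then $x_0$ lies in a Borel subgroup and hence, after conjugating in $\Inndiag(T_0)$, in the $\g$-stable maximal parabolic $P_2$ of $\Inndiag(T_0)$ (note that $\g$ fixes node $2$); taking $Y$ to be the corresponding $P_2$ parabolic of $X$, the subgroup $N_G(Y_{\s_1})$ is a maximal parabolic of $G$ since $Y$ is $\g$-stable. If $s \neq 1$, then $C_X(s)^\circ$ is a proper connected reductive $\s_2$-stable subgroup of maximal rank with $x_0 = su \in (C_X(s)^\circ)_{\s_2}$, and I would take $Y$ to be a reductive maximal rank subgroup maximal among those that are $\<\p,\g\>$-stable and contain $C_X(s)^\circ$; generically this gives $Y$ of type $A_1A_5$ or $A_2^3$, and Lemma~\ref{lem:subgroups_closed} (case~(i), passing from $\s_2$ to $\s_1 = \s_2^{f/j}$) together with \cite[Tables~5.1 \&~5.2]{ref:LiebeckSaxlSeitz92} shows that $N_{\Inndiag(T)}(Y_{\s_1})$, and hence $N_G(Y_{\s_1})$, is maximal. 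The regular semisimple case $u=1$ with $C_X(s)^\circ$ a maximal torus is subsumed here, handled via the torus-class bookkeeping of Section~\ref{ss:p_subgroups} exactly as in Case~2G of Proposition~\ref{prop:standard}.

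The one genuinely delicate case, which I expect to be the main obstacle, is when the only maximal rank overgroup of $x_0$ available is of type $D_5T_1$, that is, when $C_X(s)^\circ$ sits inside a Levi factor $\POm_{10}^\pm(q_0) \times (q_0 \mp 1)$ of a $P_1$ (or $P_6$) parabolic of $\Inndiag(T_0)$. Such a $Y$ is not $\g$-stable, since $\g$ interchanges the $P_1$ and $P_6$ parabolics, and moreover $Y_{\s_1} = \POm_{10}^\pm(q) \times (q \mp 1)$ is not maximal in $\Inndiag(T)$. Nevertheless — and this is exactly the novelty phenomenon already recorded for $E_6$ at the end of Case~2G of Proposition~\ref{prop:standard} — the normaliser $N_G(Y_{\s_1})$ is a maximal subgroup of $G$ precisely because $G \not\leq \<\Inndiag(T),\p\>$. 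Establishing this requires reading off the novelty maximal subgroups of $\<E_6(q),\g\>$ from \cite[Tables~5.1 \&~5.2]{ref:LiebeckSaxlSeitz92} and verifying that every $x_0$ whose semisimple part has a $D_5$-type centraliser is caught by such a subgroup. Once this case is settled, assembling the alternatives produces in every instance a core-free maximal subgroup of $G$ containing $x$, so $x$ is not totally deranged.
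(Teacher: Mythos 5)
Your skeleton is the same as the paper's: Shintani descent from $E_6(q)$ to $E_6(q_0)$, unipotent $x_0$ placed in a graph-stable parabolic (the paper uses $P_4$, you use $P_2$; both nodes are fixed by $\g$, so both work), all remaining $x_0$ placed in reductive maximal-rank overgroups, and the split $D_5T_1$ subgroup $\POm^+_{10}(q)\times(q-1)$ serving as a \emph{novelty} maximal subgroup of $G$ precisely because $G \not\leq \<\Inndiag(T),\p\>$ --- the phenomenon recorded at the end of Case~2G of the proof of Proposition~\ref{prop:standard}, which the paper indeed reuses here. So you have identified the right delicate point.

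However, the central step --- that every non-unipotent $x_0$ lies in a subgroup of one of finitely many types whose $G$-normaliser is maximal --- is asserted rather than proved, and your mechanism for it has concrete gaps. (1) Taking ``$Y$ maximal among $\<\p,\g\>$-stable reductive maximal rank subgroups containing $C_X(s)^\circ$'' produces a containment of \emph{algebraic} groups only; what the proof needs is a containment of \emph{finite} groups $C_{\Inndiag(T_0)}(s) \leq Z_{\s_2}$ with the correct $\s_2$-twisted form of $Z$. The paper obtains this from L\"ubeck's explicit centraliser data \cite{ref:Luebeck} via the centraliser-inclusion arguments of Lemma~\ref{lem:bfg_mixed} (Cases~2 and~3), and it must treat $(Z,p)\in\{(A_5A_1,2),(A_2^3,3)\}$ separately because there $Z_{\s_2}$ is not itself a semisimple centraliser --- a characteristic subtlety your scheme never meets. (2) Your target list is incomplete: besides $A_5A_1$, $A_2^3$ and $D_5S$ the paper needs $D_4S^2$, and this type genuinely misbehaves, since its finite form $\POm^-_8(q)\times(q^2-1)$ is \emph{not} maximal in $G$; the paper disposes of it by embedding it in the novelty maximal $\POm^+_{10}(q)\times(q-1)$, an extra containment your three-type trichotomy does not supply. (3) Two of your statements are mutually inconsistent and false as written: ``it is precisely the $\g$-stable subgroups $Y$ \dots that yield maximal subgroups'' (your own $D_5T_1$ novelty is not stable under the standard $\g$, only under an $\Inndiag(T)$-conjugate of it; the correct criterion is stability of the $\Inndiag(T)$-class), and ``$N_{\Inndiag(T)}(Y_{\s_1})$, and hence $N_G(Y_{\s_1})$, is maximal'' (maximality in $\Inndiag(T)$ does not pass to $G$ without class-stability under the graph automorphism, and conversely novelties are maximal in $G$ without being maximal in $\Inndiag(T)$; the tables of \cite{ref:LiebeckSaxlSeitz92} must be read for $G$ itself, which is what the paper does). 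These missing verifications are exactly the content of the paper's proof, so as it stands the proposal defers rather than establishes the proposition.
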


\begin{proof}
As usual, write $q=p^f$, assume that $x \in \Inndiag(T)\wp^j$ for a divisor $j$ of $f$ and let $F\: \Inndiag(T)\wp^j \to \Inndiag(T_0)$ be the Shintani map of $(X,\s_1,\s_2) = (E_6, \p^f, \p^j)$ where $T_0 = E_6(q_0)$ and $q_0 = p^j$. Write $x_0 = F(x)$. 

If $x_0$ is unipotent, then $x_0$ contained in a Borel subgroup and therefore a $P_4$ parabolic subgroup of $T_0$. 

From now we will assume that $x_0 \in C_{\Inndiag(T_0)}(s)$ for a nontrivial semisimple element $s \in G_0$. We proceed as we did in the proof of Lemma~\ref{lem:bfg_mixed}. Consulting \cite{ref:Luebeck}, note that $C_{\Inndiag(T_0)}(s) = Y_{\s_2}$ for a closed $\s_2$-stable subgroup $Y \leq X$, and recall that $x_0 \in M_0 = Y^\circ_{\s_2}$. For now assume that $M_0$ does not have order $q_0^6+q_0^3+1$ ( \texttt{[i,j,k] = [21,1,14]} in \cite{ref:Luebeck}). Then arguing as in Case~3 of Lemma~\ref{lem:bfg_mixed}, using the centraliser inclusions given in \cite{ref:Luebeck}, we deduce that $M_0 \leq Z_{\s_2}$ where $Z$ has one of the following types 
\[
A_5A_1, \ \  A_2^3, \ \ D_5S, \ \ A_4A_1S, \ \ A_2^2A_1S, \ \ A_5S, \ \ A_2^2S^2, \ \ D_4S^2
\]
(corresponding to \texttt{i = 2, 3, 4, 5, 6, 8, 12, 14}). Arguing as in Case~2 of Lemma~\ref{lem:bfg_mixed}, if $Y$ has type $A_4A_1S$, $A_2^2A_1S$, $A_5S$, $A_2^2S^2$ or $M_0$ has order $q_0^6+q_0^3+1$ (\texttt{i =  5, 6, 8, 12} or \texttt{[i,j,k] = [21,1,14]}), then $M_0 \leq Z_{\s_2}$ where $Z$ has type $A_5A_1$, $A_2^3$, $D_5S$ or $D_4S^2$. (If $p \geq 5$, then we can carry out these two steps in one, via centraliser inclusions, but we must be more careful in general since $Z_{\s_2}$ is not a semisimple centraliser if $(Y,p)$ is $(A_5A_1,2)$ or $(A_2^3,3)$.) 

Therefore, in all cases, $x_0$ is contained in a $T_0$-conjugate of $Y_{\s_2}$ where $Y$ is a closed connected $\s_2$-stable subgroup $Y$ of $X$ of type $P_4$, $A_5A_1$, $A_2^3$, $D_5S$ or $D_4S^2$. Therefore, by Theorem~\ref{thm:shintani_subgroups}, $x$ is contained in a $G$-conjugate of $Y_{\s_1}$. By \cite{ref:LiebeckSaxlSeitz92}, either $M = N_G(Y_{\s_1})$ is maximal  or $Y_{\s_1}$ has type $\POm^-_8(q) \times (q^2-1)$ and is contained in a maximal subgroup of type $\POm^+_{10} \times (q-1)$. Either way, $x$ is contained in a core-free maximal subgroup of $G$, so $x$ is not totally deranged.
\end{proof}

We remind the reader that we defined the expression \emph{contains triality} in Remark~\ref{rem:groups_automorphism}(ii).

\begin{proposition} \label{prop:d}
Let $T = \POm^+_{2m}(q)$ for an integer $m \geq 4$ and let $T \leq G \leq \Aut(T)$. Assume that $G \not\leq \<\Inndiag(T), \p \>$ and that $G$ does not contain triality. Let $x$ be an element of $G$. Assume that $x \in \< \Inndiag(T), \p \>$ and that $x$ does not feature in Theorem~\ref{thm:derangement}. Then $x$ is not totally deranged.
\end{proposition}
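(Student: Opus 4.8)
The plan is to run the same Shintani-descent machine used in Propositions~\ref{prop:standard}--\ref{prop:e}, specialised to $D_m$ with $m=2^{l-1}$ even. After replacing $x$ by a generator of $\langle x\rangle$ I may assume $x\in\Inndiag(T)\wp^j$ for a divisor $j$ of $f$. I take $X=D_m$ (adjoint), $\s_1=\p^f$, $\s_2=\p^j$, and let $F\colon \Inndiag(T)\wp^j\to\Inndiag(T_0)$ be the Shintani map, where $T_0=\POm^+_{2m}(q_0)$, $q_0=p^j$, $e=f/j$. Writing $x_0=F(x)$, Remark~\ref{rem:shintani}(ii) makes $x_0$ conjugate to $x^{-e}$, so $|x|=e|x_0|$ and the Jordan data of $x^e=su$ matches that of $x_0=s_0u_0$. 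The aim is to find a core-free overgroup of $x_0$ (or a subfield overgroup of $x$) and transport it upward: for an overgroup coming from a $\langle\s_1,\s_2\rangle$-stable algebraic subgroup $Y\leq X$ (parabolic, reducible, maximal rank) I apply Theorem~\ref{thm:shintani_subgroups} to place $x$ in $N_G(Y_{\s_1})$; for subfield overgroups I apply Corollary~\ref{cor:shintani_subfields}. The whole argument hinges on the geometry of $D_m$ with $m$ even: a pair of complementary maximal totally singular subspaces lies in one $\Omega$-orbit, and the parabolics $P_{m-1},P_m$ are swapped by the graph automorphism $\g\in G$, so an element stabilising singular $m$-spaces of only one ruling has \emph{no} $\g$-stable parabolic overgroup in $G$.

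I would first reduce to the ``Singer case''. Since no element of $\O^+_{2m}(q_0)$ acts irreducibly on $V_0=\F_{q_0}^{2m}$ (an irreducible element would force a $-$-type form), $x_0$ always stabilises a proper subspace. If it stabilises a proper nondegenerate subspace it lies in a $\g$-stable $\C_1$ subgroup $\O^\pm_k(q_0)\times\O^\pm_{2m-k}(q_0)$; if it stabilises a totally singular subspace of dimension $\leq m-1$, or singular spaces in both rulings, it lies in a $\g$-stable parabolic. In all these cases Theorem~\ref{thm:shintani_subgroups} exhibits a core-free maximal overgroup of $x$. The residual possibility is that $x_0$ stabilises a maximal totally singular $m$-space $W$ of a single ruling, acts on it as a Singer-type element, and fixes no nondegenerate subspace; then $x_0\in\GL_m(q_0).2$ and $s_0$ lies in the torus $C_{q_0^m-1}$, which gives $|s_0|\mid q_0^m-1$ (condition~(b)); failing this returns us, via a $-$-type constituent, to the nondegenerate $\C_1$ case.

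Within the Singer case I match each remaining hypothesis of Theorem~\ref{thm:derangement} to an overgroup, arguing contrapositively. The key arithmetic fact is that $s_0|_W$ acts $\F_{q_0}$-irreducibly if and only if $|s_0|\nmid q_0^{m/2}-1$, because $m/2$ is the unique maximal proper divisor of the $2$-power $m$; if $|s_0|\mid q_0^{m/2}-1$ then $s_0$ is reducible on $W$ and $x_0$ lies in a $\g$-stable parabolic, or, when $2\mid j$, in the subfield subgroup $\O^+_{2m}(q_0^{1/2})$ via Corollary~\ref{cor:shintani_subfields}(ii). For odd primes $k\mid j$ the same reasoning with the subfield $\O^+_{2m}(q_0^{1/k})$, whose plus-type Singer torus has order $q_0^{m/k}-1=p^{jm/k}-1$, supplies the rest of condition~(c). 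If $|x_0|\mid 2(q_0^{m/2}+1)$ then, even when $s_0|_W$ is irreducible, $x_0$ lies in the imprimitive $\C_2$ subgroup $\O^-_m(q_0)\wr\Sym_2$, whose ``swap'' elements have order dividing $2(q_0^{m/2}+1)$; this is the second clause of~(b). For primes $k\mid e$, Corollary~\ref{cor:shintani_subfields}(i) places $x$ in the degree-$k$ subfield subgroup $\langle\O^+_{2m}(q^{1/k}),\wp\rangle$ exactly when $x_0$ is a $k$-th power in $\Inndiag(T_0)$; computing $k$-th powers in $C_{q_0^m-1}$ (and checking the unipotent factor when $k=p$) shows this happens precisely when condition~(d) fails. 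Finally the Jordan structure controls the mixed case: if $u_0\neq1$ with $p=2$, or $|u_0|>p$, then $C_{V_0}(u_0)$ or a Jordan-induced subspace is a nondegenerate $x_0$-invariant subspace, returning us to the $\C_1$ case and yielding condition~(a).

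The main obstacle is the case $e$ even, which is excluded from Theorem~\ref{thm:derangement} (it demands $\a$ of odd order) yet must be disposed of directly. Here I would use $2\mid e$ and Corollary~\ref{cor:shintani_subfields}(i) with $k=2$ to put $x$ in $\langle\O^+_{2m}(q^{1/2}),\wp\rangle$ whenever $x_0$ is a square; when it is not (so $v_2(|s_0|)$ is the full $2$-part of $q_0^m-1$) I expect to exhibit instead a twisted subfield overgroup of type $\O^-_{2m}(q^{1/2})$ or the imprimitive $\O^-_m(q_0)\wr\Sym_2$, in the spirit of Example~\ref{ex:shintani_subfields}. The genuinely delicate points, and the conceptual crux of the proposition, are (i) the $D_m$ graph-automorphism/ruling combinatorics that single out precisely the even $m=2^{l-1}$ elements as potentially deranged, and (ii) verifying the hypothesis of Corollary~\ref{cor:shintani_subfields}(ii) that the $X$-class of $x_0$ meets $X_{\s_2}$ in a single $\Inndiag(T_0)$-class, which for orthogonal groups requires care with the classes fused by $\g$ and with the passage among $\O$, $\SO$ and $\Omega$.
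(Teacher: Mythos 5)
Your skeleton (the Shintani setup $(D_m,\p^f,\p^j)$, reduction to elements whose only invariant subspaces are $0\subset W\subset V_0$ with $W$ a totally singular $m$-space, and the final subfield arguments via Corollary~\ref{cor:shintani_subfields}) matches the paper's, but two steps are genuinely broken, and both trace to one omission: you never invoke Theorem~\ref{thm:shintani_cosets}, which is the engine of the paper's proof. Membership of $x$ in field-extension and imprimitive subgroups of $G$ corresponds under Shintani descent to membership of $x_0$ in \emph{twisted} fixed-point sets $Y_{y\s_2}$ of disconnected subgroups (e.g. $Y=\Omega_{2m/k}^k$ with $y$ a $k$-cycle permuting the factors), and Theorem~\ref{thm:shintani_subgroups} cannot transport such membership; only the coset version can.

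Concretely, first the mixed case: you claim that if $u_0\neq 1$ with $p=2$ (or $|u_0|>p$) then $C_{V_0}(u_0)$ or a Jordan-induced subspace is a nondegenerate $x_0$-invariant subspace, returning you to the $\C_1$ case. This is false exactly in the critical configuration: the paper shows the residual elements have $u_0=[J_2^m]$ with $s_0$ acting irreducibly on $W=C_{V_0}(u_0)$, which is \emph{totally singular}, and the only invariant subspaces are $0\subset W\subset V_0$, so no nondegenerate one exists. Hence your proposal cannot eliminate the $p=2$, $u_0\neq1$ elements, which is precisely where condition~(a) of Theorem~\ref{thm:derangement} comes from; the paper eliminates them by placing $x_0$ in $\Omega^+_m(q_0^2)$ or $\Omega^+_m(q_0^2).2$ and applying Theorem~\ref{thm:shintani_cosets} to land $x$ in a maximal subgroup of type $\O^\pm_m(q)\wr\Sym_2$ or $\O^-_m(q)\wr\Sym_2$.

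Second, the cases where $m$ has an odd prime divisor, or $(e,m)>1$ — in particular your "main obstacle", $e$ even. (Note also that you restrict to $m=2^{l-1}$ at the outset, but the proposition covers all $m\geq4$; ruling out other $m$ is part of the work.) The paper handles all of these by putting $x_0$ in the field-extension subgroup $\SO^+_{2m/k}(q_0^k)=Y_{y\s}$ and deducing from Theorem~\ref{thm:shintani_cosets} that $x$ lies in a maximal subgroup of type $\O^+_{2m/k}(q^k).k$ or $\O^+_{2m/k}(q)\wr\Sym_k$. Your substitute for $e$ even — Corollary~\ref{cor:shintani_subfields}(i) when $x_0$ is a square, otherwise a hoped-for overgroup of type $\O^-_{2m}(q^{1/2})$ or $\O^-_m(q_0)\wr\Sym_2$ — fails on concrete elements. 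Take $q_0=9$, $m=4$, $e=2$, and $x_0$ semisimple of order $2^5\cdot41$ in the cyclic torus $C_{q_0^4-1}=C_{6560}$: it is not a square (its order carries the full $2$-part of $q_0^4-1$ and its centraliser is essentially that torus), no element of an $\O^-_8(3)$-type subgroup has order divisible by $32$ (every cyclic torus factor $3^a\pm1$, $a\leq4$, has $2$-part at most $16$), and $\O^-_4(9)\wr\Sym_2$ has no element of order divisible by $16\cdot41$ in its base group or swap coset. The correct overgroup of $x$ is the imprimitive $\O^+_4(q)\wr\Sym_2$, reached only from $x_0\in\SO^+_4(q_0^2)$ via the coset theorem. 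So the missing idea is not a technicality but the mechanism on which the whole elimination rests.
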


\begin{proof}
If $m \geq 5$, then $G \leq \Aut(T) = \< \PGO^+_{2m}(q), \wp\>$. Now consider the case $m=4$. Since $G$ does not contain triality, we know that $G \leq \< \Inndiag(T), \wp, \g\t^i \>$ for an integer $i$. By replacing $G$ by $G^{\t^i}$, we obtain $G \leq \< \Inndiag(T), \wp, \g \> = \< \PGO^+_8(q), \wp \>$ while maintaining $x \in \< \Inndiag(T), \wp \>$. Therefore, in all cases, for the remainder of the proof, we may assume that $G \leq \< \PGO^+_{2m}(q), \wp \>$.

As usual, write $q=p^f$, assume that $x \in \Inndiag(T)\wp^j$ for a divisor $j$ of $f$ and let $F\: \Inndiag(T)\wp^j \to \Inndiag(T_0)$ be the Shintani map of $(X,\s_1,\s_2) = (D_m, \p^f, \p^j)$ where $T_0 = \POm^+_{2m}(q_0)$ and $q_0 = p^j$. Write $x_0 = F(x)$. Write $G_0 = \Inndiag(T_0)$ and let $V = \F_{q_0}^{2m}$ be the natural module for $G_0$. Note that $|x| = f/j \cdot |x_0|$.

Write $x_0 = su = us$ for elements $s,u \in G_0$ such that $s$ is semisimple and $u$ is unipotent. Let $U = \{ v \in V \mid vu = v \}$, noting that $U$ is nonzero since $u$ is unipotent. Since $Us = U$ and $s$ is semisimple, we may write $U = \oplus_{i=1}^{d} W_i$ where $s$ stabilises, and acts irreducibly on, each $W_i$ and $0 < \dim{W_1} \leq \dots \leq \dim{W_d} < n$, noting that $W_d$ is proper since $G_0$ contains no elements that act irreducibly on $V$. Let $W = W_1$. Since $s$ acts irreducibly on $W$, we deduce that $W$ is either totally singular or nondegenerate. Now $x_0 = su$ stabilises $W$, and let $H_0$ be the stabiliser of $W$ in $G_0$, so $H_0$ is parabolic if $W$ is totally singular and has type $\O^-_k(q_0) \times \O^-_{2m-k}(q_0)$ if $W$ is nondegenerate. In the latter case, as $u$ acts trivially on $W$ and $s$ acts irreducibly on $W$, $x_0 = su$ is contained in a subgroup of type $\SO^-_k(q_0) \times \SO^-_{2m-k}(q_0)$.

First assume that $W$ is not a totally singular $m$-space. In this case, $H_0 = Y_{\s_2}$ for a closed connected $\s_2$-stable subgroup $Y$ of $X$, where $Y$ is the stabiliser of a totally singular $k$-space with $k < m$ or a nondegenerate $k$-space with $k$ even. By Theorem~\ref{thm:shintani_subgroups}, $x$ is contained in the stabiliser $M_0$ in $G$ of a totally singular $k$-space of $\F_q^{2m}$ with $k < m$ or a nondegenerate $k$-space of $\F_q^{2m}$ with $k$ even. Either way, by \cite[Table~3.5E]{ref:KleidmanLiebeck}, $M = N_G(M_0)$ is a core-free maximal subgroup of $G$, noting that if $q \in \{2,3\}$, then $M_0$ does not have type $\SO^+_2(q) \times \SO^+_{2m-2}(q)$.

Next assume that $W$ is a totally singular $m$-space. Therefore, $\dim{U} \in \{m,2m\}$, so either $U = W$ (with $u \neq 1$) or $U = W \oplus W^* = V$ (with $u = 1$). For now assume that $U = V$. Here $x_0 = s$ is semisimple, and since $x_0$ acts irreducibly on $W$, we deduce that $x_0 = g \oplus g^{-\tr}$ with respect to the decomposition $V = W \oplus W^*$, with $g$ acting irreducibly on $W$. In particular, $x_0$ is semisimple of order $r$ that satisfies $r \div (q_0^m-1)$ and $r \ndiv (q_0^{m/k}-1)$ for all prime divisors $k$ of $m$. If $m$ is even and $r \div 2(q_0^{m/2}+1)$, then $x_0$ also stabilises a nondegenerate minus-type $m$-space (and its orthogonal complement), so we return to the case in the previous paragraph. Therefore, when $U=V$, we can assume that $r \ndiv 2(q_0^{m/2}+1)$ if $m$ is even. 

Now assume that $U = W$. Since $u \in \Inndiag(T_0) \cap \PO^+_{2m}(q_0)$, by \cite[Theorem~11.43]{ref:Taylor92}, $u$ has an even-dimensional $1$-eigenspace, so $m = \dim{W} = \dim{U}$ is even. We claim that $u$ has Jordan form $[J_2^m]$. To see this, observe that $u$ has no $J_1$ blocks since it has no nonsingular $1$-eigenvectors, and since it has a $m$-dimensional $1$-eigenspace there must be $m$ distinct Jordan blocks. If $p$ is odd, then this determines $u$ up to $G_0$-conjugacy. If $p=2$, then since $u$ is centralised by an element $s$ acting irreducibly on a $m$-space, $u$ is again determined up to $G_0$-conjugacy ($u$ is an $a_m$ involution, in the notation of \cite{ref:AschbacherSeitz76}). Without loss of generality, we may write $W = \< e_1, \dots, e_m \>$ and $W^* = \< f_1, \dots, f_m \>$. Now $C_{G_0}(u) = Q{:}D$ where $|Q| = q_0^{m(m-1)/2}$ and $D = \Sp_m(q_0)$, which acts diagonally on $W \oplus W^*$. Therefore, $s = g \oplus g^{-\tr}$ with respect to the decomposition $V = W \oplus W^*$, with $g \in \Sp_{2m}(q_0)$ acting irreducibly on $W$, so the order $r$ of $s$ satisfies $r \div (q_0^{m/2}+1)$ and $r \ndiv (q_0^{m/2}-1)$ (note that $g = g^{-\tr}$ since $g \in \Sp_{2m}(q_0)$). 

Continue to assume that $U=W$, but now also assume that $p=2$. For now, in addition, assume that $f/j$ is even. From the description of $x_0$, we see that $x_0$ is contained in a field extension subgroup $\Omega^+_m(q_0^2)$, so $x_0 \in Y_{y\s_2} = Y_{y\s_2}\ws_1$ (using the fact that $f/j$ is even) for some $\s_2$-stable subgroup $Y \leq X$ of type $\Omega_m^2$. Therefore, by Theorem~\ref{thm:shintani_cosets}, we know that $x \in Y_{\s_1}y\ws_2$, where $y$ interchanges the two factors of $Y$, so $x$ is contained in a subgroup of $G$ of type $\O^\pm_m(q) \wr \Sym_2$, which is maximal by \cite[Table~3.5E]{ref:KleidmanLiebeck}. Now assume that $f/j$ is odd. Here we note that there exists a subgroup chain $H_0 \leq M_0 \leq G_0$ where $H_0 = \Omega^-_m(q_0)$ and $M_0 = \Omega^+_m(q_0^2)$ such that $s \in H_0$. Moreover, we can choose these subgroups such that $u \in N_{G_0}(M_0) \setminus M_0$. In particular, $x_0 \in N_{G_0}(M_0)$ of type $\Omega^+_m(q_0^2).2$, so for a suitable $\s_2$-stable subgroup $Y \leq X$ of type $\Omega_m^2$, we have $x_0 \in Y_{u\s_2}\ws_1$ (using the fact that $f/j$ is odd). Therefore, by Theorem~\ref{thm:shintani_cosets}, we know that $x \in Y_{\s_1}u\ws_2$, so $x$ is contained in a subgroup of $G$ of type $\O^-_m(q) \wr \Sym_2$, which, again, is maximal by \cite[Table~3.5E]{ref:KleidmanLiebeck}.

To summarise, for the remainder of the proof we may assume that $x_0=su=us$, where $r = |s|$ satisfies $r \div (q_0^m-1)$ and $r \ndiv (q_0^{m/2}-1)$ if $m$ is even, and either $u=1$ and $r \ndiv 2(q_0^{m/2}+1)$ if $m$ even, or $p$ is odd, $m$ is even, $u$ has Jordan form $[J_2^m]$ and $r \div (q_0^{m/2}+1)$. Let us write $\s = \s_2$ and $e = f/j$, so $q=q_0^e$.

Let $k$ be a prime divisor of $m$, and assume that $m/k$ is even if $u \neq 1$. Our assumptions imply that $x_0$ is contained in a field extension subgroup $H_0 = \SO^+_{2m/k}(q_0^k)$. Let $Y$ be a closed $\s_2$-stable subgroup of type $\Omega_{2m/k}^k$ that contains a maximally split maximal torus of $X$, so $H_0 = Y_{y\s}$ for a $k$-cycle $y$ acting on the factors of $Y$. We apply this observation in two cases.

First, assume that there exists an odd prime divisor $k$ of $m$ such that $(e,k)=1$. Note that if $u \neq 1$, then $m$ is even, so $m/k$ is even. In this case, $Y_{y\s} = Y_{y\s}y^{-e}\ws^e$ and $|y^{-e}|=k$ since $(e,k)=1$. Now, by Theorem~\ref{thm:shintani_cosets}, $x$ is contained in $Y_{y^{-e}\s^e}y\s$, which implies that $x$ is contained in a subgroup of type $\O^+_{2m/k}(q^k).k$, which is maximal by \cite[Table~3.5E]{ref:KleidmanLiebeck}, noting that $k$ is odd. Therefore, we now assume that every odd prime divisor of $m$ divides $e$.

Next assume that $(e,m) > 1$ and let $k$ be the largest prime divisor of $(e,m)$. Note that if $u \neq 1$, then $m$ is even, so if $k$ is odd, then $m/k$ is even, and if $k=2$, then $m/2$ is even, since otherwise, $m$ would have an odd prime divisor not dividing $e$, which is a contradiction to our working assumption. In this case, $Y_{y\s} = Y_{y\s}\ws^e$, noting that $k$ divides $e$. Now, by Theorem~\ref{thm:shintani_cosets}, $x$ is contained in $Y_{\s^e}y\s$, which implies that $x$ is contained in a subgroup of type $\O^+_{2m/k}(q) \wr \Sym_k$, which is maximal by \cite[Table~3.5E]{ref:KleidmanLiebeck}. Therefore, from now on we will assume that $(e,m) = 1$.

Combining the assumptions that every odd prime divisor of $m$ divides $e$ and also that $(e,m)=1$, establishes that $m$ is a power of $2$ and $e$ is odd.  Since $x$ is not in Theorem~\ref{thm:derangement}, either there exists a (necessarily odd) prime divisor $k$ of $j$ not dividing $e$ such that $r \div (q_0^{m/k}-1)$, or there there exists a (necessarily odd) prime divisor $k$ of $e$ such that $k \ndiv r$ or $k \div (q_0^m-1)/r$. By Corollary~\ref{cor:shintani_subfields}, both of these conditions guarantee that $x$ is contained in a subfield subgroup of type $\O^+_{2m}(q^{1/k})$, which yields a core-free maximal subgroup containing $x$ (see \cite[Table~3.5E]{ref:KleidmanLiebeck}, noting that $k$ is odd). Therefore, we have deduced that $x$ is contained in a core-free maximal subgroup of $G$, so $x$ is not totally deranged.
\end{proof}

\begin{proposition} \label{prop:d4}
Let $T = \POm^+_8(q)$ and $T \leq G \leq \Aut(T)$. Assume that $G$ contains triality. Let $x \in G$. Assume that $x \in \< \PGO^+_8(q), \p \>$ and $x$ is not in Theorem~\ref{thm:derangement}. Then $x$ is not totally deranged.
\end{proposition}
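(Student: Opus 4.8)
The plan is to argue, as in the proof of Proposition~\ref{prop:d}, by passing to a Shintani descent $x_0 = F(x)$ over a subfield $\F_{q_0}$ and then locating $x$ inside a core-free maximal subgroup of $G$; the essential new feature is that, since $G$ contains triality, the maximal subgroup I produce must survive under the triality $S_3$ acting on $T=\POm^+_8(q)$. First I would separate two types of element. By Theorem~\ref{thm:derangement}(iii) a totally deranged element always lies in $\Inndiag(T)\<\wp\>$, so a duality-type element $x \in \Inndiag(T)\g\wp^j$ can never appear in Theorem~\ref{thm:derangement} and must be shown unconditionally not to be totally deranged, whereas for a field-type element $x \in \Inndiag(T)\wp^j$ the standing hypothesis that $x$ does not feature in Theorem~\ref{thm:derangement} is exactly what I will exploit. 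In both cases I set $X = D_4$: for field-type elements I take $(\s_1,\s_2)=(\wp^f,\wp^j)$ and $T_0 = \POm^+_8(q_0)$, while for duality-type elements I take $\s_2 = \g\wp^j$ and descend to $T_0 = \POm^-_8(q_0) = {}^2D_4(q_0)$, splitting according to the parity of $e=f/j$ exactly as in the relevant rows of Table~\ref{tab:standard}.

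Next I would analyse the action of $x_0$ on the natural module $V=\F_{q_0}^8$, copying the bookkeeping of Proposition~\ref{prop:d}: writing $x_0 = su = us$ with $s$ semisimple and $u$ unipotent, I inspect the $1$-eigenspace of $u$, decompose it into $s$-irreducible pieces, and extract a minimal $x_0$-invariant subspace $W$ that is totally singular or nondegenerate. This pins $x_0$ inside some $H_0 = Y_{\s_2}$ for an explicit closed connected $Y \leq X$ — a parabolic $P_k$, a nondegenerate stabiliser of type $\O^-_k\times\O^-_{8-k}$, an imprimitive or field-extension subgroup, or (in the remaining cases) a subfield subgroup — whereupon I transport the containment up to $x$ through Theorem~\ref{thm:shintani_subgroups} or Theorem~\ref{thm:shintani_cosets}, and through Corollary~\ref{cor:shintani_subfields} for the subfield subgroups.

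The crucial point, and the main obstacle, is that triality fuses the three classes of parabolics $P_1$, $P_3$, $P_4$ (stabilisers of singular points and of the two families of solids), together with various nondegenerate-subspace stabilisers, so the overgroups used freely in Proposition~\ref{prop:d} are in general no longer maximal in $G$. I would therefore insist that $Y$ be triality-invariant. The usable families are the $P_2$ and $P_{1,3,4}$ parabolics, the maximal-rank $4A_1$ subsystem subgroups (of type $\O^\pm_4 \wr \Sym_2$ and $\O^\pm_4(q^2).2$), the subfield subgroups $\POm^+_8(q_0)$, and the triality-stable subgroups $G_2(q)$ and ${}^3D_4$-subgroups, each of which yields a maximal subgroup of $G$ by the classification of maximal subgroups of the $D_4$ triality groups. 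Whenever the analysis of $x_0$ delivers a $P_2$, a $4A_1$-type or a subfield subgroup I am done at once; the real work is to show that the apparently problematic configurations — where $x_0$ fixes only a totally singular $1$-space or solid, or only a nondegenerate subspace whose stabiliser is not triality-invariant — in fact force $x_0$ into one of the invariant subgroups above (for instance into $P_{1,3,4}$ through a common refinement of the fixed objects, or into a $4A_1$ decomposition).

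Finally I would verify that the only configurations resisting this rescue are those in which $x_0$ acts as $g \oplus g^{-\tr}$ on a totally singular $4$-space with $g$ irreducible and subject to the torus-order and subfield constraints, and that these are precisely the field-type elements excluded by hypothesis because they occur in Theorem~\ref{thm:derangement}; translating the constraints through Remark~\ref{rem:shintani}(ii) and Corollary~\ref{cor:shintani_subfields} then shows that ``$x$ does not feature in Theorem~\ref{thm:derangement}'' supplies either a (triality-invariant) subfield subgroup $\POm^+_8(q^{1/k})$ or a proper-power witness, hence a core-free maximal overgroup, for every remaining field-type $x$. For duality-type elements no exclusion is needed: descending to $\POm^-_8(q_0)$, every $x_0$ either stabilises a proper nondegenerate or totally singular subspace compatible with one of the invariant overgroups, or lies in a subfield or field-extension subgroup, so all such $x$ are non-totally-deranged, completing the proof.
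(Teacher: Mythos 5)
Your skeleton matches the paper's proof — Shintani descent from $(D_4,\p^f,\s_2)$ with $\s_2\in\{\p^j,\g\p^j\}$, the observation that the duality coset needs no exclusion while the field coset uses the hypothesis, and the diagnosis that triality destroys maximality of most overgroups used in Proposition~\ref{prop:d} — but the step you defer as ``the real work'' \emph{is} the proof, and your plan for it would not go through as stated. The concrete problem is that your inventory of triality-stable maximal subgroups omits exactly the families the argument cannot do without: the torus normalisers $N_G(H)$ with $|H|\in\{(q\pm1)^4,\,(q^2+1)^2\}$, the subgroups of type $\O^\pm_2(q)\times\GL^\pm_3(q)$, and the imprimitive subgroups $\O^+_2(q)\wr\Sym_4$ and $\O^-_2(q^2)\wr\Sym_2$. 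Without them, several classes of elements have nowhere to go. For example, a semisimple $x_0$ whose centraliser has type $\GL^\pm_3(q_0)\times\O^\pm_2(q_0)$ stabilises only totally singular $3$-spaces and nondegenerate $2$- and $6$-spaces, and \emph{all} of those stabilisers lie in classes fused by triality; the correct overgroup is the $A_2$-type subgroup $\O^\pm_2(q)\times\GL^\pm_3(q)$, which is not on your list. A regular semisimple $x_0$ in a torus of order $(q_0^2+1)^2$ or (in the minus-type descent) $q_0^4+1$ needs a torus normaliser, since its field-extension overgroups ($\GL^\pm_4$-type) also lie in fused classes. And a mixed element with $|s|\div q_0\pm1$ has centraliser of type $\GL^\pm_4(q_0)$ (fused again); the paper forces such elements into $P_2$ by an explicit root-subgroup computation in the style of Case~4 of Lemma~\ref{lem:bfg_mixed}, which is genuinely different from your suggested ``common refinement of fixed objects''. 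Moreover, transporting these containments from $x_0$ up to $x$ cannot be done with Theorem~\ref{thm:shintani_subgroups} alone: one must track which twisted form arises over $\F_q$ (a torus of order $(q^2+1)^2$ versus $(q\pm1)^4$, or $\O^+_4(q)\wr\Sym_2$ versus $\O^-_2(q^2)\wr\Sym_2$), which is precisely the job of the coset-level Theorem~\ref{thm:shintani_cosets} that your proposal invokes only in passing.

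There is a second, smaller gap in how you deploy the hypothesis. You use ``$x$ is not in Theorem~\ref{thm:derangement}'' only to manufacture a subfield subgroup or a $k$-th power witness at the end, but the paper also needs it earlier: in the case $|s|\div 2(q_0^2+1)$ it is what forces $u=1$ or $p=2$, and without that control on the unipotent part the torus-coset argument is unavailable and no subgroup on your list contains such an $x_0$. (A final imprecision: for the duality coset with $e=f/j$ odd, no row of Table~\ref{tab:standard} applies — Row~4 needs $e$ even — and the paper instead runs the Shintani map of $(D_4,\p^f,\g\p^j)$ directly, landing in the coset $\Inndiag(\POm^-_8(q_0))\g$; your appeal to the table glosses over this case.) In short, the strategy is sound in outline, but the case analysis that constitutes the proof is absent, and the toolkit you allow yourself is too small to complete it.
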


\begin{proof}
Write $q=p^f$. By replacing $x$ by another generator of $\<x\>$ if necessary, assume that $x \in \PGO^+_8(q)\wp^j$ for a divisor $j$ of $f$. Fix the simple algebraic group $X = D_m$ and the Steinberg endomorphisms $\s_1 = \p^f$ and $\s_2 \in \{ \p^j, \g\p^j \}$, so $\Inndiag(T) = X_{\s_1}$ and $x \in \Inndiag(T)\ws_2$. Let $F\: \Inndiag(T)\ws_2 \to \Inndiag(T_0)\ws_1$ be the Shintani map of $(X,\s_1,\s_2)$ where $T_0 = \POm^\e_8(q_0)$ for $q_0 = p^j$ and $\e \in \{+,-\}$, and let $x_0 = F(x)$. Note that if $x \in \Inndiag(T)\wp^j$, then $x_0 \in \Inndiag(\POm^+_8(q_0))$, and if $x \in \Inndiag(T)\g\wp^j$, then $x_0 \in \Inndiag(\POm^-_8(q_0))\g^{f/j}$. Write $G_0 = \PGO^\e_8(q)$ and let $V = \F_{q_0}^8$ be the natural module for $G_0$. Write $x_0 = su = us$ for elements $s,u \in G_0$ such that $s$ is semisimple and $u$ is unipotent. Note that $|x| = f/j \cdot |x_0|$.

If $x_0$ is unipotent, then $x_0$ contained in a Borel subgroup and hence a $P_2$ parabolic subgroup of $G_0$. Therefore, by Theorem~\ref{thm:shintani_subgroups}, $x$ is contained in a maximal $P_2$ parabolic subgroup of $G$.

From now on, we will assume that $s$ is nontrivial. For now assume that there exists $s_0 \in \< s \>$ of prime order such that $C_{\Inndiag(T_0)}(s_0) \leq Y_{\s_2}$ for a closed $\s_2$-stable subgroup $Y \leq X$ of type $A_2S^2$ or $A_1^4$, with $Y_{\s_2}$ of type $\SL_2(q_0)^4$ and $\s_2 = \p^j$ in the latter case. Then $x_0 \in Y_{\s_2}\ws_1$, so Theorem~\ref{thm:shintani_subgroups} implies that $x$ is contained in a maximal subgroup of $G$ of type $\O^\pm_2(q) \times \GL^\pm_3(q)$ or $\O^+_4(q) \wr \Sym_2$. Similarly, we next assume that there exists $s_0 \in \< s\>$ of prime order such that $C_{\Inndiag(T_0)}(s_0) \leq Y_{\s_2}$ for a $\s_2$-stable maximal torus $Y \leq X$ such that $Y_{\s_2}$ has order $(q_0^2+1)^2$ with $\s_2 = \p^j$, or order $q_0^4+1$ with $\s_2 = \g\p^j$ and $f/j$ even. Then $x_0 \in Y_{\s_2}\ws_1$ and $\s_1 = \s_2^{f/j}$, so Theorem~\ref{thm:shintani_subgroups} implies that $x$ is contained in $N_G(H)$ where $H$ is a maximal torus of $G$ of order $(q^2+1)^2$ or $(q \pm 1)^4$; in all cases, $N_G(H)$ is maximal (see \cite[Table~8.50]{ref:BrayHoltRoneyDougal}).

Considering the possible centralisers of semisimple elements of prime order in $\PGO^\pm_8(q_0)$ (see \cite[Tables~B.10--B.12]{ref:BurnessGiudici16} for example), we deduce that it only remains to consider the case where $x_0 \in \Inndiag(T_0)$ for $T_0 = \POm^+_8(q_0)$ and $s \in \< s'\>$ where $s' \in \Inndiag(T_0)$ is an element of order $q_0^4-1$. Write $H_0 = C_{\Inndiag(T_0)}(s)$ and $\s = \s_2 = \p^j$. 

First assume that $|s|$ divides $q_0 \pm 1$, so $H_0$ has type $\GL^\pm_4(q_0)$. Here we proceed as in Case~4 of Lemma~\ref{lem:bfg_mixed}, using the same notation for tori. Let $B$ be the Borel subgroup of $X$ defined as $B = \< S, U_\alpha \mid \alpha \in \Phi^+ \>$, and let $U$ be the unipotent radical of $B$. Let $P$ be the $P_2$ parabolic subgroup $P = \< S, U_\alpha \mid \alpha \in \Phi^+ \cup \Phi_{1,3,4} \>$, and let $L$ be the Levi complement of $P$, so $L = \< S, U_\alpha \mid \alpha \in \Phi_{1,3,4}\>$. Observe that $H_0 = A^\pm_3(q_0) \times \< s \> = \< S_1, S_2, S_3, U_\alpha \mid \alpha \in \Phi_{1,2,3} \>_\s \times \< s \>$ and $s \in \< S, U_\alpha \mid \alpha \in \Phi_4 \>_\s = (S_1 \times S_2 \times S_3)_\s \times A_1(q_0) \leq L_\s \leq P_\s$. By conjugating in $H_0$ if necessary, we may assume that $u \in \< U_\alpha \mid \alpha \in \Phi^+_{1,2,3}\>_\s \leq U_\s \leq P_\s$, so $x_0 = su \in P_\s$. As before, $x$ is contained in a maximal $P_2$ parabolic subgroup of $G$.

Next assume that $|s|$ divides $q_0^2-1$ but does not divide $q_0 \pm 1$, so $H_0$ has type $\GL_2(q_0^2).2$ if $|s|$ divides $2(q_0 \pm 1)$ (so, in particular, $p \neq 2$) and $\GL_2(q_0^2)$ otherwise. In either case, $x_0$ is contained in the subgroup of $H_0$ of type $\GL_2(q_0^2)$. In particular, this means that $x_0$ stabilises a totally singular $2$-space of $V_0$, so $x_0$ is contained in a $P_2$ parabolic subgroup of $G_0$. Therefore, by Theorem~\ref{thm:shintani_subgroups}, $x$ is contained in a maximal $P_2$ parabolic subgroup of $G$.

Now assume that $|s|$ divides $2(q_0^2+1)$. Since $x$ is not in Theorem~\ref{thm:derangement}, either $u=1$ or $p=2$. In either case, $|x_0|$ divides $2(q_0^2+1)$, and we will write $y_0 = x_0$ if $|x_0|$ divides $q_0^2+1$ and $y_0 = x_0^2$ otherwise, so $|y_0|$ divides $q_0^2+1$ in both cases. Now fix a maximally split $\s$-stable maximal torus $Y \leq X$ and fix $\a \in N_X(Y)$ such that $|Y_{\a\s}| = (q_0^2+1)^2$ and $y_0 \in Y_{\a\s}$. If $x_0 = y_0$, then $x_0 \in Y_{\a\s}(\a\ws)^e$, so by Theorem~\ref{thm:shintani_cosets}, $x \in Y_{(\a\s)^e}\a\ws$, which means that $x$ is contained in $N_G(H)$ where $H$ is a torus of order $(q^2+1)^2$ or $(q \pm 1)^4$, which in all cases is maximal. Now assume that $x_0 \neq y_0$. Since $x_0$ and $y_0$ commute, we see that $y_0$ interchanges the two factors of $Y_{\a\s}$, so we may write $y_0 \in Y_{\a\s}t(\a\ws)^e$ for $t \in Y$. Theorem~\ref{thm:shintani_cosets} implies that $x \in Y_{t(\a\s)^e}\a\ws$. Since $Y_{t(\a\s)^e}$ has order $(q^2-1)^2$ or $(q^2+1)^2$, we deduce that $x$ is contained in a maximal subgroup of of $G$ of type $\O^+_4(q) \wr \Sym_2$ or $\O^-_2(q^2) \wr \Sym_2$ (see \cite[Table~8.50]{ref:BrayHoltRoneyDougal}).

Finally, it remains to assume that $s$ has order dividing $q_0^4-1$ but not dividing $q_0^2-1$ or $2(q_0^2+1)$, which, in particular, forces $x_0 = s$. For now assume further than $e$ is even. Then $x_0 \in Y_\s$ for a closed $\s$-stable maximal torus $Y$ of $X$ such that $Y_\s$ has order $q_0^4-1$. Theorem~\ref{thm:shintani_subgroups} implies that $x$ is contained in $Y_{\s^e}\ws$. Since $e$ is even, $Y_{\s^e}$ has order $(q^2-1)^2$ or $(q-1)^4$, which means that $x$ is contained in a maximal subgroup of $G$ of type $\O^+_4(q) \wr \Sym_2$ or $\O^+_2(q) \wr \Sym_4$. 

Now assume that $e$ is odd. We complete the proof just as for Proposition~\ref{prop:d}. Since $x$ is not in Theorem~\ref{thm:derangement}, either there exists an odd prime divisor $k$ of $j$ not dividing $e$ such that $r \div (q_0^{4/k}-1)$, or there there exists an odd prime divisor $k$ of $e$ such that $k \ndiv r$ or $k \div (q_0^4-1)/r$.  Therefore, Corollary~\ref{cor:shintani_subfields} implies that $x$ is contained in a subgroup of type $\O^+_8(q^{1/k})$, which is maximal since $k$ is odd (see \cite[Table~8.50]{ref:BrayHoltRoneyDougal}). In conclusion, $x$ is contained in a core-free maximal subgroup of $G$, so $x$ is not totally deranged.
\end{proof}

\subsection{\boldmath Verifying the examples} \label{ss:proof_examples_proof}

We now verify that if $(G,x)$ appears in part~(iii) of Theorem~\ref{thm:derangement}, then $x$ actually is a totally deranged element of $G$. Here we need more comprehensive information on the subgroup structure of $G$. Let $T \leq G \leq \Aut(T)$ for $G$ and $T$ as in Theorem~\ref{thm:derangement}. Complete information is given in \cite[Table~8.14]{ref:BrayHoltRoneyDougal} if $T = \Sp_4(2^f)$ and in \cite[Table~8.50]{ref:BrayHoltRoneyDougal} if $T = \POm^+_8(q)$. For $m \geq 4$ and $\POm^+_{2m}(q) \leq G \leq \PGO^+_{2m}(q)$, we use the (incomplete) description of the maximal subgroups of $G$ in \cite{ref:KleidmanLiebeck}, namely that each core-free maximal subgroup is either a geometric subgroup in \cite[Table~3.5E]{ref:KleidmanLiebeck} or an absolutely irreducible almost simple group in the class $\S$. The class $\S$ is defined formally in \cite[p.3]{ref:KleidmanLiebeck}, and we will make use of the detailed information on almost simple groups given in \cite[Chapter~5]{ref:KleidmanLiebeck}. 

\begin{lemma} \label{lem:ex_b2}
Let $T = \Sp_4(q)$ for $q=p^f$, and let $T \leq G \leq \Aut(T)$. Assume that $(G,x)$ is in Theorem~\ref{thm:derangement}. Then $x \not\in N_G(H)$ for any $H \leq T$ of type $\Sp_4(q^{1/k})$ for prime $k \div f$ or ${}^2B_2(q)$ for odd $f$.
\end{lemma}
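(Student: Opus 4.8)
The plan is to exploit the strong arithmetic constraints that Theorem~\ref{thm:derangement} places on $x$, push them through Shintani descent to the image $x_0$, and then read off incidence with each subfield-type overgroup using Corollary~\ref{cor:shintani_subfields}. First I would record what the theorem says in this case. Since $p=2$, part~(iii)(a) forces $u=1$, so $x^e=s$ is semisimple of odd order $r:=|s|$, and parts~(b) and~(c) (the case $k=2$, together with $r$ odd) give $r \div (q_0^2-1)$, $r \ndiv (q_0-1)$ and $r \ndiv (q_0+1)$. I would then set $X=B_2$, $\s_1=\p^f$ and $\s_2=\p^{f/e}$, so that $X_{\s_1}=T$ and $X_{\s_2}=T_0:=\Sp_4(q_0)$, and let $F$ be the Shintani map of $(X,\s_1,\s_2)$, sending $x$ to $x_0:=F(x)\in T_0$. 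By Remark~\ref{rem:shintani}(ii), $x_0$ is $X$-conjugate to $x^{-e}=s^{-1}$, hence semisimple of order $r$. The central structural input is that $r\ndiv(q_0\pm 1)$ makes $x_0$ \emph{regular} semisimple: it corresponds to an irreducible $g\in\GL_2(q_0)$ acting as $g\oplus g^{-\tr}$, so its centraliser is a cyclic maximal torus $C$ of order $q_0^2-1$. In particular $x_0^X\cap T_0=x_0^{T_0}$, so the hypothesis of Corollary~\ref{cor:shintani_subfields}(ii) is met.

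Next I would treat the subfield subgroups $\Sp_4(q^{1/k})$ for a prime $k\div f$, applying Corollary~\ref{cor:shintani_subfields} with $\s=\p$, $m=f$ and $l=f/e$, so that $m/l=e$. If $k\div e$, part~(i) says $x\in N_G(\Sp_4(q^{1/k}))$ if and only if $x_0=z^k$ for some $z\in T_0$; any such $z$ centralises $x_0$, hence lies in $C$, and viewing $x_0$ as an element of order $r$ in the cyclic group $C$ shows it is a $k$th power exactly when $k\div (q_0^2-1)/r$, which part~(iii)(d) forbids. If $k\ndiv e$, then $k\div l$, and part~(ii) says $x\in N_G(\Sp_4(q^{1/k}))$ if and only if $x_0$ is $T_0$-conjugate into $\Sp_4(q_0^{1/k})$; since $x_0$ is semisimple of order $r$, this requires $r$ to divide one of $(q_0^{1/k}\pm 1)^2$, $q_0^{2/k}-1$ or $q_0^{2/k}+1$. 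The first three are excluded by~(b) and~(c). For the last, when $k$ is odd I would write $q_0=2^l$ with $l=f/e$: from $r\div(q_0^{2/k}+1)$ we get $r\div 2^{4l/k}-1$, so with $r\div 2^{2l}-1$ we obtain $r\div 2^{\gcd(2l,\,4l/k)}-1=2^{2l/k}-1=q_0^{2/k}-1$, again contradicting~(c); and when $k=2$ the relevant torus order is $q_0^{2/k}+1=q_0+1$, excluded directly by~(b).

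Finally, for the Suzuki subgroup ${}^2B_2(q)$ (defined only when $f$ is odd) I would re-run the descent through $\s=\rho$, using $\rho^2=\p$. Then $T=X_{\rho^{2f}}$ and ${}^2B_2(q)=X_{\rho^f}=X_{\s^{m/2}}$ with $m=2f$, while $\p^f=\rho^{2f}$ and $\p^{f/e}=\rho^{2f/e}$, so the Shintani map is unchanged and still sends $x$ to $x_0$. Since $2\ndiv e$, Corollary~\ref{cor:shintani_subfields}(ii) gives $x\in N_G({}^2B_2(q))$ if and only if $x_0$ is $T_0$-conjugate into ${}^2B_2(q_0)=X_{\rho^{f/e}}$. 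But the semisimple elements of ${}^2B_2(q_0)$ have order dividing $q_0-1$ or $q_0^2+1$, whereas $r\div(q_0^2-1)$ with $r\ndiv(q_0-1)$ and $\gcd(q_0^2-1,\,q_0^2+1)=1$ (both factors odd) would force $r=1$, contradicting $r\ndiv 2(q_0+1)$ from~(b). Hence $x_0\notin{}^2B_2(q_0)$ and $x\notin N_G({}^2B_2(q))$.

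The main obstacle, and the step that makes everything else routine, is establishing that $x_0$ is regular semisimple with cyclic centraliser $C$ of order $q_0^2-1$: this both pins down the $k$th-power computation so that it matches part~(iii)(d) precisely, and validates the conjugacy hypothesis needed for Corollary~\ref{cor:shintani_subfields}(ii). The only genuinely delicate piece of arithmetic is ruling out $r\div(q_0^{2/k}+1)$ in the $k\ndiv e$ subfield case, which the gcd-of-exponents identity above handles. I would also keep careful track of the bookkeeping in switching between the $\p$-tower (for the genuine subfields) and the $\rho$-tower (for ${}^2B_2(q)$), emphasising that both yield the same map $F$ and the same $x_0$.
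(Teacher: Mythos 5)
Your overall route is exactly the paper's: Shintani descent to $x_0 \in \Sp_4(q_0)$, then Corollary~\ref{cor:shintani_subfields} with $(\s,m,l)=(\p,f,j)$ for the subfield subgroups and $(\s,m,l)=(\rho,2f,2j)$ for ${}^2B_2(q)$, and several of your details (the separate treatment of $k=2$, the gcd-of-exponents computation for odd $k$) are correct and even more careful than the paper's wording. However, the step you yourself single out as ``the central structural input'' rests on a false claim. The conditions $r \div q_0^2-1$, $r \ndiv q_0-1$, $r \ndiv q_0+1$ do \emph{not} force $x_0$ to arise from an irreducible $g \in \GL_2(q_0)$ acting as $g \oplus g^{-\tr}$: Theorem~\ref{thm:derangement} also admits \emph{mixed} semisimple elements, with eigenvalues $\l^{\pm 1},\mu^{\pm 1}$ where $1 < |\l| \div q_0-1$ and $1 < |\mu| \div q_0+1$. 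For instance, with $q = q_0 = 8$ and $e=1$, an element of order $21$ with eigenvalue orders $7$ and $3$ satisfies (iii)(a)--(d). Such an element stabilises no totally singular $2$-space (its only rational invariant $2$-spaces are nondegenerate), so it is not of your claimed form; it lies in the \emph{other} class of maximal tori of order $q_0^2-1$, namely $C_{q_0-1} \times C_{q_0+1} \leq \Sp_2(q_0) \times \Sp_2(q_0)$. As written, your proof therefore covers only a proper subset of the elements in the theorem. The gap is repairable: since $q_0$ is even, $(q_0-1,q_0+1)=1$, so this second torus is \emph{also} cyclic of order $q_0^2-1$, and mixed elements are still regular semisimple; hence the two facts you actually use downstream — the fusion hypothesis $x_0^X \cap T_0 = x_0^{T_0}$ needed for Corollary~\ref{cor:shintani_subfields}(ii), and the $k$th-power computation inside a cyclic group of order $q_0^2-1$ — remain true. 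But this must be argued; note that the paper's proof sidesteps the issue entirely by working purely with the order $|x_0|$ and never invoking any normal form for $x_0$.

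A second, smaller defect: in the $k \ndiv e$ case you assert that containment of $x_0$ in $\Sp_4(q_0^{1/k})$ forces $r$ to divide one of the torus \emph{orders} $(q_0^{1/k}\pm 1)^2$, $q_0^{2/k}-1$, $q_0^{2/k}+1$, and that ``the first three are excluded by (b) and (c)''. Conditions (b) and (c) do not by themselves exclude $r \div (q_0^{1/k}\pm 1)^2$ (ruling that out requires an additional valuation argument). You should instead bound $r$ by torus \emph{exponents}: every semisimple element of $\Sp_4(q_0^{1/k})$ has order dividing $q_0^{2/k}-1$ or $q_0^{2/k}+1$, which is the form the paper uses and which feeds directly into (c) and into your gcd argument for the $+$ case.
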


\begin{proof}
By replacing $x$ by another generator of $\<x\>$ if necessary, we may assume that $x \in T\wp^j$ for a divisor $j$ of $f$. Let $F\: T\wp^j \to T_0$ be the Shintani map of $(\Sp_4, \p^f, \p^j)$ where $T_0 = \Sp_4(q_0)$ and $q_0 = 2^j$. Write $x_0 = F(x)$, noting that $|x_0| = |x^{f/j}|$.

First assume that $M = N_G(H)$ where $H$ has type $\Sp_4(q^{1/k})$ for a prime $k \div f$. For now assume further that $k$ divides $f/j$. Then the conditions on $|x_0|$ imply that there is no $z \in T_0$ such that $x_0 = z^k$. Now assume that $k$ does not divide $f/j$ (so $k$ divides $j$). We claim that $x_0$ is not contained in a $T_0$-conjugate of $\Sp_4(q_0^{1/k})$. To see this, suppose otherwise. Then $|x_0|$ must divide either $q_0^{2/k}+1$ or $q_0^{2/k}-1$, but the first of these is impossible since $(q_0^{2/k}+1,q_0^2-1) = 1$ and the second is excluded by the statement of Theorem~\ref{thm:derangement}. Therefore, in both cases, Corollary~\ref{cor:shintani_subfields} (with $\sigma = \p$, $m=f$ and $l = j$) shows that $x \not\in M$.

Now assume that $M = N_G(H)$ where $H$ has type ${}^2B_2(q)$, so $f$ is odd. Suppose that $x_0$ is contained in a $T_0$-conjugate of ${}^2B_2(q_0)$. Then, as before, $|x_0|$ must divide $q_0^2+1$ or $q_0-1$, but the first is impossible since $(q_0^2+1,q_0^2-1)=1$ and the second is excluded. Again, Corollary~\ref{cor:shintani_subfields} (this time with $\sigma = \rho$, $m=2f$ and $l = 2j$) shows that $x \not\in M$.
\end{proof}

\begin{lemma} \label{lem:ex_d}
Let $T = \POm^+_{2m}(q)$ for $q=p^f$ and $2m = 2^l \geq 8$, and let $T \leq G \leq \Aut(T)$. Assume that $(G,x)$ is in Theorem~\ref{thm:derangement}. Then $x \not\in N_G(H)$ for any $H \leq T$ satisfying one of the following
\begin{enumerate}[1.]
\item $H$ is a reducible subgroup of type \vspace{-6pt}

\begin{enumerate}[{\rm (a)}]
\item the stabiliser of a totally singular $k$-space for $1 \leq k < m$,
\item $\O^\e_k(q) \times \O^\e_{2m-k}(q)$ for $1 \leq k < m$ and $\e \in \{+,-\}$
\item $\Sp_{2m-2}(q)$ for odd $q$
\end{enumerate}
\item $H$ has type $\O^\e_{2m/k}(q) \wr \Sym_k$ for $k \div 2m$ and $\e \in \{+,-\}$
\item $H$ is a subfield subgroup of type \vspace{-6pt}

\begin{enumerate}[{\rm (a)}]
\item $\O^+_{2m}(q^{1/k})$ for prime $k \div f$
\item $\O^-_{2m}(q^{1/2})$ for $2 \div f$ 
\item ${}^3D_4(q^{1/3})$ for $m=4$ and $3 \div f$
\end{enumerate}
\item $H$ is an $\S$-type subgroup.
\end{enumerate}
\end{lemma}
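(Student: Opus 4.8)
The plan is to transport every claim to the Shintani descent image of $x$ and there exploit the fine structure forced by Theorem~\ref{thm:derangement}. After replacing $x$ by a generator of $\langle x\rangle$, I would assume $x\in\Inndiag(T)\wp^j$ with $j=f/e$, and set $X=D_m$, $\s_1=\p^f$, $\s_2=\p^j$, so that $F\colon\Inndiag(T)\wp^j\to\Inndiag(T_0)$ is the Shintani map of $(X,\s_1,\s_2)$ with $T_0=\POm^+_{2m}(q_0)$ and $q_0=p^j$. By Remark~\ref{rem:shintani}(ii), $x_0=F(x)$ is $X$-conjugate to $x^{-e}$, so writing $x^e=su=us$ its semisimple and unipotent parts are conjugate to $s^{-1}$ and $u^{-1}$. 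Exactly as in the analysis of Proposition~\ref{prop:d}, the conditions of Theorem~\ref{thm:derangement} force $s^{-1}$ to act as $g\oplus g^{-\tr}$ on $V=W\oplus W^*$ for a totally singular $m$-space $W$ on which $g$ acts irreducibly with $|g|=|s|$, and $u\neq1$ forces $p$ odd, $u$ of Jordan type $[J_2^m]$ and $|s|\div(q_0^{m/2}+1)$. The divisibility hypotheses translate into the key facts $|s|\div(q_0^m-1)$ and $|s|\ndiv(q_0^{m/2}-1)$ (the latter because $e$ is odd, so $2$ is a prime divisor of $2f$ not dividing $e$), together with the $k$-th power obstructions of part~(d); note also that $\gcd(e,m)=1$, since $m=2^{l-1}$ is a $2$-power and $e$ is odd.

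Parts~1 and~2 I would settle together. Using Theorem~\ref{thm:shintani_subgroups} for the $\s_2$-stable subspace stabilisers and Theorem~\ref{thm:shintani_cosets} for the imprimitive subgroups, a containment of $x$ reduces to $x_0$ stabilising a totally singular $k$-space with $k<m$, a nondegenerate subspace, or a nonsingular vector of $\F_{q_0}^{2m}$, or else lying in a field-extension subgroup $\O^\e_m(q_0^2).2$ of $T_0$. Each is incompatible with the structure of $x_0$: because $g$ acts irreducibly on $W$ with $|s|\ndiv(q_0^{m/2}-1)$, and, when $u=1$, $|s|\ndiv2(q_0^{m/2}+1)$ gives $W\not\cong W^*$ as $\langle s^{-1}\rangle$-modules, the only proper nonzero $x_0$-invariant subspaces are the totally singular $m$-spaces $W$ and $W^*$; when $u\neq1$ the $[J_2^m]$-unipotent part with $p$ odd likewise obstructs the field-extension embedding. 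This is precisely the reverse of the overgroup construction of Proposition~\ref{prop:d}, and the same divisibility bookkeeping applies.

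For part~3 I would apply Corollary~\ref{cor:shintani_subfields}, splitting on whether the prime $k$ divides $e$. If $k\div e$, then part~(i) forces $x_0=z^k$ for some $z\in\Inndiag(T_0)$; as $z$ centralises $x_0$, and when $u=1$ the centraliser $C_{T_0}(x_0)$ is the cyclic torus of order $q_0^m-1$ stabilising $\{W,W^*\}$, the conditions $k\div|s|$ and $k\ndiv(q_0^m-1)/|s|$ of part~(d) show that $x_0$ is not a $k$-th power, exactly as in Lemma~\ref{lem:ex_b2} (the case $u\neq1$ is similar, absorbing a $k$-th root of the unipotent part since $k\neq p$). If $k\nmid e$, then part~(ii) forces $x_0$ into a subfield subgroup of $T_0$ of the same type, which is excluded by $|s|\ndiv(q_0^{m/k}-1)$ from part~(c). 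The minus-type subfield in~3(b) and the ${}^3D_4(q^{1/3})$ subfield in~3(c) are handled identically, now taking $\s$ to be the graph- (respectively triality-)twisted endomorphism so that the twisted subfield subgroup is produced, as in Example~\ref{ex:shintani_subfields}.

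Part~4 is the main obstacle, because Shintani descent applies only to algebraic (hence geometric) subgroups and gives no direct handle on the non-algebraic class~$\S$. Here I would argue by order. Since $m=2^{l-1}\geq4$, Zsigmondy's theorem supplies a primitive prime divisor $r$ of $q_0^m-1$ dividing $|s|$; because $\gcd(e,m)=1$ we have $\mathrm{ord}_r(q)=\mathrm{ord}_r(q_0)=m$, so $r$ is also primitive for $q^m-1$, whence $r\geq m+1\geq5$ and $r\div|s|\div|x|$. If $x\in N_G(H)$ for an $\S$-subgroup $H$, then the element of $\langle x\rangle$ of order $|s|$ lies in $N_{\Inndiag(T)}(H)$; as $N_T(H)=H$ and $|N_{\Inndiag(T)}(H):H|$ divides $|\Inndiag(T):T|\leq4$, while $r$ is an odd prime exceeding $4$, this forces $r\div|H|$. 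The crux is then to show that no class~$\S$ subgroup of $\POm^+_{2m}(q)$ with $2m=2^l\geq8$ has order divisible by a primitive prime divisor of $q^m-1$: I would extract this from the classification of $\S$-subgroups in \cite[Chapter~5]{ref:KleidmanLiebeck}, combined with the standard restrictions on the maximal subgroups of a classical group that can contain such large primitive prime divisor elements, the point being that elements of degree $m=n/2$ force geometric field-extension overgroups rather than $\S$-subgroups when $n=2^l$. Establishing this order bound cleanly, uniformly in $q$ and $m$, is the part I expect to require the most care.
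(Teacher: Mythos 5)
Your setup and your treatment of part~3 track the paper's own proof closely (the paper likewise splits part~3 on whether the prime $k$ divides $e$ and applies Corollary~\ref{cor:shintani_subfields}, using condition~(d) in one case and condition~(c) in the other). The first genuine gap is in parts~1--2, where the field-extension case is the crux and your stated obstructions to it are not valid. An element $x_0$ with exactly the structure forced by Theorem~\ref{thm:derangement} genuinely \emph{does} lie in a field-extension subgroup of $T_0$: when $u=1$, a generator of a Singer-type torus $\GL_1(q_0^m)\leq\GL_{m/2}(q_0^2)\leq\O^+_m(q_0^2)$ has precisely the eigenvalues $\lambda^{\pm q_0^i}$ of $s$, so $s$ is conjugate into a subgroup of type $\O^+_m(q_0^2)$, and its invariant $\F_{q_0}$-subspaces are still exactly $W$ and $W^*$; when $u\neq1$ and $p$ is odd, a unipotent of type $[J_2^{m/2}]$ over $\F_{q_0^2}$ restricts to $[J_2^m]$ over $\F_{q_0}$, so the Jordan form is no obstruction either. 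What must actually be excluded is membership in the \emph{outer coset}: Theorem~\ref{thm:shintani_cosets}, combined with the invariant-subspace analysis (which only eliminates the multi-cycle and untwisted components), forces $x_0\in\Omega^+_{2m/k}(q_0^k)\psi$ where $\psi$ induces a field automorphism of order $k$, because $e$ is odd and the twist is a $k$-cycle. Killing this coset needs a second application of Shintani descent \emph{inside} the field-extension group: it would force $\Omega^+_{2m/k}(q_0)$ to contain an element of order $|x_0^k|$, which is impossible since $|x_0^k|$ is divisible by a primitive prime divisor of $q_0^m-1$ and no such prime divides $|\Omega^+_{2m/k}(q_0)|$ (every cyclotomic factor there is $q_0^i-1$ with $i\leq 2(m/k-1)<m$ or $i=m/k<m$). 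This order-theoretic step, which is how the paper closes parts~1--2, is absent from your proposal, and ``the reverse of Proposition~\ref{prop:d}'' points the wrong way, since that proposition exploits precisely such field-extension containments to build overgroups.

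The second gap is that your reduction of part~4 rests on a false statement: it is not true that no class~$\S$ subgroup of $\POm^+_{2m}(q)$ with $2m=2^l\geq8$ has order divisible by a primitive prime divisor $r$ of $q^m-1$. Concrete counterexamples: $\Alt_d$ embedded via the fully deleted permutation module, where $d\in\{2m+1,2m+2\}$ can exceed $r\in\{m+1,2m+1\}$, so $r\div|\Alt_d|$; cross-characteristic subgroups with socle $\PSL_2(t)$, where $r\in\{t,t+1\}$ divides $|\PSL_2(t)|$; and, for $m=8$, defining-characteristic subgroups with socle $\PSL_2(q_0^4)$, whose order is divisible by every primitive prime divisor of $q_0^8-1$. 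This is exactly why the paper's Case~4 is long: the contradiction comes not from $|H|$ but from element orders and the fine structure of $x$ --- cycle types and Jordan forms in the deleted permutation module, the facts that $|x^e|$ is not prime and properly exceeds $r$ while $\Aut(\PSL_2(t))$ has no element of order properly divisible by $r$, and representation-degree bounds combined with a further application of Theorem~\ref{thm:shintani_cosets} in defining characteristic. So the step you flag as requiring ``the most care'' is not merely delicate; as formulated it is unavailable, and part~4 needs a different argument.
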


\begin{proof}
By replacing $x$ by another generator of $\<x\>$ if necessary, we may assume that $x$ is contained in $\Inndiag(T)\wp^j$ for a divisor $j$ of $f$. Let $F\: \Inndiag(T)\wp^j \to \Inndiag(T_0)$ be the Shintani map of $(X,\s^e,\s) = (\POm_{2m}, \p^f, \p^j)$ where $T_0 = \POm^+_{2m}(q_0)$ and $q_0 = p^j$ (so $e=f/j$). Write $x_0 = F(x)$ noting that $|x_0| = |x^e|$. In particular, $x_0 = su$ with the notation from Theorem~\ref{thm:derangement}. Write $G_0 = \Inndiag(T_0)$ and let $V = \F_{q_0}^{2m}$ be the natural module for $G_0$. 

\emph{\textbf{Cases~1 \&~2.} $H$ is a reducible or imprimitive subgroup.}\nopagebreak

Suppose that $x \in N_G(H)$. Fix a closed $\s$-stable subgroup $Y \leq X$ such that $H = Y_{\s^e} \cap T$. Now $x \in Y_{\s^e}\ws$, so fix $\a \in Y$ such that $x \in Y^\circ_{\s^e}\a\ws$. In the case $Y = \O_{2m/k} \wr \Sym_k$ we may assume that $\s$ does not permute the $k$ factors of $Y^\circ$. By Theorem~\ref{thm:shintani_cosets}, we deduce that $x_0 \in Y^\circ_{\a\s}\ws^e$. The only proper nonzero subspaces of $\F_{q_0}^{2m}$ stabilised by $x_0^a$ (for any even $a$) are totally singular $m$-spaces (here we are using the fact that $u \neq 1$ if $p=2$). Therefore, the only possibility is that $Y^\circ_{\a\s}$ has type $\Omega^+_{2m/k}(q_0^k)$ for some $k$ dividing $m$. This means that $Y$ has type $\O_{2m/k} \wr \Sym_k$ and $\a \in \Sym_k$ is a $k$-cycle. In particular, since $e$ is odd, we deduce that $x_0 \in Y^\circ_{\a\s}\b$ where $\b \in \Sym_k$ is a $k$-cycle. Said otherwise, $x_0 \in \Omega^+_{2m/k}(q_0^k)\psi$ where $\psi$ induces field automorphism of order $k$. The usual Shintani descent argument implies that $\Omega^+_{2m/k}(q_0)$ must contain an element of order $|x_0^k|$. However, since $k \leq m$, we know that $|x_0^k|$ is divisible by a primitive prime divisor of $q_0^m-1$, which is a contradiction.  

\vspace{0.3\baselineskip}

\emph{\textbf{Case~3.} $H$ is a subfield subgroup.}\nopagebreak

First assume that $H$ has type $\O^+_{2m}(q^{1/k})$ for a prime divisor $k$ of $f$. If $k$ divides $f/j$, then the conditions on $|x_0|$ imply that there is no element $z \in \Inndiag(T_0)$ such that $x_0 = z^k$. Now assume that $k$ does not divide $f/j$ (so $k$ divides $j$). We claim that $x_0$ is not contained in a subgroup of $\Inndiag(T_0)$ of type $\O^+_{2m}(q_0^{1/k})$. (To see this, suppose otherwise. Then $s$ stabilises a dual pair of totally singular $m$-spaces of $\F_{q_k}^{2m}$ where $q_k = q_0^{1/k}$, so $|s| \div q_0^{2m/k}-1$, which is a contradiction.) Therefore, in both cases, Corollary~\ref{cor:shintani_subfields} (with $\sigma = \p$, $m = f$ and $l = j$) shows that $x \not\in N_G(H)$.

Next assume that $H$ has type $\O^-_{2m}(q^{1/2})$. We claim that $x_0$ is not contained in a subgroup of $\Inndiag(T_0)$ of type $\O^-_{2m}(q_0^{1/2})$. To see this, suppose otherwise. Then $s$ must act irreducibly on $\F_{q_2}^{2m}$, where $q_2 = q_0^{1/2}$, so $|s| \div q_0^{m/2}+1$. Now the conditions in Theorem~\ref{thm:derangement} imply that $u \neq 1$, but irreducible elements of $\O^-_{2m}(q_0^{1/2})$ commute with no nontrivial unipotent elements, which is a contradiction and the claim is proved. Now, recalling that $f/j$ is odd, by applying Corollary~\ref{cor:shintani_subfields} (with $\sigma = \g\p^{j/2}$, $m=2f/i$ and $l = 2$) we deduce that $x \not\in N_G(H)$. 

Now assume that $m=4$ and $H$ has type ${}^3D_4(q^{1/3})$. In this case, it suffices to note that since $q = q_0^e$ and $e$ is odd, $|{}^3D_4(q^{1/3})|$ is not divisible by a primitive prime divisor $r$ of $q_0^4-1$ since any such $r$ divides $q_0^2+1$ and $q_0^2+1$ is prime to $|{}^3D_4(q^{1/3})|$.

\vspace{0.3\baselineskip}

\emph{\textbf{Case~4.} $H$ is an $\S$-type subgroup.}\nopagebreak

Let $S$ be the socle of $H$. Observe that $|x_0|$ (and hence $|x|$) is divisible by a primitive prime divisor $r$ of $q_0^m-1$. Now $r = am+1$ for some $a \geq 1$, so, if $m \geq 8$, then $r \geq 17$. Moreover, if $m \geq 16$, then $r = 17$ (in which case $m=16$) or $r > 100$. Note also that $|x_0|$ is not prime, so $|x| \geq |x_0| > r$. (If $|x_0|$ were prime, then $x_0 = su$ with $|s|$ prime and $u=1$, but then $|s|$ divides $q_0^{m/2}+1$, which is at odds with the specification of $x_0$ in Theorem~\ref{thm:derangement}.)

\vspace{0.3\baselineskip}

\emph{\textbf{Case~4a.} $S$ is a sporadic group.}\nopagebreak

If $m=4$, then no cases arise \cite[Table~8.50]{ref:BrayHoltRoneyDougal}, and if $m=8$, then $S={\rm M}_{12}$ \cite[Table~7.8]{ref:Rogers17}, but the prime $r \geq 17$ does not divide $|\Aut(S)|$, so again no cases arise. We can now assume that $m \geq 16$. Since $|\Aut(S)|$ is not divisible by any prime greater than $100$, we must have $r=17$ and hence $m=16$. Since $r$ divides $|\Aut(S)|$ we must have $S \in \{ {\rm J}_3, {\rm He}, {\rm Fi}_{23}, {\rm Fi}_{24}', \mathbb{B}, \mathbb{M} \}$. Every nontrivial representation of $S$ has degree greater than 32 \cite[Proposition~5.3.8]{ref:KleidmanLiebeck}, except when $S={\rm J}_3$, but $S$ does not have an irreducible representation of degree exactly 32 in this case either (see \cite[p.83]{ref:ATLAS} and \cite[pp.215--219]{ref:BrauerATLAS}).

\vspace{0.3\baselineskip}

\emph{\textbf{Case~4b.} $S$ is an alternating group $\Alt_d$.}\nopagebreak

First assume that $S$ is embedded via the fully deleted permutation module, so $q=q_0=p$ (see \cite[pp.185--187]{ref:KleidmanLiebeck}). Let $U = \F_p^d$ and write 
\begin{gather*}
U_0 = \{ (a_1,\dots, a_d) \in U \mid a_1 + \dots + a_d = 0 \} \\
W = \{ (a_1, \dots, a_d) \in U \mid a_1 = \dots = a_d \}.
\end{gather*}
Then we may identify $V = U_0/(W \cap U_0)$, so $n=d+1$ if $p \ndiv d$ and $n=d+2$ if $p \div d$. 

For now assume that $u = [J_2^m]$.  The element of $\Alt_d$ corresponding to $u$ has order $p$ and hence cycle type $(p^k,1^{d-pk})$. The Jordan form for the corresponding element in $\GL(U)$ is $[J_p^k,J_1^{d-pk}]$ and hence for the corresponding element in $\GL(U_0)$ it is $[J_p^{k-1},J_{p-1},J_1^{d-pk}]$. If $V \neq U_0$, then the corresponding element of $\GL(V)$ it is $[J_{p-2}]$ if $d=p$, $[J_p^{k-2},J_{p-1}^2]$ if $d=kp > p$ and $[J_p^{k-1},J_{p-1},J_1^{d-pk-1}]$ if $d > kp$; all of these are inconsistent with $u = [J_2^m]$. 

We may now assume that $u=1$. Let $\alpha$ be the element of $\Alt_d$ corresponding to $x_0 = s$ and let $\alpha_0$ be a power of $\alpha$ of order $r$. Since $r \equiv 1 \mod{m}$ and $d \leq 2m+2$ we must have $r \in \{m+1,2m+1\}$. Since $|\alpha| \ndiv (p^{m/2}+1)$, we know that $|\alpha| > r$, so actually $r = m+1$. For now suppose that $\alpha_0$ has cycle type $[r^2]$, so $d=2m+2$ and $p \div d$. Then $\alpha$ has cycle type $[2r]$, which implies that $2r \div (p^m-1)$. In particular, $p \not\in \{2,r\}$, but $d=2r$ is supposed to be divisible by $p$: a contradiction. Now suppose that $\alpha_0$ has cycle type $[r,1^{d-r}]$. Since $d-r \geq m > 0$, the element of $\GL(V)$ corresponding to $\alpha_0$ fixes a $1$-space of $V$, which is impossible as it must act irreducibly on a dual pair of totally singular $m$-spaces. 

It remains to assume that $S$ is not embedded via the fully deleted permutation module. No cases arise when $m=4$ \cite[Table~8.50]{ref:BrayHoltRoneyDougal}. Now assume that $m \geq 8$, so $d \geq r \geq 17$. Combining \cite[Lemma~4.3]{ref:FawcettOBrienSaxl16} and \cite[(6.1)]{ref:Muller16}, we see that since $d \geq 17$ and since the embedding is not the fully deleted permutation module, the degree of the representation must satisfy $2m \geq 2d$. This implies that $r \geq m+1 > d$, which is a contradiction.

\vspace{0.3\baselineskip}

\emph{\textbf{Case~4c.} $S$ is a group of Lie type over $\F_t$ where $(p,t)=1$.}\nopagebreak

For each possible type of $S$, we will show that the bound $r > m$ is contradicted. A lower bound on the degree of a nontrivial representation of $S$ is given in \cite[Theorem~5.3.9]{ref:KleidmanLiebeck}. To obtain upper bounds on the size of the prime divisors of $\Aut(S)$, we note that every prime divisor of $|\Aut(S)|$ is at most $\max\{3,t\}$ or divides one of the cyclotomic polynomials in $t$ given by the $t'$-part of the order formula for $|\Inndiag(T)|$. For exceptional groups and orthogonal groups, this easily yields the desired contradiction. For instance, if $S = E_6(t)$, then $2m \geq t^9(t^2-1)$ but $r$ is at most the greatest cyclotomic polynomial dividing $|\Inndiag(E_6(t))|$ which is $t^6+t^3+1$, which contradicts $r > m$. For the remaining classical groups, more care is required but the arguments are similar in all cases and we present the details for $S = \PSL_d(t)$ where $d$ is even. No examples arise when $m=4$ (see \cite[Table~8.50]{ref:BrayHoltRoneyDougal}), so assume that $m \geq 8$. 

First assume that $d > 2$. On the one hand, $2m \geq t^{d-1}-1$ (from \cite[Theorem~5.3.9]{ref:KleidmanLiebeck}). On the other hand, $r$ divides $(t^i-1)/(t-1)$ for some $1 \leq i \leq d$, and since $r > m \geq \frac{1}{2}(t^{d-1}-1)$, $i=d$. Therefore, $r$ divides $t^{d/2} \pm 1$, so (except for the case $S = \PSL_4(2)$, which was handled in Case~4b as $\Alt_8$) we deduce that \[
r \leq t^{d/2}+1 \leq \tfrac{1}{2} (t^{d-1}-1) \leq m,
\] 
which contradicts $r > m$. 

We now assume that $d=2$, and low rank isomorphisms allow us to assume that $t=8$ or $t \geq 11$. Let us record that every prime divisor of $|\Aut(S)|$ greater than $\log_2{t}$ must divide $t-1$, $t$ or $t+1$. Recall that $2m=2^l$. We claim that $r = t+1$ if $t$ is even, and $r=t$ if $t$ is odd.

To see this, first assume that $t$ is even. From \cite{ref:Burkhardt76}, the degrees of the nontrivial irreducible representations of $S$ are $t-1$, $t$ and $t+1$, so $2m=t$. Since $r \equiv 1 \mod{m}$, fix $a$ such that $r = am+1$, noting that $a \in \{1,2\}$ since $r$ divides $|\Aut(S)|$. If $r = m+1 = t/2+1$, then $r$ does not divide $t-1$, $t$ or $t+1$, so $r = 2m+1 = t+1$.

Now assume that $t$ is odd. Here, the degrees of the nontrivial irreducible representations of $S$ are $\frac{1}{2}(t-\e)$, $t$, $t-1$ and $t+1$, where $\e \in \{1,-1\}$ satisfies $t \equiv \e \mod{4}$. This means that $2m \in \{\frac{1}{2}(t \pm 1), t \pm 1\}$, so $t \in \{ 2^l \pm 1, 2^{l+1} \pm 1 \}$. As $r \equiv 1 \mod{m}$ and $r \leq t+1$ we see that $r \in \{ 2^{l-1}+1, 2^l+1, \frac{3}{2} \cdot 2^l+1, 2^{l+1}+1 \}$. Moreover, since $r$ divides $|\Aut(S)|$, we quickly see that either $r=t$ or $(r,t) \in \{ (2^{l-1}+1,2^l+1), (2^l+1,2^{l+1}+1) \}$ where $r$ divides $t+1$. However, in the latter case, $r$ and $t$ are both Fermat primes, so $l$ and either $l-1$ or $l+1$ is a power of two, which is absurd. Therefore, $r=t$, and we have proved the claim.

It now remains to note that $|x_0| > r$ but $\Aut(S)$ does not contain any elements of order properly divisible by $r \in \{t,t+1\}$: a contradiction. 

\vspace{0.3\baselineskip}

\emph{\textbf{Case~4d.} $S$ is a group of Lie type in characteristic $p$.}\nopagebreak

By \cite[Theorem~5.4.1]{ref:KleidmanLiebeck}, we can fix a closed $\s$-stable simple subgroup $Y \leq X$ such that $H = Y_{\s^e} \cap T$. Now $x \in Y_{\s^e}\ws$, so we can fix $y \in Y$ such that $x \in Y^\circ_{\s^e}y\ws$. By Theorem~\ref{thm:shintani_cosets}, we deduce that $x_0 \in Y^\circ_{y\s}\ws^e$. Let $S_0$ be the socle of $Y^\circ_{y\s}$. We will prove that $x_0 \not\in \Aut(S_0)$. 

Write $S_0 = \Sigma_n(p^{j_0})$ and recall our notation that $x_0 \in \Inndiag(\POm^+_{2m}(q_0))$ where $q_0 = p^j$. By \cite[Theorem~5.4.6 \& Remark~5.4.7]{ref:KleidmanLiebeck}, we have $2m \geq \dim{M}^{j_0/j}$ for a nontrivial irreducible $\F_{p^j}S_0$-module $M$. Let $\a = j_0/j$ and $\b = \max\{i_2 \mid (q_0^{\a i}-1) \div |\Aut(S_0)|\}$, and note that $\a \geq m/\b$ since $x_0 \in \Aut(S_0)$ and $|x_0|$ is divisible by a primitive prime divisor of $q_0^m-1$. Let $\d$ be the smallest degree of a nontrivial irreducible $\F_{p^j}S_0$-module. Therefore, if we can show that $\d^{1/\b} > (2m)^{1/m}$ for all $m \geq \d/2$, then it follows that $2m < \dim{M}^\a$, which is a contradiction. For exceptional groups, it is easy to verify this bound using the degree bounds in \cite[Proposition~5.4.13]{ref:KleidmanLiebeck} and the order formulae in \cite[Table~5.1.B]{ref:KleidmanLiebeck} (together with the fact that $z \mapsto z^{2/z}$ is decreasing on $[\mathrm{e},\infty)$, where $\mathrm{e}$ is the base of the natural logarithm). For instance, if $S_0 = E^\pm_6(q_0^\a)$, then $\d = 27$ and $\b = 8$, so $\d^{1/\b} = 27^{1/8}$, which exceeds $(2m)^{1/m}$ for all $m \geq 8$. Again classical groups require more attention, but since the arguments are similar in all cases and we just give the details for $S_0 = \PSL_d(p^{j_0})$. 

We first handle some cases in small dimension. If $m=4$, then $S_0 = \PSL_3(q_0)$ with $q_0 \equiv 1 \mod{3}$ and $M$ is the $8$-dimensional adjoint module \cite[Table~8.50]{ref:BrayHoltRoneyDougal}, but $\a=1$ and $\b=2$, which contradicts $\a \geq m/\b$. If $m=8$, the by \cite[Table~7.8]{ref:Rogers17} the only possibility is $S_0 = \PSL_2(q_0^4)$, but here the only element orders divisible by a primitive prime divisor of $q_0^8-1$ necessarily divide $q_0^4+1$, which makes this case impossible. From now on we assume that $m \geq 16$. For $S_0 = \PSL_d(q_0)$, we have $\d = d$ and $\b \leq d$, so $\d^{1/\b} \geq d^{1/d}$. If $d \leq 5$, then since $2m \geq 32$, we have 
$
(2m)^{1/m} \leq 32^{1/16} < d^{1/d},
$
a contradiction. Now assume that $d > 5$. If $M$ is the natural module for $S_0$, then $\a > 1$, so $2m \geq d^2$ and we deduce that 
$
(2m)^{1/m} \leq (d^2)^{2/d^2} < d^{1/d},
$
another contradiction. Finally, assume that $M$ is not the natural module, which means that $\dim{M} \geq \frac{1}{2}d(d-1)$ (see \cite[Proposition~5.4.11]{ref:KleidmanLiebeck}). In this case, we have 
$
(2m)^{1/m} \leq (\tfrac{1}{2}d(d-1))^{4/d(d-1)} \leq (\tfrac{1}{2}d(d-1))^{1/d} < \dim{M}^{1/d} \leq \dim{M}^{1/\b},
$
which contradicts $2m \geq \dim{M}^\a$. This completes the proof.
\end{proof}

\begin{proposition} \label{prop:examples}
Let $(G,x)$ be in Theorem~\ref{thm:derangement}. Then $x$ is a totally deranged element of $G$.
\end{proposition}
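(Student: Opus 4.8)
The plan is to use the criterion recorded in the introduction that $x$ is totally deranged if and only if $x$ lies in no core-free maximal subgroup of $G$; so it suffices to show that the element $x$ supplied by Theorem~\ref{thm:derangement} has no such overgroup. As in the previous propositions, I would first replace $x$ by a generator of $\langle x\rangle$ so that $x\in\Inndiag(T)\wp^j$ for a divisor $j$ of $f$, matching the hypotheses of Lemmas~\ref{lem:ex_b2} and~\ref{lem:ex_d}, and pass to the Shintani image $x_0$ (with $|x_0|=|x^e|$), whose Jordan and invariant-subspace structure is exactly the one extracted in the proofs of Propositions~\ref{prop:bfg} and~\ref{prop:d}. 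I would then enumerate the core-free maximal subgroups of $G$ using Theorem~\ref{thm:maximal} and the references collected at the start of this subsection: \cite[Table~8.14]{ref:BrayHoltRoneyDougal} for $T=\Sp_4(2^f)$, \cite[Table~8.50]{ref:BrayHoltRoneyDougal} for $T=\POm^+_8(q)$, and \cite[Table~3.5E]{ref:KleidmanLiebeck} together with the class $\S$ for $T=\POm^+_{2m}(q)$ with $m\geq 5$.

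The decisive structural observation is that, since $(G,x)$ satisfies condition~(ii) of Theorem~\ref{thm:derangement}, $G$ contains a graph automorphism (a graph-field automorphism when $T=\Sp_4(2^f)$, and a duality or triality when $T=\POm^+_{2m}(q)$). If $K$ is a core-free maximal subgroup of $T$ whose $T$-class is not stabilised by the image of $G$ in $\Out(T)$, then $N_G(K)T$ is a proper subgroup of $G$ containing $T$, so $N_G(K)$ is not maximal in $G$ and yields no core-free maximal overgroup of $x$. For $T=\POm^+_{2m}(q)$ the crucial point is that $m=2^{l-1}\geq 4$ is even, so two complementary totally singular $m$-spaces lie in the same family; since $\Inndiag(T)$ preserves the two families while the duality interchanges them, the duality stabilises neither the class of the parabolic $P_m$ (stabiliser of a single totally singular $m$-space) nor the class of the stabiliser $\GL_m(q).2$ of a pair of such spaces, nor its twist $\GU_m(q).2$. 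These are exactly the reducible and imprimitive subgroups \emph{absent} from Lemma~\ref{lem:ex_d}, and they are precisely the subgroups into which $x_0$ could fall, given that its only proper nonzero invariant subspaces are the two totally singular $m$-spaces it determines; this is what legitimises the restriction $1\leq k<m$ in Lemma~\ref{lem:ex_d}(1a). The analogous fusion occurs for $\Sp_4(2^f)$, where the graph-field automorphism identifies the two parabolics and the pairs $\{\O^+_4(q),\Sp_2(q)\wr\Sym_2\}$ and $\{\O^-_4(q),\Sp_2(q^2).2\}$, so none of these give core-free maximal subgroups of $G$.

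Once these subgroups are removed, every remaining core-free maximal subgroup of $G$ is of a type handled by the two lemmas. For $T=\Sp_4(2^f)$ the survivors are the subfield subgroups $\Sp_4(q^{1/k})$ and the Suzuki subgroups ${}^2B_2(q)$ (for odd $f$), both excluded by Lemma~\ref{lem:ex_b2}; any graph-stable parabolic novelty (a Borel normaliser or the stabiliser of an incident point--line pair) is excluded directly, since $x^e=su$ has $|s|$ divisible by a primitive prime divisor of $q_0^2-1$ and $e$ is odd, so $x_0$ fixes no such flag. For $T=\POm^+_{2m}(q)$ the survivors are exactly the reducible ($1\leq k<m$), imprimitive and field-extension ($\O^\e_{2m/k}(q)\wr\Sym_k$), subfield, ${}^3D_4(q^{1/3})$ (when $m=4$), and $\S$-type subgroups enumerated in Lemma~\ref{lem:ex_d}, which shows $x\notin N_G(H)$ in every case. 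Assembling these exclusions shows that $x$ lies in no core-free maximal subgroup of $G$, so $x$ is totally deranged.

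I expect the main obstacle to be completeness of the enumeration rather than any single computation: one must ensure that no maximal subgroup of $G$ is overlooked, in particular the novelty subgroups that become maximal only after adjoining the graph automorphism, and, when $T=\POm^+_8(q)$ and $G$ contains triality, the three end-node parabolics $P_1,P_3,P_4$ permuted by triality together with the triality-type reductive subgroups. For this reason I would rely on the complete tables \cite[Tables~8.14 and~8.50]{ref:BrayHoltRoneyDougal} in the two small-rank cases, checking that each geometric and $\S$-class listed there is either fused away by the graph automorphism or matched to a case of Lemmas~\ref{lem:ex_b2} and~\ref{lem:ex_d}. Since the genuine content has already been discharged in those two lemmas, the proposition is essentially a bookkeeping assembly of them with the graph-fusion argument.
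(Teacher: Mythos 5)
Your overall strategy is the paper's---show that $x$ lies in no core-free maximal subgroup of $G$, dispose of most subgroup types via Lemmas~\ref{lem:ex_b2} and~\ref{lem:ex_d}, and use fusion under the graph(-field) automorphism to eliminate the $P_m$- and $\GL_m(q)$-type subgroups that genuinely contain $x$---but your enumeration of what survives the fusion has a genuine gap, and it is exactly where the paper's proof does its real work. Fusion does not merely delete maximal subgroups of $T$; it creates \emph{novelty} maximal subgroups of $G$, and most of these are covered by neither lemma. For $T=\Sp_4(2^f)$, besides the Borel normaliser you mention, the fusions you list produce the torus normalisers $N_G(H)$ with $|H|\in\{(q-1)^2,(q+1)^2,q^2+1\}$ as core-free maximal subgroups of $G$ (see \cite[Table~8.14]{ref:BrayHoltRoneyDougal}); these are absent from your list, Lemma~\ref{lem:ex_b2} says nothing about them, and ``$x_0$ fixes no flag'' does not apply, since $x_0$ really does lie in a maximal torus (of order $q_0^2-1$) and hence in its normaliser. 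Excluding them is the bulk of the paper's Case~1: writing $x\in S_{s\s_1}t\ws_2$, Theorem~\ref{thm:shintani_cosets} gives $x_0\in S_{t\s_2}s\ws_1$; the order conditions in Theorem~\ref{thm:derangement} force $|S_{t\s_2}|=q_0^2-1$ and $s\ws_1=1$; and then, because $e$ is odd, $\bar{s}=\bar{t}^{-e}$ is a reflection in $N_T(S)/S\cong D_8$, so $|H|=q^2-1$, a contradiction. (An elementary alternative exists here: $x^e$ is an odd-order element of $N_G(H)\cap T=N_T(H)$ and $N_T(H)/H$ is a $2$-group, so $x^e\in H$, forcing $|x_0|$ to divide $q_0-1$, $q_0+1$ or $2$ since $e$ is odd, against the conditions of Theorem~\ref{thm:derangement}; but you give neither argument.)

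The same gap recurs, more seriously, when $T=\POm^+_8(q)$ and $G$ contains triality: there you cannot conjugate $G$ into $\PGaO^+_8(q)$, and while you gesture at the fused end-node parabolics and ``triality-type reductive subgroups'', you never exclude $x$ from any of them, and you omit the torus normaliser of order $(q^2+1)^2$ and the local subgroup of type $2^4.2^6.\PSL_3(2)$ entirely. The paper handles all of these: order arguments via Theorem~\ref{thm:shintani_subgroups} for the $P_{1,3,4}$ parabolic and the subgroups of type $\O^\pm_2(q)\times\GL^\pm_3(q)$; a second application of Theorem~\ref{thm:shintani_cosets} plus a computation in $W_{D_4}$ for the torus (the relevant Weyl element has order $4$ and is conjugate to its inverse, so $e$ odd forces $|H|=q^4-1$, a contradiction); and a primitive-prime-divisor argument for the local subgroup. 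Note that the parity of $e$ enters these coset-level Shintani arguments essentially, and since $|x_0|$ need not be odd when $p$ is odd, the elementary trick above is not available. Finally, a smaller but genuine inaccuracy in your Case~2: $P_m$, $\GL_m(q).2$ and $\GU_m(q).2$ are \emph{not} ``exactly'' the maximal subgroups of $T$ missing from Lemma~\ref{lem:ex_d}; the field-extension subgroups of type $\O^\e_m(q^2)$, the tensor-product subgroups and the subgroups of type $2^{1+2l}.\O^+_{2l}(2)$ are also missing, and your invariant-subspace justification cannot reach them because they act irreducibly. They too fail to extend to maximal subgroups of $G$ once a duality is present, but seeing this requires the inspection of \cite[Table~3.5E]{ref:KleidmanLiebeck}, with its conditions on $G$, that the paper actually performs---not the subspace argument you propose.
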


\begin{proof}
As usual, let $q=p^f$ and assume $x \in \Inndiag(T)\wp^j$ for $j$ dividing $f$ for $e=f/j$ odd.

\emph{\textbf{Case~1.} $T = \Sp_4(q)$ with $q$ even.}\nopagebreak

Let $F\: T\wp^j \to  T_0$ be the Shintani map of $(X,\s_1,\s_2) = (\Sp_4, \p^f, \p^j)$ where $T_0 = \Sp_4(q_0)$ for $q_0 = 2^j$. Write $x_0 = F(x)$ noting that $|x_0| = |x^{f/j}|$. Write $G = \< T, \wrho^i\>$ where $i$ divides $j$. Let $M$ be a core-free maximal subgroup of $G$. If $M$ appears in Lemma~\ref{lem:ex_b2}, then that lemma implies that $x \not\in M$. We now consult \cite[Table~8.14]{ref:BrayHoltRoneyDougal} to identify the remaining possibilities for $M$.

First assume that $M$ is a Borel subgroup, so $M$ is $\< T, \wp^i \>$-conjugate to $\< Y, \wp^i \>_{\s_1}$ for a closed $\rho$-stable Borel subgroup $Y \leq X$. Therefore, if $x \in M$, then Theorem~\ref{thm:shintani_subgroups} implies that $x_0 \in Y_{\s_2}$, a Borel subgroup $[q_0^4]{:}(q_0-1)^2$ of $T_0$, contradicting the conditions on $|x_0|$ in Theorem~\ref{thm:derangement}.

It remains to assume that $M = N_G(H)$ where $H$ is a maximal torus of order $(q-1)^2$, $(q+1)^2$ or $q^2+1$. In this case, $M$ is $\< T, \wp^i \>$-conjugate to $\< Y, \wp^i \>_{\s_1}$ where $Y$ is a maximal torus of $X$. Suppose that $x \in M$, and write $x \in S_{s\s_1}t\ws_2$ where $S$ is a maximally split $\s_2$-stable maximal torus of $X$ and $s, t \in N_T(S)$. Then, by Theorem~\ref{thm:shintani_cosets}, $x_0 \in S_{t\s_2}s\ws_1$. The conditions on $|x_0|$ in Theorem~\ref{thm:derangement}, imply that $|S_{t\s_2}| = q_0^2-1$ and $s\ws_1$ is trivial. Since $|S_{t\s_2}| = q_0^2-1$, we know that $St \in N_T(S)/S \cong D_8$ is a reflection. Since $s\ws_1 = s\ws_2^e = st^e$ is trivial, we know that $s = t^{-e}$ is a reflection, recalling that $e$ is odd, so $|H| = |S_{s\s_1}| = q^2-1$: a contradiction. Therefore, $x$ is not contained in a core-free maximal subgroup of $G$, so $x$ is a totally deranged element of $G$.

\vspace{0.3\baselineskip}

\emph{\textbf{Case~2.} $T = \POm^+_{2m}(q)$ with $2m=2^l$ and $G$ does not contain triality.}\nopagebreak

By replacing $G$ by $G^\t$ or $G^{\t^2}$ when $m=4$ if necessary, we may assume that $G \leq \PGaO^+_{2m}(q)$. Inspecting \cite[Table~3.5.E]{ref:KleidmanLiebeck} (or \cite[Table~8.50]{ref:BrayHoltRoneyDougal} if $m=4$), we see that every core-free maximal subgroup of $G$ appears is in the statement of Lemma~\ref{lem:ex_d}, so by that lemma, $x$ is contained in no core-free maximal subgroup of $G$, so $x$ is totally deranged.

\vspace{0.3\baselineskip}

\emph{\textbf{Case~3.} $T = \POm^+_8(q)$ and $G$ contains triality.}\nopagebreak

Let $F\: \Inndiag(T)\wp^j \to \Inndiag(T_0)$ be the Shintani map of $(X,\s_1,\s_2) = (\POm_8, \p^f, \p^j)$ where $T_0 = \POm^+_8(q_0)$ for $q_0 = p^j$. Write $x_0 = F(x)$ noting that $|x_0| = |x^{f/j}|$. Let $M$ be a core-free maximal subgroup of $G$. If $M$ appears in Lemma~\ref{lem:ex_d}, then that lemma implies that $x \not\in M$. We now consult \cite[Table~8.50]{ref:BrayHoltRoneyDougal} to identify the other possibilities for $M$.

First assume that $M$ is a $P_{1,3,4}$ parabolic subgroup or a subgroup of type $\O_2^\pm(q) \times \GL_3^\pm(q)$. If $x \in M$, then via the usual application of Theorem~\ref{thm:shintani_subgroups}, $x_0$ is contained in a $P_{1,3,4}$ parabolic subgroup of $\Inndiag(T_0)$ or a subgroup of $\Inndiag(T_0)$ of type $\O_2^\pm(q_0) \times \GL_3^\pm(q_0)$. However, both of these possibilities is inconsistent with the order of $x_0$.

Next assume that $M = N_G(H)$ where $H$ is a maximal torus of order $(q^2+1)^2$. Suppose that $x \in M$, and write $x \in S_{s\s_1}t\ws_2$ where $S$ is a maximally split maximal torus and $s,t \in N_{\Inndiag(T)}(S)$. Then by Theorem~\ref{thm:shintani_cosets}, $x_0 \in S_{t\s_2}s\ws_1$. By the conditions on $|x_0|$ in Theorem~\ref{thm:derangement}, we conclude that $|S_{t\s_2}| = q_0^4-1$ and $s\ws_1$ is trivial. Since $|S_{t\s_2}| = q_0^4-1$, by computing in the Weyl group $W_{D_4}$, we know that $t$ has order 4 and is conjugate to its inverse. Since $s\ws_1 = s\ws_2^e = st^e$ is trivial, $s = t^{-e}$. Since $e$ is odd, $s$ is conjugate to $t$, so $|S_{s\s_1}| = q^4-1$, which is a contradiction.

It remains to assume that $M = N_G(H)$ where $H$ has type $2^4.2^6.\PSL_3(2)$. This means that $x^{f/j}$ is contained in a group whose order is divisible by no primes other than $2$, $3$ and $7$, which contradicts $|x^{f/j}| = |x_0|$ being divisible by a primitive prime divisor of $q_0^4-1$. Therefore, $x$ is not contained in a core-free maximal subgroup of $G$, so $x$ is totally deranged.
\end{proof}

\subsection{Completing the proof of Theorem~\ref{thm:derangement}} \label{ss:proof}

Combining the results established in Section~\ref{s:proof} readily gives a proof of Theorem~\ref{thm:derangement}.

\begin{proof}[Proof of Theorem~\ref{thm:derangement}]
By Proposition~\ref{prop:examples}, if $(G,x)$ appears in Theorem~\ref{thm:derangement}, then $x$ is totally deranged. Turning to the converse, assume that $x$ is a totally deranged element of $G$. By Proposition~\ref{prop:computational}, $G$ is an almost simple group of Lie type. By Proposition~\ref{prop:standard}, either $T$ is untwisted and $G \not\leq \< \Inndiag(T), \wp \>$, or $T \in \{ {}^2A_m(q), {}^2D_m(q), {}^2E_6(q), {}^3D_4(q) \}$. Moreover, by Proposition~\ref{prop:bfg}, we may assume that $T \not\in \{B_2(q), G_2(q), F_4(q) \}$.  To summarise, we may divide into the following cases
\begin{enumerate}[(a)]
\item $T \in \{ A_m(q) \, \text{($m \geq 2$)}, \ D_m(q) \, \text{($m \geq 4$)}, \ E_6(q) \}$ and $G \not\leq \< \Inndiag(T), \wp \>$ but $G$ does not contain triality
\item $T = D_4(q)$ and $G$ contains triality
\item $T \in \{ {}^2A_m(q) \, \text{($m \geq 2$)}, \ {}^2D_m(q) \, \text{($m \geq 4$)}, \ {}^2E_6(q) \}$
\item $T = {}^3D_4(q)$
\end{enumerate}
By Propositions~\ref{prop:a}--\ref{prop:d}, we can assume that $x \not\in \< \Inndiag(T), \wp \>$ in case~(a), and $x \not\in \< \Inndiag(T), \g, \wp \>$ in case~(b). We consider cases~(a)--(d) in turn. 

For (a), by replacing $x$ by another generator of $\<x\>$ if necessary, we can assume that $x \in \Inndiag(T)\g\wp^j$ for some $j$ dividing $f$, so combining Propositions~\ref{prop:standard}(ii) and~\ref{prop:graph}(i) shows that $x$ is not totally deranged.

Similarly, for (b), we can assume that $x \in \Inndiag(T)\t\wp^j$ for some $j$ dividing $f$, so the result follows from Propositions~\ref{prop:standard}(iii) and~\ref{prop:graph}(iii).

For (c), by replacing $x$ by another generator of $\<x\>$ if necessary, we can assume that $x \in \Inndiag(T)\wp^j$ for some $j$ dividing $2f$. If $2f/j$ is even, then $j$ divides $f$ and we apply Proposition~\ref{prop:graph}(ii). If $2f/j$ is odd, then $j$ is even and, writing $j_0 = j/2$, we have that $2f/j = f/j_0$ is odd, so $2f/(2f,f+j_0) = 2f/(2f,j)$ and $\< \wp^j \> = \< \wp^{f+j_0} \> = \< \g\wp^{j_0} \>$; in particular, we can assume that $x \in \Inndiag(T)\g\wp^{j_0}$ for odd $f/j_0$ and we apply Proposition~\ref{prop:standard}(ii). 

Arguing in a similar fashion, for (d), we can assume that $x \in \Inndiag(T)\wp^j$ for some $j$ dividing $3f$. If $3$ divides $3f/j$, then $j$ divides $f$ and we apply Proposition~\ref{prop:graph}(iv); otherwise, $3 \div j$ and writing $j_0 = j/3$ we have $\< \wp^j \> = \< \t\wp^{j_0} \>$, so we can assume that $x \in \Inndiag(T)\g\wp^{j_0}$ with $3 \ndiv f/j_0$ and we apply Proposition~\ref{prop:standard}(iii).
\end{proof}

\section{Applications} \label{s:applications}

\subsection{Invariable generation} \label{ss:app_invariable}

In this section, we prove Theorem~\ref{thm:invariable} on invariable generation. We begin with two lemmas, the first of which connects totally deranged elements and invariable generation.

\begin{lemma} \label{lem:invariable}
Let $\< G, a \>$ be a group such that $G \leqn \< G, a \>$. Let $x \in G$. If $\{ x, x^a \}$ is an invariable generating set for $G$, then $x$ is a totally deranged element of $\< G, a \>$. 
\end{lemma}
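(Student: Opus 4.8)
The plan is to prove the contrapositive: assuming $x$ is \emph{not} a totally deranged element of $H = \<G,a\>$, I will exhibit $G$-conjugates of $x$ and of $x^a$ that together generate a proper subgroup, so that $\{x,x^a\}$ fails to invariably generate $G$. Recall that $x$ is totally deranged in $H$ precisely when it is a derangement in every faithful primitive action of $H$, equivalently when $x$ lies in no core-free maximal subgroup of $H$. So suppose $x$ is not totally deranged: there is a faithful primitive action of $H$ on a set $\Omega$ in which $x$ fixes some point $\omega$. Let $M = H_\omega$ be the corresponding point stabiliser, a core-free maximal subgroup of $H$ with $x \in M$; since the action is primitive, $|\Omega| \geq 2$, so $M$ is proper. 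Throughout I assume $G \neq 1$, which is automatic in the intended application (where $G$ is a nonabelian simple group) and is what makes the reduction below go through.

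First I would pass to the restricted action of the normal subgroup $G$ on $\Omega$. Because $G \leqn H$, the $G$-orbits on $\Omega$ form a block system for the transitive group $H$. As the $H$-action is faithful and $G \neq 1$, the subgroup $G$ cannot fix every point, so this block system is not the partition into singletons; by primitivity it is therefore $\{\Omega\}$, i.e.\ $G$ acts transitively on $\Omega$. Consequently the point stabiliser $P = G_\omega = G \cap M$ is a \emph{proper} subgroup of $G$, and $x \in G \cap M = P$.

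The key step is then to locate a $G$-conjugate of $x^a$ inside $P$ as well. Since $x^a$ is $H$-conjugate to $x$ (via $a$) and $x$ fixes $\omega$, the element $x^a$, which lies in $G$ as $G \leqn H$, fixes the point $\omega' = \omega^{a}$ in the $H$-orbit of $\omega$. By transitivity of $G$ there is $g \in G$ with $\omega^{g} = \omega'$, whence $G_{\omega'} = P^{g}$ and so $(x^a)^{g^{-1}} \in P$. Thus both $x = x^{1}$ and $(x^a)^{g^{-1}}$ are $G$-conjugates of the two members of $\{x,x^a\}$ lying in the proper subgroup $P$, giving $\<x,(x^a)^{g^{-1}}\> \leq P < G$. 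This contradicts the hypothesis that $\{x,x^a\}$ invariably generates $G$, completing the contrapositive.

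The argument is short, and the only genuine subtlety is the reduction in the second paragraph: one must invoke primitivity (together with faithfulness and $G \neq 1$) to upgrade the $G$-action from merely possessing a fixed point of $x$ to being \emph{transitive}. Transitivity is exactly what allows the fixed point $\omega'$ of $x^a$ to be carried back onto $\omega$ by an element of $G$, and it is essential that this conjugating element lie in $G$, since the invariable-generation hypothesis only permits conjugating $x$ and $x^a$ by elements of $G$. Everything else is routine bookkeeping with point stabilisers.
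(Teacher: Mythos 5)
Your proof is correct and is essentially the paper's own proof in permutation-theoretic dress: the paper argues directly with the core-free maximal subgroup $M$, using maximality and $G \not\leq M$ to produce $g \in G$ with $ag \in M$, whence $x^{ag} = (x^a)^g \in M \cap G < G$ --- which is exactly your step of using transitivity of $G$ on $\Omega$ (itself the same fact, that a nontrivial normal subgroup of a primitive group is transitive) to carry the fixed point $\omega^a$ of $x^a$ back to $\omega$ by an element of $G$. Your explicit assumption $G \neq 1$ is also implicit in the paper's proof (its claim $G \not\leq M$ needs it), so there is no discrepancy.
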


\begin{proof}
We prove the contrapositive. Assume that $x$ is not a totally deranged element of $\< G, a \>$. Then there exists a maximal subgroup $M < \< G, a \>$ such that $G \not\leq M$ and $x \in M$. Since $M$ is maximal and does not contain $G$, we deduce that $M \not\leq \< G, a^i \>$ for any $i$ such that $\< a^i \> < \< a \>$. Therefore, there exists $g \in G$ such that $ag \in M$. In particular, that $M^{ag} = M$. This means that $x \in M$ and $x^{ag} \in M^{ag} = M$, so $\< x, x^{ag} \> \leq M \cap G < G$, which demonstrates that $\{x, x^a \}$ is not an invariable generating set for $G$. 
\end{proof}

\begin{lemma} \label{lem:invariable_check}
Let $\< G, a \>$ be a finite group such that $G \leqn \< G, a \>$. Let $x \in G$. Then $\{ x, x^a \}$ is an invariable generating set for $G$ if and only if for every maximal subgroup $H$ of $G$, the element $x$ is contained in a $G$-conjugate of at most one of $H$ and $H^a$.
\end{lemma}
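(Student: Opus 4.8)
The plan is to reduce everything to the elementary fact that, in a finite group, two elements generate $G$ if and only if they lie in no common maximal subgroup. By definition, $\{x,x^a\}$ is an invariable generating set precisely when $\< x^{g_1}, (x^a)^{g_2} \> = G$ for all $g_1,g_2 \in G$. Hence $\{x,x^a\}$ \emph{fails} to be invariable generating if and only if there is a maximal subgroup $M$ of $G$ and elements $g_1,g_2 \in G$ with $x^{g_1} \in M$ and $(x^a)^{g_2} \in M$. I would take this reformulation as the starting point and then translate the two membership conditions into statements about $G$-conjugates of $M$ and of a twist of $M$.

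The first condition is immediate: $x^{g_1} \in M$ is equivalent to $x \in M^{g_1^{-1}}$, so as $g_1$ ranges over $G$ this says exactly that $x$ lies in some $G$-conjugate of $M$. For the second condition I would write $(x^a)^{g_2} = x^{ag_2}$ and compute $x^{ag_2} \in M \iff x \in M^{(ag_2)^{-1}} = M^{g_2^{-1}a^{-1}} = (M^{g_2^{-1}})^{a^{-1}}$. The key point is then that, since $G \leqn \< G, a\>$, conjugation by $a$ normalises $G$, and a short calculation gives $M^{g_2^{-1}a^{-1}} = (M^{a^{-1}})^{ag_2^{-1}a^{-1}}$ with $ag_2^{-1}a^{-1} \in G$; as $g_2$ ranges over $G$ the element $ag_2^{-1}a^{-1}$ ranges over all of $G$, so $(M^{g_2^{-1}})^{a^{-1}}$ runs through exactly the $G$-conjugates of $M^{a^{-1}}$. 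Thus the second condition says precisely that $x$ lies in some $G$-conjugate of $M^{a^{-1}}$.

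Combining the two translations, $\{x,x^a\}$ fails to be invariable generating if and only if there is a maximal subgroup $M$ of $G$ such that $x$ lies in a $G$-conjugate of $M$ \emph{and} in a $G$-conjugate of $M^{a^{-1}}$. Setting $H = M^{a^{-1}}$, so that $M = H^a$, and noting that conjugation by $a$ permutes the maximal subgroups of $G$ (again by normality), $H$ ranges over all maximal subgroups as $M$ does; so this condition reads: there is a maximal subgroup $H$ with $x$ in a $G$-conjugate of both $H$ and $H^a$. Negating yields the lemma. The only subtle point, and the one I would take care over, is the conjugation bookkeeping in the twisted case and the use of $G \leqn \< G,a\>$ to ensure that twisting the family of $G$-conjugates of $M$ by $a^{-1}$ returns the full family of $G$-conjugates of $M^{a^{-1}}$; everything else is routine.
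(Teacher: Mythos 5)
Your proposal is correct and takes essentially the same approach as the paper: both reduce failure of invariable generation to the existence of a maximal subgroup $M$ of $G$ containing $G$-conjugates of $x$ and of $x^a$, use the normality of $G$ in $\< G,a \>$ to rewrite the second membership as containment in a $G$-conjugate of $M^{a^{-1}}$, and conclude by substituting $H = M^{a^{-1}}$. The paper's proof is simply a compressed version of the same conjugation bookkeeping you carry out in detail.
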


\begin{proof}
The set $\{ x^g,x^{ah} \}$ fails to generate $G$ if and only if there exists a maximal subgroup $M$ of $G$ such that $x^g$ and $x^{ah} = x^{h^{a^{-1}}a}$ are both contained in $M$ or equivalently $x$ is contained in both $M^{g^{-1}} = (H^a)^{g^{-1}}$ and $(M^{a^{-1}})^{h^{-a^{-1}}} = H^{h^{-a^{-1}}}$, where $H = M^{a^{-1}}$. The result follows. 
\end{proof}

We may now proceed with the proof of Theorem~\ref{thm:invariable}.

\begin{proof}[Proof of Theorem~\ref{thm:invariable}]
The implication (i)$\implies$(ii) is given by Lemma~\ref{lem:invariable} and (ii)$\implies$(iii) is given by Theorem~\ref{thm:derangement}. Therefore, it suffices to prove that if $(T,a,x)$ appears in part~(iii), then $T$ is invariably generated by $\{x,x^a\}$. (Of course, writing $G = \< T, a\>$, this means that $(G,x)$ appears in Theorem~\ref{thm:derangement} with the additional assumption that $x \in T$.)

\emph{\textbf{Case~1.} $T = \Sp_4(q)$ with $q$ even.}\nopagebreak

By replacing $x$ by some $\Aut(T)$-conjugate if necessary, we assume that $x = y \oplus y^{-\tr}$ with respect to a decomposition $V = \F_q^4 = U \oplus U^*$ where $U$ is a totally singular $2$-space on which $y$ acts irreducibly. This means that $x$ has eigenvalues $\l, \l^q, \l^{-1}, \l^{-q}$ with $|\l|=|y|=|x|$. This implies that $x$ is contained in no $T$-conjugate of $H \in \mathcal{H}_1 = \{ P_1, \ \Sp_2(q) \wr \Sym_2, \ \O^-_4(q) \}$. Moreover, Lemma~\ref{lem:ex_b2} implies that $x$ is contained in no $T$-conjugate of $H \in \mathcal{H}_2$ where  $\mathcal{H}_2$ consists of $\Sp_4(q^{1/k})$ for each prime $k \div f$ and, if $f$ is odd, also ${}^2B_2(q)$. By \cite[Table~8.14]{ref:BrayHoltRoneyDougal}, for every maximal subgroup $H$ of $T$, either $H$ or $H^a$ is $T$-conjugate to a subgroup in $\mathcal{H}_1 \cup \mathcal{H}_2$. Therefore, Lemma~\ref{lem:invariable_check} implies that $\{x,x^a\}$ invariably generates $T$.

\vspace{0.3\baselineskip}

\emph{\textbf{Case~2.} $T = \POm^+_{2m}(q)$ with $2m=2^l$.}\nopagebreak

By replacing $x$ by some $\Aut(T)$-conjugate if necessary, we may assume that $x=su=us$ where $s = y \oplus y^{-\tr}$ with respect to a decomposition $V = \F_q^{2m} = U \oplus U^*$ where $U$ is a totally singular $m$-space on which $y$ acts irreducibly, and $u$ is either trivial or has Jordan form $[J_2^m]$. Let $r$ be a primitive prime divisor of $q^m-1$ that divides $|x|$. 

Let $H$ be a maximal subgroup of $T$ that contains $x$. In particular, $H$ is not in Lemma~\ref{lem:ex_d}. We now consult \cite[Table~3.5.E]{ref:KleidmanLiebeck} (or \cite[Table~8.50]{ref:BrayHoltRoneyDougal} if $m=4$) to identify the remaining possibilities for $H$. We will show that $x$ is not contained in any $T$-conjugate of $H^a$, which completes the proof by Lemma~\ref{lem:invariable_check}. Throughout, let $\Aut_0(T)$ be $\< \Inndiag(T), \p \>$.

First assume that $H$ is the stabiliser of an $m$-space or is a subgroup of type $\GL_m(q)$. If $u=1$, then $U$ and $U^*$ are the only maximal totally singular subspaces stabilised by $x^2$, so $T_U$ and $T_{U^*}$ are the only stabilisers of $m$-spaces containing $x$ and $T_{U \oplus U^*}$ is the only subgroup of type $\GL_m(q)$ containing $x$. If $u \neq 1$, then $T_U$ is the only stabiliser of an $m$-space containing $x$ and no subgroups of type $\GL_m(q)$ contain $x$. 

Let us pause to make some observations that will prove useful later. Let $H$ be the stabiliser of an $m$-space in $T$, which is a maximal subgroup of $T$. Then the $\PGaO^+_{2m}(q)$-class of $H$ splits into exactly two $\Aut_0(T)$-classes (which are also $T$-classes). Given two maximally totally singular subspaces $W_1$ and $W_2$ of $V$, the stabilisers $T_{W_1}$ and $T_{W_2}$ are $\Aut_0(T)$-conjugate (equivalently, $T$-conjugate) if and only if $\dim(W_1 \cap W_2)$ is even (in which case, we say that $W_1$ and $W_2$ have the same \emph{type}), see \cite[Description~4 (on p.30) \& Table~3.5.E]{ref:KleidmanLiebeck}. In particular, since $U$ and $U^*$ have trivial intersection, the stabilisers $T_U$ and $T_{U^*}$ are $\Aut_0(T)$-conjugate. If $a \in \PGaO^+_{2m}(q) \setminus \Aut_0(T)$, then this means that even when $x$ is contained in different stabilisers of an $m$-space, $x$ is not contained in an $\Aut_0(T)$-conjugate $H^a$. If $m=4$ and $a \in \Aut(T) \setminus \PGaO^+_8(q)$, then $H^a$ is either the stabiliser of an $m$-space or a $1$-space, and since $x$ stabilises no $1$-spaces of $V$, again $x$ is not contained in an $\Aut_0(T)$-conjugate of $H^a$. In all cases, as $x$ is not contained in an $\Aut_0(T)$-conjugate of $H^a$, it follows that $\<x\>$ is not $\Aut_0(T)$-conjugate to $\<x^{a^{-1}}\>$, or equivalently, $\<x\>$ is not $\Aut_0(T)$-conjugate to $\<x^a\>$.

Now assume that $H$ is not the stabiliser of an $m$-space. The rest of the proof will focus on showing that $\<x\>^{\Aut(T)} \cap H = \<x\>^{\Aut_0(T)} \cap H$, which we claim completes the proof. To see this, assume that $\<x\>^{\Aut(T)} \cap H = \<x\>^{\Aut_0(T)} \cap H$ and suppose that $x \in (H^a)^g$ for some $g \in T$. Then $(H^a)^g$ contains both $x$ and $x^{ag}$, so, by our assumption, $\<x\>$ and $\<x^a\>^g$ are $\Aut_0(T)$-conjugate, but this contradicts the observation that $\<x\>$ is not $\Aut_0(T)$-conjugate to $\<x^a\>$. Therefore, $x$ is not contained in any $T$-conjugate of $H^a$ and we appeal to Lemma~\ref{lem:invariable_check}. 

First assume that $H$ has type $\GU_m(q)$. Now $H$ has a unique class of cyclic subgroups of order $|x|$, so $\<x\>^{\Aut(T)} \cap H = \<x\>^H$ and, in particular, $\<x\>^{\Aut(T)} \cap H = \<x\>^{\Aut_0(T)} \cap H$.

Next assume that $H$ has type $\Sp_{k_1}(q) \otimes \Sp_{k_2}(q)$ or $\O^{\e_1}_{k_1}(q) \otimes \O^{\e_2}_{k_2}(q)$ where $2m=k_1k_2$. Since $|x|$ is divisible by $r$, we deduce that, without loss of generality, $k_1=m$ and $k_2=2$, and in the orthogonal case, $\e_1=-$. Again $H$ has a unique class of cyclic subgroups of order $|x|$, so $\<x\>^{\Aut(T)} \cap H = \<x\>^{\Aut_0(T)} \cap H$.

Now assume that $H$ has type $2^{1+2l}.\O^+_{2l}(2)$ where $q = p \geq 3$ (recall that $2m=2^l$), so $H$ is an extension of $2^{2l}$ by $\Omega^+_{2l}(2)$ or $\O^+_{2l}(2)$. The assumption that $x \in H$ is highly restrictive. Indeed, since $2^{2l}$ has exponent $2$, we know that $|x^2|$ must divide $\O^+_{2l}(2)$. Now $r$ divides $|x^2|$ and $r \equiv 1 \mod{2^{l-1}}$, so we must have $r=2^{l-1}+1$. Let $x'$ be the image of $x$ in $\O^+_{2l}(2)$. Then $x' = (x_1,x_2) \in \O^-_{2l-2}(2) \times \O^-_2(2)$ where $|x_1|=r=2^{l-1}+1$ and $|x_2| \in \{1,2,3\}$. Suppose that $|x_2| \neq 3$. Then $|x|$ divides $4r$, which divides $2(q^{m/2}+1)$, which is a contradiction. Therefore, $|x_2|=3$. Now $\Omega^-_{2l}(2)$ (and $\O^-_{2l}(2)$) has a unique class of cyclic subgroups of order $3r = (2+1)(2^{l-1}+1)$, which implies that $\<x^2\>^{\Aut(T)} \cap H = \<x^2\>^{\Aut_0(T)} \cap H$. Therefore, it suffices to establish that $x^2$ is not $\Aut_0(T)$-conjugate to $(x^2)^a$. However, $x^2$ satisfies the condition of Theorem~\ref{thm:derangement} in this case (since $3$ does not divide $q^m+1$), so the observation that $x$ is not $\Aut_0(T)$-conjugate to $x^a$ applies to $x^2$ too. This handles subgroups of type $2^{1+2l}.\O^+_{2l}(2)$.

Now assume that $H$ has type $\Sp_{k_1}(q) \wr \Sym_{k_2}$ or $\O^\e_{k_1}(q) \wr \Sym_{k_2}$ where $2m=k_1^{k_2}$. Here, since $|x|$ is divisible by the prime $r > m$ and $k_2 < m$, we deduce that $\Sp_{k_1}(q)$ or $\O^\e_{k_1}(q)$ contains an element of order $r$, but $k_1 < m$, so this is impossible.

All that remains is to consider the case where $H$ has type $\O^+_m(q^2)$. Let $x_\sharp$ be the preimage of $x$ in $\O^+_m(q^2)$, which naturally acts on $V_\sharp = \F_{q^2}^m$. Note that $x_\sharp$ stabilises a maximal totally singular subspace $U_\sharp$ of $V_\sharp$ corresponding to the maximal totally singular subspace $U$ of $V$. Let $y \in x^{\Aut(T)} \cap H$ and let $y_\sharp$ be the preimage of $y$ in $\O^+_m(q^2)$. Let us first address the case where $m=4$ and $y$ is not $\PGaO^+_8(q)$-conjugate to $x$. Here $y$ has exactly two eigenvalues of order dividing $q-1$, so $y_\sharp$ has a unique eigenvalue with order dividing $q-1$, but this is impossible for an element of $H$. Therefore, we may now assume that $y$ is $\PGaO^+_{2m}(q)$-conjugate to $x$. In this case, we can fix a maximal totally singular subspace $W_\sharp$ of $V_\sharp$ stabilised by $y_\sharp$. Now $\dim(U \cap W) = 2 \dim(U_\sharp \cap W_\sharp)$, which is even, so $U$ and $W$ have the same type. Since all maximal totally singular subspaces of $V$ stabilised by $y$ have the same type, we deduce that all maximal subgroups of $T$ of type $P_m$ that contain $y$ are $\Aut_0(T)$-conjugate to $T_U$. Therefore, $y$ is $\Aut_0(T)$-conjugate to $x$, so $\<x\>^{\Aut(T)} \cap H = \<x\>^{\Aut_0(T)} \cap H$. 

We have now shown that for all maximal subgroups $H$ of $T$, the element $x$ is contained in a $T$-conjugate of at most one of $H$ and $H^a$, so Lemma~\ref{lem:invariable_check} completes the proof.
\end{proof}

\subsection{Unique maximal overgroups} \label{ss:app_unique}

We conclude by proving Theorem~\ref{thm:unique} on unique maximal overgroups, beginning with a strong version of the implication (iii)$\implies$(i). Here, and in what follows, for a finite simple group of Lie type $T$, write $\ddot{x} = Tx \in \Out(T)$ if $x \in \Aut(T)$, write $\ddot{A} = \{ \ddot{x} \mid x \in A \}$ if $A \subseteq \Aut(T)$ and write $D = \< \Inndiag(T), \p \>$. 

\begin{proposition} \label{prop:unique}
Let $T$ be a finite simple group of Lie type, $T \leq G \leq \Aut(T)$ and $x \in G$. Assume that $(G,x)$ appears in Theorem~\ref{thm:unique}. Then $\< \Inndiag(T), \p \> \cap G$ is the unique maximal subgroup of $G$ that contains $x$. 
\end{proposition}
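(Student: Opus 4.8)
The plan is to pass to the quotient $\ddot{G} = G/T$ and prove that $\ddot{x}$ lies in a unique maximal subgroup of $\ddot{G}$, which will be $\ddot{K} := (D \cap G)/T = \ddot{G} \cap \ddot{D}$. First I would record the reduction. Since $(G,x)$ appears in Theorem~\ref{thm:unique} it appears in Theorem~\ref{thm:derangement}, so $x$ is totally deranged and hence lies in no core-free maximal subgroup of $G$. As $T = \soc(G)$ is the unique minimal normal subgroup of $G$, the core of any non-core-free maximal subgroup contains $T$; hence the maximal subgroups of $G$ containing $x$ are precisely the preimages of the maximal subgroups of $\ddot{G}$ containing $\ddot{x}$. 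Noting also that $x \in \Inndiag(T)\a \subseteq D$, so that $x \in D \cap G$ and $\ddot{x} \in \ddot{K}$, it suffices to show that $\ddot{K}$ is the unique maximal subgroup of $\ddot{G}$ containing $\ddot{x}$; then $D \cap G$ is maximal in $G$ by the correspondence theorem.

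Next I would use Lemma~\ref{lem:groups_automorphism} and Remark~\ref{rem:groups_automorphism} to fix the structure of $\Out(T)$: here $\ddot{D} \trianglelefteq \Out(T)$ with $\Out(T)/\ddot{D} \cong C_2$, except when $T = \POm^+_8(q)$, where it is $S_3$. Writing $x \in \Inndiag(T)\a$ with $\a \in \langle \p \rangle$ of odd order $e$ (Theorem~\ref{thm:derangement}(iii)) and feeding in the generators of $G$ supplied by Theorem~\ref{thm:unique}(iii) — so $G = \langle T, \rho^i \rangle$ when $T = \Sp_4(q)$, and $G = \langle T, x, y\p^i \rangle$ for a duality or triality automorphism $y$ when $T = \POm^+_{2m}(q)$ — I would exhibit $\ddot{G}$ explicitly inside $\Out(T)$. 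Since $G \not\leq D$ while the extra generator maps onto $\Out(T)/\ddot{D}$ through a single cyclic factor of prime order $k$ (with $k=2$ for $\Sp_4$ and the duality case, $k=3$ for triality, matching $f/(ei) = k^a$ from Theorem~\ref{thm:unique}(iii)), the quotient $\ddot{G}/\ddot{K}$ is cyclic of order $k$. In particular $\ddot{K}$ has prime index and so is maximal in $\ddot{G}$.

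The heart of the argument is the claim that $\langle \ddot{x} \rangle \trianglelefteq \ddot{G}$ and that $\ddot{G}/\langle \ddot{x} \rangle$ is a nontrivial cyclic $k$-group. Granting this, the subgroups of $\ddot{G}$ containing $\ddot{x}$ are exactly the preimages of the subgroups of the cyclic $k$-group $\ddot{G}/\langle \ddot{x} \rangle$, which has a unique maximal subgroup; hence $\ddot{x}$ lies in a unique maximal subgroup of $\ddot{G}$, necessarily the unique index-$k$ subgroup containing $\langle \ddot{x} \rangle$, which is $\ddot{K}$. To prove the claim I would compute $\ddot{G}$ inside $\Out(T)$ from its generators. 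When $q$ is even one has $\Inndiag(T) = T$, so $\ddot{G}$ is abelian and $\langle \ddot{x} \rangle$ is automatically normal; here the relation $f/(ei) = k^a$ forces the $k'$-part of the generator $y\p^i$ to lie in $\langle \ddot{x} \rangle$, whence $\ddot{G}/\langle \ddot{x} \rangle$ is generated by the image of $y\p^i$ and is a cyclic $k$-group (even when $\ddot G$ itself is not cyclic, as can occur under triality), nontrivial because $\langle \ddot{x} \rangle \subseteq \ddot{K} \subsetneq \ddot{G}$.

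The main obstacle is odd characteristic for $T = \POm^+_{2m}(q)$, where $\mathrm{Outdiag}(T) \cong C_2 \times C_2$ is nontrivial and the graph automorphisms act on it — the duality $\g$ interchanges its two involutions and the triality $\t$ cycles its three. Here $\ddot{x}$ may carry a diagonal component $\d$. For $k=2$ this does not disturb the $k'$-part computation but it does threaten normality, since $\langle \ddot{x} \rangle$ is invariant under $y\p^i$ exactly when $\d$ is fixed by $\g$; for $k=3$ it is more serious, as $\d$ then lies in the $k'$-part and, being cycled nontrivially by $\t$, would enlarge the $k'$-part of $\ddot{G}$ beyond $\langle \ddot{x} \rangle$ unless $\d = 1$. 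I expect to resolve both by exploiting the self-dual shape of $x$ recorded in the proof of Theorem~\ref{thm:derangement}: the semisimple part of $x^e$ is conjugate to $g \oplus g^{-\tr}$ on a pair of dual totally singular subspaces, which pins down the $\PGO^+_{2m}(q)$-class of $x$ and forces its diagonal type to be graph-invariant — so $\d$ is $\g$-fixed, and $\d = 1$ in the triality case. With the diagonal component so controlled, the claim of the previous paragraph goes through and $\ddot{K}$ is the unique maximal subgroup of $\ddot{G}$ containing $\ddot{x}$, completing the proof. Everywhere outside the odd-characteristic orthogonal groups the explicit abelian computation already suffices.
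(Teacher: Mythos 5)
Your reduction to $\ddot{G}=G/T$ and your treatment of the cases where $\Inndiag(T)/T$ is trivial (so $T=\Sp_4(2^f)$, and $\POm^+_{2m}(q)$ with $q$ even) are correct and agree with the paper. The gap is in what you call the heart of the argument: the claim that $\<\ddot{x}\> \leqn \ddot{G}$ with $\ddot{G}/\<\ddot{x}\>$ a cyclic $k$-group, and specifically the proposed rescue in odd characteristic that the self-dual shape of $x$ forces its diagonal component $\d$ to be $\g$-fixed and, in the triality case, trivial. That last assertion is false. Take $q\geq 3$ odd, $T=\POm^+_8(q)$, and let $x$ be the image in $\Inndiag(T)$ of $g\oplus g^{-\tr}\in\SO^+_8(q)$ with $g\in\GL_4(q)$ a Singer cycle. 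Then $x$ has projective order $(q^4-1)/2$ and satisfies Theorem~\ref{thm:derangement}(iii) with $e=1$, $u=1$; taking $f=i=1$ and $y=\t$, the pair $(G,x)$ with $G=\<T,x,\t\>$ lies in Theorem~\ref{thm:unique}. The shape $g\oplus g^{-\tr}$ does force determinant $1$ and square similitude factor, so $\d$ lies in $\{1,a\}$ where $a$ is the image of $\mathrm{PSO}^+_8(q)$ in $\Inndiag(T)/T\cong C_2^2$; but it does \emph{not} force $\d=1$, because the spinor norm of $g\oplus g^{-\tr}$ is $\det(g)$ modulo squares, which is a non-square for a Singer cycle. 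Hence $\d=a\neq 1$. Now $a$ is fixed by the duality $\g$ but is moved by triality, which permutes the three involutions of $\Inndiag(T)/T$ cyclically (this is exactly why $\Out(T)\cong\Sym_4\times C_f$). In this example $\ddot{G}=\<a,\ddot{\t}\>\cong\Alt_4$ and $\<\ddot{x}\>=\<a\>$ is a non-normal subgroup of order $2$, so your plan of quotienting by $\<\ddot{x}\>$ cannot even begin --- although the conclusion of the proposition still holds, since in $\Alt_4$ the involution $a$ lies in a unique maximal subgroup, the Klein four-group $\ddot{G}\cap\ddot{D}$.

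The repair is what the paper actually does: work with the normal closure $\ddot{K}=\<\ddot{x},\ddot{x}^{\ddot{w}},\ddot{x}^{\ddot{w}^2}\>$, where $\ddot{w}=\ddot{y}\ddot{\p}^i$ (two generators suffice in the duality case), rather than with $\<\ddot{x}\>$. Since $\ddot{D}$ is abelian, every element of $\ddot{G}\setminus\ddot{D}$ conjugates $\ddot{x}$ to $\ddot{x}^{\ddot{w}}$ or $\ddot{x}^{\ddot{w}^2}$; hence $\ddot{K}\leqn\ddot{G}$, and any maximal subgroup $\ddot{M}\neq\ddot{G}\cap\ddot{D}$ of $\ddot{G}$ containing $\ddot{x}$ must contain $\ddot{K}$. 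One then checks that $\ddot{G}/\ddot{K}$ is a cyclic $k$-group (in the example above $\ddot{K}=\Inndiag(T)/T$ and $\ddot{G}/\ddot{K}\cong C_3$), which forces $\ddot{M}=\<\ddot{K},\ddot{w}^k\>=\ddot{G}\cap\ddot{D}$, a contradiction. In short, the uniqueness must be extracted from the cyclic $k$-group $\ddot{G}/\<\ddot{x}^{\ddot{G}}\>$, not from $\ddot{G}/\<\ddot{x}\>$: the diagonal component genuinely survives and cannot be argued away. (Even in the duality case, where $\{1,a\}$ being $\g$-fixed would indeed make $\<\ddot{x}\>$ normal, your argument rests on an unproved assertion about the diagonal type; the paper's proof never needs it, and its proof of Theorem~\ref{thm:unique} deliberately allows a nontrivial, even non-$\g$-fixed, diagonal component.)
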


\begin{proof}
Let $(G,x)$ be in Theorem~\ref{thm:unique}. By Theorem~\ref{thm:derangement}, $x$ is a totally deranged element of $G$, so $x$ is contained in no core-free maximal subgroup of $G$. Therefore, it suffices to prove that $\ddot{G} \cap \ddot{D}$ is the unique maximal subgroup of $\ddot{G}$ that contains $\ddot{x}$. In all cases, $G \not\leq D$ and replacing $x$ by an $\Aut(T)$-conjugate if necessary, $x \in \Inndiag(T)\wp^j$ for a divisor $j$ of $f$ such that $f/j$ is odd. 

\emph{\textbf{Case~1.} $T = \Sp_4(q)$.}\nopagebreak

The maximal subgroups of $\ddot{G}$ that contain $\ddot{x}$ correspond to the maximal subgroups of $\ddot{G}/\< \ddot{x} \>$. However, $\ddot{G}/\< \ddot{x}\> \cong C_{2j/i}$ and $2j/i$ is a power of $2$, so the unique index two subgroup of $\ddot{G}/\< \ddot{x}\>$ is its unique maximal subgroup. Therefore, $\< \ddot{\p}^i \> = \ddot{G} \cap \ddot{D}$ is the unique maximal subgroup of $\ddot{G}$ containing $\ddot{x}$.

\vspace{0.3\baselineskip}

\emph{\textbf{Case~2.} $T = \POm^+_{2m}(q)$ and $y$ is a duality graph automorphism.}\nopagebreak

Since $\ddot{x}, (\ddot{y}\ddot{\p}^i)^2 \in \ddot{D}$, it is easy to check that $\< \ddot{x}, \ddot{x}^{\ddot{y}\ddot{\p}^i}, (\ddot{y}\ddot{\p}^i)^2 \>$ is an index two subgroup of $\ddot{G}$ wholly contained in $\ddot{D}$, so it is necessarily $\ddot{G} \cap \ddot{D}$. Now $\ddot{G} \cap \ddot{D}$ is certainly a maximal subgroup of $\ddot{G}$ containing $\ddot{x}$. Let $\ddot{M}$ be any maximal subgroup subgroup of $\ddot{G}$ containing $\ddot{x}$. Suppose that $\ddot{M} \neq \ddot{G} \cap \ddot{D}$. Then $\ddot{M} \not\leq \ddot{G} \cap \ddot{D}$, so we may fix $\ddot{g} \in \ddot{M}$ such that $\ddot{g} \not\in \ddot{D}$. Then $\ddot{x}^{\ddot{y}\ddot{\p}^i} = \ddot{x}^{\ddot{g}} \in \ddot{M}$, so $\ddot{K} = \< \ddot{x}, \ddot{x}^{\ddot{y}\ddot{\p}^i} \>$ is a normal subgroup of $\ddot{G}$ contained in $\ddot{M}$. Therefore, the possibilities for $\ddot{M}$ correspond to the maximal subgroups of $\ddot{G}/\ddot{K}$ containing $\ddot{M}/\ddot{K}$. Now $\ddot{G}/\ddot{K}$ is a cyclic group of order dividing $|\ddot{y}\ddot{\p}^i|$, which is a power of $2$, so $\ddot{M} = \< \ddot{K}, (\ddot{y}\ddot{\p}^i)^2 \> = \ddot{G} \cap \ddot{D}$, which contradicts $\ddot{g} \in \ddot{M}$. Therefore, $\ddot{M} = \ddot{G} \cap \ddot{D}$.

\emph{\textbf{Case~3.} $T = \POm^+_8(q)$ and $y$ is a triality graph automorphism.}\nopagebreak

The argument is very similar to in Case~2, so we just sketch it. First note that $\ddot{G} \cap \ddot{D} = \< \ddot{x}, \ddot{x}^{\ddot{y}\ddot{\p}^i}, \ddot{x}^{(\ddot{y}\ddot{\p}^i)^2}, (\ddot{y}\ddot{\p}^i)^3 \>$, which is a maximal subgroup of $\ddot{G}$ containing $\ddot{x}$. Next let $\ddot{M}$ be any maximal subgroup subgroup of $\ddot{G}$ containing $\ddot{x}$, and suppose that $\ddot{M} \neq \ddot{G} \cap \ddot{D}$. Then $\ddot{K} = \< \ddot{x}, \ddot{x}^{\ddot{y}\ddot{\p}^i}, \ddot{x}^{(\ddot{y}\ddot{\p}^i)^2} \>$ is a normal subgroup of $\ddot{G}$ contained in $\ddot{M}$, so the possibilities for $\ddot{M}$ correspond to the maximal subgroups of $\ddot{G}/\ddot{K}$ containing $\ddot{M}/\ddot{K}$. Since $\ddot{G}/\ddot{K}$ is a cyclic group of order dividing $|\ddot{y}\ddot{\p}^i|$, which is a power of $3$, we deduce that $\ddot{M} = \< \ddot{K}, (\ddot{y}\ddot{\p}^i)^3 \> = \ddot{G} \cap \ddot{D}$, which is a contradiction, so $\ddot{M} = \ddot{G} \cap \ddot{D}$.
\end{proof}

\begin{proof}[Proof of Theorem~\ref{thm:unique}]
The equivalence (i)$\iff$(ii) is clear, and the implication (iii)$\implies$(i) is given by Proposition~\ref{prop:unique}. Therefore, the remainder of the proof will be dedicated to proving the implication (ii)$\implies$(iii). Let $(G,x)$ be in Theorem~\ref{thm:derangement} and assume that $\ddot{x}$ is contained in a unique maximal subgroup of $\ddot{G}$. We claim that $(G,x)$ is given in Theorem~\ref{thm:unique}.

\emph{\textbf{Case~1.} $T = \Sp_4(q)$.}\nopagebreak

In this case $\Out(T) = \< \ddot{\rho} \>$. Since $\ddot{G} \not\leq \< \ddot{\p} \>$, we have $\ddot{G} = \< \ddot{\rho}^i \>$ for some odd divisor $i$ of $f$. Since $\ddot{x} \in \ddot{G}$, we must have $i \div j$. The maximal subgroups of $\ddot{G}$ that contain $\ddot{x}$ correspond to the maximal subgroups of $\ddot{G}/\<\ddot{x}\>$. In particular, $\ddot{x}$ is contained in a unique maximal subgroup of $\ddot{G}$ if and only if $2j/i = |\ddot{G}/ \<\ddot{x}\>|$ is a power of $2$.

\vspace{0.3\baselineskip}

\emph{\textbf{Case~2.} $T = \POm^+_{2m}(q)$ and $G$ does not contain triality.} \nopagebreak

If $m > 4$, then $G \leq \PGaO^+_8(q) = \< D, \g \>$ and we write $\alpha  = \g$. If $m=4$, then we can fix $\alpha \in \{\g, \g\t, \g\t^2\}$ such that $G \leq \< D, \alpha\>$. In all cases, $\alpha$ is a duality graph automorphism.

First assume that $\< \ddot{x} \>$ is normal in $\ddot{G}$. The maximal subgroups of $\ddot{G}$ containing $\ddot{x}$ correspond to the maximal subgroups of $\ddot{G}/\<\ddot{x}\>$, so $\ddot{G}/\<\ddot{x}\>$ is a cyclic $2$-group. In particular, the assumption $G \not\leq D$ means that we may write $\ddot{G} = \< \ddot{x}, \ddot{h}\ddot{\a}\ddot{\p}^i \>$ where $h \in \Inndiag(T)$ and $i \div j$. If $j/i$ had an odd prime divisor $r$, then $r$ would divide $|\ddot{G}/\< \ddot{x} \>|$, a contradiction. Therefore, $j/i$ is a power of $2$. This proves the result (with $y=h\a$).

For the rest of this case, we can assume that $p$ is odd and $\< \ddot{x} \> = \< \ddot{\d}\ddot{\p}^j \>$. Here we take a different approach. We know that $\ddot{G} \leq \< \ddot{\d}, \ddot{\a} \> \times \< \ddot{\p} \>$. Assume that the projection of $\ddot{G}$ onto $\< \ddot{\p} \> \cong C_f$ is $\< \ddot{\p}^i \>$ where $i \div f$, so $\ddot{G} \leq \ddot{A}$ where $\ddot{A} = \< \ddot{\d}, \ddot{\a}, \ddot{\p}^i\>$. The assumption that $G \not\leq D$ is equivalent to the projection of $\ddot{G}$ onto $\< \ddot{\d}, \ddot{\a} \> \cong D_8$ not being contained in $\< \ddot{\d}, \ddot{\d}^{\ddot{\a}}\> \cong C_2^2$, and since $\ddot{\d}\ddot{\p}^j \in \ddot{G}$ we deduce that $\ddot{G}$ projects onto $\< \ddot{\d}, \ddot{\a} \>$. Therefore, by Goursat's lemma, either $\ddot{G} = \ddot{A}$ or $f/i$ is even and $\ddot{G} = \{ (x_1,x_2) \in \ddot{A} \mid \ddot{N}x_1 = \<\ddot{\p}^{2i}\>x_2 \}$ where $N$ is one of $\< \ddot{\d}, \ddot{\d}^{\ddot{\a}} \>$, $\< \ddot{\a}, \ddot{\a}^{\ddot{\d}} \>$ or $\< \ddot{\d}\ddot{\a} \>$. Note that in all cases, $\ddot{G} \cap \ddot{D}$ is a maximal (in fact, index two) subgroup of $\ddot{G}$ containing $\ddot{x}$, and it is useful to record that $\ddot{G} \cap \ddot{D}$ is abelian. We will now prove that if $\ddot{G} \cap \ddot{D}$ is the only maximal subgroup of $\ddot{G}$ that contains $\ddot{x}$, then $j/i$ is a power of $2$ and $\ddot{G} = \< \ddot{x}, \ddot{h}\ddot{\a}\ddot{\p}^i \>$ for some $h \in \Inndiag(T)$, as required (again with $y=h\a$).

First assume that $\ddot{G} = \ddot{A}$. If $j > i$, then $\< \ddot{\d}, \ddot{\a}, \ddot{\p}^j\>$ is a proper nonabelian subgroup of $\ddot{G}$ containing $\ddot{x}$, a contradiction, so $j=i$. If $f/i$ is even, then $\< \ddot{\d}\ddot{\p}^i, \ddot{\a} \>$ is a proper nonabelian subgroup of $\ddot{G}$ containing $\ddot{x}$, a contradiction, so $f/i$ is odd. Therefore, $\ddot{G} = \< \ddot{x}, \ddot{\a}\ddot{\p}^i \>$.

From now on we assume that $\ddot{G} < \ddot{A}$. For now assume that $\ddot{N} = \< \ddot{\d}\ddot{\a} \>$, so $\ddot{G} = \< \ddot{\d}\ddot{\p}^i, \ddot{\d}\ddot{\a} \>$. Since $\ddot{x} = \ddot{\d}\ddot{\p}^j \in \ddot{G}$,  $j/i$ is odd. If $j > i$, then $\< \ddot{\d}\ddot{\a}, \ddot{\d}\ddot{\p}^j\>$ is a proper nonabelian subgroup of $\ddot{G}$ containing $\ddot{x}$, a contradiction, so $j=i$. Note that $\ddot{G} = \< \ddot{\d}\ddot{\p}^i, (\ddot{\d}\ddot{\p}^i)\ddot{\d}\ddot{\a} \> = \< \ddot{x}, \ddot{\a}\ddot{\p}^i \>$.

Next assume that $\ddot{N} = \< \ddot{\a}, \ddot{\a}^{\ddot{\d}} \>$, so $\ddot{G} = \< \ddot{\d}\ddot{\p}^i,  \ddot{\a}\>$ and $j/i$ is odd. If $j > i$, then $\< \ddot{\d}\ddot{\p}^j, \ddot{\a} \>$ is a proper nonabelian subgroup of $\ddot{G}$ containing $\ddot{x}$, a contradiction, so $j=i$. Note that $\ddot{G} = \< \ddot{\d}\ddot{\p}^i, (\ddot{\d}\ddot{\p}^i)\ddot{\a} \> = \< \ddot{x}, \ddot{\d}\ddot{\a}\ddot{\p}^i \>$.

Finally assume that $\ddot{N} = \< \ddot{\d}, \ddot{\d}^{\ddot{\a}} \>$, so $\ddot{G} = \< \ddot{\d}, \ddot{\a}\ddot{\p}^i \>$ and $j/i$ is even. If $j/i$ has an odd prime divisor $r$, then $\< \ddot{\d}, \ddot{\a}\ddot{\p}^{ri} \>$ is a proper nonabelian subgroup of $\ddot{G}$ containing $\ddot{x}$, a contradiction, so $j/i$ is a power of $2$. Note that $\ddot{G} = \< \ddot{\d}, \ddot{\a}\ddot{\p}^i \> =  \< \ddot{\d}(\ddot{\a}\ddot{\p}^i)^{j/i}, \ddot{\a}\ddot{\p}^i \> = \< \ddot{x}, \ddot{\a}\ddot{\p}^i \>$.
 
\vspace{0.3\baselineskip}
 
\emph{\textbf{Case~3.} $T = \POm^+_8(q)$ and $G$ contains triality.}\nopagebreak

This case is very similar to Case~2. First assume that $\ddot{x} = \ddot{\p}^j$. The maximal subgroups of $\ddot{G}$ containing $\ddot{x}$ correspond to the maximal subgroups of $\ddot{G}/\<\ddot{x}\>$, so $\ddot{G}/\<\ddot{x}\>$ is a cyclic $l$-group for some prime $l$. In particular, since $G \not\leq \< \Inndiag(T), \g, \p \>$, we may write $\ddot{G} = \< \ddot{x}, \ddot{h}\ddot{\t}\ddot{\p}^i \>$ where $h \in \Inndiag(T)$ and $i \div j$. Observe that $3$ divides $|\ddot{G}/\< \ddot{x}\>|$. If $j/i$ had a prime divisor $r \neq 3$, then $r$ would divide $|\ddot{G}/\< \ddot{x} \>|$, a contradiction. Therefore, $j/i$ is a power of $3$.

For the rest of this case, we can assume that $p$ is odd and $\< \ddot{x} \> = \< \ddot{g}\ddot{\p}^j \>$ for some element $g \in \Inndiag(T) \setminus T$. We know that $\ddot{G} \leq \< \ddot{\d}, \ddot{\g}, \ddot{\t} \> \times \< \ddot{\p} \>$. Assume that the projection of $\ddot{G}$ onto $\< \ddot{\p} \> \cong C_f$ is $\< \ddot{\p}^i \>$ where $i \div f$, so $\ddot{G} \leq \< \ddot{\d}, \ddot{\g}, \ddot{\t}, \ddot{\p}^i\>$. Since $G$ contains triality, $|GD/D|$ is divisible by $3$, so $\ddot{G}$ projects onto at least $\< \ddot{g}, \ddot{\t} \> = \< \ddot{\d}, \ddot{\t}\>$. However, if $\ddot{G}$ projects onto $\< \ddot{\d}, \ddot{\g}, \ddot{\t} \>$, then $\ddot{\g}$ has multiple maximal overgroups in $\ddot{G}$, so $\ddot{G}$ projects onto exactly $\< \ddot{g}, \ddot{\t} \>$. Therefore, either $\ddot{G} = \ddot{A} = \< \ddot{\d}, \ddot{\t}, \ddot{\p}^i\>$ or $3$ divides $f/i$ and $\ddot{G} = \{ (g_1,g_2) \in \ddot{A} \mid \ddot{N}g_1 = \<\ddot{\p}^{3i}\>g_2 \}$ where $N = \< \ddot{\d}, \ddot{\d}^{\ddot{\g}} \>$, so, in other words, $\ddot{G} = \< \ddot{\d}, \ddot{\t}\ddot{\p}^i \>$. In both cases, $\ddot{G} \cap \ddot{D}$ is a maximal (index three) subgroup of $\ddot{G}$ containing $\ddot{x}$, and we record that $\ddot{G} \cap \ddot{D}$ is abelian. 

We will now prove that if $\ddot{G} \cap \ddot{D}$ is the only maximal subgroup of $\ddot{G}$ that contains $\ddot{x}$, then $j/i$ is a power of $3$ and $\ddot{G} = \< \ddot{x}, \ddot{h}\ddot{\t}\ddot{\p}^i \>$ for some $h \in \Inndiag(T)$.

First assume that $\ddot{G} = \< \ddot{\d}, \ddot{\t}, \ddot{\p}^i\>$. If $j > i$, then $\< \ddot{\d}, \ddot{\t}, \ddot{\p}^j\>$ is a proper nonabelian subgroup of $\ddot{G}$ containing $\ddot{x}$, a contradiction, so $j=i$. If $f/i$ is even, then $\< \ddot{\d}\ddot{\p}^i, \ddot{\t} \>$ is a proper nonabelian subgroup of $\ddot{G}$ containing $\ddot{x}$, a contradiction, so $f/i$ is odd. Therefore, $\ddot{G} = \< \ddot{x}\ddot{\p}^i, \ddot{\t} \> = \< \ddot{x}\ddot{\p}^i, (\ddot{x}\ddot{\p}^i)\ddot{\t} \> = \< \ddot{x}, \ddot{x}\ddot{\t}\ddot{\p}^i \>$. 

Now assume that $\ddot{G} = \< \ddot{\d}, \ddot{\t}\ddot{\p}^i \>$, so $j/i$ is divisible by $3$. If $j/i$ has a prime divisor $r \neq 3$, then $\< \ddot{\d}, \ddot{\t}^r\ddot{\p}^{ri} \>$ is a proper nonabelian subgroup of $\ddot{G}$ containing $\ddot{x}$, a contradiction, so $j/i$ is a power of $3$. Note that $\ddot{G} = \< \ddot{\d}, \ddot{\t}\ddot{\p}^i \> = \< \ddot{\d}(\ddot{\t}\ddot{\p}^i)^{j/i}, \ddot{\t}\ddot{\p}^i \> = \< \ddot{x}, \ddot{\t}\ddot{\p}^i \>$.
\end{proof}

\vspace{11pt}

\noindent Scott Harper \newline
School of Mathematics and Statistics, University of St Andrews, KY16 9SS, UK \newline
\texttt{scott.harper@st-andrews.ac.uk}

\begin{furtheracknowledgements*}
In order to meet institutional and research funder open access requirements, any accepted manuscript arising shall be open access under a Creative Commons Attribution (CC BY) reuse licence with zero embargo.
\end{furtheracknowledgements*}


\begin{thebibliography}{10}

\bibitem{ref:Aschbacher84}
M.~Aschbacher, \emph{On the maximal subgroups of the finite classical groups},
  Invent. Math. \textbf{76} (1984), 469--514.

\bibitem{ref:AschbacherGuralnick84}
M.~Aschbacher and R.~Guralnick, \emph{Some applications of the first cohomology
  group}, J. Algebra \textbf{90} (1984), 446--460.

\bibitem{ref:AschbacherSeitz76}
M.~Aschbacher and G.~M. Seitz, \emph{Involutions in {C}hevalley groups over
  fields of even order}, Nagoya Math. J. \textbf{63} (1976), 1--91.

\bibitem{ref:Borel70}
A.~Borel, R.~W. Carter, C.~W. Curtis, N.~Iwahori, T.~A. Springer and
  R.~Steinberg, \emph{Seminar on Algebraic Groups and Related Finite Groups},
  Lecture Notes in Mathematics, vol. 131, Springer, 1970.

\bibitem{ref:Magma}
W.~Bosma, J.~Cannon and C.~Playoust, \emph{The {\textsc{Magma}} algebra system
  {I}: {T}he user language}, J. Symbolic Comput. \textbf{24} (1997), 235--265.

\bibitem{ref:BrayHoltRoneyDougal}
J.~N. Bray, D.~F. Holt and C.~M. Roney-Dougal, \emph{The Maximal Subgroups of
  the Low-Dimensional Finite Classical Groups}, London Math. Soc. Lecture Notes
  Series, vol. 407, Cambridge University Press, 2013.

\bibitem{ref:CTblLib}
T.~Breuer, \emph{The \textsf{GAP} {C}haracter {T}able {L}ibrary}, {V1.3.9, 
  2024, \textsf{GAP} package, 
  \url{http://www.math.rwth-aachen.de/~Thomas.Breuer/ctbllib}}.

\bibitem{ref:Burkhardt76}
R.~Burkhardt, \emph{Die {Z}erlegungsmatrizen der {G}ruppen {$PSL(2,p^f)$}}, J.
  Algebra \textbf{40} (1976), 75--96.

\bibitem{ref:BurnessGiudici16}
T.~C. Burness and M.~Giudici, \emph{Classical Groups, Derangements and Primes},
  Aust. Math. Soc. Lecture Note Series, vol.~25, Cambridge University Press,
  2016.

\bibitem{ref:BurnessGuralnickHarper21}
T.~C. Burness, R.~M. Guralnick and S.~Harper, \emph{The spread of a finite
  group}, Ann. of Math. \textbf{193} (2021), 619--687.

\bibitem{ref:BurnessHarper20}
T.~C. Burness and S.~Harper, \emph{Finite groups, $2$-generation and the
  uniform domination number}, Israel J. Math. \textbf{239} (2020), 271--367.

\bibitem{ref:Carter78}
R.~W. Carter, \emph{Centralizers of semisimple elements in finite groups of
  {L}ie type}, Proc. Lond. Math. Soc. \textbf{37} (1978), 491--507.

\bibitem{ref:ATLAS}
J.~H. Conway, R.~T. Curtis, S.~P. Norton, R.~A. Parker and R.~A. Wilson,
  \emph{{$\mathbb{ATLAS}$} of Finite Groups}, Clarendon Press, 1985.

\bibitem{ref:DeriziotisMichler87}
D.~I. Deriziotis and G.~O. Michler, \emph{Character table and blocks of finite
  simple triality groups {${}^3D_4(q)$}}, Trans. Amer. Math. Soc. \textbf{303}
  (1987), 39--70.

\bibitem{ref:Dixon92}
J.~D. Dixon, \emph{Random sets which invariably generate the symmetric group},
  Discrete Math. \textbf{105} (1992), 25--39.

\bibitem{ref:Enomoto70}
H.~Enomoto, \emph{The conjugacy classes of {C}hevalley groups of type {$(G_2)$}
  over finite fields of characteristic 2 or 3}, J. Fac. Sci. Univ. Tokyo Sect.
  I \textbf{16} (1970), 497--512.

\bibitem{ref:Enomoto72}
H.~Enomoto, \emph{The characters of the finite symplectic group {${\rm
  Sp}(4,q), q=2^f$}}, Osaka Math. J. \textbf{9} (1972), 75--94.

\bibitem{ref:FawcettOBrienSaxl16}
J.~B. Fawcett, E.~A. O'Brien and J.~Saxl, \emph{Regular orbits of symmetric and
  alternating groups}, J. Algebra \textbf{458} (2016), 21--52.

\bibitem{ref:FulmanGuralnick18}
J.~Fulman and R.~Guralnick, \emph{Derangements in finite classical groups for
  actions related to extension field and imprimitive subgroups and the solution
  of the {B}oston--{S}halev conjecture}, Trans. Amer. Math. Soc. \textbf{370}
  (2018), 4601--4622.

\bibitem{ref:Gager73}
P.~C. Gager, \emph{Maximal tori in finite groups of {L}ie type}, {PhD thesis,
  University of Warwick, 1973.}

\bibitem{ref:GAP}
{The GAP Group}, \emph{{GAP -- Groups, Algorithms, and Programming}}, {V4.13.0,
  2024, \url{http://www.gap-system.org}}.

\bibitem{ref:Garzoni20}
D.~Garzoni, \emph{The invariably generating graph of the alternating and
  symmetric groups}, J. Group Theory \textbf{23} (2020), 1081--1102.

\bibitem{ref:GorensteinLyonsSolomon98}
D.~Gorenstein, R.~Lyons and R.~Solomon, \emph{The Classification of the Finite
  Simple Groups, Number 3}, Mathematical Surveys and Monographs, vol.~40, Amer.
  Math. Soc., 1998.

\bibitem{ref:GuralnickMalle12JLMS}
R.~M. Guralnick and G.~Malle, \emph{Simple groups admit {B}eauville
  structures}, J. Lond. Math. Soc. \textbf{85} (2012), 694--721.

\bibitem{ref:GuralnickTracey}
R.~M. Guralnick and G.~Tracey, \emph{Elements of finite groups contained in a 
  unique maximal subgroup}, {in preparation}.

\bibitem{ref:Harper21}
S.~Harper, \emph{Shintani descent, simple groups and spread}, J. Algebra
  \textbf{578} (2021), 319--355.

\bibitem{ref:GithubTotallyDeranged}
S.~Harper, \emph{totally-deranged}, 
  \url{https://github.com/harper-scott/totally-deranged}, 2023. 
  
\bibitem{ref:BrauerATLAS}
C.~Jansen, K.~Lux, R.~Parker and R.~Wilson, \emph{An Atlas of {B}rauer
  Characters}, Lond. Math. Soc. Monographs, New Series, vol.~11, Oxford
  University Press, 1995.

\bibitem{ref:Jordan72}
C.~Jordan, \emph{Recherches sur les substitutions}, J. Math. Pures Appl.
  \textbf{17} (1872), 351--367.

\bibitem{ref:KantorLubotzkyShalev11}
W.~M. Kantor, A.~Lubotzky and A.~Shalev, \emph{Invariable generation and the
  {C}hebotarev invariant of a finite group}, J. Algebra \textbf{348} (2011),
  302--314.

\bibitem{ref:KleidmanLiebeck}
P.~B. Kleidman and M.~W. Liebeck, \emph{The Subgroup Structure of the Finite
  Classical Groups}, London Math. Soc. Lecture Note Series, vol. 129, Cambridge
  University Press, 1990.

\bibitem{ref:LiebeckPraegerSaxl87}
M.~W. Liebeck, C.~E. Praeger and J.~Saxl, \emph{A classification of the maximal
  subgroups of the finite alternating and symmetric groups}, J. Algebra
  \textbf{111} (1987), 365--383.

\bibitem{ref:LiebeckSaxlSeitz92}
M.~W. Liebeck, J.~Saxl and G.~M. Seitz, \emph{Subgroups of maximal rank in
  finite exceptional groups of {L}ie type}, Proc. Lond. Math. Soc. \textbf{65}
  (1992), 297--325.

\bibitem{ref:LiebeckSeitz90}
M.~W. Liebeck and G.~M. Seitz, \emph{Maximal subgroups of exceptional groups of
  {L}ie type, finite and algebraic}, Geom. Dedicata \textbf{35} (1990),
  353--387.

\bibitem{ref:LiebeckSeitz98}
M.~W. Liebeck and G.~M. Seitz, \emph{On the subgroup structure of classical
  groups}, Invent. Math. \textbf{134} (1998), 427--453.

\bibitem{ref:Luebeck}
F.~L{\"u}beck, \emph{Centralizers and numbers of semisimple classes in
  exceptional groups of {L}ie type},
  \url{http://www.math.rwth-aachen.de/~Frank.Luebeck/chev/CentSSClasses/index.html}.

\bibitem{ref:MalleTesterman11}
G.~Malle and D.~Testerman, \emph{Linear Algebraic Groups and Finite Groups of
  Lie Type}, Graduate Studies in Advanced Mathematics, vol. 133, Cambridge
  University Press, 2011.

\bibitem{ref:Muller16}
J.~M{\"u}ller, \emph{On low-degree representations of the symmetric group}, J.
  Algebra \textbf{465} (2016), 356--398.

\bibitem{ref:NortonWilson89}
S.~P. Norton and R.~A. Wilson, \emph{The maximal subgroups of {$F_4(2)$} and
  its automorphism group}, Comm. Algebra \textbf{17} (1989), 2809--2824.

\bibitem{ref:Rogers17}
D.~Rogers, \emph{Maximal subgroups of classical groups in dimensions 16 and
  17}, {PhD thesis, University of Warwick, 2017.}

\bibitem{ref:Serre03}
J.-P. Serre, \emph{On a theorem of {J}ordan}, Bull. Amer. Math. Soc.
  \textbf{40} (2003), 429--440.

\bibitem{ref:Shinoda74}
K.~Shinoda, \emph{The conjugacy classes of {C}hevalley groups of type {$(F_4)$}
  over fields of characteristic 2}, J. Fac. Sci. Univ. Tokyo Sect. I
  \textbf{21} (1974), 133--159.

\bibitem{ref:Shinoda75}
K.~Shinoda, \emph{The conjugacy classes of the finite {R}ee groups of type
  {$(F_4)$}}, J. Fac. Sci. Univ. Tokyo Sect. I \textbf{22} (1975), 1--15.

\bibitem{ref:Shintani76}
T.~Shintani, \emph{Two remarks on irreducible characters of finite general
  linear groups}, J. Math. Soc. Japan \textbf{28} (1976), 396--414.

\bibitem{ref:Steinberg62}
R.~Steinberg, \emph{Generators for simple groups}, Canadian J. Math.
  \textbf{14} (1962), 277--283.

\bibitem{ref:Taylor92}
D.~E. Taylor, \emph{The Geometry of the Classical Groups}, Sigma Series in Pure
  Mathematics, vol.~9, Helderman Verlag, 1992.

\end{thebibliography}
\end{document}